\newcommand{\@bbify}[1]{
  \ifcsname b#1\endcsname
  \message{WARNING: Overwriting b#1 with blackboard letter!}
  \fi
  \expandafter\edef\csname b#1\endcsname
  {\noexpand\ensuremath{\noexpand\mathbb #1}\noexpand\xspace}}
\newcommand{\@calify}[1]{
  \ifcsname c#1\endcsname
  \message{WARNING: Overwriting c#1 with calligraphic letter!}
  \fi 
  \expandafter\edef\csname c#1\endcsname
  {\noexpand\ensuremath{\noexpand\mathcal #1}\noexpand\xspace}}
\newcommand{\@bfify}[1]{
  \ifcsname bf#1\endcsname
  \message{WARNING: Overwriting c#1 with bold letter!}
  \fi
  \expandafter\edef\csname bf#1\endcsname
  {\noexpand\ensuremath{\noexpand\mathbf #1}\noexpand\xspace}}
\newcounter{@letter}\stepcounter{@letter}
\loop\@bbify{\Alph{@letter}}\@calify{\Alph{@letter}}\@bfify{\Alph{@letter}}
\newenvironment{tz}{\begin{center}\begin{tikzpicture}[scale=1]}{\end{tikzpicture}\end{center}}
\tikzstyle{d}=[double distance=.3ex]
\tikzstyle{w}=[preaction={draw=white, -,line width=4pt}]
\newcounter{diagram}
\tikzset{over/.style={auto=false,fill=white,inner sep=1.5pt, minimum size=0, outer sep=0}, 
pro/.style={postaction={decorate,decoration={
        markings,
        mark=at position .5 with {\node at (0,0) {$\bullet$};}
      }},
      inner sep=1ex,
      },n/.style={double equal sign distance, -implies},t/.style={double distance=2.5pt, -implies, postaction={draw,-}},
  }
\tikzset{%
node distance=1.5cm, la/.style={scale=0.8}, rr/.style={xshift=1.5cm},
space/.style={xshift=.5cm},
    symbol/.style={%
        draw=none,
        every to/.append style={%
            edge node={node [sloped, allow upside down, auto=false]{$#1$}}},
            
    }
}
\def\cellslide{0.5}
\def\celllength{.2cm}
\NewDocumentCommand{\cell}{ O{} O{m} O{\cellslide} O{\celllength} m m m }{
  \coordinate (mid) at ($({#5})!{#3}!({#6})$);
  \coordinate (start) at ($(mid)!{#4}!({#5})$);
  \coordinate (end) at ($(mid)!{#4}!({#6})$);
  \draw[#2] (start) to node
  [inner sep=4pt,outer sep=0,minimum size=0,#1]{{#7}} (end);
}
\newtheorem{thm}{Theorem}[subsection] 
\newtheorem*{thm*}{Theorem}
\newtheorem{cor}[thm]{Corollary}
\newtheorem{lemma}[thm]{Lemma}
\newtheorem{prop}[thm]{Proposition}
\newtheorem*{unnumberedtheorem}{Theorem}
\newtheorem*{unnumbereddefinition}{Definition}
\theoremstyle{definition}
\newtheorem{defn}[thm]{Definition}
\newtheorem{notation}[thm]{Notation}
\newtheorem{constr}[thm]{Construction}
\newtheorem{data}[thm]{Setup}
\theoremstyle{remark}
\newtheorem{rmk}[thm]{Remark}
\newtheorem{ex}[thm]{Example}
\crefname{lem}{Lemma}{Lemmas}
\crefname{thm}{Theorem}{Theorems}
\crefname{defn}{Definition}{Definitions}
\crefname{data}{Setup}{Setups}
\crefname{prop}{Proposition}{Propositions}
\crefname{rmk}{Remark}{Remarks}
\crefname{cor}{Corollary}{Corollaries}
\crefname{ex}{Example}{Examples}
\crefname{notation}{Notation}{Notations}
\crefname{constr}{Construction}{Constructions}
\crefname{recall}{Recall}{Recalls}
\crefname{descr}{Description}{Descriptions}
\crefname{para}{\textsection}{\textsection\textsection}
\newlist{rome}{enumerate}{7}
\setlist[rome]{label=(\roman*)}
\newlist{alphabet}{enumerate}{7}
\setlist[alphabet]{label=(\alph*)}
\newcommand{\pushout}[1]{\node at ($({#1})-(10pt,-10pt)$) {$\ulcorner$};}
\newcommand{\pullback}[1]{\node at ($({#1})+(10pt,-10pt)$) {$\lrcorner$};}
\NewDocumentCommand{\punctuation}{ m m O{5pt} }{\node at ($(#1.east)-(0,#3)$) {#2};}
\newcommand{\defThn}{\ensuremath{\theta}}
\newcommand{\defS}{k}
\NewDocumentCommand\Thn{O{n-1}}{\ensuremath{\Theta}_{{#1}}}
\NewDocumentCommand\Thnsset{O{n-1}}{\mathit{s}\set^{\ensuremath{\Theta_{\scriptscriptstyle{#1}}^{\op}}}}
\NewDocumentCommand\Thnssset{O{n-1}}{\mathit{ss}\set^{\ensuremath{\Theta_{\scriptscriptstyle{#1}}^{\op}}}}
\NewDocumentCommand\CSThn{O{n-1}}{{\ensuremath{\scriptscriptstyle(\infty,{#1})}}}
\NewDocumentCommand\AraThn{O{n-1}}{{\ensuremath{\scriptscriptstyle(\infty,{#1})}}}
\newcommand{\cat}{\cC\!\mathit{at}}
\newcommand{\set}{\cS\!\mathit{et}}
\newcommand{\sset}{\mathit{s}\set}
\newcommand{\Pcat}{\mathcal{P}\cat}
\newcommand{\DThn}{\Delta\times \Thn}
\newcommand{\DThnS}{\DThn\times \Delta}
\newcommand{\Dop}{\Delta^{\op}}
\newcommand{\MSspace}{\sset_{\Kan}}
\newcommand{\injThnspace}{(\sset_{\Kan})^{\Thnop}_\inj}
\newcommand{\MSThnsset}{\Thnsset_{\CSThn}}
\newcommand{\MSThncat}{\Thnsset_{\CSThn}\text{-}\cat}
\newcommand{\injsThnsset}{(\Thnsset_{\CSThn})^{\Delta^{\op}}_\inj}
\newcommand{\SegsThnsset}{\sThnsset_{\mathrm{dbl}\CSThn}}
\newcommand{\CSSsThnsset}{\sThnsset_{\CSThn[n]}}
\newcommand{\Dset}{\set^{\Dop}}
\newcommand{\Dsset}{\sset^{\Dop}}
\newcommand{\MSDsset}{\sset^{\Dop}_{\CSThn[1]}}
\newcommand{\diag}{\mathrm{diag}}
\newcommand{\SSset}{\mathit{s}\sset}
\newcommand{\sThnsset}{\mathit{s}\set^{\Thnop\times \Dop}}
\newcommand{\sThnssetslice}[1]{\sThnsset_{\;\;\;\;\;\;\;\;\;\;/{#1}}}
\newcommand{\Thncat}{\Thnsset\text{-}\cat}
\newcommand{\sThncat}{\Thnssset\text{-}\cat}
\newcommand{\Thnop}{{\Theta_{\scriptscriptstyle n-1}^{\op}}}
\newcommand{\DThnop}{\Dop\times \Thnop}
\newcommand{\MSrightfib}[1]{(\sThnssetslice{{#1}})_{\CSThn}^\mathrm{right}}
\newcommand{\MSleftfib}[1]{(\sThnssetslice{{#1}})_{\CSThn}^\mathrm{left}}
\newcommand{\pcatThn}{\Pcat(\Thnsset)}
\newcommand{\pcatinj}{\Pcat(\Thnsset_{\CSThn})}
\NewDocumentCommand\Sp{O{m}}{Sp[{#1}]}
\NewDocumentCommand\repD{O{m}}{F[{#1}]}
\NewDocumentCommand\repS{O{\defS}}{\ensuremath{\Delta}[{#1}]}
\newcommand{\slice}[2]{{#1}_{\sslash {#2}}}
\newcommand{\joinslice}[2]{{#1}_{\sslash^\star{#2}} }
\newcommand{\sliceunder}[2]{{}^{{#2} \sslash}{#1}}
\newcommand{\joinsliceunder}[2]{{}^{{#2} \sslash^\star}{#1}}
\newcommand{\pushprodstar}{\,\widehat{\star}\,}  
\newcommand{\pushprod}{\,\widehat{\times}\,}  
\NewDocumentCommand\repThn{O{\defThn}}%
  {\Thn{[{#1}]}}
\NewDocumentCommand\repDThn{O{m} O{\defThn}}%
  {F[{#1}]\times \Thn{[{#2}]}}
  \NewDocumentCommand\repDThnS{O{m} O{\defThn} O{\defS}}%
  {F[{#1},{#2},{#3}]}
\NewDocumentCommand\repThnS{O{\defThn} O{\defS}}%
  {\Thn[{#1}]\times \ensuremath{\Delta}[{#2}]}
\NewDocumentCommand\HomSh{O{m} O{X}}{{\Hom_{\Sh_{{#1}}{#2}}}}
\NewDocumentCommand\HomSigma{O{m} O{X}}{{\Hom_{\ensuremath{\Sigma}_{{#1}}{#2}}}}
\newcommand{\Ch}{\mathfrak{C}}
\newcommand{\Nh}{\mathfrak{N}}
\newcommand{\St}{\mathrm{St}}
\newcommand{\Un}{\mathrm{Un}}
\newcommand{\mapDelta}{\mu}
\DeclareMathOperator{\Ob}{Ob}
\DeclareMathOperator{\colim}{colim}
\DeclareMathOperator{\Ho}{Ho}
\DeclareMathOperator{\Hom}{Hom}
\DeclareMathOperator{\Map}{Map}
\DeclareMathOperator{\op}{op}
\DeclareMathOperator{\id}{id}
\newcommand{\Kan}{\CSThn[0]}
\newcommand{\inj}{\mathrm{inj}}
\newcommand{\proj}{\mathrm{proj}}
\newcommand{\sCat}{{\sset\text{-}\cat}}
\newcommand{\Sh}{\mathfrak{S}}
\newcommand{\NL}{N^h}
\newcommand{\CL}{c^h}
\NewDocumentCommand\PP{O{m} O{X} O{Y}}{{P_{#1}({#2}\hookrightarrow {#3})}}
\NewDocumentCommand\Cone{O{f} O{\ensuremath{\mapDelta}}}{{\mathrm{Cone}_{#2}({#1})}}
\NewDocumentCommand\newGbar{O{f} O{i} O{\ensuremath{\mapDelta}}}{{\overline{\cF}^{\,{#2}}_{#3}({#1})}}
\NewDocumentCommand\newG{O{f} O{i} O{\ensuremath{\mapDelta}}}{{\cF^{\,{#2}}_{#3}({#1})}}
\NewDocumentCommand\Hbar{O{i} O{m+1}}{{\overline{H}^{\,{#1}}_{#2}}}
\renewcommand\thepart{\Roman{part}}
\renewcommand\part{%
  \par
  \addvspace{4ex}%
  \@afterindenttrue
  \secdef\@part\@spart
}
\def\@part[#1]#2{%
    \ifnum \c@secnumdepth >\m@ne
      \refstepcounter{part}%
      
      \addcontentsline{toc}{section}{\hspace{-.5cm} \bfseries\thepart.\hspace{1em}#1}%
    \else
      \addcontentsline{toc}{section}{#1}%
    \fi
    {\parindent \z@ \raggedright
     \interlinepenalty \@M
     \normalfont
     \thispagestyle{empty}
     \ifnum \c@secnumdepth >\m@ne
      \centering\large\textsc{\textbf{\thepart.}}\nobreakspace
     \fi
     \centering\large\textsc{\textbf{#2}}
     \par}%
    \nobreak
    \vskip .3cm
    \@afterheading}
\def\@spart#1{%
      \addcontentsline{toc}{part}{#1}%
    {\parindent \z@ \raggedright
     \interlinepenalty \@M
     \normalfont
     \thispagestyle{plain}
     \centering\large\textsc{\textbf{#1}}
     \par}%
    \nobreak
    \vskip .3cm
    \@afterheading}
\title{$(\infty,n)$-Limits II: Comparison across models}
\author{Lyne Moser}
\address{Fakultät für Mathematik, Universität Regensburg, Regensburg, Germany}
\email{lyne.moser@ur.de}
\author{Nima Rasekh}
\address{Max Planck Institute for Mathematics, Bonn, Germany}
\email{rasekh@mpim-bonn.mpg.de}
\author{Martina Rovelli}
\address{Department of Mathematics and Statistics, University of Massachusetts Amherst, Amherst, USA}
\email{mrovelli@umass.edu}
\begin{document}

\begin{abstract}
We show that the notion of $(\infty,n)$-limit defined using the enriched approach and the one defined using the internal approach coincide.
We also give explicit constructions of various double $(\infty,n-1)$-categories implementing various join constructions, slice constructions and cone constructions, and study their properties. We further prove that key examples of $(\infty,n)$-categories are (co)complete.
\end{abstract}

\maketitle

\setcounter{tocdepth}{1}
\tableofcontents

\section*{Introduction}

The role of $(\infty,n)$-categories in many branches of modern mathematics has become apparent in the past couple of decades, and the universal property of a number of important constructions is encoded as that of a -- possibly weighted -- (co)limit of an appropriate diagram between $(\infty,n)$-categories. Examples range from sheaf and descent properties in (derived) geometry via colimits to the category of monads, described as a weighted limit; see the introduction of \cite{MRR3} for further examples. Hence, it is crucial to establish a model independent, meaningful and usable theory of limits and colimits in this context.

In this paper we establish the comparison between the two main -- potentially competing -- approaches to the theory of limits in the context of $(\infty,n)$-categories one could envision. On the one hand, there is the ``enriched'' approach, which is available in the framework of \cite[\textsection5]{ShulmanHomotopyLimits} and essentially relies on having access to the hom $(\infty,n-1)$-categories of the $(\infty,n)$-categories of diagrams. On the other hand, there is the ``internal'' approach, which we introduced in \cite[Definition 1.4.24]{MRR3}, and relies on having access to slice and cone constructions in the world of double $(\infty,n-1)$-categories, i.e., internal categories to $(\infty,n-1)$-categories. The former approach is closer to the intuitive viewpoint adopted in the context of strict higher category theory, whereas the latter is an appropriate generalization of the practical viewpoint taken in the context of $(\infty,1)$-categories. 

Analog comparison results have been treated before under special specific assumptions (specific values of~$n$, particular weights, or further assumptions on the $(\infty,n)$-categories considered). These include: \cite{RVLimits} in the case of terminal weight---encoding ordinary (co)limits---when $n=1$, \cite{RovelliLimits} for a partial comparison in the case of general weights when $n=1$, \cite[\textsection5.3]{GHL2} in the case of $(\infty,2)$-categories arising from appropriately enriched model categories when $n=2$, \cite[\textsection4.2]{AGH} in the case of weights encoding partially (op)lax limits when $n=2$. We hereby give a completely general treatment, which accounts for an arbitrary weight and an arbitrary~$n\geq1$. 

\subsection*{The conical case}

Given a diagram $F\colon \cJ\to \cC$ of $(\infty,n)$-categories, let us recall the two types of universal property expected for an object $\ell$ of $\cC$ to be the $(\infty,n)$-limit of $F$.

Assuming one has access to the hom $(\infty,n-1)$-categories for both $\cC$ and $\cC^{\cJ}$, as a special case of the framework from \cite{ShulmanHomotopyLimits} one obtains a definition of limit which appropriately translates the idea that there is a correspondence between cones over $F$ and maps in $\cC$ to the object $\ell$:
\begin{unnumbereddefinition}[Enriched approach]
    An object $\ell\in\cC$ is a limit of $F$ if there is an equivalence of hom $(\infty,n-1)$-categories
\begin{equation}
\label{defEnriched}
\cC(c,\ell)\simeq\cC^{\cJ}(\Delta c,F) 
\end{equation}
natural in objects $c\in \cC$. 
\end{unnumbereddefinition}

While this definition is quite transparently capturing the correct idea, because of the naturality in $c$ it can only reasonably be exploited when working with $(\infty,n)$-categories presented by categories strictly enriched over $(\infty,n-1)$-categories, a quite limiting assumption in practice. Even then, computing the correct homotopy types of the hom $(\infty,n)$-categories featuring in \eqref{defEnriched} is notably challenging (typically at least as complicated as computing cofibrant replacements of diagrams), even for particularly simple values of $\cJ$.

Instead, we proposed in \cite{MRR3} an alternative approach to the notion of limit for $(\infty,n)$-categories, which encodes the idea that the limit comes with a cone that is appropriately terminal, and which is more in line with the literature for $n=1$ \cite{JoyalVolumeII,LurieHTT,GHNlax} and for $n=2$ in the strict case \cite{GPdoublelimits,cM1,cM2}; cf.~also \cite[\textsection4.2.3]{Loubaton4}. We defined the double $(\infty,n-1)$-category of cones $\slice{\cC}{F}$ and the double $(\infty,n-1)$-slice $\slice{(\slice{\cC}{F})}{(\ell,\lambda)}$ over an object $(\ell,\lambda)$ of $\slice{\cC}{F}$. We then set:

\begin{unnumbereddefinition}[Internal approach]
    An object $\ell\in\cC$ is a limit of $F$ if there exists an element $\lambda$ in the fiber of $\slice{\cC}{F}\to \cC$ at $\ell$ such that the canonical projection 
\begin{equation}
\label{defInternal}
\slice{(\slice{\cC}{F})}{(\ell,\lambda)}\xrightarrow{\simeq} \slice{\cC}{F}
\end{equation}
is an equivalence of double $(\infty,n-1)$-categories.
\end{unnumbereddefinition}

This definition is quite accessible in practice, and in forthcoming work \cite{MRR5} we will focus on developing the theory of $(\infty,n)$-limits, enhancing to the case of general $n$ many of the arguments that work for the cases $n=1,2$ from e.g.~\cite{LurieHTT,GHL2}.

We show as \cref{EquivalenceOfLimitsConical} that the two approaches agree:

\begin{unnumberedtheorem}
An object $\ell\in \cC$ is a limit of $F$ in the sense of \eqref{defEnriched} if and only if it is one in the sense~of~\eqref{defInternal}.
\end{unnumberedtheorem}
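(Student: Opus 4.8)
The plan is to reduce each of the two conditions \eqref{defEnriched} and \eqref{defInternal} to the single assertion that a canonically induced comparison map of hom $(\infty,n-1)$-categories is an equivalence, and then to check that the two reductions produce the \emph{same} map. Concretely, for a cone $\lambda$ over $F$ with apex $\ell$ (i.e.\ an object of $\cC^{\cJ}(\Delta\ell, F)$), form the canonical ``postcompose with $\lambda$'' natural transformation
\[
\rho_\lambda\colon \cC(-,\ell)\longrightarrow \cC^{\cJ}(\Delta-,F).
\]
It then suffices to prove: (a) $\ell$ satisfies \eqref{defEnriched} if and only if $\rho_\lambda$ is a levelwise equivalence for some such $\lambda$; and (b) $\ell$ satisfies \eqref{defInternal} with witness $\lambda$ if and only if $\rho_\lambda$ is a levelwise equivalence.

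For (a) I would run an $(\infty,n-1)$-enriched Yoneda argument. Restriction along the constant-diagram functor turns $\cC^{\cJ}(\Delta-,F)$ into a functor $\cC^{\op}\to\cat_{(\infty,n-1)}$, and evaluation at the identity of $\ell$ identifies natural transformations out of the representable $\cC(-,\ell)$ with objects of $\cC^{\cJ}(\Delta\ell,F)$, the transformation attached to $\lambda$ being exactly $\rho_\lambda$. Consequently any abstract natural equivalence as in \eqref{defEnriched} is, up to equivalence, of the form $\rho_\lambda$, and conversely, which gives (a).

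For (b) I would analyze the projection $p\colon \slice{(\slice{\cC}{F})}{(\ell,\lambda)}\to\slice{\cC}{F}$ through mapping objects. An equivalence of double $(\infty,n-1)$-categories is detected on underlying objects and on hom $(\infty,n-1)$-categories, so $p$ is an equivalence if and only if it is essentially surjective and fully faithful in the double-categorical sense. Using the explicit description of the cone double $(\infty,n-1)$-category $\slice{\cC}{F}$, of the apex projection $\slice{\cC}{F}\to\cC$, and of the iterated slice developed earlier in the paper, each hom $(\infty,n-1)$-category of $\slice{(\slice{\cC}{F})}{(\ell,\lambda)}$ fits into a pullback over the corresponding hom of $\slice{\cC}{F}$ along the leg recording a factorization through $(\ell,\lambda)$; unwinding, $p$ is an equivalence exactly when for every object $(c,\mu)$ of $\slice{\cC}{F}$ the space of such factorizations is contractible, i.e.\ when $(\ell,\lambda)$ is a double-categorically terminal object of $\slice{\cC}{F}$. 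Transporting this along $\slice{\cC}{F}\to\cC$, whose fibers recover the hom $(\infty,n-1)$-categories $\cC^{\cJ}(\Delta c, F)$ and whose effect on morphisms into $(\ell,\lambda)$ is precisely $\rho_\lambda$, the terminality of $(\ell,\lambda)$ becomes exactly the statement that $\rho_\lambda$ is a levelwise equivalence; this gives (b), and combining with (a) finishes the proof.

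The hard part will be the mapping-object computation in (b): one must identify the hom $(\infty,n-1)$-categories of $\slice{\cC}{F}$ and of its iterated slice $\slice{(\slice{\cC}{F})}{(\ell,\lambda)}$ with the appropriate fibers of the apex projection, and check that the comparison map extracted this way coincides on the nose with $\rho_\lambda$. This leans on the explicit models for the join, slice, and cone constructions built earlier in the paper, and on a careful treatment of equivalences and terminal objects for double $(\infty,n-1)$-categories --- in particular on the facts that such equivalences may be tested on objects and hom $(\infty,n-1)$-categories, and that a slice projection $\slice{\mathbb{D}}{d}\to\mathbb{D}$ is an equivalence precisely when $d$ is terminal. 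By contrast, the Yoneda reduction (a) and the final bookkeeping are comparatively routine.
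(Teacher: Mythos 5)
Your outline actually shadows the paper's own strategy fairly closely: the paper, too, reduces \eqref{defInternal} to terminality of $(\ell,\lambda)$ in the double $(\infty,n-1)$-category of cones, characterizes terminality by a single Yoneda-type comparison map out of the slice being an equivalence of double $(\infty,n-1)$-right fibrations, detects that equivalence fiberwise, and identifies the fibers and the induced fiber maps with the enriched mapping objects and with precomposition by $\lambda$ (\cref{lem:terminalequivrelative,webtwdoubleright,fiberwiseeq,EquivalenceOfLimitsWeighted}). The difficulty is that the step you explicitly defer is not a routine verification but the actual mathematical content of the theorem, and your setup hides it. The two definitions do not live in one common world: the enriched side is a strictly $\Thnsset$-enriched $\cC$ in which $\cC^{\cJ}(\Delta c,F)$ is only accessible as $\Hom_{[\Ch J,\Thnsset]}(W,\Hom_\cC(c,F-))$ for a projectively cofibrant replacement $W$ of the terminal weight (identified with $\St_J(\id_J)$), while the internal side concerns $\Nh\cC$, the adjoint diagram $F^\flat\colon J\to\Nh\cC$, and a witness $\lambda^\flat$ living in the fiber of $\slice{\Nh\cC}{F^\flat}\to\Nh\cC$. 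So one must (i) match the witnesses across the models, (ii) identify the fibers of the cone projection with the enriched objects $\Hom_{[\Ch J,\Thnsset]}(\St_J(\id_J),\Hom_\cC(c,F-))$, and (iii) construct a map of right fibrations over $\Nh\cC$ whose fiber components are literally $\lambda^*_c$ (your $\rho_\lambda$). In the paper this is \cref{lem:webtwfibers,prop:existYoneda}, resting on the point-set isomorphism $\Un_\cC\Hom_\cC(-,\ell)\cong\joinslice{\Nh\cC}{\ell}$ (\cref{Unvsjoinslice}), the naturality of straightening, and the explicit Yoneda-map construction that occupies all of \cref{sec:Yonedamap}; none of this machinery (homotopy coherent nerve, straightening--unstraightening, cofibrant weights) appears in your plan, and "the comparison map extracted this way coincides on the nose with $\rho_\lambda$" is asserted rather than argued. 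Your step (a) has a related, milder issue: replacing an abstract equivalence natural in $c$ by one of the form $\rho_\lambda$ requires a homotopy-coherent Yoneda/strictification argument in the enriched model; the paper avoids it by taking Shulman's formulation (an equivalence induced by a specified $\lambda$ via strict precomposition) as the definition.

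A further inaccuracy in (b): the equivalence demanded in \eqref{defInternal} is a weak equivalence in $\SegsThnsset$ between fibrant objects, hence a levelwise equivalence, not something to be tested by double-categorical essential surjectivity plus full faithfulness (that criterion pertains to the Dwyer--Kan equivalences of $\CSSsThnsset$). The detection principle that actually makes the fiberwise reduction legitimate is \cref{webtwdoubleright}: for double $(\infty,n-1)$-right fibrations over a base whose level-zero object is discrete, weak equivalences are detected on fibers over points; and one also needs the comparison of fat and neat slices (\cref{thm:comparisoniswe,cor:comparisoniswe}) and the invariance statement \cref{prop:limit in precat} to work with $\Nh\cC$ (fibrant in $\pcatinj$) rather than a fibrant replacement in $\CSSsThnsset$. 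So while your skeleton is the right one, the proposal as written leaves the central cross-model identifications unproved.
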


The result is in fact a special instance of a more general framework, which we now discuss.

\subsection*{The weighted case}

In higher category theory one is often interested in studying cones of a more exotic shape---specified by something called a \emph{weight}---and the summit of the universal cone of such shape would then be the corresponding weighted limit. When working with $(\infty,n)$-categories presented by categories strictly enriched over $(\infty,n-1)$-categories, the nature of the weight is expressed as a functor $W\colon \cJ\to \cat_{(\infty,n-1)}$ which assigns to each object of $\cJ$ the $(\infty,n-1)$-category of legs on that object for the type of cones that are being considered. The case of ordinary cones is recovered when considering the terminal weight and gives the notion of \emph{conical} limits. The case of lax cones when $n=2$ corresponds to the weight which assigns to each object of $\cC$ the corresponding slice and gives the notion of \emph{lax} limits. The case of lax squares when $n=2$ and $\cJ$ is the cospan shape corresponds to the weight given by the diagram of categories $\{[0]\xrightarrow{0}[1]\xleftarrow{1}[0]\}$ and gives the notion of \emph{comma} objects.

Following \cite{ShulmanHomotopyLimits}, one sets:
\begin{unnumbereddefinition}[Enriched approach]
    An object $\ell$ of $\cC$ is a $W$-weighted limit of $F$ if there is an equivalence of hom $(\infty,n-1)$-categories
\begin{equation}
\label{defEnrichedWeighted}
\cC(c,\ell)\simeq [\cJ,\cat_{(\infty,n-1)}](W,\cC(c,F-))
\end{equation}
natural in objects $c\in \cC$.
\end{unnumbereddefinition}

When working with a general weight, in the internal approach, the nature of the weight is encoded by a map (whose homotopy theory is that of appropriate fibrations). The conical case is recovered as the case of the identity map at $\cJ$, while for $n=2$ the lax case is recovered as the case of the unstraightening construction of the weight encoding lax cones. We then set, as \cref{def:weightedlimit}:

\begin{unnumbereddefinition}[Internal approach]
 An object $\ell\in \cC$ is a $p$-weighted limit of $f$ if there is an element $\lambda$ in the fiber of $\slice{\cC}{F\circ p}\to \cC$ at $\ell$ such that the canonical projection
\begin{equation}
\label{defInternalWeighted}
\slice{(\slice{\cC}{F\circ p})}{(\ell,\lambda)}\xrightarrow{\simeq}\slice{\cC}{F\circ p}
\end{equation}
is an equivalence of double $(\infty,n-1)$-categories.
\end{unnumbereddefinition}

Again, we are able to show---as \cref{EquivalenceOfLimitsWeighted}---that the two agree:
\begin{unnumberedtheorem}
An object $\ell$ of $\cC$ is a weighted limit of $F$ in the sense of \eqref{defEnrichedWeighted} if and only if it is in the sense of \eqref{defInternalWeighted}.
\end{unnumberedtheorem}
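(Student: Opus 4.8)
The plan is to deduce the statement from two ingredients: a \emph{fibrational} description of the double $(\infty,n-1)$-category of cones $\slice{\cC}{F\circ p}$ over the base $\cC$, together with a \emph{representability principle} for such fibrations. Since taking $p=\id_\cJ$ recovers the conical comparison, it suffices to treat the weighted statement. First I would record that the projection $q\colon\slice{\cC}{F\circ p}\to\cC$ remembering the summit of a cone is a right fibration of double $(\infty,n-1)$-categories: a morphism $g\colon c'\to c$ of $\cC$ acts on cones over $F\circ p$ with summit $c$ by precomposition, and one checks this action is cartesian. Thus $q$ is classified by a presheaf $\mathscr F\colon\cC^{\op}\to\cat_{(\infty,n-1)}$ whose value at $c$ is the fiber of $q$, i.e.\ the $(\infty,n-1)$-category of cones over $F\circ p$ with summit $c$; these fibrational properties of the slice are already built into the constructions of \cite{MRR3}.

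Next I would identify this fiber with the enriched weighted-cone object on the right-hand side of \eqref{defEnrichedWeighted}. Using that $p$ arises, via unstraightening, from the weight $W$, a cone over $F\circ p$ with summit $c$ — that is, a natural transformation from the constant diagram at $c$ to $F\circ p$ — transposes to a natural transformation $W\Rightarrow\cC(c,F-)$ in $[\cJ,\cat_{(\infty,n-1)}]$; carrying this through at the level of hom $(\infty,n-1)$-categories gives an equivalence $\mathscr F(c)\simeq[\cJ,\cat_{(\infty,n-1)}](W,\cC(c,F-))$, which one then checks is natural in $c\in\cC$.

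The central step is the representability principle: for a right fibration $q\colon\cE\to\cC$ of double $(\infty,n-1)$-categories classified by $\mathscr F\colon\cC^{\op}\to\cat_{(\infty,n-1)}$ and an object $(\ell,\lambda)$ of $\cE$ over $\ell$, the projection $\slice{\cE}{(\ell,\lambda)}\to\cE$ is an equivalence of double $(\infty,n-1)$-categories if and only if, for every $c\in\cC$, the map $\cC(c,\ell)\to\mathscr F(c)=\cE_c$ sending $g$ to $g^{\ast}\lambda$ is an equivalence of $(\infty,n-1)$-categories. I would prove this by showing both conditions are equivalent to $(\ell,\lambda)$ being terminal in $\cE$: the first from the general behaviour of slices over an object, and the second because, $q$ being a right fibration, the hom $(\infty,n-1)$-category $\cE(x,(\ell,\lambda))$ is computed as the fiber of $\cC(qx,\ell)\to\mathscr F(qx)$ over $x$, which is terminal for all $x$ precisely when the displayed maps are equivalences.

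Assembling these, the internal condition \eqref{defInternalWeighted} — existence of $\lambda$ with $\slice{(\slice{\cC}{F\circ p})}{(\ell,\lambda)}\to\slice{\cC}{F\circ p}$ an equivalence — becomes, under the identification of the second step, the existence of a weighted cone $\lambda$ with summit $\ell$ for which $\cC(c,\ell)\to[\cJ,\cat_{(\infty,n-1)}](W,\cC(c,F-))$ is an equivalence for every $c$, naturally in $c$; conversely, a natural equivalence as in \eqref{defEnrichedWeighted} evaluated at $\id_\ell$ yields such a $\lambda$, with naturality matching the induced map to the given equivalence. This is exactly the enriched condition, proving the theorem. The hard part will be the third step: making the representability principle precise requires a usable (un)straightening dictionary for the relevant right fibrations of double $(\infty,n-1)$-categories and, crucially, the formula expressing the hom $(\infty,n-1)$-categories of the total space in terms of $\mathscr F$ and the homs of $\cC$ — the enriched, double-categorical analogue of $\cE(x,y)\simeq\cC(qx,qy)\times_{\mathscr F(qx)}\{x\}$ — verified at the level of the models at hand. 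The second step is also delicate: one must unwind the explicit construction of $\slice{\cC}{F\circ p}$ to see that it is genuinely the unstraightening of the weighted-cone presheaf, and transport the naturality in $c$ through the comparison so that it lines up with the naturality demanded in \eqref{defEnrichedWeighted}.
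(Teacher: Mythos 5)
Your outline is correct and, at the level of architecture, it is the same as the paper's: identify the fibers of the cone projection with the enriched weighted-cone object, detect the universal property fiberwise, and translate terminality of $(\ell,\lambda)$ into the slice projection being an equivalence (this last equivalence is \cref{lem:terminalequivrelative}, and fiberwise detection is \cref{webtwdoubleright}). The difference is in how the key comparison is implemented, and it is exactly where you locate "the hard part." You propose to package the cone fibration as a classified presheaf $\mathscr{F}$ together with a map of fibrations/presheaves $\cC(-,\ell)\to\mathscr{F}$ given by $g\mapsto g^*\lambda$, natural in $c$; the paper explicitly remarks that such a global map (an unstraightening-level morphism $\Un_\cC \Hom_{[\Ch J,\Thnsset]}(\St_J(p),\Hom_\cC(-,F))\to \joinslice{\Nh\cC}{F^\flat\circ p}$ acting as the fiber identification) is difficult to build and deliberately bypasses it. Instead, the paper (i) replaces the fat cone of the internal definition by the neat cone, using the fat-vs-neat comparison of \cref{thm:comparisoniswe}, precisely because the neat join gives a point-set isomorphism $\Un_\cC\Hom_\cC(-,\ell)\cong\joinslice{\Nh\cC}{\ell}$ (\cref{Unvsjoinslice}) and an exact fiber formula $(\joinslice{\Nh\cC}{F^\flat\circ p})_c\cong\Hom_{[\Ch J,\Thnsset]}(\St_J(p),\Hom_\cC(c,F-))$ (\cref{lem:webtwfibers}); and (ii) for the given cone $\lambda$ it constructs by hand, at the level of $\Ch L$ of joins, a single Yoneda map $\lambda^\flat_*\colon\joinslice{\Nh\cC}{\ell}\to\joinslice{\Nh\cC}{F^\flat\circ p}$ and verifies only that it agrees with the enriched precomposition $\lambda^*_c$ on each fiber (\cref{prop:existYoneda}, proved in the final section). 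This weaker, fiberwise compatibility suffices because equivalences of double $(\infty,n-1)$-right fibrations are detected on fibers, so the full naturality-in-$c$ you ask for in your second and third steps never has to be produced. So your plan buys conceptual transparency (a clean representability principle), but as stated its central construction is the one the paper judged intractable; if you pursue it, expect the honest work to be either that global (un)straightening comparison or a retreat to the paper's fiberwise strategy, together with the fat/neat comparison needed to connect back to the definition of the internal limit.
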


The main goal of this paper is to make this theorem into a precise mathematical statement and we will now delve into the details.

\subsection*{The main result}

We now explain the precise form of the main theorem of the paper. Similar implementations have been pursued in other contexts for the case $n=1$ \cite{RVLimits,RovelliLimits} and for the case $n=2$ \cite{GHL2}.

First, let $\MSThnsset$ denote the model structure on the category $\Thnsset$ for $(\infty,n-1)$-categories given by the complete Segal $\Thn$-spaces from \cite{RezkTheta}. We consider two different models of $(\infty,n)$-categories: the injective-like model structure $\pcatinj$ on the subcategory $\pcatThn$ of $\sThnsset$ spanned by those $X$ such that $X_0$ is discrete for Segal category objects in complete Segal $\Thn$-spaces from \cite{br1}, and the model structure $\MSThncat$ on the category $\Thncat$ of strictly enriched categories in $\Thnsset$ for enriched categories that are locally complete Segal $\Thn$-categories from \cite{br1}. Consider the homotopy coherent categorification-nerve Quillen equivalence from \cite{MRR1}. 
\begin{tz}
\node[](1) {$\pcatinj$}; 
\node[right of=1,xshift=2.9cm](2) {$\MSThncat$}; 

\draw[->] ($(2.west)-(0,5pt)$) to node[below,la]{$\Nh$} ($(1.east)-(0,5pt)$);
\draw[->] ($(1.east)+(0,5pt)$) to node[above,la]{$\Ch$} ($(2.west)+(0,5pt)$);

\node[la] at ($(1.east)!0.5!(2.west)$) {$\bot$};
\end{tz}

\begin{itemize}[leftmargin=*]
\item[\guillemotright] \emph{The indexing $(\infty,n)$-category $\cJ$:}
In the internal approach, the indexing shape can be realized as an object $J$ of $\pcatThn$.
Because of the Quillen equivalence $\Ch\dashv\Nh$, in the enriched approach the indexing shape can be realized without loss of generality as an object $\Ch J$ in $\MSThncat$. 
\begin{center}
\fbox{\quad$\Ch J \quad \leftrightsquigarrow \quad J$\quad}
\end{center}
\item[\guillemotright] \emph{The ambient $(\infty,n)$-category $\cC$}:
In the enriched approach, the value $(\infty,n)$-category can be realized as a fibrant object $\cC$ of $\MSThncat$. Because of the Quillen equivalence $\Ch\dashv\Nh$, in the internal approach the indexing shape can be realized without loss of generality as a fibrant object $\Nh\cC$ of $\pcatinj$.
\begin{center}
\fbox{\quad$\cC \quad \leftrightsquigarrow \quad \Nh\cC$\quad}
\end{center}
    \item[\guillemotright]
    \emph{The $(\infty,n)$-diagram $F\colon\cJ\to\cC$}: In the enriched approach, such a diagram can be realized as a morphism $F\colon \Ch J\to \cC$ in $\MSThncat$, which corresponds under the Quillen adjunction $\Ch \dashv \Nh$ to a morphism $F^\flat\colon J\to \Nh\cC$ in $\pcatinj$ in the internal approach.
    \begin{center}
\fbox{\quad$F\colon \Ch J\to \cC \quad \leftrightsquigarrow \quad F^\flat\colon J\to \Nh \cC$\quad}
\end{center}
\end{itemize}
Next, let $[\Ch J^{\op},\MSThnsset]_\proj$ denote the projective model structure on the category of enriched functors $\Ch J^{\op}\to\cC$ from \cite{Moserinj}, and let $\MSrightfib{J}$ denote the model structure on the slice category $\sThnssetslice{J}$ for double $(\infty,n-1)$-right fibrations from \cite{rasekh2023Dyoneda}. Consider the straightening-unstraightening Quillen equivalence from \cite{MRR2}. 
\begin{tz}
\node[](1) {$\MSrightfib{J}$}; 
\node[right of=1,xshift=3.8cm](2) {$[\Ch J^{\op},\MSThnsset]_\proj$}; 

\draw[->] ($(2.west)-(0,5pt)$) to node[below,la]{$\Un_J$} ($(1.east)-(0,5pt)$);
\draw[->] ($(1.east)+(0,5pt)$) to node[above,la]{$\St_J$} ($(2.west)+(0,5pt)$);

\node[la] at ($(1.east)!0.5!(2.west)$) {$\bot$};
\end{tz}

\begin{itemize}[leftmargin=*]
\item[\guillemotright] \emph{The weight $W\colon\cJ\to \cat_{(\infty,n-1)}$}: In the internal approach, the datum of the weight can be realized as a morphism $p\colon A\to J$ in $\sThnsset$. Because of the Quillen equivalence $\St_J\dashv\Un_J$, in the enriched approach the weight can be realized without loss of generality as a morphism $\St_J(p)\colon \Ch J\to \Thnsset$ in $\Thncat$.
\begin{center}
\fbox{\quad$\St_J(p)\colon \Ch J\to \Thnsset\quad \leftrightsquigarrow \quad p\colon A\to J$\quad}
\end{center}
\item[\guillemotright] \emph{The limit cone $\lambda$}: In the enriched approach, the limit cone can be realized as an enriched natural transformation $\lambda\colon \St_J(p)\Rightarrow \cC(\ell,F-)$, while, in the internal approach, it is realized by an element $\lambda^\flat$ of the fiber of $\slice{\cC}{F\circ p}\to \cC$ at $\ell$. In \cref{lem:webtwfibers}, we make such correspondence precise. 
\begin{center}
\fbox{\quad$ \lambda\colon \St_J(p)\Rightarrow \cC(\ell,F-)\quad \leftrightsquigarrow \quad \lambda^\flat\in (\slice{\cC}{F\circ p})_\ell $\quad}
\end{center}
\item[\guillemotright] \emph{The limit}: In the enriched context, as defined in \cite[\textsection5]{ShulmanHomotopyLimits}, the property for an object $\ell\in \cC$ to be a $W$-weighted limit of the diagram $F\colon\Ch J\to\cC$ is that the canonical map realizes a weak equivalence in $[\cC^{\op},\MSThnsset]_\mathrm{proj}$ 
 \[ \lambda^*\colon \Hom_\cC(-,\ell)\to \Hom_{[\Ch J,\Thnsset]}(\St_J(p),\Hom_\cC(-,F)). \] Alternatively, as defined in \cref{def:weightedlimit}, in the internal world the property for an object $\ell\in \cC$ to be the $p$-weighted limit of the diagram $F\colon J\to\Nh\cC$ is that there is an element $\lambda$ in the fiber of $\slice{\Nh\cC}{F^\flat\circ p}\to \Nh\cC$ at $\ell$ such that the canonical projection realizes a weak equivalence in $\MSrightfib{\Nh\cC}$
\[
\slice{(\slice{\Nh\cC}{F^\flat\circ p})}{(\ell,\lambda)}\xrightarrow{\simeq}\slice{\Nh\cC}{F^\flat\circ p}.
\]
 We prove in this paper as \cref{EquivalenceOfLimitsWeighted} that $\ell$ is a $\St_J(p)$-weighted limit of $F\colon\Ch J\to\cC$ if and only if $\ell$ is a $p$-weighted limit of $F^{\flat}\colon J\to\Nh\cC$.
\begin{center}
\fbox{ $\lim{}^{\St_J(p)} F \quad \leftrightsquigarrow \quad \lim{}^p F^\flat$}
\end{center}
By taking $p=\id$ we deduce as \cref{EquivalenceOfLimitsConical} an analogous correspondence for conical limits.
\begin{center}
\quad\fbox{ $\lim{}^{\St_J(\id)} F \quad \leftrightsquigarrow \quad \lim{} F^\flat$}
\end{center}
\end{itemize}

\subsection*{$(\infty,n)$-(Co)completeness of key examples}

Recall that, in the case $n=1$, the class of $(\infty,1)$-categories obtained as localizations of $(\infty,1)$-categories of $(\infty,1)$-presheaves are precisely the \emph{presentable $(\infty,1)$-categories}; see \cite[Theorem 5.5.1.1]{LurieHTT}. As a first application of the main result, we prove an $(\infty,n)$-categorical generalization of the well-known result that presentable $(\infty,1)$-categories are $(\infty,1)$-complete and $(\infty,1)$-cocomplete, as proven in  \cite[Corollary 5.5.2.4]{LurieHTT}. 

Explicitly, we establish that some important classes of  $(\infty,n)$-categories---here modeled by Segal category objects in $(\infty,n-1)$-categories \cite{br1}---have all weighted $(\infty,n)$-(co)limits:
\begin{itemize}[leftmargin=0.6cm]
    \item The $(\infty,n)$-category of $(\infty,n-1)$-categories.
    \item The $(\infty,n)$-category of $(\infty,n)$-presheaves, i.e., $(\infty,n-1)$-category valued presheaves.
    \item Localizations of the above examples.
    \item The $(\infty,n)$-category of $(\infty,n)$-sheaves, i.e., $(\infty,n-1)$-category valued sheaves, on an $(\infty,1)$-site.
\end{itemize}
All of these important examples follow from a more general result, establishing the $(\infty,n)$-(co)comp\-lete\-ness of any $(\infty,n)$-category which arises as the underlying $(\infty,n)$-category of an appropriately enriched model category; see \cref{subsec:cocompletenessmodel}.

\subsection*{Slices and cone constructions}

While categories of cones are classical concepts in the categorical literature, they particularly rose to prominence in the context of higher category theory due to their ability to efficiently package higher coherences. Hence, early adopters of higher categorical methods, starting with Joyal \cite{JoyalNotes} and then later Lurie \cite{LurieHTT}, Cisinski \cite{CisinskiBook}, Rasekh \cite{rasekh2023yoneda}, Riehl--Verity \cite{RVelements} employed categories of cones and slices and various abstractions thereof, so-called fibrations, as proper higher categorical analogues to (representable) presheaves; a statement that has been made precise via various Grothendieck constructions, straightening-unstraightening, and a fibrational Yoneda lemma. The centrality of this concept has motivated the development of alternative characterization of higher categorical cones, suitable to particular tasks.

Hence two prominent and equivalent $(\infty,1)$-categorical cone constructions have been established, relying on two join constructions:  the fat join $A\star B$ and the neat join $A\diamond B$. More precisely, given an $(\infty,1)$-category $C$ and an object $c$, we can define two slices of $(\infty,1)$-category: the fat version $C_{/c}$, based on the fat join, and neat slices $C_{/^\star c}$, based on the neat join, and we can analogously for a given functor $f\colon A\to C$ define two cone constructions: $C_{/f}$ and $C_{/^\star f}$. Hence, as part of their work various authors have shown that these slice and cone constructions coincide, notably Joyal \cite{JoyalNotes}, Lurie \cite[\textsection4.2.1]{LurieHTT}, Riehl-Verity \cite[\textsection D.6]{RVelements}, Rovelli \cite[\textsection2.4]{RovelliLimits}. Something similar is done for $n=2$ in \cite[\textsection4.2]{GHL2} for $(\infty,2)$-categories.

In this paper we enhance the whole story to the context of double $(\infty,n-1)$-categories (recovering the classical case when $n=1$). Precisely, given an $(\infty,n)$-category or more generally a double $(\infty,n-1)$-category $C$ and an object $c$, we define/recall two slice double $(\infty,n-1)$-categories: the fat version $\slice{C}{c}$ and the neat version $\joinslice{C}{c}$. More generally for a functor $f\colon A\to C$, we define the fat cone construction $\slice{C}{f}$ and the neat cone construction $\joinslice{C}{f}$. They are based on two versions of appropriate join constructions: the fat join $A\star B$ and the neat join $A\diamond B$. We recall the fat constructions in \cref{sec:defnfatcone}, we introduce the neat constructions in \cref{sec:defnneatcone}, and we show in \cref{SectionFatVsNeat} that the neat and fat versions of these constructions are equivalent.

The ability to alternate between two equivalent constructions of slices and cones enable explicit computations that make comparison between the enriched and internal definitions of limits possible. Indeed, while it was known that for a given strictly enriched category $\cC$ and object $c$ the two fibrations $\slice{\Nh\cC}{c}$ and $\Un_\cC\Hom_\cC(-,c)$ are equivalent (due to work in \cite{rasekh2023Dyoneda,MRR2}), we show in \cref{Unvsjoinslice} that $\Un_\cC\Hom_\cC(-,c)$ is in fact isomorphic to the neat slice $\joinslice{\Nh\cC}{c}$; an explicit point-set computation that in turn is a major ingredient of our comparison theorem.

\subsection*{Notations and prerequisites}

Recall Joyal's cell category $\Thn$ from \cite{JoyalDisks}. Throughout the paper we will use the following notational conventions. We write:
\begin{itemize}[leftmargin=0.6cm]
\item $\repD\in \set^{\Dop}$ for the representable at $m\geq 0$, 
    \item $\repDThnS\in \sThnsset$ for the representable at $([m],\defThn,[\defS])\in \DThnS$, 
    \item $F[m,X]\coloneqq F[m]\times X\in \sThnsset$ for the product of the representable at $m\geq 0$ and an object $X\in \Thnsset$. 
\end{itemize}
The categories $\Dset$ and $\Thnsset$ are naturally included into $\sThnsset$, and we regard objects of these subcategories as objects of $\sThnsset$ without further specification. We refer to an object of $\Thnsset$ as a \emph{$\Thn$-space}.

We further assume the reader to be familiar with standard model categorical techniques; see e.g.~\cite{Hovey,Hirschhorn}. In particular, we will make use without mention of the following facts: 
\begin{itemize}[leftmargin=0.6cm]
    \item a model structure is determined by its class of cofibrations (equivalently its class of trivial fibrations) and its class of fibrant objects; see \cite[Proposition E.1.10]{JoyalVolumeII},
    \item in the left Bousfield localization---referred to as \emph{localization} here---of a model structure, the class of cofibrations and the class of weak equivalences and fibrations between fibrant objects stay unchanged.
\end{itemize}

\subsection*{Acknowledgments}
During the realization of this work, the first author was a member of the Collaborative Research Centre ``SFB 1085: Higher
Invariants'' funded by the Deutsche Forschungsgemeinschaft (DFG).
The third author is grateful for support from the National Science Foundation under Grant No.~DMS-2203915.

\part{Review: \texorpdfstring{$(\infty,n)$}{(infinity,n)}-categories and \texorpdfstring{$(\infty,n)$}{(infinity,n)}-presheaves} \label{part1}

\section{Homotopy coherent categorification--nerve for \texorpdfstring{$(\infty,n)$}{(infinity,n)}-categories}

\label{SectionNerve}

In this section, we recall the models of $(\infty,n)$-categories given by the categories strictly enriched over $(\infty,n-1)$-categories in \cref{enrichedmodel} and the internal category objects (or Segal objects) in $(\infty,n-1)$-categories satisfying either a completeness or a discreteness condition in \cref{internalmodel}. We then recall in \cref{Nerve} the homotopy coherent categorification--nerve from \cite{MRR1}, which gives an equivalence between the enriched and internal models of $(\infty,n)$-categories.

\subsection{Enriched model of \texorpdfstring{$(\infty,n)$}{(infinity,n)}-categories} \label{enrichedmodel}

Recall from \cite{RezkTheta} that there is a model structure $\MSThnsset$ on the category $\Thnsset$ for $(\infty,n-1)$-categories given by the complete Segal $\Thn$-spaces. If $\MSspace$ denotes the Kan--Quillen model structure on $\sset$ for $(\infty,0)$-categories given by the Kan complexes, the model structure $\MSThnsset$ is obtained as a localization of the injective model structure $\injThnspace$ on the category of $\Thn$-presheaves valued in $\MSspace$; see \cite[\textsection 1.1]{MRR1} for more details. Moreover, by \cite[Theorem 8.1]{RezkTheta}, the model structure $\MSThnsset$ is cartesian closed. This is enough to guarantee that the model structure $\MSThnsset$ is excellent in the sense of \cite[Definition A.3.2.16]{LurieHTT}.

As a consequence, the category $\Thncat$ of $\Thnsset$-enriched categories and $\Thnsset$-enriched functors supports the model structure $\MSThncat$ from \cite[\textsection3.10]{br1}, obtained as a special instance of \cite[Proposition A.3.2.4 and Theorem A.3.2.24]{LurieHTT}. In this model structure, 
\begin{itemize}[leftmargin=0.6cm]
    \item a $\Thnsset$-enriched category $\cC$ is fibrant if, for all objects $a,b\in \cC$, the hom $\Thn$-space $\Hom_\cC(a,b)$ is fibrant in $\MSThnsset$,
    \item a $\Thnsset$-enriched functor $F\colon \cC\to \cC$ is a trivial fibration if it is surjective on objects, and for all objects $a,b\in \cC$ the induced map
    \[F_{a,b}\colon\Hom_{\cC}(a,b)\to\Hom_\cC(Fa,Fb)\]
   a trivial fibration in $\MSThnsset$.
\end{itemize}

\subsection{Internal models of \texorpdfstring{$(\infty,n)$}{(infinity,n)}-categories} \label{internalmodel}

Throughout the paper, we will consider the following models of double $(\infty,n-1)$-categories and $(\infty,n)$-categories. In what follows, we denote by $\injsThnsset$ the injective model structure on the category of simplicial objects valued in $\MSThnsset$. 

Recall from \cite{BR2} that there is a model structure $\SegsThnsset$ on the category $\sThnsset$ for double $(\infty,n-1)$-categories, in which 
\begin{itemize}[leftmargin=0.6cm]
    \item an object $D\in \sThnsset$ is fibrant if it is fibrant in $\injsThnsset$ and, for every $m\geq 2$, the Segal map 
    \[ D_m\to D_1\times_{D_0}\ldots \times_{D_0} D_1\]
    is a weak equivalence in $\MSThnsset$,
    \item a map $A\to B$ is a cofibration if it is a monomorphism in $\sThnsset$.
\end{itemize}
This model structure is obtained as a localization of the injective model structure $\injsThnsset$. Moreover, by \cite[Theorem 5.2]{BR2}, the model structure $\SegsThnsset$ is cartesian closed. 

Recall also from \cite{BR2} that there is a model structure $\CSSsThnsset$ on the category $\sThnsset$ for $(\infty,n)$-categories, which is obtained as a further localization of the above model structure. In the model structure $\CSSsThnsset$, 
\begin{itemize}[leftmargin=0.6cm]
    \item an object $C\in \sThnsset$ is fibrant if it is fibrant in $\SegsThnsset$, for all $\theta\in \Thn$, the map 
    \[ C_{0,[0]}\to C_{0,\theta} \]
    is a weak equivalence in $\MSspace$, and the map 
    \[ \Hom_{\sThnsset}(N\bI, C)\to C_0 \]
    is a weak equivalence in $\Thnsset$, where $N\bI\in \Dset$ denotes the nerve of the ``free-living isomorphism'',
    \item a map $A\to B$ is a cofibration if it is a monomorphism in $\sThnsset$.
\end{itemize}
This model structure is obtained as a localization of the injective model structure $\injsThnsset$. However, the model structure $\CSSsThnsset$ is \emph{not} cartesian closed.

We also consider the following model of $(\infty,n)$-categories introduced in \cite{br1}. We denote by $\pcatThn$ the full subcategory of $\sThnsset$ spanned by those objects $A\in \sThnsset$ such that $A_0$ is discrete, i.e., in the image of $\set\hookrightarrow \Thnsset$. As mentioned in \cite[\textsection7]{br1}, the inclusion $\pcatThn\hookrightarrow \sThnsset$ admits both adjoints.

There is a model structure $\pcatinj$ on the category $\pcatThn$ from \cite{br1}, in which
\begin{itemize}[leftmargin=0.6cm]
    \item an object $C\in \pcatThn$ is fibrant if it is fibrant in $\SegsThnsset$, 
    \item a map $A\to B$ is a cofibration if it is a monomorphism in $\pcatThn$.
\end{itemize}

Moreover, by \cite[Theorem 9.6]{BR2}, the model structure $\pcatinj$ is Quillen equivalent to the model structure $\CSSsThnsset$ considered above.

\subsection{Comparison of enriched and internal models of \texorpdfstring{$(\infty,n)$}{(infinity,n)}-categories} \label{Nerve}

In \cite{MRR1}, we construct a Quillen equivalence between the models $\pcatinj$ and $\MSThncat$ of $(\infty,n)$-categories, which is an $(\infty,n)$-categorical version of the homotopy coherent categorification--nerve adjunction. We recall briefly this construction; more details can be found in \cite[\textsection2.3]{MRR1}. 

The homotopy coherent categorification--nerve adjunction $c^h\dashv N^h$ of Cordier--Porter \cite{CordierPorterHomotopyCoherent} induces by post-composition an adjunction as below left, which restricts to an adjunction between full subcategories as below right.
\begin{tz}
\node[](1) {$(\Dset)^{\DThnop}$}; 
\node[right of=1,xshift=3.3cm](2) {$(\sCat)^{\DThnop}$}; 

\draw[->] ($(2.west)-(0,5pt)$) to node[below,la]{$\NL_*$} ($(1.east)-(0,5pt)$);
\draw[->] ($(1.east)+(0,5pt)$) to node[above,la]{$\CL_*$} ($(2.west)+(0,5pt)$);

\node[la] at ($(1.east)!0.5!(2.west)$) {$\bot$};

\node[right of=2,xshift=2.5cm](1) {$\pcatThn$}; 
\node[right of=1,xshift=2.7cm](2) {$\sThncat$}; 

\draw[->] ($(2.west)-(0,5pt)$) to node[below,la]{$\NL_*$} ($(1.east)-(0,5pt)$);
\draw[->] ($(1.east)+(0,5pt)$) to node[above,la]{$\CL_*$} ($(2.west)+(0,5pt)$);

\node[la] at ($(1.east)!0.5!(2.west)$) {$\bot$};
\end{tz}
Then the diagonal functor $\delta\colon \Delta\to \Delta\times \Delta$ induces an adjunction as below left, which induces by base-change an adjunction between categories of enriched categories as below right.
\begin{tz}
\node[](1) {$\SSset^{\Thnop}$}; 
\node[right of=1,xshift=1.8cm](2) {$\sset^{\Thnop}$}; 

\draw[->] ($(2.west)-(0,5pt)$) to node[below,la]{$(\delta_*)_*$} ($(1.east)-(0,5pt)$);
\draw[->] ($(1.east)+(0,5pt)$) to node[above,la]{$\diag$} ($(2.west)+(0,5pt)$);

\node[la] at ($(1.east)!0.5!(2.west)$) {$\bot$};

\node[right of=2,xshift=3cm](1) {$\sThncat$}; 
\node[right of=1,xshift=2.5cm](2) {$\Thncat$}; 

\draw[->] ($(2.west)-(0,5pt)$) to node[below,la]{$((\delta_*)_*)_*$} ($(1.east)-(0,5pt)$);
\draw[->] ($(1.east)+(0,5pt)$) to node[above,la]{$\diag_*$} ($(2.west)+(0,5pt)$);

\node[la] at ($(1.east)!0.5!(2.west)$) {$\bot$};
\end{tz}

The homotopy coherent categorification-nerve adjunction is then defined to be the following composite of adjunctions.
\begin{tz}
\node[](1) {$\pcatThn$}; 
\node[right of=1,xshift=2.7cm](2) {$\sThncat$}; 
\node[right of=2,xshift=2.5cm](3) {$\Thncat$}; 

\node at ($(1.west)-(6pt,.3pt)$) {$\Ch\colon$};
\node at ($(3.east)+(7pt,-1.5pt)$) {$\colon\! \Nh$};

\draw[->] ($(3.west)-(0,5pt)$) to node[below,la]{$((\delta_*)_*)_*$} ($(2.east)-(0,5pt)$);
\draw[->] ($(2.east)+(0,5pt)$) to node[above,la]{$\diag_*$} ($(3.west)+(0,5pt)$);

\node[la] at ($(1.east)!0.5!(2.west)$) {$\bot$};

\draw[->] ($(2.west)-(0,5pt)$) to node[below,la]{$\NL_*$} ($(1.east)-(0,5pt)$);
\draw[->] ($(1.east)+(0,5pt)$) to node[above,la]{$\CL_*$} ($(2.west)+(0,5pt)$);

\node[la] at ($(2.east)!0.5!(3.west)$) {$\bot$};
\end{tz} 

The following appears as \cite[Theorem~4.3.3]{MRR1}. 

\begin{thm} \label{nerve-cat}
    The adjunction
\begin{tz}
\node[](1) {$\pcatinj$}; 
\node[right of=1,xshift=2.9cm](2) {$\MSThncat$}; 

\draw[->] ($(2.west)-(0,5pt)$) to node[below,la]{$\Nh$} ($(1.east)-(0,5pt)$);
\draw[->] ($(1.east)+(0,5pt)$) to node[above,la]{$\Ch$} ($(2.west)+(0,5pt)$);

\node[la] at ($(1.east)!0.5!(2.west)$) {$\bot$};
\end{tz}
is a Quillen equivalence. 
\end{thm}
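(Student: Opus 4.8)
In the paper this statement is imported as \cite[Theorem~4.3.3]{MRR1}, so I only describe the shape of the argument one would give. The plan is to use the decomposition of $\Ch\dashv\Nh$ as the composite of the two adjunctions $\CL_*\dashv\NL_*\colon\pcatThn\rightleftarrows\sThncat$ and $\diag_*\dashv((\delta_*)_*)_*\colon\sThncat\rightleftarrows\Thncat$ displayed above, to equip the intermediate category $\sThncat$ with a suitable model structure (which exists since the enriching category is an excellent model category), to prove that each of the two factors is a Quillen equivalence, and to conclude since a composite of Quillen equivalences is one. At the end one must check that the composite model structure produced on $\pcatThn$ really is $\pcatinj$ and the one on $\Thncat$ really is $\MSThncat$; since each of these is determined by its cofibrations (monomorphisms in both cases) and its fibrant objects, this amounts to matching up fibrant objects, which are in all cases the ``locally/levelwise (complete) Segal'' ones, via the characterizations in \cref{enrichedmodel} and \cref{internalmodel}. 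One could alternatively route through $\CSSsThnsset$ using the Quillen equivalence $\pcatinj\simeq\CSSsThnsset$ recalled above.

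\emph{The diagonal factor.} First I would establish that the underlying adjunction $\diag\dashv(\delta_*)_*$ between $\SSset^{\Thnop}$ and $\sset^{\Thnop}$ is a monoidal Quillen equivalence for appropriate (complete-Segal-type) model structures: this is a $\Thn$-enriched analogue of the classical fact that the diagonal functor relates a suitable model structure on bisimplicial sets to simplicial sets via a Quillen equivalence, it survives the relevant localizations, and $\diag$ is strong monoidal so that $(\delta_*)_*$ is lax monoidal. Since all the model structures in play are excellent, a monoidal Quillen equivalence between them induces a Quillen equivalence on the associated model categories of enriched categories via the change-of-enrichment machinery of \cite[\textsection A.3.2]{LurieHTT}; applied here this yields that $\diag_*\dashv((\delta_*)_*)_*$ is a Quillen equivalence $\sThncat\rightleftarrows\Thncat$.

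\emph{The homotopy coherent nerve factor.} Here one starts from the Cordier--Porter Quillen equivalence $\CL\dashv\NL\colon\sset_{\mathrm{Joyal}}\rightleftarrows\sCat$. Post-composition produces $\CL_*\dashv\NL_*$ on $\Delta\times\Thn$-indexed diagrams, which restricts to the ``category-shaped'' full subcategories $\pcatThn$ and $\sThncat$. One then checks that, for the localized model structures, $\CL_*$ sends (trivial) cofibrations to (trivial) cofibrations, that $\NL_*$ preserves and detects the fibrations and the weak equivalences between fibrant objects, and that the derived unit and counit are weak equivalences. This last point carries the genuine $(\infty,n)$-categorical content: it is an ``internal to $(\infty,n-1)$-categories'' version of the statements that the homotopy coherent nerve of a locally fibrant simplicial category is weakly equivalent to the original object and that rigidifying an $(\infty,n)$-category recovers it, and it relies on analyzing $\CL$ on representables (necklace-type computations) together with the Segal and completeness conditions defining the two sides.

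\emph{Main obstacle.} The crux is exactly establishing those derived unit/counit equivalences in the homotopy coherent nerve factor; this cannot be obtained formally and requires carefully upgrading the $n=1$ comparison between quasi-categories and simplicial categories to the present ``enriched internally in $(\infty,n-1)$-categories'' setting. If instead one takes the Cordier--Porter and diagonal comparisons as black boxes, the main residual difficulty is the bookkeeping from the first paragraph: checking that the abstractly produced composite Quillen equivalence is the one between $\pcatinj$ and $\MSThncat$ with the cofibrations and fibrant objects specified in \cref{internalmodel} and \cref{enrichedmodel}.
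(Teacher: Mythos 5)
The paper itself contains no proof of this statement: it is imported wholesale, with the sentence ``The following appears as \cite[Theorem~4.3.3]{MRR1}'' immediately preceding it, so there is no in-paper argument to compare your proposal against. Your outline does track the way the adjunction is assembled in \cref{Nerve}---as the composite of $\CL_*\dashv\NL_*\colon\pcatThn\rightleftarrows\sThncat$ and $\diag_*\dashv((\delta_*)_*)_*\colon\sThncat\rightleftarrows\Thncat$---so as a map of where the content lies it is sensible, and you are candid that the hard steps are deferred.

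Precisely for that reason, though, it is an outline rather than a proof, and the gaps are not cosmetic. (a) You assert that the intermediate category $\sThncat$ carries a suitable model structure ``since the enriching category is an excellent model category'': but the enriching category here is $\Thnssset$ with a diagonal/levelwise-localized model structure, and its excellence (in particular the cartesian-closedness and invertibility hypotheses needed for \cite[\S A.3.2]{LurieHTT}) is exactly the kind of statement that must be proved, not assumed. (b) Even granting excellence on both sides, passing from a monoidal Quillen equivalence of enriching categories to a Quillen equivalence of enriched-category model structures is not formal; it requires an argument about the transferred fibrant objects and derived units/counits. (c) The coherent-nerve factor is where you yourself locate ``the genuine $(\infty,n)$-categorical content,'' and you leave it entirely open; the necklace-type analysis of $\CL$ on representables and the verification that the derived unit/counit are equivalences internally to $\MSThnsset$ is the theorem, not bookkeeping around it. (d) Finally, the restriction to $\pcatThn$ (discreteness at level $0$) and the identification of the resulting localized structure with $\pcatinj$ (whose fibrant objects are the $\SegsThnsset$-fibrant objects of $\pcatThn$, per \cref{internalmodel}) needs its own check, since $\CL_*\dashv\NL_*$ is a priori an adjunction of diagram categories. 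None of this can be certified from the present paper, whose role for this statement is purely to cite \cite{MRR1}; your proposal correctly identifies the skeleton but supplies none of the load-bearing steps.
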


\section{Straightening--unstraightening for \texorpdfstring{$(\infty,n)$}{(infinity,n)}-presheaves}

\label{SectionStraightening}

In this section, we recall the models of $(\infty,n)$-presheaves, i.e., $(\infty,n)$-functors valued in the $(\infty,n)$-category of $(\infty,n-1)$-categories, given by the strictly enriched functors valued in the model category $\MSThnsset$ in \cref{enrichedfunctor} and the double $(\infty,n-1)$-right fibrations introduced in \cite{rasekh2023Dyoneda} in \cref{internalfunctor}. We then recall in \cref{straightening} the straightening--unstraightening from \cite{MRR3}, which gives an equivalence between the enriched and internal models of $(\infty,n)$-presheaves.

\subsection{Enriched model of \texorpdfstring{$(\infty,n)$}{(infinity,n)}-presheaves} \label{enrichedfunctor}

The category $\Thnsset$ is cartesian closed and so it can be seen as a $\Thnsset$-enriched category. Given a $\Thnsset$-enriched category $\cJ$, we denote by $[\cJ^{\op},\Thnsset]$ the category of $\Thnsset$-enriched functors from $\cJ^{\op}$ to $\Thnsset$ and $\Thnsset$-enriched natural transformations between them. By \cite[Theorem 4.4 (ii)]{Moserinj}, the category $[\cJ^{\op},\Thnsset]$ of $\Thnsset$-enriched functors $\cJ^{\op}\to \Thnsset$ supports the projective model structure $[\cJ^{\op},\MSThnsset]_\proj$, in which
\begin{itemize}[leftmargin=0.6cm]
    \item a $\Thnsset$-enriched functor $F\colon \cJ^{\op}\to \Thnsset$ is fibrant if, for every object $j\in \cJ$, the $\Thn$-space $F(j)$ is fibrant in $\MSThnsset$, 
    \item a map $F\to G$ in $[\cJ^{\op},\Thnsset]$ is a trivial fibration if, for every object $j\in \cJ$, the induced map $F(j)\to G(j)$ is a trivial fibration in $\MSThnsset$. 
\end{itemize}
Moreover, by \cite[ Theorem 5.4]{Moserinj}, the model structure $[\cJ^{\op},\MSThnsset]_\mathrm{proj}$ is enriched over $\MSThnsset$.

Recall that every $\Thnsset$-enriched functor $F$ induces an adjunction $F_!\dashv F^*$ by enriched left Kan extension and pre-composition (see \cite[Theorem 4.51]{KellyEnriched}), which has the following homotopical property, as we now recall from \cite[Propositions~A.3.3.7(1) and~A.3.3.8(1)]{LurieHTT}. 

\begin{prop} \label{LKE-precomp}
    Let $F\colon \cI\to \cJ$ be a $\Thnsset$-enriched functor. The adjunction
    \begin{tz}
\node[](1) {$[\cI^{\op},\MSThnsset]_\mathrm{proj}$}; 
\node[right of=1,xshift=3.6cm](2) {$[\cJ^{\op},\MSThnsset]_\mathrm{proj}$}; 

\draw[->] ($(2.west)-(0,5pt)$) to node[below,la]{$F^*$} ($(1.east)-(0,5pt)$);
\draw[->] ($(1.east)+(0,5pt)$) to node[above,la]{$F_!$} ($(2.west)+(0,5pt)$);

\node[la] at ($(1.east)!0.5!(2.west)$) {$\bot$};
\end{tz}
is a Quillen pair  enriched over $\MSThnsset$, which is further a Quillen equivalence when $F$ is a weak equivalence in $\MSThncat$.
\end{prop}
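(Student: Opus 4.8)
The plan is to reduce this statement to the known enriched-model-categorical facts cited, namely Lurie's \cite[Propositions A.3.3.7(1) and A.3.3.8(1)]{LurieHTT}, by verifying that the hypotheses of those results are met in our setting. First I would recall that $\MSThnsset$ is an excellent model category — this was established in \cref{enrichedmodel} as a consequence of the cartesian closure from \cite[Theorem 8.1]{RezkTheta} — and that $[\cJ^{\op},\MSThnsset]_\proj$ is the projective model structure supplied by \cite[Theorem 4.4(ii)]{Moserinj}, which is furthermore enriched over $\MSThnsset$ by \cite[Theorem 5.4]{Moserinj}. The enriched adjunction $F_!\dashv F^*$ exists purely formally by \cite[Theorem 4.51]{KellyEnriched}. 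So the content is entirely homotopical.

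For the Quillen pair statement, the cleanest route is to check that $F^*$ is right Quillen: it preserves fibrant objects and (trivial) fibrations. But these are detected objectwise in the projective model structure — $G$ is fibrant iff each $G(j)$ is fibrant in $\MSThnsset$, and $G\to G'$ is a (trivial) fibration iff each $G(j)\to G'(j)$ is — and $(F^*G)(i) = G(Fi)$, so $F^*$ visibly preserves all of these. Hence $F^*$ is right Quillen and $F_!\dashv F^*$ is a Quillen pair; the fact that it is $\MSThnsset$-enriched follows since both functors are $\Thnsset$-enriched and the adjunction unit/counit are enriched natural transformations (this is exactly the situation of \cite[Proposition A.3.3.7(1)]{LurieHTT}, whose hypotheses — $\MSThnsset$ excellent, the diagram categories being the associated projective model structures — we have just verified).

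For the Quillen equivalence statement when $F$ is a weak equivalence in $\MSThncat$, I would invoke \cite[Proposition A.3.3.8(1)]{LurieHTT} directly: it asserts precisely that, for an excellent model category $\bS$ and a weak equivalence $F\colon \cI\to\cJ$ of $\bS$-enriched categories (in the model structure on $\bS$-enriched categories), the induced adjunction $F_!\dashv F^*$ between projective model structures on enriched functor categories is a Quillen equivalence. Here $\bS = \MSThnsset$ is excellent, the model structure on $\bS$-enriched categories is $\MSThncat$ from \cite[\textsection3.10]{br1}, and $F$ is by hypothesis a weak equivalence there, so the cited result applies verbatim (after replacing $\cI,\cJ$ by $\cI^{\op},\cJ^{\op}$, which are still $\MSThnsset$-enriched and related by the weak equivalence $F^{\op}$).

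I expect the only genuine subtlety — and hence the main thing worth spelling out — to be a bookkeeping point rather than a mathematical obstacle: making sure the model structure on $\Thnsset$-enriched categories that appears implicitly in the phrase ``$F$ is a weak equivalence in $\MSThncat$'' is exactly the one to which \cite[Proposition A.3.3.8(1)]{LurieHTT} refers. Since $\MSThncat$ is by construction (in \cref{enrichedmodel}) the instance of \cite[Proposition A.3.2.4 and Theorem A.3.2.24]{LurieHTT} applied to the excellent model category $\MSThnsset$, this matches on the nose, and the proof is complete. A reader wanting more detail on the passage from Lurie's hypotheses to ours can consult \cite[\textsection A.3.3]{LurieHTT} together with the excellence verification in \cref{enrichedmodel}.
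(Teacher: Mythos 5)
Your proposal is correct and matches the paper's treatment: the paper offers no independent proof but simply recalls this as an instance of \cite[Propositions A.3.3.7(1) and A.3.3.8(1)]{LurieHTT}, exactly the reduction you carry out, with the hypothesis checks (excellence of $\MSThnsset$, the projective structures from \cite{Moserinj}, the Kelly adjunction, and the identification of $\MSThncat$ with Lurie's model structure on enriched categories) being the only content. Your extra objectwise verification that $F^*$ is right Quillen is harmless and consistent with what Lurie's result already gives.
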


\subsection{Internal model of \texorpdfstring{$(\infty,n)$}{(infinity,n)}-presheaves} \label{internalfunctor}

Given an object $J\in \sThnsset$, we denote by $\sThnssetslice{J}$ the slice category over $J$. By \cite[Theorem 5.15]{rasekh2023Dyoneda}, there is a model structure $\MSrightfib{J}$ on $\sThnssetslice{J}$ for double $(\infty,n-1)$-right fibrations. In this model structure, 
\begin{itemize}[leftmargin=0.6cm]
    \item an object $A\to J$ is fibrant if it is a fibration in $\injsThnsset$ and, for every $m\geq 1$, the map 
    \[ A_m\to A_0\times_{J_0} J_m \]
    induced by the morphism $\langle m\rangle \colon [0]\to [m]$ of $\Delta$ is a weak equivalence in $\MSThnsset$, 
    \item a map $A\to B$ over $J$ is cofibration if it is a monomorphism in $\sThnsset$. 
\end{itemize}
This model structure is obtained as a localization of the slice model structure ${\injsThnsset}_{/J}$. Moreover, by \cite[ Theorem 5.15]{rasekh2023Dyoneda}, the model structure $\MSrightfib{J}$ is enriched over $\MSThnsset$.

We make the following observations about double $(\infty,n-1)$-right fibrations.

\begin{rmk}
    Let $p\colon A\to J$ in $\sThnsset$ be a double $(\infty,n-1)$-right fibration, i.e., a fibrant object in $\MSrightfib{J}$, and let $x\in J_{0,[0],0}$ be an element. The \textbf{fiber of $p$ at $x$} is the (homotopy) pullback in $\MSThnsset$ 
    \begin{tz}
        \node[](1) {$A_x$}; 
        \node[right of=1,xshift=.5cm](2) {$A_0$}; 
        \node[below of=1](3) {$\Delta[0]$}; 
        \node[below of=2](4) {$J_0$}; 

        \draw[->] (1) to  (2); 
        \draw[->>] (1) to (3); 
        \draw[->>] (2) to node[right,la]{$p_0$} (4); 
        \draw[->] (3) to node[below,la]{$x$} (4);
        \pullback{1};
    \end{tz}
    In particular, the $\Thn$-space $A_x$ is a fibrant object in $\MSThnsset$. 
\end{rmk}

\begin{prop} \label{sourceofdblrightfib}
    Let $D$ be a fibrant object in $\SegsThnsset$ and $p\colon A\to D$ in $\sThnsset$ be a double $(\infty,n-1)$-right fibration. Then $A$ is also fibrant in $\SegsThnsset$.
\end{prop}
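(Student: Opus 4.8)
The plan is to check, in turn, the two conditions characterizing fibrancy in $\SegsThnsset$. Fibrancy in $\injsThnsset$ is immediate: the map $A\to\ast$ factors as $A\xrightarrow{\,p\,}D\to\ast$, in which $p$ is a fibration in $\injsThnsset$ since it is a fibrant object of $\MSrightfib{D}$, and $D\to\ast$ is a fibration in $\injsThnsset$ since $D$ is fibrant there. In particular $A$ is Reedy fibrant, so each level $A_m$ is fibrant in $\MSThnsset$ and the face maps $A_1\to A_0$ are fibrations; also $p_0\colon A_0\to D_0$ is a fibration because $\ev_0$ is right Quillen. Consequently every strictly-formed pullback appearing below is a homotopy pullback in $\MSThnsset$, and the only remaining task is to show that for every $m\geq2$ the Segal map $A_m\to A_1\times_{A_0}\cdots\times_{A_0}A_1$ is a weak equivalence in $\MSThnsset$.

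For this, I would compare the Segal map of $A$ with that of $D$ along $p$ and show that the resulting commuting square is a homotopy pullback in $\MSThnsset$; since $D$ is fibrant in $\SegsThnsset$ its Segal map is a weak equivalence, and in a homotopy pullback square the top map is a weak equivalence as soon as the bottom one is. To organize this, write $\underline{A}=\Hom_{\sThnsset}(-,A)$ and $\underline{D}=\Hom_{\sThnsset}(-,D)$, both valued in $\Thnsset$, so that $\underline{A}(\repD[m])=A_m$ while $\underline{A}(\Sp[m])=A_1\times_{A_0}\cdots\times_{A_0}A_1$; the square in question is obtained by applying the natural transformation $\underline{A}\to\underline{D}$ to the spine inclusion $\Sp[m]\hookrightarrow\repD[m]$.

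Call a monomorphism $Y\hookrightarrow X$ in $\sThnsset$ \emph{$p$-cartesian} if $\underline{A}\to\underline{D}$ sends it to a homotopy pullback square in $\MSThnsset$. I would then observe three things. First, the terminal-vertex inclusion $\langle m'\rangle\colon\repD[0]\hookrightarrow\repD[m']$ is $p$-cartesian for every $m'\geq1$ — this is precisely the defining condition for $p$ to be a double $(\infty,n-1)$-right fibration, once one notes that the strict pullback $A_0\times_{D_0}D_{m'}$ is already a homotopy pullback. Second, $p$-cartesian maps are closed under cobase change and composition, and satisfy the cancellation: if $Y\hookrightarrow Y'\hookrightarrow X$ with $Y\hookrightarrow Y'$ and $Y\hookrightarrow X$ both $p$-cartesian, then $Y'\hookrightarrow X$ is $p$-cartesian — this is the usual pasting calculus for homotopy pullback squares, together with the fact that $\underline{A}$ and $\underline{D}$ send pushouts along monomorphisms to homotopy pullbacks (valid because $A$ and $D$ are injectively fibrant). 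Third, the terminal-vertex inclusion $\repD[0]\hookrightarrow\Sp[m]$ is a finite composite of cobase changes of $\langle 1\rangle\colon\repD[0]\hookrightarrow\repD[1]$ — concretely, one grows $\Sp[m]$ outward from the vertex $m$ by attaching one edge at a time — and hence is $p$-cartesian by the first two points. Applying the cancellation to $\repD[0]\hookrightarrow\Sp[m]\hookrightarrow\repD[m]$, in which both $\repD[0]\hookrightarrow\Sp[m]$ and $\repD[0]\hookrightarrow\repD[m]$ are $p$-cartesian, gives that $\Sp[m]\hookrightarrow\repD[m]$ is $p$-cartesian, which is exactly the claim.

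I expect the one genuinely delicate point to be the observation that one cannot argue level by level: although the right fibration condition gives an equivalence $A_1\simeq A_0\times_{D_0}D_1$, only the terminal-vertex face map $A_1\to A_0$ is the structure map of the right fibration, so the iterated spine pullback $A_1\times_{A_0}\cdots\times_{A_0}A_1$ is \emph{not} equivalent in any naive way to $A_0\times_{D_0}(D_1\times_{D_0}\cdots\times_{D_0}D_1)$; treating the spine $\Sp[m]$ globally, as above, and leaning on the pasting laws for homotopy pullbacks is what gets around this. No properness of $\MSThnsset$ enters, since every square in play is a pullback of fibrant objects along a fibration. Morally, the statement is the $\Theta_{n-1}$-enriched double-categorical avatar of ``a right fibration over a category object is again a category object''.
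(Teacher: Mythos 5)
Your argument is correct, but note that the paper itself offers no proof of this proposition: it is stated as an input (it belongs to the theory of double $(\infty,n-1)$-right fibrations developed in \cite{rasekh2023Dyoneda}, where the model structure $\MSrightfib{D}$ is constructed), so there is no in-paper argument to compare against; what you give is a legitimate self-contained proof, and it is the expected one. Injective fibrancy of $A$ is indeed immediate from composing the injective fibrations $A\to D\to \ast$, and for the Segal condition your strategy — show that restriction along the spine inclusion $\Sp[m]\hookrightarrow F[m]$ gives a homotopy pullback square over the corresponding square for $D$, by reducing via cobase change, composition and cancellation of homotopy cartesian squares to the defining right-fibration condition for the final-vertex inclusions $\langle m'\rangle\colon F[0]\to F[m']$ — is sound, and your remark that the spine must be grown outward from the vertex $m$, attaching each edge along its \emph{final} vertex so that only $\langle 1\rangle$ is ever cobase-changed, is exactly the point that makes the reduction work. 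The one step I would ask you to write out is the stability of your $p$-cartesian monomorphisms under cobase change: this is not literally a two-out-of-three pasting, since the remaining face of the relevant cube (the one indexed by $Y\hookrightarrow Z$, respectively $E(Z)\to E(Y)$ over $B(Z)\to B(Y)$ in your notation) is not known to be homotopy cartesian. The clean argument identifies the comparison map for the pushed-out square, $\underline{A}(X\amalg_Y Z)\to \underline{D}(X\amalg_Y Z)\times_{\underline{D}(Z)}\underline{A}(Z)$, with the pullback along $\underline{A}(Z)\to\underline{A}(Y)$ of the comparison map $\underline{A}(X)\to \underline{D}(X)\times_{\underline{D}(Y)}\underline{A}(Y)$, and then uses that pullback along a map is right Quillen on slices, hence preserves weak equivalences between fibrations over a fixed base (Ken Brown). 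Since every map in sight is a fibration between fibrant objects of $\MSThnsset$ — because $A$ and $D$ are fibrant in $\injsThnsset$ and the enriched hom $\Hom_{\sThnsset}(-,-)$ turns monomorphisms into fibrations against injectively fibrant targets — this goes through with no properness hypothesis, exactly as you claim, and the rest of your pasting and cancellation steps are the standard calculus.
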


The following result appears as \cite[Theorem 4.62]{rasekh2023Dyoneda}.

\begin{prop} \label{webtwdoubleright}
    Let $J$ be an object of $\pcatThn$, seen as an object of $\sThnsset$, and let $A\to J$ and $B\to J$ be double $(\infty,n-1)$-right fibrations. Then the following are equivalent for a map $f\colon A\to B$ in $\sThnssetslice{J}$:
    \begin{rome}[leftmargin=1cm]
        \item the map $f\colon A\to B$ is a weak equivalence in $\MSrightfib{J}$, 
        \item for all $x\in J_{0,[0],0}$, the map on fibers $f_x\colon A_x\to B_x$ is a weak equivalence in~$\MSThnsset$.
    \end{rome}
\end{prop}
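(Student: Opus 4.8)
The plan is to prove the two implications by a fibrewise argument, reducing the statement about weak equivalences in $\MSrightfib{J}$ to a statement about weak equivalences in the base $\MSThnsset$, and exploiting the general machinery that $\MSrightfib{J}$ arises as a localization of the slice model structure ${\injsThnsset}_{/J}$.

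For the implication (i) $\Rightarrow$ (ii): since $A\to J$ and $B\to J$ are both fibrant in $\MSrightfib{J}$ and $f\colon A\to B$ is a weak equivalence between fibrant objects, it is in particular a weak equivalence in the underlying injective slice model structure ${\injsThnsset}_{/J}$, hence a levelwise (in the $\Delta^{\op}$-direction) weak equivalence in $\MSThnsset$; in particular $f_0\colon A_0\to B_0$ is a weak equivalence. Each fiber $A_x$ is computed as the homotopy pullback of $A_0\to J_0\leftarrow \Delta[0]$ (using that $p_0$ is a fibration), and likewise for $B_x$; since $\MSThnsset$ is right proper (being a localization of the right proper injective model structure), the induced map of homotopy pullbacks $f_x\colon A_x\to B_x$ is a weak equivalence in $\MSThnsset$. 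This direction holds without any assumption on $J$.

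For the converse (ii) $\Rightarrow$ (i), which is where the hypothesis $J\in\pcatThn$ is used, I would argue as follows. The idea is that a double $(\infty,n-1)$-right fibration over $J$ is, morally, a left-fibration-type object classifying a presheaf, and such objects are determined up to equivalence by their fibers. Concretely, one can factor $f$ as a trivial cofibration followed by a fibration in $\MSrightfib{J}$, reducing to the case where $f$ is a fibration between fibrant objects; then one shows such an $f$ is a trivial fibration by checking the right lifting property against the generating cofibrations, or—more in the spirit of the surrounding text—by invoking the straightening/unstraightening equivalence (\cite{MRR2}) or the Yoneda-type results of \cite{rasekh2023Dyoneda}: under $\Un_J$ (or its analogue for right fibrations), $f$ corresponds to a map of enriched presheaves on $\Ch J$, which by \cref{LKE-precomp} and the description of $[\Ch J^{\op},\MSThnsset]_\proj$ is a weak equivalence if and only if it is an objectwise weak equivalence, i.e., a weak equivalence on all fibers. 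Here the discreteness of $J_0$ is what guarantees that "objects of $\Ch J$" are literally the elements $x\in J_{0,[0],0}$, so that the fibers $A_x$ are exactly the values of the straightened presheaf; without $J\in\pcatThn$ one would need to correct $J_0$ up to equivalence first.

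The main obstacle is the converse direction: one needs a clean way to go from "equivalence on all fibers" to "equivalence in $\MSrightfib{J}$" without circularity, since the most natural tool (straightening–unstraightening) is itself a substantial input. I expect the cleanest route is the reduction to $f$ a fibration between fibrant objects together with an argument—either a direct induction over the Segal/Reedy structure in the $\Delta^{\op}$-direction using the defining weak equivalences $A_m\xrightarrow{\simeq}A_0\times_{J_0}J_m$ of a right fibration, or an appeal to \cite[Theorem 4.62]{rasekh2023Dyoneda} in its proved form—showing that a fibrewise equivalence between right fibrations has the right lifting property against monomorphisms. All the other steps (right properness of $\MSThnsset$, fibers as homotopy pullbacks, levelwise equivalences between fibrant objects of the injective slice model structure) are formal.
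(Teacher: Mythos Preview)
The paper does not give its own proof of this statement: it simply records it as \cite[Theorem~4.62]{rasekh2023Dyoneda}. So there is no argument in the paper to compare against, and your proposal is strictly more than what the paper does.

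Your sketch is sound in outline. For (i)$\Rightarrow$(ii), the argument is correct in substance; one small correction: left Bousfield localizations do \emph{not} in general inherit right properness, so ``$\MSThnsset$ is right proper because it is a localization of a right proper model structure'' is not a valid justification. You do not need it: since $f$ is a weak equivalence between fibrant objects in the localized slice, $f_0$ is already a weak equivalence in the unlocalized injective model structure $\injThnspace$, which \emph{is} right proper, and the maps $A_0\to J_0$, $B_0\to J_0$ are fibrations there; the fiber maps are then weak equivalences in $\injThnspace$ and hence in $\MSThnsset$.

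For (ii)$\Rightarrow$(i), you correctly flag this as the nontrivial direction and list two routes. The second---citing \cite[Theorem~4.62]{rasekh2023Dyoneda}---is exactly what the paper does. The first, via straightening--unstraightening, would also work and gives a conceptually clean explanation (fibers correspond to values of the straightened presheaf, and projective weak equivalences are objectwise), but you should be aware that it imports the full force of \cref{st-unst}; one should check that the proof of that Quillen equivalence in \cite{MRR2} does not itself rely on the fibrewise criterion you are trying to establish. The direct Reedy/Segal-style reduction you allude to (using $A_m\simeq A_0\times_{J_0}J_m$ and the discreteness of $J_0$) is closer to how \cite{rasekh2023Dyoneda} actually proves it.
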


Recall that every map $f$ in $\sThnsset$ induces an adjunction $f_!\dashv f^*$ by post-composition and pullback, which has the following homotopical property, as we now recall from \cite[Theorem 5.38]{rasekh2023Dyoneda}.

\begin{prop} \label{postcomp-pullback}
    Let $f\colon I\to J$ be a map in $\sThnsset$. The adjunction
    \begin{tz}
\node[](1) {$\MSrightfib{I}$}; 
\node[right of=1,xshift=3.9cm](2) {$\MSrightfib{J}$}; 

\draw[->] ($(2.west)-(0,5pt)$) to node[below,la]{$f^*$} ($(1.east)-(0,5pt)$);
\draw[->] ($(1.east)+(0,5pt)$) to node[above,la]{$f_!$} ($(2.west)+(0,5pt)$);

\node[la] at ($(1.east)!0.5!(2.west)$) {$\bot$};
\end{tz}
is a Quillen pair enriched over $\MSThnsset$, which is further a Quillen equivalence when $f$ is such that, for every object $\theta\in \Thn$, the induced map $f_{-,\theta}\colon I_{-,\theta}\to J_{-,\theta}$ is a weak equivalence in $\MSDsset$. 
\end{prop}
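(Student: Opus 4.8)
\emph{Step 1: the Quillen pair and its enrichment.} I would first check that $f_!\dashv f^*$ is a Quillen pair for the slice injective model structures ${\injsThnsset}_{/I}$ and ${\injsThnsset}_{/J}$: the left adjoint $f_!$ is post-composition with $f$, which preserves monomorphisms and weak equivalences, both of these being created by the forgetful functors to $\injsThnsset$. To descend to the Bousfield localizations $\MSrightfib{I}$ and $\MSrightfib{J}$ it then suffices to check that the right adjoint $f^*$ sends fibrant objects to fibrant objects, i.e.\ that a double $(\infty,n-1)$-right fibration $A\to J$ is carried to a double $(\infty,n-1)$-right fibration over $I$. The underlying map of $f^*(A\to J)=(A\times_J I\to I)$ is a fibration in $\injsThnsset$ since pullbacks of fibrations are fibrations, and for each $m\geq 1$ the pasting law for pullbacks identifies the comparison $(A\times_J I)_m\to (A\times_J I)_0\times_{I_0}I_m$ with the base change along $I_m\to J_m$ of the weak equivalence $A_m\to A_0\times_{J_0}J_m$; by right properness of $\MSThnsset$, and using that $A\to J$ is a fibration in $\injsThnsset$, this remains a weak equivalence. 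For the enrichment, note that $f_!$ and $f^*$ are $\Thnsset$-enriched functors --- post-composition and pullback commute with the $\MSThnsset$-(co)tensoring, computed levelwise in the slices --- so, the underlying adjunction being Quillen, it is automatically a Quillen pair enriched over $\MSThnsset$.

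\emph{Step 2: the base-change lemma.} Now assume that $f_{-,\theta}\colon I_{-,\theta}\to J_{-,\theta}$ is a weak equivalence in $\MSDsset$ for every $\theta\in\Thn$. The key input is that, for every double $(\infty,n-1)$-right fibration $p\colon A\to J$, the projection $f^*A=A\times_J I\to A$ is a weak equivalence in $\SegsThnsset$. To prove this I would test it levelwise in the $\Thn$-direction: at $\theta\in\Thn$ it evaluates to $A_{-,\theta}\times_{J_{-,\theta}}I_{-,\theta}\to A_{-,\theta}$, the base change of the $\MSDsset$-equivalence $f_{-,\theta}$ along $A_{-,\theta}\to J_{-,\theta}$; since $p$ is a double right fibration, $A_{-,\theta}\to J_{-,\theta}$ underlies a right fibration of Segal spaces, and base change of a complete-Segal-space equivalence along such a map is again one. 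Hence the evaluated map is a weak equivalence in $\MSDsset$ for every $\theta$, and therefore $A\times_J I\to A$ is a weak equivalence in $\SegsThnsset$. This step genuinely uses the right-fibration hypothesis on $A$: it fails for a general $A\in\sThnsset$, since $f$ itself need not be a weak equivalence in $\SegsThnsset$ --- the hypothesis only says that $f$ becomes one after completing each $\Thn$-level in the internal direction, and right fibrations are insensitive to this completion.

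\emph{Step 3: from the lemma to the Quillen equivalence.} It remains to verify that $f^*$ reflects weak equivalences between fibrant objects and that the derived unit of $f_!\dashv f^*$ is a weak equivalence. For the first: if $g\colon A\to B$ is a map of double right fibrations over $J$ with $f^*g$ a weak equivalence in $\MSrightfib{I}$ --- equivalently, since $f^*A$ and $f^*B$ are $\MSrightfib{I}$-fibrant, a weak equivalence in $\SegsThnsset$ --- then two-out-of-three applied to the commuting square relating $g$, $f^*g$, and the two Step~2 equivalences $f^*A\to A$, $f^*B\to B$ shows $g$ is a weak equivalence in $\SegsThnsset$, hence in $\MSrightfib{J}$. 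For the derived unit, reduce (using that every object of $\MSrightfib{I}$ is cofibrant, and that fibrant replacement then preserves cofibrancy) to the case of a fibrant $B\to I$; choosing a fibrant replacement $B\hookrightarrow R$ of $f_!B$ in $\MSrightfib{J}$, the derived unit $B\to f^*R=R\times_J I$ composed with the Step~2 equivalence $R\times_J I\to R$ is the $\MSrightfib{J}$-weak equivalence $B\to R$, so two-out-of-three together with the fiberwise criterion of \cref{webtwdoubleright} --- after the standard reduction to a base in $\pcatThn$, where the reflective and coreflective inclusion $\pcatThn\hookrightarrow\sThnsset$ is compatible with the right-fibration model structures --- identifies it as a weak equivalence in $\MSrightfib{I}$. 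An alternative to Steps~2--3 is to transport the statement across the straightening--unstraightening equivalence $\St_J\dashv\Un_J$, under which $f^*$ corresponds to precomposition $(\Ch f)^*$: this is a Quillen equivalence by \cref{LKE-precomp} once $\Ch f$ is recognized as a weak equivalence in $\MSThncat$, which again repackages the hypothesis on the $f_{-,\theta}$.

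\emph{Expected main obstacle.} The delicate part is the base-change lemma of Step~2, and more broadly the need to manipulate simultaneously the three nested directions at play --- the ``internal'' simplicial direction of double $(\infty,n-1)$-categories, the simplicial direction inside a $\Thn$-space, and the $\Thn$-direction --- and to promote a levelwise-in-$\Thn$ statement about $\MSDsset$-equivalences to one in $\SegsThnsset$. Since the hypothesis on $f$ is a Dwyer--Kan-type condition rather than a levelwise weak equivalence, the comparison cannot be carried out in the unlocalized $\injsThnsset$ and must exploit the defining Segal condition of double right fibrations; confirming the $(\infty,1)$-categorical fact that base change of complete-Segal-space equivalences along right fibrations are again equivalences --- and that such equivalences are detected $\Thn$-levelwise --- is the technical core, as is keeping the various fibrancy conditions and (homotopy) pullbacks aligned through the successive localizations.
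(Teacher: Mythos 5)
First, a point of reference: the paper does not prove this proposition at all --- it is recalled verbatim from \cite[Theorem 5.38]{rasekh2023Dyoneda} --- so your attempt is being measured against the argument in that external source rather than anything in this text. Your Step 1 is essentially fine (the appeal to right properness is a slight misattribution; what you actually use is that pullback along an arbitrary map preserves weak equivalences between objects that are fibrations over the common base, which your fibrancy hypotheses do supply), and it matches the standard way such Quillen pairs are checked.

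The genuine gap is Step 2, and it propagates through Step 3. The claimed ``key input'' --- that for every double $(\infty,n-1)$-right fibration $A\to J$ the projection $f^*A=A\times_J I\to A$ is a weak equivalence in $\SegsThnsset$ --- is false. Take $J$ an injectively fibrant replacement of the levelwise discrete nerve of the free-living isomorphism (constant in the $\Thn$-direction), $A=J$ with the identity right fibration, $I=F[0]$ and $f$ the inclusion of one object. Then every $f_{-,\theta}$ is a weak equivalence in $\MSDsset$, so the hypothesis of the proposition holds, but $f^*A\to A$ is $f$ itself, which is not a levelwise equivalence and hence not a weak equivalence in $\SegsThnsset$ (both objects are fibrant there, so $\SegsThnsset$-equivalences between them are levelwise). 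The inference you make --- ``$\theta$-levelwise $\MSDsset$-equivalence, therefore $\SegsThnsset$-equivalence'' --- is exactly the step that fails: $\SegsThnsset$ is only the Segal localization and does not invert Dwyer--Kan-type equivalences. What is actually true (and what the Quillen equivalence amounts to) is the weaker statement that the counit $f_!f^*A\to A$ is a weak equivalence in the localized structure $\MSrightfib{J}$; proving that is the real content of \cite[Theorem 5.38]{rasekh2023Dyoneda} and cannot be reduced to a levelwise statement in the way you propose. Consequently the two-out-of-three arguments in Step 3 (reflection of weak equivalences between fibrant objects, and the derived unit) rest on a false lemma. Two further caveats: your fallback route through straightening--unstraightening is only available when the base lies in $\pcatThn$ (\cref{st-unst} is stated under that hypothesis, as is the fiberwise criterion \cref{webtwdoubleright}), whereas here $I,J$ are arbitrary objects of $\sThnsset$; and several of your intermediate claims implicitly assume the bases are fibrant, which is likewise not given.
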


\begin{rmk}
    There is also a model structure $\MSleftfib{J}$ for double $(\infty,n-1)$-left fibrations, where the left fibration condition is defined using the morphism $\langle 0\rangle \colon [0]\to [m]$ of $\Delta$ in place of $\langle m\rangle$. It satisfies analogous properties as the model structure for double $(\infty,n-1)$-right fibrations.
\end{rmk}

\subsection{Comparison of enriched and internal model of \texorpdfstring{$(\infty,n)$}{(infinity,n)}-presheaves} \label{straightening}

Let us fix an object $J\in \pcatThn$, seen as an object of $\sThnsset$. In \cite{MRR2}, we construct a Quillen equivalence between the models $[\Ch J^{\op},\MSThnsset]_\mathrm{proj}$ and $\MSrightfib{J}$ of $(\infty,n)$-presheaves. This Quillen equivalence is an $(\infty,n)$-categorical version of straightening--unstraightening. We recall briefly this construction; more details can be found in \cite[\textsection3.1]{MRR2}.

Let $m,k\geq 0$, $\theta\in \Thn$, and $\sigma\colon \repD[m,\defThn,\defS]\to J$ be a map in~$\sThnsset$. Since $J$ is in $\pcatThn$, the map $\sigma$ corresponds to a map $\sigma\colon L\repD[m,\defThn,\defS]\to J$ in $\pcatThn$, where $L\colon \sThnsset\to \pcatThn$ denotes the left adjoint of the inclusion. We write~$J_\sigma$ for the following pushout in $\pcatThn$ (and hence in $\sThnsset$).
    \begin{tz}
        \node[](1) {$L\repD[m,\defThn,\defS]$}; 
        \node[right of=1,xshift=1.6cm](2) {$J$}; 
        \node[below of=1](3) {$L\repD[m+1,\defThn,\defS]$}; 
        \node[below of=2](4) {$J_\sigma$}; 

        \draw[->] (1) to node[above,la]{$\sigma$} (2); 
        \draw[->] (1) to node[left,la]{$L [d^{m+1},\defThn,\defS]$} (3); 
        \draw[->] (2) to node[right,la]{$\iota_\sigma$} (4); 
        \draw[->] (3) to node[below,la]{$\sigma'$} (4);
        \pushout{4};
    \end{tz}
As pushouts in $\sThnsset$ can be computed levelwise in $\Thnsset$, we have $(J_\sigma)_0\cong J_0\amalg \{\top\}$, where $\top$ is the image under $\sigma'$ of the object $m+1\in L\repD[m+1,\defThn,\defS]_0\cong \{0,1,\ldots,m+1\}$.

Then we define the $\Thnsset$-enriched functor  $\St_J(\sigma)\colon \Ch J^{\op}\to \Thnsset$ to be the following composite 
    \[ \St_J(\sigma)\colon \Ch J^{\op}\xrightarrow{\Ch (\iota_\sigma)} (\Ch J_\sigma)^{\op}\xrightarrow{\Hom_{\Ch J_\sigma}(-,\top)} \Thnsset. \]
    This construction extends to a functor
    \[ \textstyle\St_J\colon \int_{\DThnS} J\to [\Ch J^{\op}, \Thnsset],\] and the straightening--unstraightening adjunction is obtained by left Kan extending along the Yoneda embedding
\begin{tz}
 \node[](1) {$\sThnssetslice{J}$}; 
\node[right of=1,xshift=2.7cm](2) {$[\Ch J^{\op},\Thnsset]$}; 
\punctuation{2}{.};

\draw[->] ($(2.west)-(0,5pt)$) to node[below,la]{$\Un_J$} ($(1.east)-(0,5pt)$);
\draw[->] ($(1.east)+(0,5pt)$) to node[above,la]{$\St_J$} ($(2.west)+(0,5pt)$);

\node[la] at ($(1.east)!0.5!(2.west)$) {$\bot$};    
\end{tz}

The following appears as \cite[Theorem 4.4.10]{MRR2}.

\begin{thm} \label{st-unst}
    Let $J$ be an object in $\pcatThn$. The adjunction
\begin{tz}
\node[](1) {$\MSrightfib{J}$}; 
\node[right of=1,xshift=3.8cm](2) {$[\Ch J^{\op},\MSThnsset]_\proj$}; 

\draw[->] ($(2.west)-(0,5pt)$) to node[below,la]{$\Un_J$} ($(1.east)-(0,5pt)$);
\draw[->] ($(1.east)+(0,5pt)$) to node[above,la]{$\St_J$} ($(2.west)+(0,5pt)$);

\node[la] at ($(1.east)!0.5!(2.west)$) {$\bot$};
\end{tz}
 is a Quillen equivalence  enriched over $\MSThnsset$.
\end{thm}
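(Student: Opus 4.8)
The plan is to prove the statement in three steps: first that $\St_J\dashv\Un_J$ is a Quillen pair, then that it is a Quillen equivalence in the ordinary sense, and finally to observe that the $\MSThnsset$-enrichment is then automatic — an $\MSThnsset$-enriched Quillen pair which is an ordinary Quillen equivalence is an enriched one, and the compatibility of $\St_J\dashv\Un_J$ with the tensoring and cotensoring over $\MSThnsset$ is a pushout--product check on generators that is built into the construction of $\St_J$ out of $\MSThnsset$-enriched data.

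For the Quillen pair: the functor $\St_J$ is colimit-preserving, being a left Kan extension along the Yoneda embedding of $\int_{\DThnS}J$, and $\MSrightfib{J}$ is cofibrantly generated as a left Bousfield localization of the slice ${\injsThnsset}_{/J}$, whose cofibrations are the monomorphisms and hence are generated by the boundary inclusions $\partial\repDThnS\hookrightarrow\repDThnS$ over $J$. Hence it suffices to identify $\St_J$ on such cells — which one reads off from the pushout defining $J_\sigma$ and from the behaviour of $\Ch$ on it — to check that these boundary inclusions and the generating acyclic cofibrations go to projective cofibrations and projective acyclic cofibrations, and to check that $\Un_J$ carries fibrant enriched functors to double $(\infty,n-1)$-right fibrations, so that the adjunction descends to the localization $\MSrightfib{J}$.

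For the Quillen equivalence, I would verify the standard criterion: that $\Un_J$ reflects weak equivalences between fibrant objects, and that the derived unit is a weak equivalence on every cofibrant object $A\to J$. The first follows by combining \cref{webtwdoubleright} — weak equivalences of double $(\infty,n-1)$-right fibrations over $J$ are detected on fibers in $\MSThnsset$ — with the objectwise detection of weak equivalences in $[\Ch J^{\op},\MSThnsset]_\proj$, once one checks that for fibrant $F$ the fiber $(\Un_JF)_x$ at an object $x$ of $J$ is weakly equivalent to $F(x)$. Here the hypothesis $J\in\pcatThn$ enters essentially: the set of objects of $J$ being discrete is what makes $(\Un_JF)_0$ split as $\coprod_x F(x)$, which together with the identification $\St_J(x)\simeq\Hom_{\Ch J}(-,x)$ and the enriched Yoneda lemma gives the claim. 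For the second condition, that the derived unit is a weak equivalence, I would use the functoriality of straightening in $J$ — the compatibility $\St_J\circ f_!\cong(\Ch f)_!\circ\St_I$ with change of base, together with \cref{postcomp-pullback}, \cref{LKE-precomp} and \cref{nerve-cat} — to reduce, via a descent argument (both homotopy categories send homotopy colimits in $J$ to homotopy limits, compatibly with $\St$), to the case where $J$ is a single representable cell $L\repDThnS$. For such $J$ one computes the derived unit directly from the explicit pushout $J_\sigma=L\repDThnS[m+1]\amalg_{L\repDThnS}L\repDThnS$ and the mapping $\Thn$-spaces out of its adjoined object $\top$, organised as an induction on the cell $([m],\theta,[k])\in\DThnS$ in which the inductive step uses left properness of $\MSThnsset$, the fact that proper subobjects of a cell are built from strictly smaller cells, and \cref{sourceofdblrightfib} to remain within Segal objects throughout.

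The main obstacle is this last computation: understanding $\Ch$ of the pushout $J_\sigma$ and its hom $\Thn$-spaces into $\top$. This is the $(\infty,n)$-enriched analogue of the necklace combinatorics that governs mapping spaces in $\Ch$ of a simplicial set and underlies the classical slice-versus-join identifications (Joyal, Lurie, Dugger--Spivak, Riehl--Verity); carrying it out in the $\Thn$-enriched setting, where the $\Thn$- and Segal-directions interact, is the technical core of the argument, and the rest of the proof is formal once it is available.
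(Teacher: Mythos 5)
There is nothing in this paper for your argument to be compared against: \cref{st-unst} is not proved here, but imported verbatim from \cite[Theorem 4.4.10]{MRR2}; the present paper only recalls the construction of $\St_J$ on representables of the slice and its extension by left Kan extension along the Yoneda embedding. So the relevant question is whether your sketch would constitute an independent proof, and as written it does not: it is a programme whose load-bearing steps are exactly the content of the cited theorem and are left open. The identification of $\St_J$ on generating (trivial) cofibrations, the verification that $\Un_J$ sends projectively fibrant functors to double $(\infty,n-1)$-right fibrations, the identification of the fiber $(\Un_J F)_x$ with $F(x)$ (equivalently, the comparison $\St_J(x)\simeq \Hom_{\Ch J}(-,x)$, which requires controlling the hom $\Thn$-spaces $\Hom_{\Ch J_\sigma}(-,\top)$ of a homotopy coherent categorification of a pushout), and the reduction of the derived unit to representables are precisely where all the work lies; you acknowledge this for the last item, but the same caveat applies to the others, so the claim that ``the rest of the proof is formal'' overstates what has been established.

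Beyond incompleteness, one step as formulated is genuinely suspect: the proposed reduction over the base, via the assertion that $J\mapsto\MSrightfib{J}$ and $J\mapsto[\Ch J^{\op},\MSThnsset]_\proj$ send homotopy colimits in $J$ to homotopy limits compatibly with $\St$, is itself a descent statement for double $(\infty,n-1)$-right fibrations of essentially the same strength as the straightening theorem you are trying to prove (for the fibrational side it is usually deduced \emph{from} straightening), so the argument as sketched risks circularity. The safer route—and the one consistent with the structure of the cited proof—is to work over a fixed $J$, use that $\St_J$ is left Quillen and that weak equivalences on both sides are detected objectwise respectively fiberwise (\cref{webtwdoubleright}), establish the derived unit on representables of $\sThnssetslice{J}$ by the explicit computation of $\Hom_{\Ch J_\sigma}(-,\top)$, and then propagate along cell attachments of the total object $A$; the nontrivial ingredient there is that the \emph{right} derived functor of $\Un_J$ commutes with the relevant homotopy colimits, which must be proved and does not follow from adjunction formalities. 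Your enrichment step is fine in principle (an enriched Quillen pair that is an underlying Quillen equivalence is an enriched Quillen equivalence), but the pushout--product compatibility of $\St_J$ with the $\MSThnsset$-(co)tensoring is again a verification, not something ``built in'' for free.
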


\begin{notation} \label{enrichedst-unst}
    Let $\cC$ be a fibrant $\MSThnsset$-enriched category. We write $\St_\cC\dashv \Un_\cC$ for the composite of $\MSThnsset$-enriched Quillen equivalences from \cref{LKE-precomp,st-unst}
    \begin{tz}
\node[](1) {$\MSrightfib{\Nh\cC}$}; 
\node[right of=1,xshift=3.9cm](2) {$[\Ch\Nh\cC^{\op},\MSThnsset]_\proj$}; 
\node[right of=2,xshift=3.8cm](3) {$[\cC^{\op},\MSThnsset]_\proj$}; 

\node at ($(1.west)-(10pt,2pt)$) {$\St_\cC\colon$};
\node at ($(3.east)+(13pt,-3pt)$) {$\colon\! \Un_\cC$};

\draw[->] ($(3.west)-(0,5pt)$) to node[below,la]{$(\varepsilon_\cC)^*$} ($(2.east)-(0,5pt)$);
\draw[->] ($(2.east)+(0,5pt)$) to node[above,la]{$(\varepsilon_\cC)_!$} ($(3.west)+(0,5pt)$);

\node[la] at ($(1.east)!0.5!(2.west)$) {$\bot$};

\draw[->] ($(2.west)-(0,5pt)$) to node[below,la]{$\Un_{\Nh\cC}$} ($(1.east)-(0,5pt)$);
\draw[->] ($(1.east)+(0,5pt)$) to node[above,la]{$\St_{\Nh\cC}$} ($(2.west)+(0,5pt)$);

\node[la] at ($(2.east)!0.5!(3.west)$) {$\bot$};
\end{tz} 
where $\varepsilon_\cC\colon \Ch\Nh\cC\to \cC$ is the component of the (derived) counit at $\cC$ of the Quillen equivalence $\Ch\dashv \Nh$ from \cref{nerve-cat}, which is therefore a weak equivalence in $\MSThncat$.
\end{notation}

The straightening functors satisfy the following naturality condition. 

\begin{lemma} \label{lem:naturalityofSt}
    Let $J$ be an object in $\pcatThn$ and $\cC$ be a fibrant $\Thnsset$-enriched category. Let $F\colon \Ch J\to \cC$ be a $\Thnsset$-enriched functor and $F^\flat\colon J\to \Nh\cC$ be the adjunct of $F$ in $\sThnsset$. The following square of functors commutes up to natural isomorphism.
    \begin{tz}
        \node[](1) {$\sThnssetslice{J}$}; 
        \node[right of=1,xshift=2.7cm](2) {$[\Ch J^{\op},\Thnsset]$}; 
        \node[below of=1](3) {$\sThnssetslice{\Nh\cC}$}; 
        \node[below of=2](4) {$[\cC^{\op},\Thnsset]$}; 

        \draw[->] (1) to node[above,la]{$\St_J$} (2); 
        \draw[->] (1) to node[left,la]{$(F^\flat)_!$} (3); 
        \draw[->] (3) to node[below,la]{$\St_\cC$} (4);
        \draw[->] (2) to node[right,la]{$F_!$} (4);
    \end{tz}
\end{lemma}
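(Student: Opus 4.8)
The plan is to unwind both composite functors $\St_\cC \circ (F^\flat)_!$ and $F_! \circ \St_J$ on representables and then extend the resulting natural isomorphism along the Yoneda embedding, since all functors in sight are left adjoints (being left Kan extensions or induced adjunctions) and hence commute with colimits. First I would recall that $\sThnssetslice{J}$ is generated under colimits by the representable maps $\sigma \colon \repDThnS \to J$, so it suffices to construct a natural isomorphism $\St_\cC((F^\flat)_!\sigma) \cong F_!(\St_J \sigma)$ compatibly in $\sigma$, i.e.\ as $\sigma$ ranges over $\int_{\DThnS} J$. Both sides are computed via the pushout-and-hom recipe defining $\St$: $\St_J(\sigma)$ is $\Hom_{\Ch J_\sigma}(-,\top)$ precomposed with $\Ch(\iota_\sigma)\colon \Ch J^{\op} \to (\Ch J_\sigma)^{\op}$, and $\St_\cC$ is the composite Quillen equivalence of \cref{enrichedst-unst}, built from $\St_{\Nh\cC}$ and then $(\varepsilon_\cC)_!$.

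The key step is to identify $(F^\flat)_!\sigma$, which is just the composite $\repDThnS \xrightarrow{\sigma} J \xrightarrow{F^\flat} \Nh\cC$, and then to compare the two pushout squares: the one defining $J_\sigma$ inside $\pcatThn$ and the one defining $(\Nh\cC)_{F^\flat\sigma}$. Applying $\Ch$ (a left adjoint, hence preserving pushouts) to the first square and comparing with the second, one gets a canonical map $\Ch J_\sigma \to (\Nh\cC)_{F^\flat \sigma}$ that sends the distinguished object $\top$ to $\top$ and sits in a commuting diagram with $\Ch(\iota_\sigma)$, $\Ch(\iota_{F^\flat\sigma})$, $\Ch(F^\flat)$, and (after applying the counit) $F$. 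Chasing $\Hom(-,\top)$ around this diagram, together with the identification $\varepsilon_\cC \circ \Ch(F^\flat) \simeq F$ coming from the adjunction $\Ch \dashv \Nh$ and the definition of $F^\flat$ as the adjunct of $F$, yields the desired natural isomorphism on representables: $F_!(\St_J\sigma) = (\varepsilon_\cC)_! (F^\flat)^!_{\mathrm{enr}}(\St_J\sigma)$ (using the notation of the paper for the enriched precomposition) matches $\St_\cC((F^\flat)_! \sigma)$.

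Finally, I would promote this to the full statement. Since $\sThnssetslice{J} \simeq \int_{\DThnS} J$-indexed colimits of representables, $(F^\flat)_!$, $\St_J$, $\St_\cC$, and $F_!$ all preserve colimits (the first two because $\St$ is defined by left Kan extension along Yoneda and $(F^\flat)_!$ is a left adjoint; the last two likewise), both composites $\St_\cC \circ (F^\flat)_!$ and $F_! \circ \St_J$ are colimit-preserving functors out of $\sThnssetslice{J}$ that agree on representables via a coherent family of isomorphisms, hence are naturally isomorphic. The one subtlety to handle carefully, and what I expect to be the main obstacle, is the coherence of the identification on the $\int_{\DThnS} J$-indexed diagram: one must check that the isomorphism $\Ch J_\sigma \to (\Nh\cC)_{F^\flat\sigma}$ is natural in $\sigma$ with respect to morphisms in $\int_{\DThnS} J$ (i.e.\ commutes with the maps induced by morphisms $\repDThnS \to \repDThnS[m',\defThn',\defSprime]$ over $J$) and that it interacts correctly with the various structure maps $\iota_\sigma$, $\sigma'$, $d^{m+1}$; this is a somewhat delicate but routine diagram chase in $\pcatThn$ and $\Thncat$ using that $\Ch$ preserves the relevant pushouts and that everything is compatible with the counit $\varepsilon_\cC$. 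Once that coherence is in place, the extension-by-colimits argument finishes the proof.
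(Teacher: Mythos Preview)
Your approach is correct and follows the expected line: verify the isomorphism on representables using the explicit pushout-and-hom definition of $\St$, then extend by colimits since every functor in the square is a left adjoint. The paper's own proof is much terser: it simply invokes \cite[Proposition~3.2.1]{MRR2}, which records the naturality statement $\St_{J'} \circ g_! \cong (\Ch g)_! \circ \St_J$ for a map $g\colon J\to J'$ in $\pcatThn$, and then observes that by definition $\St_\cC=(\varepsilon_\cC)_!\St_{\Nh\cC}$ and $F=\varepsilon_\cC\circ(\Ch F^\flat)$, so that applying the cited proposition to $g=F^\flat$ and post-composing with $(\varepsilon_\cC)_!$ gives the square. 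What you have sketched is essentially the content of that cited proposition together with this definitional bookkeeping, so there is no real difference in strategy---you are reconstructing the proof rather than citing it.

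One small correction: the canonical map you build from the pushout comparison should land in $\Ch\bigl((\Nh\cC)_{F^\flat\sigma}\bigr)$, not in $(\Nh\cC)_{F^\flat\sigma}$ itself; the latter lives in $\pcatThn$, not $\Thncat$. With that fixed, the key identity you are using is that $\Ch$ preserves the pushout defining $(-)_\sigma$, so the square
\[
\begin{array}{ccc}
\Ch J & \xrightarrow{\ \Ch\iota_\sigma\ } & \Ch J_\sigma \\
\Ch F^\flat \big\downarrow & & \big\downarrow \\
\Ch\Nh\cC & \xrightarrow{\ \Ch\iota_{F^\flat\sigma}\ } & \Ch(\Nh\cC)_{F^\flat\sigma}
\end{array}
\]
is a pushout in $\Thncat$; combined with the fact that left Kan extension sends the representable at $\top$ to the representable at the image of $\top$, this yields the identification $(\Ch F^\flat)_!\St_J(\sigma)\cong \St_{\Nh\cC}(F^\flat\sigma)$ on the nose, and the naturality in $\sigma$ is immediate from the universal property of pushouts. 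Your worry about coherence is justified but, as you say, routine.
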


\begin{proof}
    This follows from \cite[Proposition 3.2.1]{MRR2} using that by definition $\St_\cC=(\varepsilon_\cC)_!\St_{\Nh\cC} $ and $F=\varepsilon_\cC(\Ch F^\flat)$.
\end{proof}

\begin{rmk}
    Similarly, we have Quillen equivalences $\St_J\dashv \Un_J$ and $\St_\cC\dashv \Un_\cC$ when removing ``op'' and replacing ``right'' with ``left'', and the naturality of straightening also holds in this case.
\end{rmk}

\part{Slice and cone constructions for double \texorpdfstring{$(\infty,n-1)$}{(infinity,n-1)}-categories} \label{part2}

\section{Fat slice and cone constructions}

\label{SectionFat}

In this section, we enhance to the context of double $(\infty,n-1)$-categories the fat slice and cone constructions introduced in \cite[\textsection9.17-9.19]{JoyalNotes} and further studied in  \cite[\textsection4.2.1]{LurieHTT}, \cite[\textsection2.4]{RVqCat} (with more details in \cite[Appendix~A]{RVqCatv3}) and \cite[Appendix~D.2]{RVelements}. For this, we first recall in \cref{sec:defnfatcone} the slice and cone constructions for double $(\infty,n-1)$-categories considered in \cite{MRR3,rasekh2023Dyoneda}. We show in \cref{sec:fibrantfatcone} that, over a double $(\infty,n-1)$-category, the canonical projections from the slice and cone constructions are double $(\infty,n-1)$-right fibrations.

\subsection{The fat constructions}
\label{sec:defnfatcone}

In order to give a first definition of slice and cone constructions, we start by defining a notion of fat join. 

\begin{constr} \label{constr:join}
    The \textbf{fat join functor}
    \[ -\diamond - \colon \sThnsset\times \sThnsset\to \sThnsset \]
    is induced by postcomposition along the classical fat join functor $-\diamond-\colon \Dset\times \Dset\to \Dset$. Since pushouts in $\sThnsset$ are defined levelwise in $\Dset$, this functor is given by sending a pair $(A,B)$ of objects of $\sThnsset$ to the following pushout in $\sThnsset$.
    \begin{tz}
        \node[](1) {$A\times B \amalg A\times B$}; 
        \node[below of=1](2) {$A\times F[1]\times B$}; 
        \node[right of=1,xshift=2cm](3) {$A\amalg B$}; 
        \node[below of=3](4) {$A\diamond B$}; 
        \pushout{4};

        \draw[right hook->](1) to (2); 
        \draw[right hook->](3) to (4); 
        \draw[->](1) to (3); 
        \draw[->](2) to (4); 
    \end{tz}
\end{constr}

\begin{rmk} 
 \label{fatjoinprescolim}
    Given an object $A\in \sThnsset$, then the induced functors \[ A\diamond -, -\diamond A\colon \sThnsset\to {}^{A/} \sThnsset \]
    preserve colimits. In particular, the functors \[ A\diamond -, -\diamond A\colon \sThnsset\to \sThnsset \] 
    preserve connected colimits.
\end{rmk} 

By the adjoint functor theorem, we then get the following result. 

\begin{prop}
There are adjunctions
    \begin{tz}
\node[](1) {$\sThnsset$}; 
\node[right of=1,xshift=2.7cm](2) {${}^{A/} \sThnsset$}; 

\draw[->] ($(2.west)-(0,5pt)$) to node[below,la]{$\slice{(-)}{(-)}$} ($(1.east)-(0,5pt)$);
\draw[->] ($(1.east)+(0,5pt)$) to node[above,la]{$-\diamond A$} ($(2.west)+(0,5pt)$);
\punctuation{2}{,};

\node[la] at ($(1.east)!0.5!(2.west)$) {$\bot$};

\node[right of=2,xshift=2.4cm](1) {$\sThnsset$}; 
\node[right of=1,xshift=2.7cm](2) {${}^{A/} \sThnsset$}; 
\punctuation{2}{.};

\draw[->] ($(2.west)-(0,5pt)$) to node[below,la]{$\sliceunder{(-)}{(-)}$} ($(1.east)-(0,5pt)$);
\draw[->] ($(1.east)+(0,5pt)$) to node[above,la]{$A\diamond -$} ($(2.west)+(0,5pt)$);

\node[la] at ($(1.east)!0.5!(2.west)$) {$\bot$};
\end{tz}
\end{prop}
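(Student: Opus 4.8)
The plan is to deduce both adjunctions from the adjoint functor theorem for locally presentable categories, using \cref{fatjoinprescolim} as the input. The two cases are entirely symmetric, so I will describe the argument for $-\diamond A$; the case of $A\diamond -$ is identical.

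First I would record that all categories in sight are locally presentable. The category $\sThnsset=\mathit{s}\set^{\Thnop\times\Dop}$ is a presheaf category on a small category, hence locally presentable; and for a fixed object $A\in\sThnsset$, the coslice category ${}^{A/}\sThnsset$ is again locally presentable, being the coslice of a locally presentable category under an object (this is standard; alternatively one notes that ${}^{A/}\sThnsset$ is cocomplete and accessible). Next I would invoke \cref{fatjoinprescolim}, which asserts precisely that the functor $-\diamond A\colon\sThnsset\to{}^{A/}\sThnsset$ preserves all (small) colimits: the coslice here is exactly what is needed to absorb the canonical coprojection $A\to B\diamond A$ and thereby upgrade the preservation of connected colimits (which is all one gets after postcomposing with the forgetful functor ${}^{A/}\sThnsset\to\sThnsset$) to preservation of arbitrary colimits, including the initial object and coproducts.

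With these two observations in hand, the conclusion is immediate: a colimit-preserving functor between locally presentable categories admits a right adjoint (by the adjoint functor theorem, since a cocontinuous functor out of a locally presentable category is automatically accessible and one may apply the special adjoint functor theorem). Applying this to $-\diamond A$ produces a right adjoint which I would name $\slice{(-)}{(-)}\colon{}^{A/}\sThnsset\to\sThnsset$, so that for $B\in\sThnsset$ and an object $(f\colon A\to C)$ of ${}^{A/}\sThnsset$ there is a natural bijection between maps $B\diamond A\to C$ under $A$ and maps $B\to\slice{C}{f}$ in $\sThnsset$. Applying the same reasoning to $A\diamond -$ yields the right adjoint $\sliceunder{(-)}{(-)}$.

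There is no genuine obstacle here; the one point meriting a word of care is the distinction between the colimits preserved by $-\diamond A$ as a functor into the coslice versus the merely connected colimits preserved after forgetting down to $\sThnsset$, but this is exactly what \cref{fatjoinprescolim} has already settled, so the proof reduces to citing local presentability and the adjoint functor theorem.
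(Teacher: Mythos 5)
Your argument is exactly the paper's: the paper derives the proposition by citing \cref{fatjoinprescolim} (the join into the coslice preserves all colimits) and then invoking the adjoint functor theorem, which is what you do, with the added (correct) bookkeeping about local presentability. No differences worth noting.
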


We now study the right adjoint of the functor $-\diamond A$, but everything can be dualized to $A\diamond -$. 

\begin{defn}
Let $Z$ be an object in $\sThnsset$ and $x\in Z_{0,0,0}$. We call the object $\slice{Z}{x}$ of $\sThnsset$ the \textbf{(fat) slice construction}. By definition, for $m,k\geq 0$ and $\theta\in \Thn$, an element of the set $(\slice{Z}{x})_{m,\theta,k}$ is given by a commutative diagram in $\sThnsset$ of the form
    \begin{tz}
        \node[](1) {$\repDThnS[m]\diamond \repD[0]$}; 
        \node[right of=1,xshift=1.2cm](2) {$Z$}; 
        \node[above of=1](3) {$\repD[0]$}; 
        \draw[->](1) to (2); 
        \draw[right hook->](3) to (1);
        \draw[->](3) to  node[right,la,yshift=5pt]{$x$} (2); 
    \end{tz}
    The canonical inclusions $\repDThnS\hookrightarrow \repDThnS\diamond \repD[0]$ induce a projection map 
    \[ \slice{Z}{x}\to Z. \]
\end{defn}

\begin{defn}
Let $f\colon A\to Z$ be a map in $\sThnsset$. We call the object $\slice{Z}{f}$ of $\sThnsset$ the \textbf{(fat) cone construction}. By definition, for $m,k\geq 0$ and $\theta\in \Thn$, an element of the set $(\slice{Z}{f})_{m,\theta,k}$ is given by a commutative diagram in $\sThnsset$ of the form
    \begin{tz}
        \node[](1) {$\repDThnS[m]\diamond A$}; 
        \node[right of=1,xshift=1.2cm](2) {$Z$}; 
        \node[above of=1](3) {$A$}; 
        \draw[->](1) to (2); 
        \draw[right hook->](3) to (1);
        \draw[->](3) to  node[right,la,yshift=5pt]{$f$} (2); 
    \end{tz}
    The canonical inclusions $\repDThnS\hookrightarrow \repDThnS\diamond A$ induce a projection map 
    \[ \slice{Z}{f}\to Z. \]
\end{defn}

It is straightforward to see that the slice and cone constructions also admit the following descriptions, which were taken as definitions in \cite[Notations 1.4.21 and 1.4.22]{MRR3}.

\begin{prop} \label{sliceaspullback}
Let $Z$ be an object in $\sThnsset$, $x\in Z_{0,0,0}$, and $f\colon A\to Z$ be a map in $\sThnsset$. We have the following pullbacks in $\sThnsset$,
\begin{tz}
    \node[](1) {$\slice{Z}{x}$}; 
    \node[right of=1,xshift=1cm](2) {$Z^{\repD[1]}$}; 
    \node[below of=1](3) {$Z$}; 
    \node[below of=2](4) {$Z\times Z$}; 
    \pullback{1};

    \draw[->](1) to (2); 
    \draw[->](1) to (3); 
    \draw[->](3) to node[below,la]{$\id_Z\times \{x\}$} (4); 
    \draw[->](2) to node[right,la]{$(d^1,d^0)$} (4);

    \node[right of=2,xshift=1.3cm](1) {$\slice{Z}{f}$}; 
    \node[right of=1,xshift=1.5cm](2) {$(Z^A)^{F[1]}$}; 
    \node[below of=1](3) {$Z$}; 
    \node[below of=2](4) {$Z^A\times Z^A$}; 
    \pullback{1};

    \draw[->](1) to (2); 
    \draw[->](1) to (3); 
    \draw[->](3) to node[below,la]{$\Delta\times \{f\}$} (4); 
    \draw[->](2) to node[right,la]{$(d^1,d^0)$} (4);
\end{tz}
where $\Delta\colon Z\to Z^A$ is the map induced by precomposition along the unique map $A\to F[0]$. 
\end{prop}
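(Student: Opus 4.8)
The plan is to establish both pullback squares by exhibiting, for each bidegree $(m,\theta,k)$, a natural bijection between the sets of $(m,\theta,k)$-simplices on the two sides, and then check that this bijection respects the structure maps so that it assembles into an isomorphism in $\sThnsset$; naturality over $Z$ and over $f$ (resp.~$x$) then forces compatibility with the projection maps. For the slice square, I would unwind the defining adjunction: by construction, $(\slice{Z}{x})_{m,\theta,k}$ is the set of commutative triangles consisting of a map $\repDThnS[m]\diamond \repD[0]\to Z$ restricting along $\repD[0]\hookrightarrow \repDThnS[m]\diamond \repD[0]$ to $x$. The key computational input is the pushout formula for the fat join from \cref{constr:join}: specializing $B=\repD[0]$, we have
\[
\repDThnS[m]\diamond \repD[0] \;=\; \Big( \repDThnS[m]\amalg \repD[0] \Big) \amalg_{\repDThnS[m]\amalg \repDThnS[m]} \big(\repDThnS[m]\times F[1]\big),
\]
where we have used $\repDThnS[m]\times \repD[0]\cong \repDThnS[m]$. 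Mapping this pushout into $Z$ and imposing that the $\repD[0]$-component is $x$, one sees that such data is equivalently a map $\repDThnS[m]\times F[1]\to Z$ whose two ends (obtained by restricting along the two vertex inclusions $\repDThnS[m]\cong\repDThnS[m]\times F[0]\rightrightarrows \repDThnS[m]\times F[1]$) are respectively an arbitrary map $u\colon \repDThnS[m]\to Z$ and the constant map at $x$. By the hom–tensor adjunction in the cartesian closed category $\sThnsset$ (or just by direct inspection on representables), a map $\repDThnS[m]\times F[1]\to Z$ is the same as an $(m,\theta,k)$-simplex of $Z^{F[1]}=Z^{\repD[1]}$, and the two end-restrictions correspond to postcomposing with $(d^1,d^0)\colon Z^{\repD[1]}\to Z\times Z$. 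Requiring one end to be constant at $x$ and remembering the other end $u$ is exactly the universal property of the pullback of $Z^{\repD[1]}\xrightarrow{(d^1,d^0)} Z\times Z \xleftarrow{\id_Z\times\{x\}} Z$, which gives the stated first square; tracking which leg of the pullback records $u$ identifies the projection $\slice{Z}{x}\to Z$ with the displayed map to $Z$.

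For the cone square the argument is formally identical, replacing $\repD[0]$ by $A$ throughout. Now $\repDThnS[m]\diamond A$ is the pushout
\[
\big(\repDThnS[m]\times A \amalg \repDThnS[m]\times A\big) \;\rightarrow\; \repDThnS[m]\amalg A, \qquad
\big(\repDThnS[m]\times A \amalg \repDThnS[m]\times A\big)\;\hookrightarrow\;\repDThnS[m]\times F[1]\times A,
\]
so a map $\repDThnS[m]\diamond A\to Z$ restricting to $f$ on $A$ is the data of a map $g\colon \repDThnS[m]\times F[1]\times A\to Z$ together with the condition that $g$ restricted to $\repDThnS[m]\times\{0\}\times A$ factors through $\repDThnS[m]$ (i.e.~is constant in the $A$-direction) and $g$ restricted to $\repDThnS[m]\times\{1\}\times A$ equals $f\circ\mathrm{pr}_A$. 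Using cartesian closedness, $g$ transposes to a map $\repDThnS[m]\to (Z^A)^{F[1]}$; the ``$\{0\}$-end is constant in $A$'' condition says that $d^1$ of this lands in the image of $\Delta\colon Z\to Z^A$, and records an arbitrary map $\repDThnS[m]\to Z$, while the ``$\{1\}$-end equals $f$'' condition pins the $d^0$-end to the constant simplex at $f\in (Z^A)_{0,[0],0}$. This is precisely the universal property of the pullback of $(Z^A)^{F[1]}\xrightarrow{(d^1,d^0)} Z^A\times Z^A \xleftarrow{\Delta\times\{f\}} Z$, yielding the second square, again with the projection to $Z$ matching up.

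The one point requiring genuine care — and where I expect the main (admittedly mild) obstacle to lie — is verifying that these bidegreewise bijections are natural in $(m,\theta,k)\in(\DThnS)^{\op}$, i.e.~that they commute with the action of the structure maps of the simplicial–$\Theta_{n-1}$–bisimplicial objects involved. Concretely, one must check that the transposition isomorphisms coming from cartesian closedness, the pushout decomposition of the fat join, and the vertex inclusions $F[0]\rightrightarrows F[1]$ are all natural in the representable $\repDThnS[m]$; this is standard but needs the pushout in \cref{constr:join} to be computed levelwise (which is exactly why the fat join was defined by levelwise postcomposition) and the fact that $-\times F[1]$ and $-\times A$ preserve the relevant colimits, as recorded in \cref{fatjoinprescolim}. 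Granting this naturality, both isomorphisms are morphisms in $\sThnsset$ and one checks on $0$-simplices that they are compatible with the canonical projection maps, completing the proof.
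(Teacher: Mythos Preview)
Your argument is correct and is exactly the kind of direct verification the paper has in mind: the paper does not give a proof at all but simply asserts that ``it is straightforward to see'' that the slice and cone constructions admit these pullback descriptions (noting that they were taken as the definitions in \cite[Notations~1.4.21 and~1.4.22]{MRR3}). Your unwinding of the pushout defining the fat join, followed by transposition under cartesian closedness, is precisely the straightforward check being alluded to; the naturality-in-$(m,\theta,k)$ concern you flag is real but, as you note, is guaranteed by the fact that every step (the pushout formula, the exponential adjunction, the endpoint inclusions) is natural in the representable variable.
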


\subsection{Homotopical properties of the fat constructions} \label{sec:fibrantfatcone}

We prove that, for a double $(\infty,n-1)$-category, the canonical projections from the slice and cone constructions are double $(\infty,n-1)$-right fibrations. For the slice, this is proven in \cite[Theorem 5.25]{rasekh2023Dyoneda}.

\begin{prop}
\label{prop:fibrantslice}
    Let $D$ be a fibrant object in $\SegsThnsset$ and $d\in D_{0,0,0}$. Then the canonical projection  \[ \slice{D}{d}\to D \]
    is a double $(\infty,n-1)$-right fibration.   
\end{prop}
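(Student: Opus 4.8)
The plan is to base everything on the pullback description of the slice from \cref{sliceaspullback}, namely that $\slice{D}{d}$ is the pullback of $(d^1,d^0)\colon D^{\repD[1]}\to D\times D$ along $\id_D\times\{d\}\colon D\to D\times D$, and then to verify directly the two conditions defining a fibrant object of $\MSrightfib{D}$: that the projection $\slice{D}{d}\to D$ is a fibration in $\injsThnsset$, and that for every $m\geq 1$ the map $(\slice{D}{d})_m\to(\slice{D}{d})_0\times_{D_0}D_m$ induced by $\langle m\rangle$ is a weak equivalence in $\MSThnsset$.

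For the fibration condition, I would first observe that, since $\SegsThnsset$ is cartesian closed by \cite[Theorem 5.2]{BR2} and $\partial\repD[1]=\repD[0]\amalg\repD[0]\hookrightarrow\repD[1]$ is a cofibration, cotensoring the fibrant object $D$ yields a fibration $D^{\repD[1]}\to D^{\partial\repD[1]}=D\times D$ in $\SegsThnsset$ between fibrant objects. Pulling back along $\id_D\times\{d\}$, whose source $D$ is fibrant, shows that $\slice{D}{d}\to D$ is a fibration in $\SegsThnsset$ with fibrant source. Since $\SegsThnsset$ is a localization of $\injsThnsset$ and fibrations between fibrant objects are unchanged under localization, this is a fibration in $\injsThnsset$, as required.

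The heart of the argument is the right-fibration condition. Using the fat-join description from \cref{constr:join} and that $\repD[0]=F[0]$ is terminal in $\sThnsset$, one identifies $\repD[m]\diamond\repD[0]$ with the fat cone on $\repD[m]$, and, fixing the apex to $d$, rewrites the comparison map as $j_m^*$, where
\[ j_m\colon \repD[m]\cup_{\repD[0]}(\repD[0]\diamond\repD[0])\hookrightarrow \repD[m]\diamond\repD[0] \]
is the inclusion, under the apex $\repD[0]$, of the last-vertex edge into the full fat cone; here $\repD[0]\diamond\repD[0]\cong\repD[1]$ and the gluing is along $\{m\}\hookrightarrow\repD[m]$. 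Concretely, both sides become $\Thn$-spaces of maps out of a prism that send the top face to the constant value $d$, so the comparison is $j_m^*$ on mapping $\Thn$-spaces. Restriction to the apex makes $\Map(\repD[m]\diamond\repD[0],D)\to D_0$ and $\Map(\repD[m]\cup_{\repD[0]}\repD[1],D)\to D_0$ into fibrations over $D_0$ (again by cartesian closedness and fibrancy of $D$), with fibers over $d$ equal to $(\slice{D}{d})_m$ and to $(\slice{D}{d})_0\times_{D_0}D_m$ respectively. Hence it suffices to prove the Key Lemma that $j_m$ is a trivial cofibration in $\SegsThnsset$: then, by the $\MSThnsset$-enrichment, $j_m^*$ is a weak equivalence on total mapping $\Thn$-spaces, and therefore on the fibers over $d$, which is exactly the desired comparison.

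Proving this Key Lemma is the main obstacle. The map $j_m$ is plainly a monomorphism, so the content is that it is a weak equivalence. Conceptually, the fat cone $\repD[m]\diamond\repD[0]$ is weakly equivalent to the strict join $\repD[m]\star\repD[0]=\repD[m+1]$, under which $j_m$ corresponds to the standard inclusion $\repD[m]\cup_{\{m\}}\repD[\{m,m+1\}]\hookrightarrow\repD[m+1]$ of the ordinal sum along a shared vertex, a well-known trivial cofibration in the Segal model structure; two-out-of-three then concludes. If one prefers to avoid the fat-versus-strict comparison, one can instead exhibit $j_m$ directly as a transfinite composite of pushouts of generating Segal trivial cofibrations: since $\repD[m]\diamond-$ preserves connected colimits by \cref{fatjoinprescolim}, the fat cone decomposes cell-by-cell, and each attachment is a Segal-trivial cofibration, a class closed under pushout and transfinite composition. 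Either route reduces the statement to the generating trivial cofibrations of $\SegsThnsset$ and completes the proof; this recovers exactly \cite[Theorem 5.25]{rasekh2023Dyoneda}.
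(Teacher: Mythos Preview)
Your argument is correct. The paper itself does not prove this statement: it simply cites \cite[Theorem~5.25]{rasekh2023Dyoneda}. You instead supply a direct proof, and the reduction to the Key Lemma---that the inclusion $j_m\colon\repD[m]\cup_{\{m\}}\repD[1]\hookrightarrow\repD[m]\diamond\repD[0]$ is a trivial cofibration in $\SegsThnsset$---is clean and the fiber identification (collapsing the apex copy of $\repThnS$ to a point turns $(\repD[m]\diamond\repD[0])\times\repThnS$ into $\repDThnS\diamond\repD[0]$) checks out.

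Two remarks. First, your preferred route to the Key Lemma uses that $\gamma_{m,0}\colon\repD[m]\diamond\repD[0]\to\repD[m+1]$ is a weak equivalence; in the paper this is \cref{gammaishtpyeq}, which appears \emph{after} the present proposition. There is no circularity, since \cref{gammaishtpyeq} is itself just a citation of \cite[Lemma~D.2.16]{RVelements} and depends on nothing here, but you should flag this or inline the simplicial argument so the proof is self-contained at this point in the paper. Once you have $\gamma_{m,0}$, your triangle $\gamma_{m,0}\circ j_m=(\text{Segal inclusion }\repD[m]\cup_{\{m\}}\repD[1]\hookrightarrow\repD[m+1])$ plus $2$-out-of-$3$ finishes the Key Lemma. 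Second, your alternative sketch (``exhibit $j_m$ directly as a transfinite composite of pushouts of generating Segal trivial cofibrations'') is too vague to stand on its own; since the first route already works, I would drop it.
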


As a consequence, we can show that the canonical projection from the cone construction is also a double $(\infty,n-1)$-right fibration.

\begin{prop} \label{prop:fibrantcone}
    Let $D$ be a fibrant object in $\SegsThnsset$ and $f\colon A\to D$ be a map in $\sThnsset$. Then the canonical projection
    \[ \slice{D}{f}\to D \]
    is a double $(\infty,n-1)$-right fibration. 
\end{prop}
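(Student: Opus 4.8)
The idea is to reduce the claim about the cone construction $\slice{D}{f}$ to the already-established claim about the slice construction $\slice{D}{d}$ from \cref{prop:fibrantslice}. The pullback description in \cref{sliceaspullback} exhibits $\slice{D}{f}$ as the pullback of $(Z^A)^{F[1]}\to Z^A\times Z^A$ along $\Delta\times\{f\}\colon Z\to Z^A\times Z^A$, while $\slice{(D^A)}{(f)}$ --- the fat slice of the $\sThnsset$-object $D^A$ at the vertex $f\in (D^A)_{0,0,0}$ --- is the pullback of the \emph{same} map $(D^A)^{F[1]}\to D^A\times D^A$ along $\id_{D^A}\times\{f\}$. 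Comparing the two pullback squares, one sees that $\slice{D}{f}$ is the pullback of the projection $\slice{(D^A)}{f}\to D^A$ along $\Delta\colon D\to D^A$. So the plan has three steps: (i) identify $\slice{D}{f}$ with this pullback of $\slice{(D^A)}{f}\to D^A$ along $\Delta$; (ii) observe that $D^A$ is fibrant in $\SegsThnsset$, hence by \cref{prop:fibrantslice} the projection $\slice{(D^A)}{f}\to D^A$ is a double $(\infty,n-1)$-right fibration; (iii) conclude that its pullback along $\Delta$ is again a double $(\infty,n-1)$-right fibration, since fibrant objects (hence fibrations to a fixed fibrant base, after reindexing along $\Delta^*\colon \sThnssetsliceshort{D^A}\to\sThnssetsliceshort{D}$) in the right-fibration model structures are stable under pullback.

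\textbf{Details of the steps.} For step (i), I would take the pullback square for $\slice{D}{f}$ in \cref{sliceaspullback} and factor the bottom map $\Delta\times\{f\}\colon D\to D^A\times D^A$ as $D\xrightarrow{\Delta\times\id} D^A\times D\xrightarrow{\id\times\{f\}} D^A\times D^A$ --- or more efficiently, directly verify by a universal-property chase that the big rectangle
\begin{tz}
\node[](1) {$\slice{D}{f}$};
\node[right of=1,xshift=1.5cm](2) {$\slice{(D^A)}{f}$};
\node[below of=1](3) {$D$};
\node[below of=2](4) {$D^A$};
\pullback{1};
\draw[->](1) to (2);
\draw[->](1) to (3);
\draw[->](3) to node[below,la]{$\Delta$} (4);
\draw[->](2) to (4);
\end{tz}
is a pullback, using that $\slice{(D^A)}{f}$ is the pullback of $(D^A)^{F[1]}\to D^A\times D^A$ along $\id_{D^A}\times\{f\}$ and that limits commute with limits. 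For step (ii), since $\SegsThnsset$ is cartesian closed (\cite[Theorem 5.2]{BR2}) and $D$ is fibrant there, the cotensor $D^A = \underline{\Hom}(A,D)$ is fibrant in $\SegsThnsset$; then \cref{prop:fibrantslice} applies verbatim with $D^A$ in place of $D$ and the vertex $f\in(D^A)_{0,0,0}$ in place of $d$. For step (iii), I would invoke that the model structure $\MSrightfib{D^A}$ is enriched over $\MSThnsset$ (\cite[Theorem 5.15]{rasekh2023Dyoneda}), and more to the point that \cref{postcomp-pullback} makes $\Delta^*\colon\MSrightfib{D^A}\to\MSrightfib{D}$ right Quillen (so it preserves fibrant objects), OR more elementarily that a pullback of a double $(\infty,n-1)$-right fibration along any map of $\sThnsset$ is again one --- this is immediate from the pointwise characterization of the right-fibration condition (the map $A_m\to A_0\times_{J_0}J_m$) together with the fact that the defining condition is a levelwise weak equivalence/fibration condition in $\MSThnsset$ stable under base change, since $\MSThnsset$ is right proper. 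Either route closes the argument.

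\textbf{Main obstacle.} The only genuinely nontrivial point is step (iii): being a double $(\infty,n-1)$-right fibration requires \emph{both} that the underlying map is a fibration in $\injsThnsset$ \emph{and} that the Segal-type maps $A_m\to A_0\times_{J_0}J_m$ are weak equivalences in $\MSThnsset$; one must check base change along $\Delta\colon D\to D^A$ preserves both. The fibration part is formal (fibrations are stable under pullback, and $\Delta^*$ is right Quillen for the injective/slice structures). The weak-equivalence part is where right properness of $\MSThnsset$ is used: pulling back the square defining $\slice{(D^A)}{f}_m\to \slice{(D^A)}{f}_0\times_{(D^A)_0}(D^A)_m$ along the (fibration) $D\to D^A$ and its induced maps, one gets the corresponding square for $\slice{D}{f}$, and right properness guarantees the top horizontal map stays a weak equivalence. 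I would state this cleanly as a short lemma ("pullbacks of double $(\infty,n-1)$-right fibrations are double $(\infty,n-1)$-right fibrations") if it is not already available, or simply cite \cref{postcomp-pullback} to get that $\Delta^*$ preserves fibrant objects. Everything else is a formal manipulation of the pullback squares in \cref{sliceaspullback}.
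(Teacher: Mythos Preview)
Your proposal is correct and follows essentially the same approach as the paper: reduce to the slice case by observing that $\slice{D}{f}\to D$ is the pullback along $\Delta\colon D\to D^A$ of the slice projection $\slice{(D^A)}{f}\to D^A$, use cartesian closedness of $\SegsThnsset$ to get $D^A$ fibrant so that \cref{prop:fibrantslice} applies, and then pull back. Your treatment of step (iii) is in fact more careful than the paper's one-line proof, which leaves the stability of double $(\infty,n-1)$-right fibrations under pullback implicit; invoking \cref{postcomp-pullback} (right Quillen $\Delta^*$ preserves fibrant objects) is the cleanest justification.
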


\begin{proof}
    This follows from the fact that the projection $\slice{D}{f}\to D$ is the pullback along $\Delta\colon D\to D^A$ of the projection $\slice{D^A}{f}\to D$ from the slice over $f\in (D^A)_{0,[0],0}$ which is a double $(\infty,n-1)$-right fibration by cartesian closedness of $\SegsThnsset$ and \cref{prop:fibrantslice}.
\end{proof}

\begin{rmk}
    Note that, if $C$ is a fibrant object in $\CSSsThnsset$ and $f\colon A\to C$ is a map in $\sThnsset$, then $\slice{C}{f}$ is always a double $(\infty,n-1)$-category, but in general not an $(\infty,n)$-category, as $\CSSsThnsset$ is not cartesian closed.
\end{rmk}

We also observe the following result.

\begin{cor} \label{diamondjoinLQ}
The adjunction
    \begin{tz}
\node[](1) {$\SegsThnsset$}; 
\node[right of=1,xshift=2.7cm](2) {${}^{A/} \SegsThnsset$}; 

\draw[->] ($(2.west)-(0,5pt)$) to node[below,la]{$\slice{(-)}{(-)}$} ($(1.east)-(0,5pt)$);
\draw[->] ($(1.east)+(0,5pt)$) to node[above,la]{$-\diamond A$} ($(2.west)+(0,5pt)$);

\node[la] at ($(1.east)!0.5!(2.west)$) {$\bot$};
\end{tz}
is a Quillen pair.
\end{cor}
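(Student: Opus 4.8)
The goal is to show that the adjunction $-\diamond A \dashv \slice{(-)}{(-)}$ between $\SegsThnsset$ and ${}^{A/}\SegsThnsset$ is a Quillen pair. Since a model structure is determined by its cofibrations and fibrant objects, and in particular a left adjoint is left Quillen as soon as it preserves cofibrations and trivial cofibrations, the plan is to verify these two conditions for $-\diamond A$, together with the compatibility with the respective localizations. I would first recall that the slice model structure ${}^{A/}\SegsThnsset$ has as cofibrations the monomorphisms (in the slice), and as fibrant objects those $A \to D$ with $D$ fibrant in $\SegsThnsset$.

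First I would handle cofibrations. By \cref{constr:join}, for fixed $A$ the functor $-\diamond A$ sends $B$ to the pushout of $A\amalg B \leftarrow A\times B\amalg A\times B \hookrightarrow A\times F[1]\times B$. A map $B\to B'$ then induces on fat joins a map which, by the gluing lemma for monomorphisms in the presheaf category $\sThnsset$, is again a monomorphism provided $B\to B'$ is; here one uses that $A\times(-)$ and $A\times F[1]\times(-)$ preserve monomorphisms (as $\sThnsset$ is a presheaf category and products with a fixed object preserve monos). So $-\diamond A$ preserves cofibrations. The trickier input is trivial cofibrations. Rather than attack this directly, the cleaner route is: the fat join/fat slice adjunction is already known at the level of the \emph{unlocalized} injective model structure, or can be reduced to a statement there, and then one invokes that $\SegsThnsset$ is a left Bousfield localization of $\injsThnsset$. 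Concretely, it suffices to check that $-\diamond A$ sends generating trivial cofibrations of $\SegsThnsset$ to weak equivalences in ${}^{A/}\SegsThnsset$; but the generating trivial cofibrations of the localized structure are (up to the ones of $\injsThnsset$) the horn-type maps together with the Segal maps, and for those one can use the explicit pushout description plus the cartesian closedness of $\SegsThnsset$ (\cite[Theorem 5.2]{BR2}) to see the fat join of a trivial cofibration with $A$ is a trivial cofibration.

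Actually the most economical argument, and the one I expect the authors use, goes through the already-established \cref{prop:fibrantslice} and the pullback description in \cref{sliceaspullback}: it is enough to show the right adjoint $\slice{(-)}{(-)}$ is \emph{right} Quillen, i.e.\ preserves fibrations and trivial fibrations. Given a fibration $D\to D'$ in $\SegsThnsset$ between fibrant objects, one wants $\slice{D}{d}\to \slice{D'}{d'}$ (over the images in $D$, $D'$) to be a fibration in the slice; but by \cref{sliceaspullback} the slice construction is built from cotensors $(-)^{F[1]}$ and pullbacks, and since $\SegsThnsset$ is cartesian closed (\cite[Theorem 5.2]{BR2}) cotensoring with $F[1]$ is right Quillen, so the whole construction is a composite/pullback of right Quillen functors; combined with \cref{prop:fibrantslice} guaranteeing the targets are fibrant in the appropriate right-fibration-adjacent sense, one concludes fibrancy is preserved. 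Trivial fibrations are handled the same way using that cotensoring preserves trivial fibrations and that monomorphisms are stable.

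The main obstacle I anticipate is matching up the fibrations of the \emph{slice} model structure ${}^{A/}\SegsThnsset$ with the pullback-of-cotensor description: fibrations in a slice $\mathcal{M}_{/X}$ (or under $X$) of a model category are detected in $\mathcal{M}$, so one reduces to showing that for a fibration $p\colon D\to D'$ in $\SegsThnsset$, the induced map on fat slices $\slice{D}{x}\to \slice{D'}{p(x)}$ is a fibration in $\SegsThnsset$ — and this is exactly where \cref{sliceaspullback} plus cartesian closedness and the fact that $\SegsThnsset$ is a localization of $\injsThnsset$ (so fibrations between fibrant objects are the injective fibrations with the Segal condition, which are stable under pullback and cotensor with representables) do the work. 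I would therefore organize the proof as: (1) recall the cofibrations and fibrant objects of both model structures; (2) show $-\diamond A$ preserves monomorphisms via the gluing lemma on the pushout of \cref{constr:join}; (3) use \cref{sliceaspullback}, cartesian closedness of $\SegsThnsset$, and \cref{prop:fibrantslice} to show $\slice{(-)}{(-)}$ preserves (trivial) fibrations between fibrant objects, hence is right Quillen; (4) conclude.
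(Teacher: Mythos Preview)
Your proposal is correct in spirit but takes a longer route than the paper. The paper's proof is one sentence: by \cref{constr:join}, the functor $-\diamond A$ is the pushout of
\[
(-)\amalg A \;\longleftarrow\; (-)\times A \amalg (-)\times A \;\lhook\joinrel\longrightarrow\; (-)\times F[1]\times A,
\]
each of which is left Quillen on $\SegsThnsset$ by cartesian closedness (and colimit-preservation for the coproduct); the leg along which one pushes out is objectwise a cofibration, so the pushout is a homotopy pushout and $-\diamond A$ is itself left Quillen. That is the whole argument.

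Your first idea --- use the pushout description plus cartesian closedness directly on the left adjoint --- was exactly this, but you set it aside in favor of a right-adjoint argument through \cref{sliceaspullback}. That detour can be made to work, but it is more involved: you would have to treat $\slice{D}{f}$ for a general $f\colon A\to D$ rather than $\slice{D}{d}$ at a point, and your argument only addresses fibrations \emph{between fibrant objects}, which is not by itself the definition of right Quillen; you would still need to invoke a suitable recognition criterion to close that gap. The paper's direct approach bypasses all of this and uses neither \cref{prop:fibrantslice} nor \cref{sliceaspullback} here.
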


\begin{proof}
    This follows from the fact that $-\diamond A$ is defined as a (homotopy) pushout of left Quillen functors using cartesian closedness of $\SegsThnsset$ and so is also left Quillen. 
\end{proof}

\section{Neat slice and cone constructions}

\label{SectionNeat}

In this section, we enhance to the context of double $(\infty,n-1)$-categories the slice and cone constructions from \cite{EhlersPorter}
(see also \cite[\textsection9.1-9.16]{JoyalNotes}, \cite[\textsection1.2.9]{LurieHTT}, \cite[\textsection2.4]{RVqCat} and \cite[Appendix~D.2]{RVelements}). For this, we introduce in \cref{sec:defnneatcone} another slice and cone constructions for double $(\infty,n-1)$-categories---which we refer to as \emph{neat}---, based on a join functor for double $(\infty,n-1)$-categories given levelwise by the usual join of simplicial sets. We then show in \cref{sec:pushoutjoin} that the pushout-join functor has good homotopical properties, from which we deduce in  \cref{sec:fibrantneatcone} that, over a double $(\infty,n-1)$-category, the canonical projections from the neat slice and cone constructions are double $(\infty,n-1)$-right fibrations. We further give in \cref{sec:fibersneatcone} a formula that is used at a later stage for the fibers of the canonical projection from the neat cone construction.

\subsection{The neat constructions} \label{sec:defnneatcone}

We define another join functor on the category $\sThnsset$ and provide yet other slice and cone constructions using this join.

\begin{rmk} \label{joinsset}
    Recall the classical join functor $-\star-\colon \Dset\times \Dset\to \Dset$. We will use the following properties of the join. For all $m,m'\geq 0$, there is an isomorphism in $\Dset$
    \[
    F[m]\star F[m']\cong F[m+1+m'],
    \]
    for all objects $A,B\in \Dset$, there is a canonical inclusion $A\amalg B\hookrightarrow A\star B$ and, by \cite[Lemma~D.2.7]{RVelements}, for all objects $A\in \Dset$, there are adjunctions
    \begin{tz}
\node[](1) {$\Dset$}; 
\node[right of=1,xshift=1.5cm](2) {${}^{A/} \Dset$}; 

\draw[->] ($(2.west)-(0,5pt)$) to ($(1.east)-(0,5pt)$);
\draw[->] ($(1.east)+(0,5pt)$) to node[above,la]{$-\star A$} ($(2.west)+(0,5pt)$);
\punctuation{2}{,};

\node[la] at ($(1.east)!0.5!(2.west)$) {$\bot$};

\node[right of=2,xshift=1.2cm](1) {$\Dset$}; 
\node[right of=1,xshift=1.5cm](2) {${}^{A/} \Dset$}; 

\punctuation{2}{.};

\draw[->] ($(2.west)-(0,5pt)$) to ($(1.east)-(0,5pt)$);
\draw[->] ($(1.east)+(0,5pt)$) to node[above,la]{$A\star -$} ($(2.west)+(0,5pt)$);

\node[la] at ($(1.east)!0.5!(2.west)$) {$\bot$};
\end{tz}
\end{rmk}

\begin{constr} 
    The \textbf{(neat) join functor}
    \[ -\star - \colon \sThnsset\times \sThnsset\to \sThnsset \]
    is induced by postcomposition along the join functor $-\star-\colon \Dset\times \Dset\to \Dset$. Given objects $A,B\in \sThnsset$, there is a canonical inclusion 
    \[ A\amalg B\hookrightarrow A\star B. \]
\end{constr}

\begin{rmk} 
 \label{joinprescolim}
    Given an object $A\in \sThnsset$, then the induced functors \[ A\star -, -\star A\colon \sThnsset\to {}^{A/} \sThnsset \]
    preserve colimits as those are computed levelwise in $\Dset$. In particular, the functors \[ A\star -, -\star A\colon \sThnsset\to \sThnsset \] 
    preserve connected colimits.
\end{rmk} 

By the adjoint functor theorem, we then get the following result. 

\begin{prop}
There are adjunctions
    \begin{tz}
\node[](1) {$\sThnsset$}; 
\node[right of=1,xshift=2.7cm](2) {${}^{A/} \sThnsset$}; 

\draw[->] ($(2.west)-(0,5pt)$) to node[below,la]{$\joinslice{(-)}{(-)}$} ($(1.east)-(0,5pt)$);
\draw[->] ($(1.east)+(0,5pt)$) to node[above,la]{$-\star A$} ($(2.west)+(0,5pt)$);

\punctuation{2}{,};

\node[la] at ($(1.east)!0.5!(2.west)$) {$\bot$};

\node[right of=2,xshift=2.4cm](1) {$\sThnsset$}; 
\node[right of=1,xshift=2.7cm](2) {${}^{A/} \sThnsset$}; 

\punctuation{2}{.};

\draw[->] ($(2.west)-(0,5pt)$) to node[below,la]{$\joinsliceunder{(-)}{(-)}$} ($(1.east)-(0,5pt)$);
\draw[->] ($(1.east)+(0,5pt)$) to node[above,la]{$A\star -$} ($(2.west)+(0,5pt)$);

\node[la] at ($(1.east)!0.5!(2.west)$) {$\bot$};
\end{tz}
\end{prop}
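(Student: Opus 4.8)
The plan is to deduce both adjunctions formally from the adjoint functor theorem, exactly in the way the analogous statement for the fat join was obtained earlier in the paper. It suffices to treat the pair $-\star A\dashv \joinslice{(-)}{(-)}$, since the argument for $A\star-\dashv \joinsliceunder{(-)}{(-)}$ is symmetric (interchanging the two arguments of the join throughout).

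First I would verify that $-\star A$ genuinely defines a functor $\sThnsset\to {}^{A/}\sThnsset$: the canonical inclusion $A\amalg B\hookrightarrow A\star B$ recorded in the preceding construction, restricted along $A\hookrightarrow A\amalg B$, gives a map $A\hookrightarrow B\star A$ that is natural in $B$, and this equips $B\star A$ functorially with the structure of an object under $A$. Next I would invoke \cref{joinprescolim}, by which this functor preserves all colimits — the point being that the neat join is computed levelwise in $\Dset$, where the classical join is cocontinuous. Finally, $\sThnsset$ is a presheaf category, hence locally presentable, and therefore so is the coslice ${}^{A/}\sThnsset$; a cocontinuous functor between locally presentable categories admits a right adjoint, which we take as $\joinslice{(-)}{(-)}$.

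There is no real obstacle here: the only mathematical input beyond category-theoretic generalities is the colimit-preservation, and that has already been reduced in \cref{joinprescolim} to the corresponding elementary fact for the join on $\Dset$. The one point to be attentive to is that the adjoint functor theorem must be applied to the functor whose target is the coslice ${}^{A/}\sThnsset$, rather than $\sThnsset$ itself, so that the right adjoint acquires the source displayed in the statement; this is exactly why \cref{joinprescolim} is formulated with the coslice as codomain.
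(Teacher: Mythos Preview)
Your proposal is correct and takes essentially the same approach as the paper: the paper simply states ``By the adjoint functor theorem, we then get the following result'' immediately after \cref{joinprescolim}, and your argument spells out exactly this deduction (colimit-preservation from \cref{joinprescolim} plus local presentability of presheaf categories and their coslices).
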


We now study the right adjoint of the functor $-\star A$, but everything can be dualized to $A\star -$.

\begin{notation} \label{def:neatslice}
    Let $Z$ be an object in $\sThnsset$ and $x\in Z_{0,0,0}$. We call the object $\joinslice{Z}{x}$ of $\sThnsset$ the \textbf{neat slice construction}. By definition, for $m,k\geq 0$ and $\theta\in \Thn$, an element of the set $(\joinslice{Z}{x})_{m,\theta,k}$ is given by a commutative diagram in $\sThnsset$ of the form
    \begin{tz}
        \node[](1) {$\repDThnS[m]\star \repD[0]$}; 
        \node[right of=1,xshift=1.2cm](2) {$Z$}; 
        \node[above of=1](3) {$\repD[0]$}; 
        \draw[->](1) to (2); 
        \draw[right hook->](3) to (1);
        \draw[->](3) to  node[right,la,yshift=5pt]{$x$} (2); 
    \end{tz}
    The canonical inclusions $\repDThnS\hookrightarrow \repDThnS\star \repD[0]$ induce a projection map 
    \[ \joinslice{Z}{x}\to Z. \]
\end{notation} 

\begin{defn}
    Let $f\colon A\to Z$ be a map in $\sThnsset$. We call the object $\joinslice{Z}{f}$ the \textbf{neat cone construction}. By definition, for $m,k\geq 0$ and $\theta\in \Thn$, an element of the set $(\joinslice{Z}{f})_{m,\theta,k}$ is given by a commutative diagram in $\sThnsset$ of the form
    \begin{tz}
        \node[](1) {$\repDThnS[m]\star A$}; 
        \node[right of=1,xshift=1.2cm](2) {$Z$}; 
        \node[above of=1](3) {$A$}; 
        \draw[->](1) to (2); 
        \draw[right hook->](3) to (1);
        \draw[->](3) to  node[right,la,yshift=5pt]{$f$} (2); 
    \end{tz}
    The canonical inclusions $\repDThnS\hookrightarrow \repDThnS\star A$ induce a projection map 
    \[ \joinslice{Z}{f}\to Z. \]
\end{defn}

\subsection{Homotopical properties of pushout-joins} \label{sec:pushoutjoin}

We aim to show that, for any double $(\infty,n-1)$-category, the canonical projections from the neat slice and cone constructions are double $(\infty,n-1)$-right fibrations. For this, we first show the following result in this section, generalizing \cite[\textsection 9.10]{JoyalNotes} and \cite[Lemma 2.1.2.3]{LurieHTT}.

\begin{thm} \label{thm:pushprodofjoins}
    Let $i\colon A\hookrightarrow B$ and $i'\colon A'\hookrightarrow B'$ be monomorphisms in $\sThnsset$. Then the pushout-join map 
    \[ i\pushprodstar i'=(A\star B'\amalg_{A\star A'} B\star A'\to B\star B')\]
    is a monomorphism in $\sThnsset$. Moreover, if $i$ or $i'$ is a trivial cofibration in $\injsThnsset$, then so is $i\pushprodstar i'$.
\end{thm}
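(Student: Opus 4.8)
The plan is to reduce the statement about the join $\star$ on $\sThnsset$ to the corresponding statement about the classical join $\star$ on $\Dset$, since the join on $\sThnsset$ is defined levelwise in the $\Thnop$-direction by postcomposition. Concretely, for a fixed $\theta \in \Thn$, evaluating at $\theta$ is a functor $\sThnsset \to \Dsset$ that commutes with the join (by construction, the join on $\sThnsset$ is levelwise the join on $\Dset$ after applying the constant-simplices embedding $\Dset \hookrightarrow \Dsset$ in the appropriate direction; more precisely one works one bisimplicial degree at a time). So it suffices to prove the two assertions for the classical join of (bi)simplicial sets, and then assemble them levelwise: a map in $\sThnsset$ is a monomorphism iff it is so levelwise, and likewise the injective model structure $\injsThnsset$ has its (trivial) cofibrations detected levelwise against the injective model structure on $\Dsset$, which in turn is detected degreewise against $\MSspace$.

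For the monomorphism claim, I would first record the combinatorial fact that the classical join preserves monomorphisms in each variable: this follows from the formula $(A \star B)_p = \coprod_{i+1+j = p}\! A_i \times B_j$ (with the convention $A_{-1} = B_{-1} = \ast$), which is manifestly functorial and injective in $A$ and in $B$ separately. Then the pushout-product $i \pushprodstar i'$ being a monomorphism is the usual Reedy-type argument: $-\star B'$ and $B \star -$ preserve monos, the pushout $A \star B' \amalg_{A \star A'} B \star A'$ injects into $B \star B'$ because in each simplicial degree everything is a disjoint union of products of monomorphisms of sets and one checks the relevant square of sets is a pushout of injections with injective total map. This is routine and I would not grind through it.

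The substantive part is the ``moreover'': if, say, $i'$ is a trivial cofibration in $\injsThnsset$, then $i \pushprodstar i'$ is too. Here the key input is \cref{thm:pushprodofjoins}'s analogue in $\Dset$, i.e.\ \cite[Lemma 2.1.2.3]{LurieHTT} (or \cite[\textsection9.10]{JoyalNotes}), which says exactly this for the Joyal model structure on $\sset$; but we need it for the \emph{injective} model structure on bisimplicial sets / on $\sThnsset$, i.e.\ with respect to the Kan--Quillen-detected levelwise weak equivalences, not the Joyal structure. I expect this to be the main obstacle, and I would handle it as follows. By standard arguments it is enough to test $i \pushprodstar i'$ against fibrant objects of $\injsThnsset$, equivalently to check that for every such fibrant $D$ the map $D^{i \pushprodstar i'}$ induced on internal homs is a trivial fibration; by the exponential adjunction $\sThnsset(X, D^{Y \star Z}) \cong \sThnsset(X \star ?, D)$-type manipulations (using the adjunctions $-\star A \dashv \joinslice{(-)}{(-)}$ and $A \star - \dashv \joinsliceunder{(-)}{(-)}$ recalled above), this translates into a statement about the neat slice/coslice constructions and their projections. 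Alternatively, and perhaps more cleanly, one reduces levelwise and degreewise to the statement in $\sset_{\Kan}$: there, the claim is that the pushout-join of a cofibration with a trivial cofibration is a trivial cofibration, which holds because $A \star - \colon \sset \to {}^{A/}\sset$ and $- \star A$ are \emph{left Quillen} for the Kan--Quillen model structure on the coslice (indeed $A \star B = A \amalg (A \times \Delta[1] \times B)/\!\sim$-flavored colimit built from $A \times \Delta[1] \times -$, which is left Quillen, along a mono — this is the analogue of \cref{diamondjoinLQ}). Granting that $- \star A$ is left Quillen into the coslice, the pushout-join of a cofibration and a trivial cofibration is a trivial cofibration by the usual two-variable-Quillen formalism, and monomorphisms/trivial cofibrations in $\injsThnsset$ are exactly detected levelwise-then-degreewise by this. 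The one genuine check is that $- \star A$ really is left Quillen for the Kan--Quillen (not just Joyal) structure on the coslice, which amounts to verifying it preserves the generating trivial cofibrations $\Lambda^k[p] \hookrightarrow \Delta[p]$ up to trivial cofibrations of the coslice — a direct computation using $\Delta[p] \star \Delta[q] \cong \Delta[p+1+q]$ and the fact that under the join a horn inclusion maps to an inner-horn-like inclusion which is anodyne. I would cite \cite[Lemma 2.1.2.3]{LurieHTT} for the sset-level input and then do only the levelwise/degreewise bookkeeping to transport it to $\sThnsset$.
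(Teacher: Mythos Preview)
Your reduction for the monomorphism clause is correct and matches the paper's one-line argument. The gap is in the ``moreover'' clause: you claim that trivial cofibrations in $\injsThnsset$ are ``detected levelwise against the injective model structure on $\Dsset$, which in turn is detected degreewise against $\MSspace$,'' but this misidentifies the model structure. By definition (see \cref{internalmodel}), $\injsThnsset$ is the injective model structure on simplicial objects in $\MSThnsset$, so its weak equivalences are maps which are levelwise (in the first $\Delta^{\op}$-variable) weak equivalences in the \emph{localized} structure $\MSThnsset$---not levelwise Kan equivalences in every direction. A trivial cofibration $i$ in $\injsThnsset$ therefore need not become, after evaluating at some $(\theta,k)\in\Thn\times\Delta$, a trivial cofibration in any model structure on $\Dset$; your degreewise reduction to $\sset_{\Kan}$ (or an appeal to Lurie's lemma, which in any case concerns the Joyal structure) only treats the strictly smaller class of trivial cofibrations in $\injsThnspace$.

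The paper's approach is designed precisely to handle this issue and is genuinely different from yours. Rather than evaluating levelwise, it establishes an explicit pushout formula (\cref{cor:pushoutforjoins}): for monomorphisms $f,f'$ in $\Dset$ and $g,g'$ in $\Thnsset$ with connected targets, the pushout-join $(f\pushprod g)\pushprodstar(f'\pushprod g')$ is a pushout of $(f\pushprodstar f')\pushprod(g\pushprod g')$. This decouples the two directions: the join $f\pushprodstar f'$ lives entirely in $\Dset$ (where only the monomorphism statement is needed), while the $\Thnsset$-direction contributes through the cartesian pushout-product $g\pushprod g'$, to which cartesian closedness of $\MSThnsset$ applies when $g$ is a trivial cofibration there (\cref{lem:pushprodjoin2}). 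A saturation argument (\cref{prop:pushprodofjoins}) then extends from maps of the form $f\pushprod g$ to arbitrary trivial cofibrations in $\injsThnsset$. Your levelwise evaluation destroys exactly the $\Thnsset$-structure needed for this step.
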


\begin{proof}
    As joins, pushouts, and monomorphisms in $\sThnsset$ can be computed levelwise in $\Dset$, the fact that $i\pushprodstar i'$ is a monomorphism follows directly from the similar statement for~$\Dset$. It remains to show that, if $i$ or $i'$ is a trivial cofibration in $\injsThnsset$, then so is $i\pushprodstar i'$, which we will address as \cref{prop:pushprodofjoins}.
\end{proof}

To prove \cref{prop:pushprodofjoins}, we first give explicit ways of computing joins in terms of pushouts. Recall from \cite[Definition 3.1.5]{MRR1} that a $\Thn$-space $X$ is said to be \emph{connected} if the set $\pi_0 X$ is isomorphic to a point, where $\pi_0\colon \Thnsset\to \set$ denotes the left adjoint of the inclusion.

\begin{lemma} \label{lemma:join reps}
    Let $m,m'\geq 0$ and $X,X'$ be connected $\Thn$-spaces. The join $F[m,X]\star F[m',X']$ of representables can be computed as the following pushout in $\sThnsset$.
    \begin{tz}
        \node[](1) {$(\repD\amalg \repD[m'])\times X\times X'$}; 
        \node[below of=1](2) {$(\repD[m]\star F[m'])\times X\times X'$}; 
        \node[right of=1,xshift=4cm](3) {$F[m,X] \amalg F[m',X']$}; 
        \node[below of=3](4) {$F[m,X]\star F[m',X']$}; 
        \pushout{4}; 
        \draw[right hook->](1) to (2); 
        \draw[->](1) to (3); 
        \draw[->](2) to (4); 
        \draw[right hook->](3) to (4);
    \end{tz}
\end{lemma}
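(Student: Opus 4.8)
The plan is to reduce the statement about joins of $\Thn$-space-valued presheaves on $\Delta^{\op}$ to the well-understood combinatorics of the classical join in $\Dset$, exploiting the fact that everything in sight is computed levelwise. First I would fix $m, m' \geq 0$ and connected $\Thn$-spaces $X, X'$, and observe that since joins, products, and pushouts in $\sThnsset = \set^{\Thnop \times \Dop}$ are computed levelwise in each simplicial degree, it suffices to verify the claimed pushout square after evaluating at an arbitrary $\theta \in \Thn$; the resulting objects live in $\Dset = \set^{\Dop}$. Equivalently, since colimits in $\sThnsset$ are also computed levelwise in $\Thnsset$, and $X, X'$ being connected means $\pi_0 X \cong \pi_0 X' \cong \ast$, I would unwind what the join $F[m,X] \star F[m',X']$ is degreewise: in bidegree $(\theta, [j])$, an element is a map $F[j] \to F[m] \star F[m']$ in $\Dset$ together with compatible data in $X$ and $X'$ indexed by which ``side'' of the join each vertex of $[j]$ maps to.

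The key computational input is the description of maps into a classical join: a map $F[j] \to F[m] \star F[m'] \cong F[m + 1 + m']$ in $\Dset$ is a weakly monotone map $[j] \to [m+1+m']$, which decomposes $[j]$ into a (possibly empty) initial interval landing in the $F[m]$-part and a (possibly empty) final interval landing in the $F[m']$-part. Accordingly, the join $F[m,X] \star F[m',X']$ in bidegree $(\theta,[j])$ is the set of triples consisting of such a map $[j] \to [m+1+m']$, a $(\theta,[j_1])$-simplex of $X$ (on the $F[m]$-part, of appropriate length $j_1$), and a $(\theta,[j_2])$-simplex of $X'$ (on the $F[m']$-part), where $j_1 + j_2$ relates to $j$ in the evident way. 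The two ``degenerate extremes''---where the map $[j] \to [m+1+m']$ lands entirely in $F[m]$ or entirely in $F[m']$---are exactly the copies of $F[m,X]$ and $F[m',X']$ inside the join, giving the top-right corner $F[m,X] \amalg F[m',X']$ of the square. The remaining simplices, where both intervals are nonempty, carry a pair of simplices from $X$ and from $X'$ simultaneously; using connectedness of $X$ and $X'$ (so that $\pi_0$ contributes nothing and the ``glued'' data over the cut point is unambiguous), these are precisely parametrized by $(\repD[m] \star F[m']) \times X \times X'$, i.e., an internal simplex of the combinatorial join times a point of $X$ times a point of $X'$. I would then check that gluing the boundary cases back in along $(\repD \amalg \repD[m']) \times X \times X' \hookrightarrow (\repD[m] \star F[m']) \times X \times X'$ yields exactly $F[m,X]\star F[m',X']$, which is a routine verification once the bookkeeping is set up.

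Concretely I would argue that the square is a pushout by checking it is so levelwise in $\Dset$ for each $\theta$: the map $(\repD[m]\amalg\repD[m'])\times X\times X' \to (\repD[m]\star F[m'])\times X\times X'$ is a monomorphism (being a product of the monomorphism $F[m]\amalg F[m'] \hookrightarrow F[m]\star F[m'] = F[m+1+m']$ with $X \times X'$), so the pushout is computed by gluing along that subobject, and the resulting object matches the degreewise description of $F[m,X]\star F[m',X']$ given above. The key point making this work is that $X$ and $X'$ being connected is exactly what guarantees the simplices ``straddling the cut'' are governed by the product $X \times X'$ rather than by some more complicated gluing over $\pi_0$ or over the nerve of the one-point $\Thn$-category; for general $X$, $X'$ the join would not have such a clean pushout presentation.

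The main obstacle I anticipate is the careful combinatorial bookkeeping in the degreewise description: correctly identifying, for a map $[j] \to [m+1+m']$, the lengths of the two intervals and how the extra ``pivot'' vertex (coming from the $+1$ in $F[m+1+m']$) interacts with the $X$- and $X'$-coordinates, and confirming that the four nodes of the square assemble to give precisely the join with no over- or under-counting of simplices. This is entirely elementary but error-prone. Once the degreewise identification is pinned down, the pushout claim is immediate since monomorphisms, products, and pushouts in $\sThnsset$ are all levelwise, and the connectedness hypothesis is used exactly at the point of matching the ``straddling'' simplices with $(\repD[m]\star F[m']) \times X \times X'$.
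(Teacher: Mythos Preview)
Your levelwise approach is correct and is essentially the paper's argument. The paper's one-line proof packages the same reduction more abstractly: since the join is computed levelwise in $\Dset$, and at each level $(\theta,k)$ one has $F[m,X]_{\theta,k}\cong\coprod_{X_{\theta,k}}F[m]$, the formula follows from the fact that $A\star-$ and $-\star B$ preserve colimits when valued in the coslice (\cref{joinsset,joinprescolim}), together with the description of coproducts in ${}^{A/}\Dset$ as pushouts over $A$ in $\Dset$. Iterating this turns the join of the two coproducts into a double coproduct-in-coslices of copies of $F[m]\star F[m']$, which unwinds to exactly the claimed pushout; this is the same computation you propose, just without the element-by-element bookkeeping.

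One correction: your account of where connectedness enters is off. If you run your own levelwise computation you will see that for arbitrary sets $S=X_{\theta,k}$ and $T=X'_{\theta,k}$ the ``straddling'' $j$-simplices of $(F[m]\times S)\star(F[m']\times T)$ are already exactly $\coprod_{p+q=j-1}F[m]_p\times F[m']_q\times S\times T$, and the pushout along $(F[m]\amalg F[m'])\times S\times T$ simply collapses the redundant $T$-factor on the $F[m]$-side and the redundant $S$-factor on the $F[m']$-side; no hypothesis on $S,T$ is invoked. Connectedness of $X,X'$ is a standing assumption in this part of the paper rather than an ingredient of this particular pushout. (Also, there is no extra ``pivot vertex'' in $F[m+1+m']$: it has exactly $(m+1)+(m'+1)$ vertices, all accounted for by the two factors.)
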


\begin{proof}
    This follows from the levelwise definition of the join 
    and the formulas for coproducts in ${}^{A/}\Dset$ in terms of pushouts in $\Dset$.
\end{proof}

\begin{lemma} \label{lem:joinofconnected}
    Let $A,A'\in \Dset$ and $X,X'\in \Thnsset$ be connected objects. Then the join $(A\times X)\star (A'\times X')$ can be computed as the following pushout in $\sThnsset$
    \begin{tz}
        \node[](1) {$(A\amalg A')\times X\times X'$}; 
        \node[below of=1](2) {$(A\star A')\times X\times X'$}; 
        \node[right of=1,xshift=3.2cm](3) {$A\times X\amalg A'\times X'$}; 
        \node[below of=3](4) {$(A\times X)\star (A'\times X')$}; 
        \pushout{4}; 
        \draw[right hook->](1) to (2); 
        \draw[->](1) to (3); 
        \draw[->](2) to (4); 
        \draw[right hook->](3) to (4);
    \end{tz}
\end{lemma}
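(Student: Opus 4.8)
The plan is to reduce this statement to the previous lemma, \cref{lemma:join reps}, by treating the simplicial sets $A$ and $A'$ as colimits of their representables and exploiting the colimit-preservation properties of the join functor. First I would write $A = \colim_{\repD[m]\to A} \repD[m]$ and $A' = \colim_{\repD[m']\to A'} \repD[m']$ as colimits indexed over the respective categories of simplices; this is a \emph{connected} colimit precisely when $A$ is nonempty (and similarly for $A'$), which is guaranteed here since $A$ and $A'$ support maps from objects of $\Dset$ only if they are inhabited — but more carefully, one should note that the relevant colimits are connected because the simplex category of any simplicial set is connected when the simplicial set is nonempty, and the degenerate case of empty $A$ or $A'$ makes both sides of the claimed pushout trivially agree.

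Next I would use \cref{joinprescolim}: since $-\star A'$ and $A\star-$ preserve connected colimits, and the product functors $(-)\times X$, $(-)\times X'$ preserve all colimits (being left adjoints in the cartesian closed categories $\Dset$ and $\sThnsset$), each of the four corners of the square in the statement commutes with forming $A$ and $A'$ as colimits of representables. Concretely, $(A\times X)\star(A'\times X')$ is the connected colimit over pairs $(\repD[m]\to A,\, \repD[m']\to A')$ of the objects $(\repD[m]\times X)\star(\repD[m']\times X')$, and similarly the pushout corner $(A\amalg A')\times X\times X'$, the source $(A\star A')\times X\times X'$, and the coproduct $A\times X\amalg A'\times X'$ are the corresponding connected colimits of their representable analogues. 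Then the desired pushout square is obtained by taking a connected colimit (over the product of the two simplex categories) of the pushout squares provided by \cref{lemma:join reps} applied to each pair $(\repD[m],\repD[m'])$ with the connected $\Thn$-spaces $X$, $X'$; since colimits commute with pushouts and with forming the indexing coproducts, the colimit of pushout squares is again a pushout square, which is exactly the claimed description.

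The main obstacle I anticipate is bookkeeping around connectedness and the interaction of $\pi_0$ with the product: \cref{lemma:join reps} is stated only for \emph{connected} $\Thn$-spaces $X,X'$, so I must be careful that nothing forces me to split $X$ or $X'$ along their connected components — fortunately the hypothesis of \cref{lem:joinofconnected} already assumes $X,X'$ connected, so this is inherited directly and the representable-level lemma applies verbatim. A secondary subtlety is verifying that the connected colimit indexing category (a product of two nonempty simplex categories) is itself connected, so that \cref{joinprescolim} genuinely applies to the outer colimit as well; this is elementary since a product of connected categories is connected. Finally I would remark, as a sanity check, that when $m,m'$ range the formula $\repD[m]\star F[m'] = F[m+1+m']$ from \cref{joinsset} makes the top map $(\repD[m]\amalg\repD[m'])\times X\times X' \hookrightarrow (\repD[m]\star F[m'])\times X\times X'$ in \cref{lemma:join reps} the evident inclusion, and this is compatible under the colimit maps, so the resulting square has the claimed maps. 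Given these observations the proof is short and essentially formal.
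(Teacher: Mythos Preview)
Your approach is essentially identical to the paper's one-line proof, which simply invokes \cref{joinprescolim} (join preserves connected colimits) and \cref{lemma:join reps} (the representable case). One small correction: the simplex category of $A$ is connected precisely when $A$ is \emph{connected}, not merely nonempty---but since connectedness of $A,A'$ is already part of the hypothesis, your argument goes through as written.
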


\begin{proof}
    This follows from the fact that the join preserves connected colimits by \cref{joinprescolim} and from \cref{lemma:join reps}.
\end{proof}

\begin{notation}
    For $\otimes\in \{\times,\star\}$ and monomorphisms $f\colon A\hookrightarrow B$ and $g\colon X\hookrightarrow Y$ in $\sThnsset$, we write
    \[ S(f\,\widehat{\otimes}\, g)\coloneqq A\otimes Y\amalg_{A\otimes X} B\otimes X \]
    for the source of the pushout-product map $(A\xhookrightarrow{f} B)\,\widehat{\otimes}\, (X\xhookrightarrow{g} Y)=(A\otimes Y\amalg_{A\otimes X} B\otimes X\hookrightarrow B\otimes Y)$.
\end{notation}

\begin{lemma} \label{lem:sourceofpojoin}
    Let $f\colon A\hookrightarrow B$, $f'\colon A'\hookrightarrow B'$ in $\Dset$ and $g\colon X\hookrightarrow Y$, $g'\colon X'\hookrightarrow Y'$ in $\Thnsset$ be monomorphisms with connected target. The source $S((f\pushprod g)\pushprodstar (f'\pushprod g'))$ of the pushout-join can be computed as the following pushout in $\sThnsset$
     \begin{tz}
        \node[](1) {$B\times Y\times Y'\amalg B'\times Y'\times Y$}; 
        \node[below of=1](2) {$S((f\pushprodstar f')\pushprod (g\pushprod g'))$}; 
        \node[right of=1,xshift=3.8cm](3) {$B\times Y\amalg B'\times Y'$}; 
        \node[below of=3](4) {$S((f\pushprod g)\pushprodstar (f'\pushprod g'))$}; 
        \pushout{4}; 
        \draw[right hook->](1) to (2); 
        \draw[->](1) to (3); 
        \draw[->](2) to (4); 
        \draw[right hook->](3) to (4);
    \end{tz} 
\end{lemma}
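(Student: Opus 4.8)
The plan is to realize both objects in the asserted pushout square as subobjects of the single join $(B\times Y)\star(B'\times Y')$, and to identify them there. Since $Y$ and $Y'$ are connected, \cref{lem:joinofconnected} presents $P\coloneqq(B\times Y)\star(B'\times Y')$ as the pushout of the span
\[
B\times Y\amalg B'\times Y'\ \xleftarrow{\ \ }\ (B\amalg B')\times Y\times Y'\ \hookrightarrow\ (B\star B')\times Y\times Y' ;
\]
write $P_2$, $P_0$, $P_1$ for its three corners, and note that the left-hand leg is the evident pair of projections, while the right-hand leg $P_0\hookrightarrow P_1$ is a monomorphism, as is the resulting map $P_2\to P$. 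Since $\sThnsset$ is a presheaf category, pushouts along monomorphisms in it are van Kampen, hence stable under pullback; in particular $P_0=P_1\times_P P_2$, and for every subobject $T\hookrightarrow P$ one gets a pushout decomposition $T=(T\times_P P_1)\amalg_{T\times_P P_0}(T\times_P P_2)$ with $T\times_P P_0\hookrightarrow T\times_P P_1$ again a monomorphism. (Note that $P_1\to P$ is \emph{not} itself a monomorphism — it collapses the vertex factors $Y$ and $Y'$ — so these really are pullbacks, not mere intersections.)

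I would apply this to $T\coloneqq S((f\pushprod g)\pushprodstar(f'\pushprod g'))$, which is a subobject of $P$ as the source of a monomorphism with target $P=(B\times Y)\star(B'\times Y')$. By construction of the pushout-join source, the two canonical inclusions $B\times Y\hookrightarrow(B\times Y)\star S(f'\pushprod g')$ and $B'\times Y'\hookrightarrow S(f\pushprod g)\star(B'\times Y')$ — which together are precisely the leg $P_2\hookrightarrow P$ — both factor through $T$, so $T\times_P P_2=P_2$, and hence also $T\times_P P_0=P_0$. Substituting these into the decomposition above, it remains only to show that $T\times_P P_1\cong S\big((f\pushprodstar f')\pushprod(g\pushprod g')\big)$, the latter being regarded as a subobject of $P_1=(B\star B')\times(Y\times Y')$ (being the source of the monomorphism $(f\pushprodstar f')\pushprod(g\pushprod g')$); this will exhibit $T$ as exactly the pushout in the statement.

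To prove that last identification I would first expand $T$, inside $P$, as the union of subobjects $T = S(f\pushprod g)\star(B'\times Y')\ \cup\ (B\times Y)\star S(f'\pushprod g')$, which holds because the join preserves connected colimits in each variable (\cref{joinprescolim}) and preserves monomorphisms and pullbacks of monomorphisms, so it carries the pushout squares defining $S(f\pushprod g)$ and $S(f'\pushprod g')$ to the corresponding unions of joins. Pulling this back along $P_1\to P$ and computing summand by summand — using \cref{lem:joinofconnected} once more to recognize the spanning part of $P$ as $(B\star B')^{\mathrm{span}}\times Y\times Y'$ and to see how the vertex factors collapse — one obtains
\[
T\times_P P_1\;=\;S(f\pushprodstar f')\times(Y\times Y')\ \cup\ (B\star B')\times S(g\pushprod g')
\]
as subobjects of $(B\star B')\times(Y\times Y')$; here one uses that every vertex of $B\star B'$ already lies in $S(f\pushprodstar f')=(A\star B')\cup(B\star A')$, since $B\subseteq B\star A'$ and $B'\subseteq A\star B'$. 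The right-hand side is, by definition, $S\big((f\pushprodstar f')\pushprod(g\pushprod g')\big)$, which completes the argument.

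The only genuinely computational point — and hence the main obstacle — is the last displayed equality: one must track, levelwise in $\Dset$, how each of the joins occurring in the expansion of $T$ pulls back along the vertex-collapsing map $P_1\to P$. This is elementary, since all the subobjects involved are complemented (the vertex/spanning decomposition of a simplicial join is levelwise a coproduct), but it is fiddly — the pullbacks of the individual joins are larger than one might naively guess, and only their union simplifies to the stated formula. The connectedness hypotheses on $B$, $B'$, $Y$, $Y'$ are used precisely so that \cref{lem:joinofconnected} applies, both to the ambient join $P$ and to the spanning parts appearing along the way.
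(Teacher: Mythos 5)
Your proposal is correct, but it follows a genuinely different route from the paper's. The paper disposes of this lemma by pure colimit calculus: it expands each join occurring in the source of the pushout-join via \cref{lem:joinofconnected} and then rearranges the resulting iterated colimits by (several applications of) Fubini, with no appeal to any pullback arguments. You instead embed everything into the ambient join $P=(B\times Y)\star(B'\times Y')$, invoke adhesivity of the presheaf category $\sThnsset$ (pushouts along monomorphisms are van Kampen, hence pull back to pushouts and are themselves pullbacks) to decompose the subobject $T=S((f\pushprod g)\pushprodstar(f'\pushprod g'))$ along the presentation of $P$ given by \cref{lem:joinofconnected}, and reduce the whole lemma to the single identification $T\times_P P_1=S(f\pushprodstar f')\times Y\times Y'\,\cup\,(B\star B')\times S(g\pushprod g')$ inside $P_1=(B\star B')\times Y\times Y'$. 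That identity does hold, and the ``fiddly'' check you defer closes as you predict: levelwise in $\Dset$, on the two end summands of $P_1$ both sides are everything (the pullback because those summands are carried by $P_1\to P$ into $P_2\subseteq T$, the right-hand side because $B\subseteq B\star A'$ and $B'\subseteq A\star B'$ lie in $S(f\pushprodstar f')$), while on a mixed summand indexed by $B_i\times B'_j$ both sides consist exactly of the tuples whose $B$-coordinate lies in $A_i$, or whose $B'$-coordinate lies in $A'_j$, or whose $Y$-coordinate lies in $X$, or whose $Y'$-coordinate lies in $X'$. What your route buys is a conceptual reading of the square in the statement: it is literally the pullback of the van Kampen square presenting $P$ along $T\hookrightarrow P$, with two corners absorbed because $P_2$ and $P_0$ already land in $T$; the cost is the topos-theoretic input (adhesivity, pullbacks preserving unions of subobjects) plus one levelwise subobject computation, whereas the paper's Fubini argument needs no such input but buries the geometry in colimit bookkeeping. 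Two small points to tighten: replace ``every vertex of $B\star B'$ lies in $S(f\pushprodstar f')$'' by the statement you actually use, namely that the two end pieces $B\subseteq B\star A'$ and $B'\subseteq A\star B'$ are contained in $S(f\pushprodstar f')$; and record explicitly that under your three identifications the structure maps of the pulled-back square are the canonical projections $P_0\to P_2$ and the inclusion $P_0\hookrightarrow S((f\pushprodstar f')\pushprod(g\pushprod g'))$, so that the pushout you obtain is the square displayed in the lemma and not merely an abstract one with the same corners.
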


\begin{proof}
    This can be deduced from \cref{lem:joinofconnected} through multiple tedious but rather straightforward applications of Fubini's theorem for colimits.
\end{proof}

\begin{lemma} \label{cor:pushoutforjoins}
    Let $f\colon A\hookrightarrow B$, $f'\colon A'\hookrightarrow B'$ in $\Dset$ and $g\colon X\hookrightarrow Y$, $g'\colon X'\hookrightarrow Y'$ in $\Thnsset$ be monomorphisms with connected target. We have the following pushout in $\sThnsset$.
    \begin{tz}
        \node[](1) {$S((f\pushprodstar f')\pushprod (g\pushprod g'))$}; 
        \node[right of=1,xshift=3.6cm](2) {$S((f\pushprod g)\pushprodstar (f'\pushprod g'))$}; 
        \node[below of=1](3) {$(B\star B')\times Y\times Y'$}; 
        \node[below of=2](4) {$(B\times Y)\star (B'\times Y')$}; 
        \pushout{4}; 
        \draw[->](1) to (2); 
        \draw[right hook->](1) to node[left,la]{$(f\pushprodstar f')\pushprod (g\pushprod g')$} (3); 
        \draw[right hook->](2) to node[right,la]{$(f\pushprod g)\pushprodstar (f'\pushprod g')$} (4); 
        \draw[->](3) to (4);
    \end{tz} 
\end{lemma}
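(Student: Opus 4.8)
The plan is to realise the claimed square as the right-hand piece of a horizontal pasting of two pushout squares: the left-hand piece will be the pushout of \cref{lem:sourceofpojoin}, and the total rectangle will be the pushout of \cref{lem:joinofconnected} applied to $B,B'\in \Dset$ and the connected $\Thn$-spaces $Y,Y'$.

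Concretely, I would assemble the diagram in $\sThnsset$
\begin{tz}
\node[](1) {$(B\amalg B')\times Y\times Y'$};
\node[right of=1,xshift=4cm](2) {$S((f\pushprodstar f')\pushprod(g\pushprod g'))$};
\node[right of=2,xshift=4.2cm](3) {$(B\star B')\times Y\times Y'$};
\node[below of=1](4) {$B\times Y\amalg B'\times Y'$};
\node[below of=2](5) {$S((f\pushprod g)\pushprodstar(f'\pushprod g'))$};
\node[below of=3](6) {$(B\times Y)\star(B'\times Y')$};
\pushout{5}
\draw[right hook->](1) to (2);
\draw[right hook->](2) to (3);
\draw[->](1) to (4);
\draw[->](2) to (5);
\draw[->](3) to (6);
\draw[right hook->](4) to (5);
\draw[right hook->](5) to (6);
\end{tz}
in which the left-hand square (columns one and two) is the pushout square of \cref{lem:sourceofpojoin} --- drawn up to transposition, and with its top-left corner $B\times Y\times Y'\amalg B'\times Y'\times Y$ identified with $(B\amalg B')\times Y\times Y'$ via the symmetry isomorphism $Y'\times Y\cong Y\times Y'$ --- the two right-hand horizontal maps are the pushout-product $(f\pushprodstar f')\pushprod(g\pushprod g')$ and the pushout-join $(f\pushprod g)\pushprodstar(f'\pushprod g')$, and the rightmost vertical map $(B\star B')\times Y\times Y'\to (B\times Y)\star(B'\times Y')$ together with the two outer horizontal composites are those of the pushout square of \cref{lem:joinofconnected}.

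The step I expect to be the main obstacle is verifying that these two pushout squares genuinely glue into the commutative rectangle above. Explicitly, one must check that the canonical map $(B\amalg B')\times Y\times Y'\to (B\star B')\times Y\times Y'$ of \cref{lem:joinofconnected} equals the composite of the top-left map of \cref{lem:sourceofpojoin} with the pushout-product $(f\pushprodstar f')\pushprod(g\pushprod g')$, and likewise that the canonical map $B\times Y\amalg B'\times Y'\to (B\times Y)\star(B'\times Y')$ equals the composite of the bottom-left map of \cref{lem:sourceofpojoin} with the pushout-join $(f\pushprod g)\pushprodstar(f'\pushprod g')$ (the remaining shared map, the leftmost vertical one, being literally common to the two squares). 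Neither identity is conceptually deep, but this is the one place where one must actually track the maps rather than quote a black box: it comes down to unwinding the definitions, every map in sight being built from the canonical inclusions into joins and coproducts together with the projections implicit in \cref{lem:joinofconnected}, just as in the derivation of \cref{lem:sourceofpojoin} from \cref{lem:joinofconnected}.

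Granting the gluing, the left-hand square is a pushout by \cref{lem:sourceofpojoin} and the outer rectangle is a pushout by \cref{lem:joinofconnected}, so the pasting law for pushout squares yields that the right-hand square is a pushout as well. Reading that square with the two maps out of $S((f\pushprodstar f')\pushprod(g\pushprod g'))$ --- the horizontal one to $S((f\pushprod g)\pushprodstar(f'\pushprod g'))$ and the vertical pushout-product to $(B\star B')\times Y\times Y'$ --- as the two legs of the defining span gives exactly the asserted pushout.
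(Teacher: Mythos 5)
Your argument is correct and is essentially the paper's own proof: the paper likewise deduces the square by pasting/cancellation of pushouts, using the pushout of \cref{lem:sourceofpojoin} for the left-hand square and that of \cref{lem:joinofconnected} for the outer rectangle. The gluing compatibility you flag is exactly the routine unwinding of canonical maps that the paper leaves implicit.
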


\begin{proof}
    This follows from the cancellation property of pushouts using the pushouts from \cref{lem:joinofconnected,lem:sourceofpojoin}.
\end{proof}

Using these computations, we can now prove that, if we have a monomorphism and a trivial cofibration in $\SegsThnsset$ of specific forms, then their pushout-join with respect to the join is also a trivial cofibration in $\SegsThnsset$. 

\begin{lemma} \label{lem:pushprodjoin2}
    Let $f\colon A\hookrightarrow B$, $f'\colon A'\hookrightarrow B'$ in $\Dset$ and $g\colon X\hookrightarrow Y$, $g'\colon X'\hookrightarrow Y'$ in $\Thnsset$ be monomorphisms with connected target such that $g$ is a trivial cofibration in $\MSThnsset$. Then the pushout-join map 
    \[ (f\pushprod g)\pushprodstar (f'\pushprod g')\]
    is a weak equivalence in $\injsThnsset$. 
\end{lemma}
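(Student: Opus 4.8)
The plan is to reduce the claim to an ordinary pushout-product estimate via the pushout square of \cref{cor:pushoutforjoins}, which exhibits the map $(f\pushprod g)\pushprodstar(f'\pushprod g')$ as a cobase change of the pushout-product $(f\pushprodstar f')\pushprod(g\pushprod g')$. Since trivial cofibrations in $\injsThnsset$ are stable under cobase change, it then suffices to show that $(f\pushprodstar f')\pushprod(g\pushprod g')$ is a trivial cofibration in $\injsThnsset$; this is a touch stronger than the asserted weak equivalence, but it comes for free (and is consistent with \cref{thm:pushprodofjoins}, which already gives that the map in question is a monomorphism).

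To prove this, I would first collect two inputs. Since $g$ is a trivial cofibration and $g'$ a monomorphism, hence a cofibration, in the cartesian closed model structure $\MSThnsset$, the pushout-product axiom gives that $g\pushprod g'$ is a trivial cofibration in $\MSThnsset$; and since $f,f'$ are monomorphisms in $\Dset$, hence in $\sThnsset$, \cref{thm:pushprodofjoins} shows that $f\pushprodstar f'$ is a monomorphism in $\sThnsset$, i.e.\ a cofibration in $\injsThnsset$. Then I would use that $\injsThnsset$, being the injective model structure on simplicial objects in the cartesian closed model category $\MSThnsset$, is itself cartesian: its cofibrations and its weak equivalences are exactly the levelwise (in the simplicial direction) cofibrations and weak equivalences of $\MSThnsset$, while cartesian products and pushouts in $\sThnsset$ are computed levelwise, so the pushout-product axiom for $\times$ is inherited levelwise from $\MSThnsset$. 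In particular, via the inclusion $\Thnsset\hookrightarrow\sThnsset$ the map $g\pushprod g'$ becomes a (levelwise) trivial cofibration in $\injsThnsset$, and the pushout-product of the cofibration $f\pushprodstar f'$ with it, namely $(f\pushprodstar f')\pushprod(g\pushprod g')$, is a trivial cofibration in $\injsThnsset$; feeding this into the pushout of \cref{cor:pushoutforjoins} finishes the argument.

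I do not expect a genuine obstacle, since the combinatorial heavy lifting is already packaged in the preceding lemmas, culminating in \cref{cor:pushoutforjoins}. The only points needing a little care are checking that $\injsThnsset$ satisfies the pushout-product axiom with respect to $\times$ --- immediate once one recalls that injective cofibrations and weak equivalences are levelwise and that $\MSThnsset$ is cartesian closed --- and keeping track of the fact that $(f\pushprodstar f')\pushprod(g\pushprod g')$ pairs a genuine object of $\sThnsset$ with one pulled back from $\Thnsset$, which is harmless precisely because $\times$ is computed levelwise.
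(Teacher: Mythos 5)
Your proposal is correct and follows essentially the same route as the paper: cartesian closedness of $\MSThnsset$ gives that $g\pushprod g'$ is a trivial cofibration, cartesian closedness of $\injsThnsset$ then makes $(f\pushprodstar f')\pushprod(g\pushprod g')$ a trivial cofibration, and \cref{cor:pushoutforjoins} exhibits $(f\pushprod g)\pushprodstar(f'\pushprod g')$ as its cobase change. Your extra remarks (that $f\pushprodstar f'$ is a monomorphism and that the levelwise description of $\injsThnsset$ justifies its pushout-product axiom) are just a more explicit spelling-out of what the paper compresses into "by cartesian closedness".
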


\begin{proof}
    Note that, if $g$ is a trivial cofibration in $\MSThnsset$, then $g\pushprod g'$ is a trivial cofibration in $\MSThnsset$ by cartesian closedness. Hence the pushout-product map \[ (f\pushprodstar f')\pushprod (g\pushprod g') \]
    is also a trivial cofibration $\injsThnsset$ by cartesian closedness. By \cref{cor:pushoutforjoins}, the map 
    \[ (f\pushprod g)\pushprodstar (f'\pushprod g') \]
    is a pushout of the trivial cofibration $(f\pushprodstar f')\pushprod (g\pushprod g')$, and so it is also a trivial cofibration in $\injsThnsset$, as desired.
\end{proof}

We can now prove the desired result.

\begin{prop} \label{prop:pushprodofjoins}
    Let $i\colon A\hookrightarrow B$ and $i'\colon A'\hookrightarrow B'$ be monomorphisms in $\sThnsset$. If $i$ or $i'$ is a trivial cofibration in $\injsThnsset$, then so is $i\pushprodstar i'$.
\end{prop}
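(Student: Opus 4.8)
The plan is to reduce the general statement to the special case treated in \cref{lem:pushprodjoin2} by a standard skeletal-filtration argument. First I would recall that any monomorphism $i\colon A\hookrightarrow B$ in $\sThnsset$ can be built as a (transfinite) composition of pushouts of the generating cofibrations of $\injsThnsset$, and that the generating cofibrations can be taken to be the maps $\partial(\repD[m,\theta,k])\hookrightarrow \repD[m,\theta,k]$ obtained by pushout-products of boundary inclusions; since $\repD[m,\theta,k]\cong F[m]\times \Thn[\theta]\times \Delta[k]$ decomposes as an external product, each generator is (up to reindexing) of the form $f\pushprod g$ for a monomorphism $f\colon A\hookrightarrow B$ in $\Dset$ between representable-type objects and a monomorphism $g\colon X\hookrightarrow Y$ in $\Thnsset$, both with connected target. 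Likewise the generating \emph{trivial} cofibrations of $\injsThnsset$ can be taken of the form $f\pushprod g$ with $g$ a generating trivial cofibration of $\MSThnsset$ (hence a trivial cofibration between connected-target objects) — this is exactly the hypothesis feeding \cref{lem:pushprodjoin2}.

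The core of the argument then proceeds in the usual two reduction steps. Step one: since $-\pushprodstar i'$ (and symmetrically $i\pushprodstar -$) sends colimits in the first variable to colimits, and since pushout-joins of monomorphisms are monomorphisms by \cref{thm:pushprodofjoins}, the class of monomorphisms $i$ in $\sThnsset$ such that $i\pushprodstar i'$ is a trivial cofibration in $\injsThnsset$ (for a fixed trivial cofibration $i'$) is closed under pushout, transfinite composition, and retract; hence it suffices to verify the statement when $i$ ranges over the generating cofibrations. Step two: by the same closure argument applied now in the second variable, it suffices to check the claim when $i$ is a generating cofibration \emph{and} $i'$ is a generating trivial cofibration of $\injsThnsset$. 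By the previous paragraph, in that situation $i = f\pushprod g$ and $i' = f'\pushprod g'$ with $f,f'$ monomorphisms in $\Dset$ and $g,g'$ monomorphisms in $\Thnsset$, all with connected target, and with $g'$ (or $g$, after reindexing) a trivial cofibration in $\MSThnsset$. Then $i\pushprodstar i' = (f\pushprod g)\pushprodstar(f'\pushprod g')$ is precisely the map shown to be a weak equivalence in $\injsThnsset$ by \cref{lem:pushprodjoin2}, and it is a monomorphism by \cref{thm:pushprodofjoins}, hence a trivial cofibration. By the symmetric argument the case where $i'$ is an arbitrary trivial cofibration and $i$ is arbitrary follows as well.

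The main obstacle I anticipate is bookkeeping rather than conceptual: one must make sure that the generating (trivial) cofibrations of $\injsThnsset$ genuinely have the external-product form $f\pushprod g$ with connected targets, so that \cref{lem:pushprodjoin2} applies verbatim. The injective model structure $\injsThnsset$ on $\sset^{\Thnop\times\Dop}$ has generating cofibrations indexed by boundary inclusions in each of the $\Delta$, $\Thn$, and simplicial directions; one checks that pushout-products of these with the identity reassemble, using the identification $\repD[m,\theta,k]\cong F[m]\times\Thn[\theta]\times\Delta[k]$ and the fact that representables $\Thn[\theta]$ and $\repD[m]$, $\repD[k]$ are connected $\Thn$-spaces (and that $g\pushprod g'$ preserves connected targets), into the hypotheses of \cref{lem:pushprodjoin2}. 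For the trivial cofibrations one uses that the generating trivial cofibrations of $\injsThnsset$ may be chosen as pushout-products of monomorphisms with trivial cofibrations of $\MSThnsset$, which is where the hypothesis ``$g$ is a trivial cofibration in $\MSThnsset$'' in \cref{lem:pushprodjoin2} is consumed. Once these identifications are in place, the two closure-under-colimits reductions are entirely routine, and the proof concludes.
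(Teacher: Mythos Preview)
Your proposal is correct and follows essentially the same approach as the paper: a two-step weak-saturation reduction (one variable at a time) to the case handled by \cref{lem:pushprodjoin2}. The only difference is cosmetic---the paper assumes without loss of generality that $i$ is the trivial cofibration and reduces it first, while you fix $i'$ as the trivial cofibration and reduce $i$ first---and you are more explicit than the paper about the bookkeeping point (that the generating cofibrations and generating trivial cofibrations of $\injsThnsset$ can be chosen of the form $f\pushprod g$ with connected targets, using that Reedy $=$ injective over $\Delta^{\op}$), which the paper leaves implicit.
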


\begin{proof}
   Without loss of generality, we will assume that $i$ is a trivial cofibration in $\injsThnsset$ and the case for $i'$ follows analogously. 
   
   Let us first fix $i$ to be of the form $f\pushprod g$ as in \cref{lem:pushprodjoin2}. Note that the class of monomorphisms $i'$ for which $(f\pushprod g)\pushprodstar i'$ is a trivial cofibration in $\injsThnsset$ is weakly saturated. Hence it is enough to show the result for $i'$ of the form $f'\pushprod g'$ as in \cref{lem:pushprodjoin2}, which is true by \cref{lem:pushprodjoin2}. Let us now fix $i'$ to be an arbitrary monomorphism. As before, note that the class of monomorphisms $i$ for which $i\pushprodstar i'$ is a trivial cofibration in $\injsThnsset$ is weakly saturated. Hence it is enough to show the result for $i$ of the form $f\pushprod g$ as in \cref{lem:pushprodjoin2}, which follows from the first case.  
\end{proof}

We further prove that other relevant pushout-joins of monomorphisms are trivial cofibrations, which we will use to deduce the right fibration condition of the neat slice and cone~projections.

\begin{lemma} \label{lemma:join triv segal}
    Let $m,m'\geq 0$. Then the pushout-join map 
    \[ (F[0]\xhookrightarrow{\langle m\rangle} F[m]) \pushprodstar (\emptyset \hookrightarrow F[m']) \]
    is a trivial cofibration in $\SegsThnsset$. 
\end{lemma}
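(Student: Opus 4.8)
The plan is to unwind the pushout-join into an explicit inclusion of simplicial objects and then recognize it as a trivial cofibration by comparison with spine inclusions. Since $A\star\emptyset=A$ for every $A\in\sThnsset$ and $\repD[p]\star\repD[q]\cong\repD[p+1+q]$ by \cref{joinsset}, the pushout-join map
\[ (\repD[0]\xhookrightarrow{\langle m\rangle}\repD[m])\pushprodstar(\emptyset\hookrightarrow\repD[m']) \]
is isomorphic to the inclusion
\[ \iota\colon \repD[m]\amalg_{\repD[0]}\repD[m'+1]\hookrightarrow\repD[m+1+m'], \]
realized inside $\repD[m+1+m']$ as the inclusion of the union of the two faces $\repD[{\{0,\dots,m\}}]$ and $\repD[{\{m,m+1,\dots,m+1+m'\}}]$, glued along their common vertex $\{m\}$. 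The point to check here is that $\langle m\rangle\colon[0]\to[m]$ selects the vertex $m$, so that the two face inclusions $\repD[0]\hookrightarrow\repD[m+1+m']$ coming from the two branches of the pushout-join agree, both hitting the vertex $m$; the degenerate cases $m=0$ or $m'=0$, where one of the two faces is a single vertex or an edge, are harmless.

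Next we would observe that this union of faces contains the spine $\Sp[m+1+m']\hookrightarrow\repD[m+1+m']$, since every spinal edge $\repD[{\{i,i+1\}}]$ lies in one of the two faces (the ``seam'' edges $\repD[{\{m-1,m\}}]$ and $\repD[{\{m,m+1\}}]$ belonging to the first and the second face respectively). We would then factor the resulting inclusion $\Sp[m+1+m']\hookrightarrow\repD[m]\amalg_{\repD[0]}\repD[m'+1]$ as a composite of two maps: first attach the face $\repD[{\{0,\dots,m\}}]\cong\repD[m]$ along its own spine $\Sp[m]$, exhibiting this step as a pushout of the spine inclusion $\Sp[m]\hookrightarrow\repD[m]$; then attach the face $\repD[{\{m,\dots,m+1+m'\}}]\cong\repD[m'+1]$ along its spine, a pushout of $\Sp[m'+1]\hookrightarrow\repD[m'+1]$. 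Each spine inclusion $\Sp[k]\hookrightarrow\repD[k]$ is a monomorphism in $\sThnsset$, hence a cofibration, and is a weak equivalence in $\SegsThnsset$: the $\Thnsset$-cotensor with any fibrant object $D$ sends it to the Segal map $D_k\to D_1\times_{D_0}\ldots\times_{D_0}D_1$, which is a weak equivalence by definition of fibrancy in $\SegsThnsset$ (see \cite{BR2}). Hence each of the two attachments is a trivial cofibration in $\SegsThnsset$, and so is the composite $\Sp[m+1+m']\hookrightarrow\repD[m]\amalg_{\repD[0]}\repD[m'+1]$.

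Finally, the spine inclusion $\Sp[m+1+m']\hookrightarrow\repD[m+1+m']$ is itself a trivial cofibration in $\SegsThnsset$ for the same reason, so by the two-out-of-three property $\iota$ is a weak equivalence; being a monomorphism in $\sThnsset$ it is a cofibration, hence a trivial cofibration in $\SegsThnsset$. We do not anticipate a genuine obstacle here: the only delicate point will be the initial identification of the pushout-join as the union of two faces meeting in exactly the vertex $m$, together with the bookkeeping of the degenerate cases; once that combinatorial picture is in place, the remainder is a routine assembly of pushouts of spine inclusions and an application of two-out-of-three, in the spirit of \cite[Lemma~2.1.2.3]{LurieHTT}.
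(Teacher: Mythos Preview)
Your proof is correct and follows essentially the same approach as the paper's. Both arguments identify the pushout-join with the inclusion $\repD[m]\amalg_{\repD[0]}\repD[1+m']\hookrightarrow\repD[m+1+m']$; the paper then simply declares this to be ``the Segal map'' and concludes, while you supply the standard justification via the factorization through $\Sp[m+1+m']$ and two-out-of-three. Your added detail is accurate and your bookkeeping on where the gluing vertex lands is exactly right.
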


\begin{proof}
    By definition, the pushout-join map is given by the map
    \[ F[0]\star F[m']\amalg_{F[0]} F[m] \to F[m]\star F[m']. \]
    This map can be computed to be the Segal map $F[1+m']\amalg_{F[0]} F[m]\to F[m+1+m']$ and so is a trivial cofibration in $\SegsThnsset$. 
\end{proof}

\begin{lemma}
    Let $m,m'\geq 0$, $X\hookrightarrow Y$ be a monomorphism in $\Thnsset$ with connected target, and $X'$ be a connected $\Thn$-space. Then the pushout-join map
    \[ ((F[0]\xhookrightarrow{\langle m\rangle} F[m])\pushprod (X\hookrightarrow Y)) \pushprodstar (\emptyset \hookrightarrow F[m',X']) \]
    is a trivial cofibration in $\SegsThnsset$. 
\end{lemma}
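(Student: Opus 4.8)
The plan is to realize the stated pushout-join map as a pushout of a pushout-product of a trivial cofibration with a cofibration in $\SegsThnsset$, in the spirit of the proof of \cref{lem:pushprodjoin2}.

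First I would rewrite the map so that \cref{cor:pushoutforjoins} applies. Put $f\coloneqq(F[0]\xhookrightarrow{\langle m\rangle} F[m])$ and $f'\coloneqq(\emptyset\hookrightarrow F[m'])$ as monomorphisms in $\Dset$, and $g\coloneqq(X\hookrightarrow Y)$ and $g'\coloneqq(\emptyset\hookrightarrow X')$ as monomorphisms in $\Thnsset$, noting that their targets $Y$ and $X'$ are connected by hypothesis. Computing sources and targets of pushout-products against the initial object, one sees that $f'\pushprod g'$ is the map $\emptyset\hookrightarrow F[m']\times X'$, that is, $\emptyset\hookrightarrow F[m',X']$; hence the map in the statement is precisely $(f\pushprod g)\pushprodstar(f'\pushprod g')$. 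Since $f,f'$ lie in $\Dset$ and $g,g'$ have connected target, \cref{cor:pushoutforjoins} applies and exhibits $(f\pushprod g)\pushprodstar(f'\pushprod g')$ as a pushout of $(f\pushprodstar f')\pushprod(g\pushprod g')$.

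It then remains to identify the two factors of this pushout-product. For the first, $f\pushprodstar f'=(F[0]\xhookrightarrow{\langle m\rangle} F[m])\pushprodstar(\emptyset\hookrightarrow F[m'])$ is a trivial cofibration in $\SegsThnsset$ by \cref{lemma:join triv segal}. For the second, another direct computation against the initial object gives $g\pushprod g'=(X\times X'\hookrightarrow Y\times X')$, which is a monomorphism in $\Thnsset$ and hence a cofibration in $\SegsThnsset$. By cartesian closedness of $\SegsThnsset$, the pushout-product of a trivial cofibration with a cofibration is again a trivial cofibration, so $(f\pushprodstar f')\pushprod(g\pushprod g')$ is a trivial cofibration in $\SegsThnsset$; since trivial cofibrations are stable under pushout, so is $(f\pushprod g)\pushprodstar(f'\pushprod g')$, which is what we want.

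I do not expect a genuine obstacle: the argument is essentially bookkeeping, and the only step requiring care is recognizing the stated map as $(f\pushprod g)\pushprodstar(f'\pushprod g')$ for the right choices of $f,f',g,g'$ — in particular the degenerate identifications $f'\pushprod g'=(\emptyset\hookrightarrow F[m',X'])$ and $g\pushprod g'=(X\times X'\hookrightarrow Y\times X')$ — so that \cref{cor:pushoutforjoins} and \cref{lemma:join triv segal} can be invoked verbatim.
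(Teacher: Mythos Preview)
Your proof is correct and follows essentially the same approach as the paper: both realize the stated map as $(f\pushprod g)\pushprodstar(f'\pushprod g')$, invoke \cref{cor:pushoutforjoins} to express it as a pushout of $(f\pushprodstar f')\pushprod(g\pushprod g')$, and then combine \cref{lemma:join triv segal} with cartesian closedness of $\SegsThnsset$. The paper is just slightly terser, writing $g\pushprod g'$ directly as $(X\hookrightarrow Y)\times X'$ without introducing $g'$ explicitly.
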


\begin{proof}
    By \cref{lemma:join triv segal}, the pushout-join map $(F[0]\xhookrightarrow{\langle m\rangle} F[m])\pushprodstar (\emptyset \hookrightarrow F[m'])$ is a trival cofibration in $\SegsThnsset$. Hence, by cartesian closedness, the pushout-product map 
    \[ ((F[0]\xhookrightarrow{\langle m\rangle} F[m])\pushprodstar (\emptyset \hookrightarrow F[m'])) \pushprod ( (X\hookrightarrow Y) \times X')\]
    is also a trivial cofibration in $\SegsThnsset$. By \cref{cor:pushoutforjoins}, the map
    \[ ((F[0]\xhookrightarrow{\langle m\rangle} F[m])\pushprod (X\hookrightarrow Y)) \pushprodstar (\emptyset \hookrightarrow F[m',X']) \] 
    is a pushout of the above trivial cofibration, and so is also a trivial cofibration in $\SegsThnsset$. 
\end{proof}

\begin{prop} \label{prop:pushjoinofcovariant}
    Let $m\geq 0$, $X\hookrightarrow Y$ be a monomorphism in $\Thnsset$, and $A\in \sThnsset$ be an object. Then the pushout-join map
    \[ ((F[0]\xhookrightarrow{\langle m\rangle} F[m])\pushprod (X\hookrightarrow Y)) \pushprodstar (\emptyset \hookrightarrow A) \]
    is a weak equivalence in $\SegsThnsset$. 
\end{prop}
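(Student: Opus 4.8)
The plan is a two-step weak-saturation argument that reduces the claim to the preceding lemma. Write $i := (F[0]\xhookrightarrow{\langle m\rangle}F[m])\pushprod(X\hookrightarrow Y)$, a monomorphism in $\sThnsset$ (being a pushout-product of monomorphisms). Since the cofibrations of $\SegsThnsset$ are exactly the monomorphisms and, by \cref{thm:pushprodofjoins}, pushout-joins of monomorphisms are monomorphisms, it suffices to prove the formally stronger statement that $i\pushprodstar(\emptyset\hookrightarrow A)$ is a \emph{trivial cofibration} in $\SegsThnsset$; working with trivial cofibrations rather than mere weak equivalences is what keeps weak-saturation arguments available below. I will also use repeatedly that, for a fixed monomorphism $k$ in $\sThnsset$, the class of monomorphisms $j$ with $k\pushprodstar j$ a trivial cofibration in $\SegsThnsset$ is weakly saturated: indeed $k\pushprodstar j$ is automatically a cofibration, and the join preserves connected colimits (\cref{joinprescolim}), so the Leibniz functor $k\pushprodstar(-)$ preserves pushouts of maps, transfinite composites, and retracts.

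The first step reduces the object $A$ to a representable. Fix a monomorphism $g_0\colon X_0\hookrightarrow Y_0$ in $\Thnsset$ whose target $Y_0$ is connected, and put $i_0 := (F[0]\xhookrightarrow{\langle m\rangle}F[m])\pushprod g_0$. Every representable $F[c]$ of $\sThnsset$ is isomorphic to $F[m',X']$ for some $m'\geq0$ and some connected $\Thn$-space $X'$ — indeed $\repDThnS[m',\theta,k]\cong F[m']\times(\Thn[\theta]\times\Delta[k])$, and $\Thn[\theta]\times\Delta[k]$ is a product of representable $\Thn$-spaces, hence connected — so the preceding lemma shows that $i_0\pushprodstar(\emptyset\hookrightarrow F[c])$ is a trivial cofibration in $\SegsThnsset$ for every representable $F[c]$. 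By induction on the dimension of $c$ I would upgrade this to all boundary inclusions $\partial F[c]\hookrightarrow F[c]$: the map $\emptyset\hookrightarrow\partial F[c]$ lies in the weakly saturated class generated by the boundary inclusions of strictly smaller dimension, hence (inductive hypothesis, plus the weak saturation recorded above with $k=i_0$) $i_0\pushprodstar(\emptyset\hookrightarrow\partial F[c])$ is a trivial cofibration; the standard pushout-join identity factors $i_0\pushprodstar(\emptyset\hookrightarrow F[c])$ as $i_0\pushprodstar(\partial F[c]\hookrightarrow F[c])$ precomposed with a pushout of $i_0\pushprodstar(\emptyset\hookrightarrow\partial F[c])$, so two-out-of-three in $\SegsThnsset$ forces $i_0\pushprodstar(\partial F[c]\hookrightarrow F[c])$ to be a weak equivalence, and it is a monomorphism by \cref{thm:pushprodofjoins}, hence a trivial cofibration. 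Since the monomorphisms of $\sThnsset$ form the weakly saturated class generated by the boundary inclusions, we conclude that $i_0\pushprodstar(\emptyset\hookrightarrow A)$ is a trivial cofibration in $\SegsThnsset$ for \emph{every} $A\in\sThnsset$ and every monomorphism $g_0$ in $\Thnsset$ with connected target.

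The second step removes the connectedness hypothesis. Fix $A\in\sThnsset$ and $m\geq0$; the class of monomorphisms $g$ in $\Thnsset$ for which $((F[0]\xhookrightarrow{\langle m\rangle}F[m])\pushprod g)\pushprodstar(\emptyset\hookrightarrow A)$ is a trivial cofibration in $\SegsThnsset$ is weakly saturated (same reasoning, now also using that $-\pushprod-$ is cocontinuous in each variable on the cartesian closed category $\Thnsset$). By the first step it contains every monomorphism of $\Thnsset$ with connected target, in particular the generating monomorphisms of $\Thnsset$ (the boundary inclusions of representables), whose targets are representable and hence connected; therefore it contains all monomorphisms of $\Thnsset$, so in particular $X\hookrightarrow Y$ belongs to it. This says exactly that $i\pushprodstar(\emptyset\hookrightarrow A)$ is a trivial cofibration, in particular a weak equivalence, in $\SegsThnsset$. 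The only genuinely delicate point is the skeletal induction in the first step — propagating the preceding lemma, which only knows about representables of the special shape $F[m',X']$, to arbitrary boundary inclusions while keeping every map a trivial cofibration so that weak saturation can be invoked; everything else is routine bookkeeping with \cref{thm:pushprodofjoins,joinprescolim}.
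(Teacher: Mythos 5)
Your proposal is correct, and it follows the strategy the paper itself gestures at---the paper's entire proof of \cref{prop:pushjoinofcovariant} is the sentence that ``the argument works as in \cref{prop:pushprodofjoins}'', i.e.\ a weak-saturation reduction in each variable to the preceding lemma---but you supply an extra step that a literal transcription of \cref{prop:pushprodofjoins} genuinely needs. In \cref{prop:pushprodofjoins} the reduction succeeds because the monomorphisms of $\sThnsset$ are generated under weak saturation by maps of exactly the shape $f'\pushprod g'$ covered by \cref{lem:pushprodjoin2}; here, by contrast, the unlabeled lemma before \cref{prop:pushjoinofcovariant} only covers the maps $\emptyset\hookrightarrow F[m',X']$, and $\emptyset\hookrightarrow A$ does \emph{not} lie in their weak saturation (that class consists essentially of coproduct insertions $W\to W\amalg\bigl(\coprod_i F[m_i,X_i]\bigr)$ and retracts thereof, so it misses any $A$ with nontrivial gluing). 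Your skeletal induction is precisely the bridge: the Leibniz composition identity factors $i_0\pushprodstar(\emptyset\hookrightarrow F[c])$ as $i_0\pushprodstar(\partial F[c]\hookrightarrow F[c])$ preceded by a pushout of $i_0\pushprodstar(\emptyset\hookrightarrow\partial F[c])$, and $2$-out-of-$3$ together with the monomorphism part of \cref{thm:pushprodofjoins} upgrades $i_0\pushprodstar(\partial F[c]\hookrightarrow F[c])$ to a trivial cofibration, after which saturation in each variable (using \cref{joinprescolim}, and the fact that the boundary inclusions of $\Delta\times\Thn\times\Delta$-presheaves, resp.\ of $\Thn$-spaces, generate all monomorphisms and have connected, indeed representable, targets) gives arbitrary $A$ and arbitrary $X\hookrightarrow Y$. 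An alternative closer to the paper's other template would be to treat the $A$-variable as in \cref{gammaiswe}, via a class of objects saturated by monomorphisms; the paper leaves this point implicit, whereas your route makes it explicit and in fact yields the formally stronger conclusion that the map is a trivial cofibration, which is equivalent here since the map is a monomorphism.
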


\begin{proof}
    The argument works as in \cref{prop:pushprodofjoins}.
\end{proof}

\subsection{Homotopical properties of the neat constructions} \label{sec:fibrantneatcone}

We are now ready to prove that, for any double $(\infty,n-1)$-category, the canonical projections from the neat slice and cone construction are double $(\infty,n-1)$-right fibrations. 

\begin{prop}
\label{joinconefibrant}
    Let $D$ be a fibrant object in $\SegsThnsset$ and $f\colon A\to D$ be a map in $\sThnsset$. Then the projection map 
    \[ \joinslice{D}{f}\to D\]
    is a double $(\infty,n-1)$-right fibration.
\end{prop}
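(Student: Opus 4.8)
The plan is to verify directly the two conditions characterizing a fibrant object of $\MSrightfib{D}$: that the canonical projection $\pi\colon\joinslice{D}{f}\to D$ is a fibration in $\injsThnsset$, and that for every $m\geq 1$ the map $\phi_m\colon(\joinslice{D}{f})_m\to(\joinslice{D}{f})_0\times_{D_0}D_m$ induced by $\langle m\rangle\colon[0]\to[m]$ is a weak equivalence in $\MSThnsset$. The engine of the argument is a transposition of lifting problems along the neat join. Concretely, by the defining description of $\joinslice{D}{f}$, for a monomorphism $i\colon K\hookrightarrow L$ in $\sThnsset$ a solution to a lifting problem of $i$ against $\pi$ is the same datum as an extension of a map $K\star A\amalg_K L\to D$ (whose restriction to $A$ is $f$) along the inclusion $K\star A\amalg_K L\hookrightarrow L\star A$; and since $K\star\emptyset\cong K$ and $L\star\emptyset\cong L$, this inclusion is precisely the pushout-join $i\pushprodstar(\emptyset\hookrightarrow A)$. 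Hence $\pi$ has the right lifting property against $i$ whenever $D$ is injective with respect to $i\pushprodstar(\emptyset\hookrightarrow A)$, i.e.\ whenever the map $D\to\ast$ has the right lifting property against $i\pushprodstar(\emptyset\hookrightarrow A)$.

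For the first condition, since $D$ is fibrant in $\SegsThnsset$ and the latter is a localization of $\injsThnsset$, the object $D$ is also fibrant in $\injsThnsset$, hence injective with respect to every trivial cofibration of $\injsThnsset$. If $i$ is a trivial cofibration in $\injsThnsset$, then $i\pushprodstar(\emptyset\hookrightarrow A)$ is a monomorphism by \cref{thm:pushprodofjoins} and a trivial cofibration in $\injsThnsset$ by \cref{prop:pushprodofjoins}; the transposition above then shows $\pi$ lifts against $i$. As this holds for all trivial cofibrations $i$, the projection $\pi$ is a fibration in $\injsThnsset$.

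For the second condition I would establish the stronger statement that $\phi_m$ is a trivial fibration in $\injThnspace$ for each $m\geq 1$; since $\MSThnsset$ is a localization of $\injThnspace$, this yields in particular a weak equivalence in $\MSThnsset$. A trivial fibration in $\injThnspace$ is exactly a map with the right lifting property against all monomorphisms $X\hookrightarrow Y$ of $\Thnsset$. Unwinding the definitions---using that $(\joinslice{D}{f})_m$ classifies maps $F[m]\times(-)\to\joinslice{D}{f}$ and that the fiber product over $D_0$ records compatibility along $\langle m\rangle$---a lifting problem of $X\hookrightarrow Y$ against $\phi_m$ is identified with a lifting problem of $(F[0]\xhookrightarrow{\langle m\rangle}F[m])\pushprod(X\hookrightarrow Y)$ against $\pi$. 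By the transposition of the first paragraph, such a problem has a solution provided $D$ is injective with respect to $\bigl((F[0]\xhookrightarrow{\langle m\rangle}F[m])\pushprod(X\hookrightarrow Y)\bigr)\pushprodstar(\emptyset\hookrightarrow A)$. This last map is a monomorphism by \cref{thm:pushprodofjoins} and a weak equivalence in $\SegsThnsset$ by \cref{prop:pushjoinofcovariant}, hence a trivial cofibration in $\SegsThnsset$; and $D$, being fibrant in $\SegsThnsset$, is injective with respect to it. Thus $\phi_m$ is a trivial fibration in $\injThnspace$, which completes the argument.

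Most of the substance is already contained in \cref{prop:pushprodofjoins,prop:pushjoinofcovariant}, so the only delicate part is setting up the transposition cleanly: matching the combinatorics of the neat cone $\joinslice{D}{f}$ and of the fiber product $(\joinslice{D}{f})_0\times_{D_0}D_m$ against the relevant pushout-joins, and observing that the side condition ``restricts to $f$ on $A$'' is harmless, since injectivity of $D$ against the pushout-join solves every extension problem out of its source, not merely those extending $f$. The rest is a direct application of the homotopical properties of pushout-joins established in this section.
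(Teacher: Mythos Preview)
Your proof is correct and follows essentially the same strategy as the paper's: transpose lifting problems against $\joinslice{D}{f}\to D$ into extension problems against $D$ along pushout-joins of the form $i\pushprodstar(\emptyset\hookrightarrow A)$, then invoke \cref{thm:pushprodofjoins} and \cref{prop:pushjoinofcovariant}. The paper packages both conditions into a single lifting argument against the two classes of maps $i'$ (trivial cofibrations and the pushout-products $(F[0]\xhookrightarrow{\langle m\rangle}F[m])\pushprod(X\hookrightarrow Y)$), whereas you verify the injective fibration condition and the right-fibration condition separately and phrase the latter as a trivial-fibration statement for $\phi_m$; but this is only a cosmetic difference in organization.
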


\begin{proof}
Let $i'\colon A'\hookrightarrow B'$ be either a trivial cofibration in $\injsThnsset$ or the following pushout-product map, for $m\geq 0$ and $X\hookrightarrow Y$ a monomorphism in $\Thnsset$.
\[ (F[0]\xhookrightarrow{\langle m\rangle} F[m])\pushprod (X\hookrightarrow Y) \] We need to show that there is a lift in the below left commutative diagram in $\sThnsset$. By definition of the neat cone construction $\joinslice{D}{f}$, this is equivalent to showing that there is a lift in the below right commutative diagram in $\sThnsset$.
    \begin{tz}
        \node[](1) {$A'$}; 
        \node[right of=1,xshift=.5cm](2) {$\joinslice{D}{f}$}; 
        \node[below of=1](3) {$B'$}; 
        \node[below of=2](4) {$D$};

        \draw[->](1) to (2); 
        \draw[right hook->](1) to (3); 
        \draw[->](2) to (4); 
        \draw[->](3) to (4);
        \draw[->,dashed] (3) to (2);
        
        \node[right of=2,xshift=2cm](1) {$A'\star A\amalg_{A'} B'$}; 
        \node[right of=1,xshift=1.5cm](2) {$D$}; 
        \node[below of=1](3) {$B'\star A$};

        \draw[->](1) to (2); 
        \draw[right hook->](1) to (3); 
        \draw[->,dashed] (3) to (2);
        \end{tz}
        However, since the left-hand map is the pushout-join $(A'\hookrightarrow B')\pushprodstar (\emptyset\hookrightarrow A)$, it is a trivial cofibration in $\injsThnsset$  by \cref{thm:pushprodofjoins} in the case where $A'\hookrightarrow B'$ is a trivial cofibration in $\injsThnsset$, and in $\SegsThnsset$ by \cref{prop:pushjoinofcovariant} in the case where $A'\hookrightarrow B'$ is the pushout-product map $(F[0]\xhookrightarrow{\langle m\rangle} F[m])\pushprod (X\hookrightarrow Y)$. Hence the desired lift exists.
\end{proof}

As a special case, we obtain the following. 

\begin{cor} \label{joinslicefibrant}
    Let $D$ be a fibrant object in $\SegsThnsset$ and $d\in D_{0,[0],0}$. Then the projection map 
    \[ \joinslice{D}{d}\to D\]
    is a double $(\infty,n-1)$-right fibration.
\end{cor}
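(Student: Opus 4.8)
The plan is to deduce this corollary directly from \cref{joinconefibrant} by recognizing the neat slice construction $\joinslice{D}{d}$ as the special case of the neat cone construction obtained by taking $A=\repD[0]$. The point is that $\repD[0]$ corepresents the objects: a map $f\colon\repD[0]\to D$ in $\sThnsset$ is exactly the datum of an element $d\in D_{0,[0],0}$, which is precisely how $x$ is encoded in \cref{def:neatslice}.

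Concretely, I would fix $d\in D_{0,[0],0}$ and let $f=d\colon\repD[0]\to D$ be the corresponding map. Comparing the defining diagrams of \cref{def:neatslice} and of the neat cone construction, for every $m,k\geq 0$ and $\theta\in\Thn$ the sets $(\joinslice{D}{d})_{m,\theta,k}$ and $(\joinslice{D}{f})_{m,\theta,k}$ coincide on the nose: each consists of the commutative diagrams given by a map $\repDThnS[m]\star\repD[0]\to D$ extending $d\colon\repD[0]\to D$ along the canonical inclusion $\repD[0]\hookrightarrow\repDThnS[m]\star\repD[0]$. These identifications are natural in $([m],\theta,[k])$, so $\joinslice{D}{d}=\joinslice{D}{f}$ as objects of $\sThnsset$; moreover both projection maps to $D$ are induced by the same canonical inclusions $\repDThnS\hookrightarrow\repDThnS\star\repD[0]$, so they agree as maps over $D$.

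It then suffices to apply \cref{joinconefibrant} to $f=d\colon\repD[0]\to D$: since $D$ is fibrant in $\SegsThnsset$, the projection $\joinslice{D}{f}\to D$ is a double $(\infty,n-1)$-right fibration, and hence so is $\joinslice{D}{d}\to D$. I do not expect any real obstacle here: all of the homotopical content --- the pushout-join computations of \cref{sec:pushoutjoin} and the lifting argument in the proof of \cref{joinconefibrant} --- has already been carried out, and what remains is only the bookkeeping that identifies the neat slice over $d$ with the neat cone over $\repD[0]$.
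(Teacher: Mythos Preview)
Your proposal is correct and is exactly the approach the paper takes: the corollary is stated immediately after \cref{joinconefibrant} with the phrase ``As a special case, we obtain the following'' and no further proof, so the entire content is indeed the identification of $\joinslice{D}{d}$ with $\joinslice{D}{f}$ for $f=d\colon\repD[0]\to D$. Your write-up simply makes this identification explicit.
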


As a corollary, we obtain the following result.

\begin{cor} \label{starjoinLQ}
The adjunction
    \begin{tz}
\node[](1) {$\SegsThnsset$}; 
\node[right of=1,xshift=2.7cm](2) {${}^{A/} \SegsThnsset$}; 

\draw[->] ($(2.west)-(0,5pt)$) to node[below,la]{$\joinslice{(-)}{(-)}$} ($(1.east)-(0,5pt)$);
\draw[->] ($(1.east)+(0,5pt)$) to node[above,la]{$-\star A$} ($(2.west)+(0,5pt)$);

\node[la] at ($(1.east)!0.5!(2.west)$) {$\bot$};
\end{tz}
is a Quillen pair.
\end{cor}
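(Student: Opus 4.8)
The plan is to show directly that the right adjoint $\joinslice{(-)}{(-)}$ is a right Quillen functor, rather than to imitate the proof of \cref{diamondjoinLQ}: the fat join $-\diamond A$ is a homotopy pushout of left Quillen functors, whereas the neat join $-\star A$ admits no such presentation, so instead I will feed in the fibrancy result \cref{joinconefibrant}. I will use the following criterion, which follows from the prerequisite that a model structure is determined by its cofibrations and its fibrant objects (in particular, a cofibration with the left lifting property against every fibration between fibrant objects is a weak equivalence): an adjunction $L\dashv R$ is a Quillen pair as soon as $L$ preserves cofibrations and $R$ preserves fibrant objects and fibrations between fibrant objects. Indeed, granting this, if $j$ is a trivial cofibration then $L(j)$ is a cofibration, and for every fibration $p$ between fibrant objects the transpose of a lifting problem of $L(j)$ against $p$ is a lifting problem of $j$ against the fibration $R(p)$, which is solvable; hence $L(j)$ has the left lifting property against all such $p$ and is therefore a weak equivalence, so $L$ preserves trivial cofibrations. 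Since cofibrations and fibrant objects in the coslice model structure ${}^{A/}\SegsThnsset$ are detected by the forgetful functor to $\SegsThnsset$, it remains to check that $-\star A$ carries monomorphisms to monomorphisms and that $\joinslice{(-)}{(-)}$ carries fibrant objects and fibrations between fibrant objects to fibrations.

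For the first, given a monomorphism $f\colon B\hookrightarrow B'$ in $\sThnsset$, I factor $f\star A\colon B\star A\to B'\star A$ as the pushout of $f$ along the canonical inclusion $B\hookrightarrow B\star A$ followed by the pushout-join $f\pushprodstar(\emptyset\hookrightarrow A)$. The first map is a monomorphism, being a pushout of one in the presheaf category $\sThnsset$, and the second is a monomorphism by \cref{thm:pushprodofjoins}; hence so is $f\star A$, and so is its image in ${}^{A/}\SegsThnsset$.

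For the second, a fibrant object of ${}^{A/}\SegsThnsset$ is a map $g\colon A\to D$ with $D$ fibrant in $\SegsThnsset$, and then \cref{joinconefibrant} shows that $\joinslice{D}{g}\to D$ is a double $(\infty,n-1)$-right fibration, so $\joinslice{D}{g}$ is fibrant in $\SegsThnsset$ by \cref{sourceofdblrightfib}. Next let $(A\xrightarrow{g}D)\to(A\xrightarrow{g'}D')$ be a fibration between fibrant objects of ${}^{A/}\SegsThnsset$, that is, a commutative triangle whose underlying map $D\to D'$ is a fibration in $\SegsThnsset$ between fibrant objects and hence also a fibration in $\injsThnsset$. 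Both $\joinslice{D}{g}$ and $\joinslice{D'}{g'}$ are fibrant in $\SegsThnsset$ by the previous observation, so it suffices to show that $\joinslice{D}{g}\to\joinslice{D'}{g'}$ is a fibration in $\injsThnsset$, because between objects fibrant in $\SegsThnsset$ the fibrations of $\SegsThnsset$ coincide with those of $\injsThnsset$. I test the right lifting property against a trivial cofibration $i\colon X\hookrightarrow Y$ in $\injsThnsset$: transposing the square along the adjunction $-\star A\dashv\joinslice{(-)}{(-)}$, a solution amounts to a lift of $i\star\id_A\colon X\star A\to Y\star A$ against $D\to D'$ restricting to $g$ on $A$. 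Factoring $i\star\id_A$ as the pushout of $i$ along $X\hookrightarrow X\star A$ followed by $i\pushprodstar(\emptyset\hookrightarrow A)$ exhibits it as a trivial cofibration in $\injsThnsset$ by \cref{thm:pushprodofjoins}, so the unconstrained lift exists; and it automatically restricts to $g$ on $A$ since $A\hookrightarrow X\star A\hookrightarrow Y\star A$ lands in the subobject $X\star A$ on which the lift is already prescribed to equal the transpose of the top map.

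The one point that needs care is precisely this last bookkeeping: one must verify that the produced lift of $i\star\id_A$ transposes back to a map into $\joinslice{D}{g}$ and not merely into $\joinslice{D'}{g'}\times_{D'}D$. Everything else is formal, once \cref{thm:pushprodofjoins} and \cref{joinconefibrant} are in hand, together with the standard behaviour of coslice model structures and of fibrations between fibrant objects under left Bousfield localization.
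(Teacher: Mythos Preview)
Your proof is correct and uses the same essential ingredients as the paper---the factorization of $f\star A$ through the pushout-join $f\pushprodstar(\emptyset\hookrightarrow A)$ together with \cref{thm:pushprodofjoins}, and \cref{joinconefibrant} plus \cref{sourceofdblrightfib} for fibrancy of the right adjoint's values. The only difference is packaging: the paper invokes \cite[Proposition~E.2.14]{JoyalVolumeII} in the form ``$L$ preserves (trivial) cofibrations of $\injsThnsset$ and $R$ preserves fibrant objects,'' which avoids your explicit verification that $R$ preserves fibrations between fibrant objects (though your transposition-and-lift argument for that step is fine, including the bookkeeping that the lift restricts to $g$ on $A$).
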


\begin{proof} 
    To prove that it is a Quillen pair, by \cite[Proposition E.2.14]{JoyalVolumeII} it is enough to show that the left adjoint $-\star A$ sends (trivial) cofibrations in $\injsThnsset$ to (trivial) cofibrations in $\injsThnsset$, and that the right adjoint $\joinslice{(-)}{(-)}$ preserves fibrant objects. 

    Let $A'\hookrightarrow B'$ be a (trivial) cofibration in $\injsThnsset$. Consider the following diagram in $\injsThnsset$,
    \begin{tz}
        \node[](1) {$A'$}; 
        \node[below of=1](2) {$B'$}; 
        \node[right of=1,xshift=1cm](3) {$A'\star A$}; 
        \node[below of=3](4) {$A'\star A\amalg_{B'} A'$}; 
        \node[below right of=4,xshift=1.3cm](5) {$B'\star A$};
        \pushout{4};

        \draw[right hook->](1) to node[left,la]{$(\simeq)$} (2);
        \draw[right hook->](3) to node[right,la]{$(\simeq)$} (4);
        \draw[right hook->](1) to (3); 
        \draw[right hook->](2) to (4); 
        \draw[right hook->](4) to node[above,la,xshift=5pt,yshift=-2pt]{$(\simeq)$} (5);
        \draw[right hook->,bend left](3) to (5);
        \draw[right hook->,bend right=20](2) to (5);
    \end{tz}
    where the map $A'\star A\to A'\star A\amalg_{B'} A'$ is a (trivial) cofibration as a pushout of a trivial cofibration, and $A'\star A\amalg_{B'} A'\to B'\star A$ is a (trivial) cofibration in $\injsThnsset$ by \cref{thm:pushprodofjoins}, as it is the pushout-join $(A'\hookrightarrow B')\pushprodstar (\emptyset\hookrightarrow A)$. Hence the map $A'\star A\to B'\star A$ is also a (trivial) cofibration in $\injsThnsset$ as a composite of such.

    Now, let $f\colon A\to D$ be a map in $\sThnsset$ such that $D$ is fibrant in $\SegsThnsset$. By \cref{joinconefibrant}, the projection $\joinslice{D}{f}\to D$ is a double $(\infty,n-1)$-right fibration, and so $\joinslice{D}{f}$ is also fibrant in $\SegsThnsset$ by \cref{sourceofdblrightfib}. 
\end{proof}

\subsection{The study of neat fibers} \label{sec:fibersneatcone}

We now give a description of the fibers of the double $(\infty,n-1)$-right fibration given by the neat cone construction, which will be useful later.

\begin{lemma} \label{lem:equalpushoutneat}
    Let $X$ be a connected $\Thn$-space and $A$ be an object of $\sThnsset$. Consider the following pushouts in $\sThnsset$.
    \begin{tz}
        \node[](1) {$F[0,X]\amalg A$}; 
        \node[below of=1](2) {$F[0,X]\star A$}; 
        \node[right of=1,xshift=1.9cm](3) {$F[0]\amalg A$}; 
        \node[below of=3](4) {$P(A,X)$}; 
        \pushout{4};

        \draw[right hook->](1) to (2); 
        \draw[->](1) to (3); 
        \draw[->](2) to (4); 
        \draw[right hook->](3) to (4);

        \node[right of=3,xshift=2.5cm](1) {$F[0]\amalg (A\times X)$}; 
        \node[below of=1](2) {$F[0]\star (A\times X)$}; 
        \node[right of=1,xshift=2.2cm](3) {$F[0]\amalg A$}; 
        \node[below of=3](4) {$P'(A,X)$}; 
        \pushout{4};

        \draw[right hook->](1) to (2); 
        \draw[->](1) to (3); 
        \draw[->](2) to (4); 
        \draw[right hook->](3) to (4);
    \end{tz}
    Then there is a canonical isomorphism in ${}^{F[0]\amalg A/}\sThnsset$
    \[ P(A,X)\cong P'(A,X). \]
\end{lemma}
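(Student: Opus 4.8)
The plan is to compute both pushouts explicitly: reduce to the case where $A$ is representable, apply \cref{lem:joinofconnected} to each of the two joins $F[0,X]\star A$ and $F[0]\star(A\times X)$, paste the resulting nested pushouts using Fubini's theorem for colimits, and then observe that the two descriptions that come out agree up to the symmetry isomorphism of the cartesian product. First I would reduce to $A=\repDThnS$: the functors $P(-,X)$ and $P'(-,X)$ preserve connected colimits — using that $F[0,X]\star-$ and $F[0]\star-$ do by \cref{joinprescolim}, that $F[0,X]\amalg-$, $F[0]\amalg-$ and $-\times X$ do, and that pushouts are colimits — and they send coproducts to the corresponding coproducts in the coslice ${}^{F[0]/}\sThnsset$ (note $P(\emptyset,X)\cong F[0]\cong P'(\emptyset,X)$). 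Since every object of $\sThnsset$ is a coproduct of connected colimits of representables, it suffices to construct the isomorphism, compatibly with the maps from $F[0]\amalg A$, for $A$ representable. Such an $A$ may be written as $\repD[k]\times Y$ with $Y$ a connected $\Thn$-space, so that $F[0,X]\star A=(F[0]\times X)\star(\repD[k]\times Y)$ and $F[0]\star(A\times X)=(F[0]\times\ast)\star(\repD[k]\times(Y\times X))$, where $\ast$ is the (connected) terminal $\Thn$-space and $Y\times X$ is connected as a product of connected $\Thn$-spaces; thus \cref{lem:joinofconnected} applies to each join.

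Substituting these presentations into the definitions of $P(A,X)$ and $P'(A,X)$, one observes that in each case the subobject $F[0,X]\amalg A$, resp.\ $F[0]\amalg(A\times X)$, along which the outer pushout glues is exactly the target of the collapse leg of the pushout furnished by \cref{lem:joinofconnected}, so the nested pushouts paste. After pasting, $P(A,X)$ becomes the pushout of the span
\[ (F[0]\star\repD[k])\times X\times Y\ \hookleftarrow\ (F[0]\amalg\repD[k])\times X\times Y\ \longrightarrow\ F[0]\amalg(\repD[k]\times Y), \]
with left leg the evident monomorphism and right leg induced by the projections onto $F[0]$, resp.\ onto $\repD[k]\times Y$, while $P'(A,X)$ becomes the pushout of the analogous span with the factor $X\times Y$ replaced by $Y\times X$. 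These two spans are identified by the symmetry isomorphism $X\times Y\cong Y\times X$, which is the identity on the common vertex $F[0]\amalg(\repD[k]\times Y)=F[0]\amalg A$; passing to pushouts yields the isomorphism $P(A,X)\cong P'(A,X)$ in ${}^{F[0]\amalg A/}\sThnsset$, and by the reduction above this extends to arbitrary $A$.

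\textbf{Expected main obstacle.} The difficulty is entirely in the pasting step: one must track several nested pushouts and verify that, after the two applications of \cref{lem:joinofconnected}, the gluing subobjects and the comparison maps are precisely as claimed — routine but tedious, in the spirit of \cref{lem:sourceofpojoin} and \cref{cor:pushoutforjoins}. The only conceptual input is the symmetry of the cartesian product; a secondary point is the reduction to representables, where the non-connected colimits (coproducts) have to be handled in the coslice under $F[0]$.
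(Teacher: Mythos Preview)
Your proposal is correct and follows essentially the same approach as the paper: reduce to representables, expand each join via the pushout formula, paste, and identify the two resulting spans (the paper uses \cref{lemma:join reps} directly rather than \cref{lem:joinofconnected}, and leaves the symmetry $X\times X'\cong X'\times X$ implicit). The one place the paper is cleaner is the reduction step: it observes that $P(-,X)$ and $P'(-,X)$ are outright colimit-preserving as functors $\sThnsset\to{}^{F[0]/}\sThnsset$---each is a composite of colimit-preserving functors into coslices---so there is no need to split off coproducts and handle them separately in the coslice.
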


\begin{proof}
    First note that the constructions $P(A,X)$ and $P'(A,X)$ define colimit-preserving functors
    \[ P(-,X),P'(-,X)\colon \sThnsset\to {}^{F[0]/}\sThnsset. \]
    Indeed, the functor $P(-,X)$ is the composite of colimit-preserving functors \[ \sThnsset\xrightarrow{F[0,X]\star -} {}^{F[0,X]/}\sThnsset \xrightarrow{\varphi} {}^{F[0]/}\sThnsset, \]
where $\varphi(F[0,X]\to Z)$ is the pushout of $F[0]\leftarrow F[0,X]\to Z$, and the functor $P'(-,X)$ is the composite of colimit-preserving functors
\[ \sThnsset\xrightarrow{\psi} ({}^{F[0]/}\sThnsset)^{\bullet\leftarrow \bullet \to \bullet} \xrightarrow{\mathrm{colim}}{}^{F[0]/}\sThnsset, \]
where $\psi(A)$ is the span $F[0]\amalg A\leftarrow F[0]\amalg (A\times X)\to F[0]\star (A\times X)$. Hence, to show that they agree on all objects $A\in \sThnsset$, it is enough to show that they agree on representables. 

    Let $m'\geq 0$ and $X'$ be a connected $\Thn$-spaces. By pasting the pushouts from the statement defining $P(F[m',X'],X)$ and $P'(F[m',X'],X)$ to the pushouts from \cref{lemma:join reps} giving $F[0,X]\star F[m',X']$ and $F[0]\star (F[m',X']\times X)$ respectively, we see that $P(F[m',X'],X)$ and $P'(F[m',X'],X)$ can both be computed as the pushout of the following span
    \begin{tz}
        \node[](1) {$(F[0]\star F[m'])\times X\times X'$}; 
        \node[right of=1,xshift=4cm](2) {$(F[0]\amalg F[m'])\times X\times X'$}; 
        \node[right of=2,xshift=3.3cm](3) {$F[0]\amalg F[m',X']$}; 
        \draw[left hook->](2) to (1); 
        \draw[->](2) to (3);
    \end{tz}
    and so they are canonically isomorphic.
\end{proof}

\begin{lemma} \label{joinconeonfibers}
    Let $Z\in \pcatThn$ and $f\colon A\to Z$ be a map in $\sThnsset$. Given an element $x\in Z_{0,0,0}$, there is a canonical isomorphism
    \[ (\joinslice{Z}{f})_x\xrightarrow{\cong} \Hom_{\sThnssetslice{Z}}(A\xrightarrow{f} Z,\joinsliceunder{Z}{x}\to Z). \]
\end{lemma}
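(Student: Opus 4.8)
The statement identifies the fiber of the neat cone construction $\joinslice{Z}{f}\to Z$ at a point $x$ with the mapping object in $\sThnssetslice{Z}$ from $f\colon A\to Z$ to the neat \emph{under}-slice $\joinsliceunder{Z}{x}\to Z$. The natural approach is to unwind both sides levelwise and match elements, using the adjunctions defining the neat constructions together with the computation of joins as pushouts from \cref{lem:equalpushoutneat}.

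\textbf{Step 1: Unwind the right-hand side.} An element of $\Hom_{\sThnssetslice{Z}}(A\xrightarrow{f}Z,\;\joinsliceunder{Z}{x}\to Z)$ at level $(m,\theta,k)$ is a map $\repDThnS\to$ of the set, i.e.\ a map $g\colon A\to \joinsliceunder{Z}{x}$ in $\sThnsset$ making the evident triangle over $Z$ commute. By the adjunction $F[0]\star -\dashv \joinsliceunder{(-)}{(-)}$ (from the proposition immediately after \cref{joinprescolim}), such a $g$ corresponds to a map $F[0]\star A\to Z$ restricting to $x$ on $F[0]$, and the compatibility over $Z$ says that it restricts to $f$ on $A\subseteq F[0]\star A$. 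So the right-hand side, at level $(m,\theta,k)$, is the set of commutative squares
\[
\begin{tikzcd}
F[0]\amalg A \ar[r] \ar[d] & Z \\
F[0]\star A \ar[ur,dashed] &
\end{tikzcd}
\]
(tensored with $\repDThnS$), where the left vertical is the canonical inclusion and the top map is $(x,f)$.

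\textbf{Step 2: Unwind the left-hand side.} An element of $(\joinslice{Z}{f})_x$ at level $(m,\theta,k)$ is, by the pullback defining the fiber together with the defining description of $\joinslice{Z}{f}$, a commutative diagram $A\hookrightarrow \repDThnS\star A \to Z$ restricting to $f$ on $A$, together with the data that its image in $Z_0$ under the projection is the constant point $x$. Equivalently (adjointly, using $-\star A\dashv \joinslice{(-)}{(-)}$), it is a map $\repDThnS\star A\to Z$ whose restriction along $\repDThnS\hookrightarrow \repDThnS\star A$ factors through $x\colon F[0]\to Z$ and whose restriction along $A\hookrightarrow \repDThnS\star A$ is $f$. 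The heart of the matter is to identify the pushout $A\amalg_{\text{(degenerate part)}}$ — more precisely, to compare $\repDThnS\star A$ glued to $F[0]$ along $\repDThnS\to F[0]$ with the object $F[0]\star A$ glued appropriately — which is exactly what \cref{lem:equalpushoutneat} (and its representable case, via \cref{lemma:join reps}) is designed to supply, since $\repDThnS$ has a connected $\Thn$-space component and the projection $\repDThnS\to F[0]$ collapses the simplicial/$\Delta$ directions.

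\textbf{Step 3: Match the two via \cref{lem:equalpushoutneat}.} Having both sides presented as sets of maps out of a pushout into $Z$, I would identify the two pushouts. On the left, collapsing $\repDThnS\to F[0]$ inside $\repDThnS\star A$ yields $P(A,X)$ in the notation of \cref{lem:equalpushoutneat} with $X$ the connected $\Thn$-space underlying $\repDThnS$ (so that $\repDThnS\cong F[0,X]$ up to the relevant connectedness, using that $\theta$ is connected as an object of $\Thn$ viewed in $\Thnsset$ — more carefully, $\repDThnS = F[m,?]\times\cdots$, and one reduces to the relevant connected factor). On the right, the pushout $F[0]\amalg A \hookrightarrow F[0]\star A$ combined with the $\repDThnS$-tensoring gives $P'(A,X)$. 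The isomorphism $P(A,X)\cong P'(A,X)$ of \cref{lem:equalpushoutneat} then furnishes the desired bijection at each level, and one checks it is compatible with the simplicial and $\Theta_{n-1}$ structure maps, hence assembles to an isomorphism of $\Theta_{n-1}$-spaces, and finally that it is natural in $x$ so as to be canonical.

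\textbf{Main obstacle.} The delicate point is \textbf{Step 2/3}: correctly bookkeeping how the fiber condition "$\repDThnS\star A \to Z$ restricted to $\repDThnS$ is the constant point $x$" translates into collapsing $\repDThnS$ to $F[0]$ inside the join, and verifying that the resulting pushout is precisely $P(A,X)$ for the correct connected $\Thn$-space $X = $ (the $\Thn$-space part of $\repDThnS[m]$ at the relevant level). This requires care because $\repDThnS = F[m]\times\Thn[\theta]\times\Delta[k]$ is not literally of the form $F[0,X]$; one must use that the relevant join $\repDThnS\star A$, after collapsing the $F[m]\times\Delta[k]$ "interval-like" directions, reduces to $F[0,X']\star A$ for $X'$ connected, and that the fiber construction does perform exactly this collapse. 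Once that reduction is in place, \cref{lem:equalpushoutneat} does the real work and the rest is routine verification of compatibility with structure maps.
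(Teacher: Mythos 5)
Your proposal is correct and is essentially the paper's own proof: unwind both sides as maps out of pushouts under $F[0]\amalg A$ into $Z$, and identify the two pushouts via \cref{lem:equalpushoutneat}. The only correction worth making concerns your flagged ``main obstacle'': it dissolves once you note that both sides of the isomorphism are $\Thn$-spaces, so their elements are indexed by $(\theta,k)$ with the outer simplicial degree already fixed at $0$ (the fiber is a pullback of $(\joinslice{Z}{f})_0$); hence the relevant representable is literally $F[0,X]$ with $X=\Thn[\theta]\times\Delta[k]$ connected, and no collapsing of $F[m]$ or of the $\Delta[k]$ direction is required --- $\Delta[k]$ is part of the connected $\Thn$-space $X$, not an ``interval-like'' direction to be crushed.
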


\begin{proof}
    First observe that mapping out of a connected $\Thn$-space into the left-hand (resp.~right-hand) side amounts to a map in ${}^{F[0]\amalg A/}\sThnsset$ 
    \[ P(A,X)\to Z \quad (\text{resp.~} P'(A,X)\to Z). \]
    Then the result follows from \cref{lem:equalpushoutneat}.
\end{proof}

\section{Comparison of slice and cone constructions}

\label{SectionFatVsNeat}

In this section, we enhance to the context of double $(\infty,n-1)$-categories the arguments from \cite[Appendix~D.4]{RVelements} (see also~\cite[\textsection9.18]{JoyalNotes}, \cite[\textsection4.2.1]{LurieHTT} and \cite[Appendix~A]{RVqCatv3}), which compares the fat and neat slice and cone constructions. For this, we first construct in \cref{sec:comparisonmap} a comparison map between the fat and neat slice and cone constructions. Then, in \cref{sec:comparisonmapequivalence}, we prove that, over a double $(\infty,n-1)$-category, the comparison map is an equivalence of double $(\infty,n-1)$-right fibrations.

\subsection{Construction of the comparison map} \label{sec:comparisonmap}

We now construct a comparison map between the fat and neat slice and cone constructions. For this, we start by constructing a map between their associated joins. 

\begin{constr}
    Let $m,m'\geq 0$. We define a map in $\Dset$
    \[ \gamma_{m,m'}\colon \repD\times \repD[1]\times \repD[m']\to \repD[m+1+m'], \quad 
         (i,\varepsilon,i') \mapsto \begin{cases}
        i & \text{if } \varepsilon=0 \\
        m+1+i' & \text{if } 
        \varepsilon=1
    \end{cases}\]
    It is straightforward to check that this map is natural in $m,m'$. Moreover, it induces a map
    \[ \gamma_{m,m'}\colon F[m]\diamond F[m']\to F[m]\star F[m']. \]
\end{constr}

\begin{lemma} \label{gammaonreps}
    Let $m,m'\geq 0$ and $X,X'$ be connected $\Thn$-spaces. Then there is a pushout in $\sThnsset$ of the form 
\begin{tz}
        \node[](1) {$(\repD\diamond \repD[m'])\times X\times X'$}; 
        \node[below of=1](2) {$(\repD\star\repD[m'])\times X\times X'$}; 
        \node[right of=1,xshift=4cm](3) {$F[m,X]\diamond F[m',X']$}; 
        \node[below of=3](4) {$F[m,X]\star F[m',X']$}; 
        \pushout{4};

        \draw[->](1) to  node[left,la]{$\gamma_{m,m'}\times X\times X'$} (2); 
        \draw[->](1) to (3); 
        \draw[->](2) to (4); 
        \draw[->](3) to node[right,la]{$\gamma_{F[m,X],F[m',X']}$} (4);
    \end{tz}
\end{lemma}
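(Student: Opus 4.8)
The plan is to reduce the statement to the cancellation property of pushouts, by presenting both the fat and the neat join of the thickened representables $F[m,X]$, $F[m',X']$ as pushouts over a common span. Since $X$ and $X'$ are connected, \cref{lemma:join reps} already exhibits $F[m,X]\star F[m',X']$ as the pushout in $\sThnsset$ of the span
\[ (F[m]\star F[m'])\times X\times X' \longleftarrow (F[m]\amalg F[m'])\times X\times X' \longrightarrow F[m,X]\amalg F[m',X'], \]
where the left-hand map is the canonical inclusion $F[m]\amalg F[m']\hookrightarrow F[m]\star F[m']$ crossed with $X\times X'$ and the right-hand map is the canonical one induced by the coproduct formula in ${}^{A/}\Dset$.

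Next I would establish the fat analogue: that $F[m,X]\diamond F[m',X']$ is the pushout of the span obtained by replacing $\star$ with $\diamond$ in the left-hand corner. No connectedness is needed here. By \cref{constr:join}, $F[m,X]\diamond F[m',X']$ is the pushout of
\[ F[m,X]\times F[1]\times F[m',X'] \longleftarrow F[m,X]\times F[m',X'] \amalg F[m,X]\times F[m',X'] \longrightarrow F[m,X]\amalg F[m',X']; \]
rewriting $F[m,X]\times F[1]\times F[m',X']\cong (F[m]\times F[1]\times F[m'])\times X\times X'$ and the apex analogously, and using that $-\times X\times X'$ has a right adjoint and so preserves colimits --- so that $(F[m]\diamond F[m'])\times X\times X'$ is itself the pushout of $(F[m]\times F[1]\times F[m'])\times X\times X'\hookleftarrow (F[m]\times F[m']\amalg F[m]\times F[m'])\times X\times X'\to (F[m]\amalg F[m'])\times X\times X'$ --- the asserted description follows by the cancellation property, pasting the defining pushout of $F[m]\diamond F[m']$ (crossed with $X\times X'$) onto the one above.

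Then I would compare the two spans. The formula defining $\gamma_{m,m'}$ acts as the identity on the two end copies $F[m]$ and $F[m']$, so the triangle $F[m]\amalg F[m']\hookrightarrow F[m]\diamond F[m']\xrightarrow{\gamma_{m,m'}} F[m]\star F[m']$ commutes and its composite is the canonical inclusion. Crossing with $X\times X'$, the previous two steps realize $F[m,X]\diamond F[m',X']$ and $F[m,X]\star F[m',X']$ as pushouts of two spans with the same apex $(F[m]\amalg F[m'])\times X\times X'$ and the same right leg, whose left legs are intertwined by $\gamma_{m,m'}\times X\times X'$. By the cancellation property of pushouts --- if $P_1=A_1\amalg_C D$ and $P_2=A_2\amalg_C D$ with a map $A_1\to A_2$ over $C$, then $P_2\cong A_2\amalg_{A_1}P_1$ --- this produces a pushout square with vertical maps $\gamma_{m,m'}\times X\times X'$ and the induced map on pushouts; one checks the latter is $\gamma_{F[m,X],F[m',X']}$, and this square is exactly the one in the statement.

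The main obstacle is organizational rather than conceptual: carefully carrying out the pasting in the second step, and matching the abstractly induced map of pushouts with $\gamma_{F[m,X],F[m',X']}$ as defined. Both are routine, since joins, finite products and pushouts in $\sThnsset$ are computed levelwise, but they involve enough coproduct summands and product factors to warrant care.
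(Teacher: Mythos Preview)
Your proposal is correct and follows essentially the same approach as the paper: both arguments express $F[m,X]\diamond F[m',X']$ and $F[m,X]\star F[m',X']$ as pushouts of spans with common apex $(F[m]\amalg F[m'])\times X\times X'$ and common right leg (using the definition of $\diamond$ together with $-\times X\times X'$ preserving colimits for the fat case, and \cref{lemma:join reps} for the neat case), and then conclude by the cancellation property of pushouts. One small remark: your final concern about ``matching the abstractly induced map of pushouts with $\gamma_{F[m,X],F[m',X']}$ as defined'' is moot, since this lemma \emph{is} where $\gamma_{F[m,X],F[m',X']}$ is introduced---the induced map on pushouts is taken as its definition (cf.\ \cref{constr:generalgamma}, which cites this lemma for the map).
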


\begin{proof}
    Consider the following diagram in $\sThnsset$.
    \begin{tz}
    \node[](1) {$\coprod_2 F[m,X]\times F[m',X']$}; 
    \node[right of=1,xshift=4.4cm](2) {$(F[m]\amalg F[m'])\times X\times X'$}; 
    \node[below of=1](3) {$F[m,X]\times F[1]\times F[m',X']$}; 
    \node[below of=2](4) {$(F[m]\diamond F[m'])\times X\times X'$}; \node[below of=4](40) {$(F[m]\star F[m'])\times X\times X'$};
    \pushout{4};
    
    \node[right of=2,xshift=4cm](2') {$F[m,X]\amalg F[m',X']$};  
    \node[below of=2'](4') {$F[m,X]\diamond F[m',X']$};  
    \node[below of=4'](41) {$F[m,X]\star F[m',X']$};

    \draw[->](1) to (2); 
    \draw[right hook->](1) to (3);  
    \draw[->](2) to (2');
    \draw[->](3) to (4); 
    \draw[->,dashed](4) to (4'); 
    \draw[right hook->](2) to (4);
    \draw[right hook->](2') to (4');
    \draw[->](4) to node[left,la]{$\gamma_{m,m'}\times X\times X'$} (40); 
    \draw[->,dashed](4') to node[right,la]{$\gamma_{F[m,X],F[m',X']}$} (41); 
    \draw[->] (40) to (41);
\end{tz}
The top left square is a pushout by definition of $\diamond$ and the fact that products commute with pushouts in $\sThnsset$. Hence, the pushout from \cref{constr:join} defining $\diamond$ yields an induced horizontal dashed map such that the composite of the two top squares is this pushout. By the cancellation property of pushouts, the top right square is a pushout. Hence, the pushout from \cref{lemma:join reps} yields an induced vertical dashed map such that the composite of the two right-hand squares is this pushout. Finally, by the cancellation property of pushouts, the bottom right square is a pushout, as desired. 
\end{proof}

The above maps induce a comparison between the two slice constructions.

\begin{constr} \label{def:comparisonslices}
    Let $Z$ be an object in $\sThnsset$ and $x\in Z_{0,0,0}$. Then the maps
    \[ \gamma_{\repDThnS,F[0]}\colon \repDThnS\diamond F[0]\to \repDThnS\star F[0] \]
    from \cref{gammaonreps}
    induce a comparison map in $\sThnssetslice{Z}$
    \[ \joinslice{Z}{x}\to \slice{Z}{x}. \]
\end{constr}

To further give a comparison map between the cone constructions, we first need the following.

\begin{constr} \label{constr:generalgamma}
    Let $m\geq 0$, $X$ be a connected $\Thn$-space, and $A\in \sThnsset$ be an object. Using that the functors 
    \[ F[m,X]\diamond -,F[m,X]\star -\colon \sThnsset \to {}^{F[m,X]/}\sThnsset \]
    preserve colimits, we define the map in $\sThnsset$
    \[ \gamma_{F[m,X],A}\colon F[m,X]\diamond A\to F[m,X]\star A \]
    to be given by the colimit in ${}^{F[m,X]/}\sThnsset$
    \[ \colim_{F[m',\theta',k']\to A} (F[m,X]\diamond F[m',\theta',k']\xrightarrow{\gamma_{F[m,X],F[m',\theta',k']}} F[m,X]\star F[m',\theta',k']), \]
    where $\gamma_{F[m,X],F[m',\theta',k']}$ is the map from \cref{gammaonreps}.

    Similarly, given objects $A,B\in \sThnsset$, we can further define a map in $\sThnsset$
    \[ \gamma_{B,A}\colon B\diamond A\to B\star A. \]
\end{constr}

\begin{constr} \label{def:comparisoncones}
    Let $f\colon A\to Z$ be a map in $\sThnsset$. The maps
    \[ \gamma_{\repDThnS,A}\colon \repDThnS\diamond A\to \repDThnS\star A \]
    from \cref{constr:generalgamma}
    induce a comparison map in $\sThnssetslice{Z}$
    \[ \joinslice{Z}{f}\to \slice{Z}{f}.\]
\end{constr}

\subsection{Homotopical properties of the comparison map}
\label{sec:comparisonmapequivalence}

We now want to prove that, for any double $(\infty,n-1)$-category, the comparison map between slice and cone constructions is a weak equivalence. Recall the following result from \cite[Lemma~D.2.16]{RVelements} (cf.~\cite[Proposition~A.4.7]{RVqCat}). 

\begin{lemma} \label{gammaishtpyeq}
    Given $m,m'\geq 0$, the comparison map 
    \[ \gamma_{m,m'}\colon F[m]\diamond F[m']\to F[m]\star F[m'] \]
    is a homotopy equivalence in $\injsThnsset$. 
\end{lemma}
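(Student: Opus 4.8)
The plan is to deduce this from the analogous statement for simplicial sets, namely \cite[Lemma~D.2.16]{RVelements}, by transporting the homotopy data along the natural inclusion $\iota\colon\Dset\hookrightarrow\sThnsset$.

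First I would observe that $\gamma_{m,m'}\colon F[m]\diamond F[m']\to F[m]\star F[m']$, viewed as a map in $\sThnsset$, is the image under $\iota$ of the classical comparison map $\gamma_{m,m'}\colon\repD[m]\diamond\repD[m']\to\repD[m]\star\repD[m']$ of simplicial sets. Indeed, the fat join and the join on $\sThnsset$ are defined by postcomposition from those on $\Dset$, so they -- together with the products and pushouts occurring in \cref{constr:join} and \cref{gammaonreps} -- are computed levelwise in $\Dset$; moreover $F[m]$ and $F[m']$ are the images under $\iota$ of the representables $\repD[m],\repD[m']\in\Dset$, which are constant in the remaining directions. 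Unwinding \cref{def:comparisonslices} and \cref{constr:generalgamma} then identifies $\gamma_{m,m'}$ with the map that is constant in those directions with value the classical $\gamma_{m,m'}$, i.e.\ with $\iota(\gamma_{m,m'})$; note in particular that $\iota$ carries the interval $\repD[1]$ of $\Dset$ to $F[1]$.

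Next I would invoke \cite[Lemma~D.2.16]{RVelements}: the classical map $\gamma_{m,m'}\colon\repD[m]\diamond\repD[m']\to\repD[m]\star\repD[m']$ is a homotopy equivalence of simplicial sets, that is, it admits a homotopy inverse $s$ together with homotopies, built from the interval $\repD[1]$, witnessing $\gamma_{m,m'}\circ s\simeq\id$ and $s\circ\gamma_{m,m'}\simeq\id$ (in fact $\gamma_{m,m'}$ is a deformation retraction). Since the inclusion $\iota\colon\Dset\hookrightarrow\sThnsset$ is given by restriction along a projection of indexing categories, it preserves all limits and colimits; in particular it preserves products with $\repD[1]$ and the endpoint inclusions $\repD[0]\rightrightarrows\repD[1]$. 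Applying $\iota$ therefore transports $s$ and these homotopies to $\sThnsset$, where they exhibit $\gamma_{m,m'}\colon F[m]\diamond F[m']\to F[m]\star F[m']$ as a homotopy equivalence with respect to the interval $F[1]$ -- that is, a homotopy equivalence in $\injsThnsset$, as claimed.

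I do not expect a genuine obstacle here: the mathematical content lies entirely in \cite[Lemma~D.2.16]{RVelements}, and the remaining work is the routine bookkeeping identifying the $\sThnsset$-level join and comparison constructions with their levelwise $\Dset$-counterparts when all inputs come from $\Dset$, together with the observation that the natural inclusion $\Dset\hookrightarrow\sThnsset$ is (co)continuous and hence transports the homotopy data.
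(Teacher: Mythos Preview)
Your proposal is correct and takes essentially the same approach as the paper: the paper simply records this lemma as a recalled result, citing \cite[Lemma~D.2.16]{RVelements} directly without further argument. You have spelled out the implicit transport along the inclusion $\Dset\hookrightarrow\sThnsset$ that makes the citation apply, which is accurate but more detailed than what the paper provides.
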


As a consequence, we can deduce that the comparison map between joins is a weak equivalence. 

\begin{prop} \label{gammaonrepiswe}
    Given $m,m'\geq 0$ and $X,X'$ connected $\Thn$-spaces, the comparison map 
    \[ \gamma_{F[m,X],F[m',X']}\colon F[m,X]\diamond F[m',X]\to F[m,X]\star F[m',X] \]
    is a weak equivalence in $\injsThnsset$, and so also in $\SegsThnsset$. 
\end{prop}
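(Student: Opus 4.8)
The plan is to deduce the statement from its undecorated version \cref{gammaishtpyeq} by propagating it through the pushout presentations of the decorated joins provided by \cref{lemma:join reps} and \cref{gammaonreps}.

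First I would observe that $\gamma_{m,m'}\times X\times X'$ is a weak equivalence in $\injsThnsset$. By \cref{gammaishtpyeq}, $\gamma_{m,m'}$ is a homotopy equivalence in $\injsThnsset$; since every homotopy equivalence is a weak equivalence and, as $\MSThnsset$ is cartesian closed, the endofunctor $(-)\times X\times X'$ of $\injsThnsset$ preserves weak equivalences (these being detected levelwise), the map $\gamma_{m,m'}\times X\times X'$ is a weak equivalence in $\injsThnsset$.

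Next I would rewrite both sides of $\gamma_{F[m,X],F[m',X']}$ as pushouts of spans sharing a common ``spine''. By \cref{lemma:join reps}, $F[m,X]\star F[m',X']$ is the pushout of the span
\[ F[m,X]\amalg F[m',X'] \longleftarrow (F[m]\amalg F[m'])\times X\times X' \longrightarrow (F[m]\star F[m'])\times X\times X', \]
and the cancellation of pushouts carried out inside the proof of \cref{gammaonreps} (the assertion that its ``top-right square'' is a pushout) shows that $F[m,X]\diamond F[m',X']$ is the pushout of the same span with $F[m]\star F[m']$ replaced by $F[m]\diamond F[m']$ and the second leg replaced by the monomorphism induced by $F[m]\amalg F[m']\hookrightarrow F[m]\diamond F[m']$. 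Since $\gamma_{m,m'}$ restricts to the identity on $F[m]\amalg F[m']$, the comparison map $\gamma_{F[m,X],F[m',X']}$ is exactly the map induced on pushouts by the map of spans that is the identity on the first two vertices and $\gamma_{m,m'}\times X\times X'$ on the third.

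Finally I would conclude by a gluing argument: in each span the ``spine'' leg $(F[m]\amalg F[m'])\times X\times X'\to(F[m]\diamond F[m'])\times X\times X'$, respectively $\to(F[m]\star F[m'])\times X\times X'$, is a monomorphism, hence a cofibration in $\injsThnsset$, so by left properness of $\injsThnsset$ both squares are homotopy pushouts; the map of spans is a levelwise weak equivalence (the identity on two vertices and $\gamma_{m,m'}\times X\times X'$ on the third), so the induced map $\gamma_{F[m,X],F[m',X']}$ on homotopy pushouts is a weak equivalence in $\injsThnsset$, and since $\SegsThnsset$ is a localization of $\injsThnsset$ it is a weak equivalence there as well. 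The main obstacle is the bookkeeping in the middle step: reconciling the single pushout of \cref{gammaonreps} with the two spine pushouts, so that $\gamma_{F[m,X],F[m',X']}$ is genuinely the map induced by the stated map of spans. Arguing directly from \cref{gammaonreps} that $\gamma_{F[m,X],F[m',X']}$ is a pushout of the weak equivalence $\gamma_{m,m'}\times X\times X'$ does not by itself suffice, since the leg along which one pushes is not a cofibration and the strict pushout must still be recognized as a homotopy pushout — which is precisely what the span reformulation achieves.
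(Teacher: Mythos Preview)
Your proof is correct and follows essentially the same approach as the paper's. Both arguments reduce to the pushout presentation from \cref{gammaonreps}, observe that $\gamma_{m,m'}\times X\times X'$ is a weak equivalence via \cref{gammaishtpyeq} and cartesian closedness, and then use that the relevant pushout preserves this weak equivalence because the spine legs are cofibrations; the paper phrases this last step as the pushout functor between coslice categories preserving weak equivalences between cofibrant objects, while you phrase it via left properness and homotopy pushouts, but the content is identical.
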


\begin{proof}
By the proof of \cref{gammaonreps}, we have the following pushout diagrams in $\injsThnsset$,
    \begin{tz}
    \node[](2) {$(F[m]\amalg F[m'])\times X\times X'$};  
    \node[right of=2,xshift=4.2cm](4) {$(F[m]\diamond F[m'])\times X\times X'$}; 
    \node[right of=4,xshift=4.8cm](40) {$(F[m]\star F[m'])\times X\times X'$};
    
    \node[below of=2](2') {$F[m,X]\amalg F[m',X']$};  
    \node[below of=4](4') {$F[m,X]\diamond F[m',X']$};  
    \node[below of=40](41) {$F[m,X]\star F[m',X']$};
    
    \pushout{4'};
    \pushout{41};

 \draw[right hook->,bend right=8](2') to (41);
    \draw[->](2) to (2'); 
    \draw[->](4) to (4'); 
    \draw[right hook->](2) to (4);
    \draw[right hook->](2') to (4');
    \draw[->](4) to node[above,la]{$\gamma_{m,m'}\times X\times X'$} (40); 
    \draw[->](4') to node[above,la]{$\gamma_{F[m,X],F[m',X']}$} (41); 
    \draw[->] (40) to (41);
\end{tz}
where the map $\gamma_{m,m'}\times X\times X'$ is a weak equivalence in $\injsThnsset$ by \cref{gammaishtpyeq} and cartesian closedness of $\injsThnsset$. Since the pushout functor 
\[ {}^{(F[m]\amalg F[m'])\times X\times X'/} \injsThnsset\to {}^{F[m,X]\amalg F[m',X']/}\injsThnsset  \]
preserves weak equivalences between cofibrant objects, it follows that the map $\gamma_{F[m,X],F[m',X']}$ is also a weak equivalence in $\injsThnsset$, as desired.
\end{proof}

\begin{prop} \label{gammaiswe}
    Let $A$ and $B$ be objects in $\sThnsset$. The map 
    \[ \gamma_{B,A}\colon B\diamond A\to B\star A \]
    is a weak equivalence in $\SegsThnsset$.
\end{prop}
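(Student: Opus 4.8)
The statement upgrades \cref{gammaonrepiswe} from representables-times-connected-$\Thn$-spaces to arbitrary objects $A,B\in\sThnsset$. The natural strategy is a double induction by reducing to the case already handled, using that both sides of $\gamma_{B,A}$ are built as colimits. First I would fix $B$ and show that the class of objects $A$ for which $\gamma_{B,A}\colon B\diamond A\to B\star A$ is a weak equivalence in $\SegsThnsset$ is closed under (connected) colimits, so that by \cref{joinprescolim,fatjoinprescolim} (the join functors $B\diamond-$ and $B\star-$ preserve connected colimits into ${}^{B/}\sThnsset$) it suffices to verify the claim for $A$ a representable $\repDThnS[m',\theta',k']$; and since an arbitrary $\sThnsset$ is a colimit of its representables, one can reduce to $A$ representable and then, symmetrically, to $B$ representable.

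The subtlety is that ``closed under colimits'' is not quite automatic: $\SegsThnsset$-weak equivalences are not closed under arbitrary colimits. The standard fix, exactly as in \cite[Appendix~D.4]{RVelements}, is to work relatively: present $B\diamond A\to B\star A$ as a map in the slice ${}^{(B\amalg A)/}\sThnsset$ (both joins receive the canonical inclusion from $B\amalg A$), note that everything in sight is cofibrant in $\injsThnsset$, and use that the colimit functor out of a diagram of cofibrant objects along cofibrations preserves weak equivalences between such. Concretely, I would write $A=\colim_i \repDThnS[m_i,\theta_i,k_i]$ as a canonical colimit over the category of elements, arrange this as a cofibrant diagram (via the Reedy/projective machinery, or by the usual skeletal filtration of a presheaf by attaching cells along monomorphisms), and observe that both $B\diamond-$ and $B\star-$ send this to cofibrant diagrams with the same indexing, with $\gamma$ a levelwise weak equivalence between them by the representable case; then $\gamma_{B,A}$ is a weak equivalence as the induced map on colimits. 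I would phrase the representable base case as: for $A=\repDThnS[m',\theta',k']$, the object $\repThnS[\theta',k']$ decomposes as a representable $\Thn$-space times a representable simplicial set, and a representable $\Thn$-space is connected (it has a terminal-like $\pi_0$), so $A\cong F[m',X']$ with $X'$ connected, landing us in \cref{gammaonrepiswe}; symmetrically for $B$.

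The main obstacle, as usual in these arguments, is the bookkeeping needed to make the colimit-of-cofibrant-objects reduction rigorous — i.e., checking that the canonical presentation of a presheaf $A$ as a colimit of representables can be taken to be projective-cofibrant (or, equivalently, that the skeletal filtration $\emptyset=\mathrm{sk}_{-1}A\hookrightarrow \mathrm{sk}_0 A\hookrightarrow\cdots$ is by monomorphisms obtained by attaching representables, each step a pushout along a monomorphism), and that $B\diamond-$ and $B\star-$ both carry pushouts along such monomorphisms to pushouts along monomorphisms (for $\diamond$ this uses \cref{fatjoinprescolim} plus \cref{thm:pushprodofjoins} with its monomorphism clause; for $\star$ it uses \cref{joinprescolim} plus \cref{thm:pushprodofjoins} again). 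Granting that, the inductive step is: given that $\gamma_{B,\mathrm{sk}_{p-1}A}$ is a weak equivalence and $\gamma_{B,\,\mathrm{cell}}$ is one by the representable case, the map on the pushout defining $\mathrm{sk}_p A$ is a weak equivalence because a pushout of a weak equivalence between cofibrant objects along a cofibration is a weak equivalence in the left proper model category $\SegsThnsset$; passing to the (filtered, hence homotopy) colimit over $p$ finishes it. This is precisely the scheme of \cite[Appendix~D.4]{RVelements}, transported through the levelwise description of $\star$ and $\diamond$ on $\sThnsset$, so I expect no genuinely new difficulty beyond making these cofibrancy claims precise.
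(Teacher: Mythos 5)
Your proposal is correct and follows essentially the same route as the paper: reduce to representables (which are of the form $F[m',X']$ with $X'$ connected, so that \cref{gammaonrepiswe} applies), then propagate to arbitrary $A$ and then arbitrary $B$ by closure under cell attachments and colimits. The bookkeeping you flag as the main obstacle is exactly what the paper outsources to Cisinski's notion of a class of objects saturated by monomorphisms (\cite[Definition 1.3.9 and Corollary 1.3.10]{CisinskiBook}), with the required closure properties supplied by the fact that $-\diamond A$ and $-\star A$ (and their duals) are left Quillen into the slice, i.e.\ \cref{diamondjoinLQ,starjoinLQ} — precisely the package your skeletal-filtration argument re-derives by hand.
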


\begin{proof}
For $m\geq 0$ and $X$ a connected $\Thn$-space, let $\cA$ be the class of all objects $A\in \sThnsset$ for which the map \[ \gamma_{F[m,X],A}\colon F[m,X]\diamond A\to F[m,X]\star A \] is a weak equivalence in $\SegsThnsset$. By \cref{gammaonrepiswe}, all representables of $\sThnsset$ are in $\cA$. Moreover, the class $\cA$ is saturated by monomorphisms in the sense of \cite[Definition 1.3.9]{CisinskiBook} since the functors 
\[ F[m,X]\diamond -, F[m,X]\star -\colon \SegsThnsset\to {}^{F[m,X]/} \SegsThnsset \]
are left Quillen by \cref{diamondjoinLQ,starjoinLQ}. Hence, we deduce by \cite[Corollary 1.3.10.]{CisinskiBook} that every object $A\in \sThnsset$ is in $\cA$. By fixing $A\in \sThnsset$ and repeating the above argument to the class $\cB$ of all objects $B\in \sThnsset$ for which the map $\gamma_{B,A}\colon B\diamond A\to B\star A$ is a weak equivalence in $\SegsThnsset$, we get the desired result. 
\end{proof}

As a corollary, we obtain the desired result.

    \begin{thm} \label{thm:comparisoniswe}
    Let $D$ be a fibrant object in $\SegsThnsset$ and $f\colon A\to D$ be a map in $\sThnsset$. Then the comparison map 
    \[ \joinslice{D}{f}\to \slice{D}{f} \]
    is a weak equivalence in $\MSrightfib{D}$, and so also in $\injsThnsset$. 
\end{thm}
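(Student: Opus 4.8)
The plan is to reduce the statement to what has already been established about the comparison map $\gamma_{B,A}$. By \cref{prop:fibrantcone,joinconefibrant}, both projections $\slice{D}{f}\to D$ and $\joinslice{D}{f}\to D$ are double $(\infty,n-1)$-right fibrations over the fibrant object $D\in\SegsThnsset$, and by construction (\cref{def:comparisoncones}) the comparison map $\joinslice{D}{f}\to\slice{D}{f}$ is a map over $D$. Since $D$ is fibrant in $\SegsThnsset$ — but not necessarily in $\pcatThn$ — I cannot quite invoke \cref{webtwdoubleright} directly; however, the model structure $\MSrightfib{D}$ detects weak equivalences between fibrant objects via the lifting property against the generating trivial cofibrations listed in the proof of \cref{joinconefibrant}, so it suffices to produce a lift in every square with left-hand side either a trivial cofibration $i'\colon A'\hookrightarrow B'$ in $\injsThnsset$ or a pushout-product map $(F[0]\xhookrightarrow{\langle m\rangle}F[m])\pushprod(X\hookrightarrow Y)$.

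The key step is the following translation, exactly as in the proof of \cref{joinconefibrant}: by the defining adjunctions for the fat and neat cone constructions, a lift of $i'\colon A'\hookrightarrow B'$ against $\joinslice{D}{f}\to\slice{D}{f}$ is equivalent to a lift in a square in $\sThnsset$ whose left-hand map is built from the pushout-join $(A'\hookrightarrow B')\pushprodstar(\emptyset\hookrightarrow A)$ on the neat side, the pushout-(fat-)join on the fat side, and the comparison map $\gamma$ relating them. Concretely, one forms the map
\[
A'\diamond A\amalg_{A'} B' \;\longrightarrow\; B'\diamond A
\]
on the fat side, the analogous map on the neat side, and uses the naturality of $\gamma_{B,A}$ in $B$ (from \cref{constr:generalgamma}) together with \cref{thm:pushprodofjoins} to see that the relevant comparison of these pushout-joins is, up to the already-known equivalence $\gamma$, a trivial cofibration in $\injsThnsset$ (respectively $\SegsThnsset$). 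The main input is \cref{gammaiswe}, which gives that $\gamma_{B,A}\colon B\diamond A\to B\star A$ is a weak equivalence in $\SegsThnsset$ for all $A,B$; combined with the fact that $\injsThnsset\to\SegsThnsset$ is a left Bousfield localization (so cofibrations and fibrant objects behave well), this lets the lift be constructed against a fibrant $D$.

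A cleaner route, which I would actually pursue first, bypasses the lifting argument: one shows directly that the comparison map $\joinslice{D}{f}\to\slice{D}{f}$ induces, on each fiber over $x\in D_{0,[0],0}$, a weak equivalence in $\MSThnsset$. Using \cref{sliceaspullback} one identifies $(\slice{D}{f})_x$ with an iterated pullback of path objects of $D$ built from $\gamma_{B,A}$ with $B$ a representable, and \cref{joinconeonfibers} gives the companion description of $(\joinslice{D}{f})_x$ as $\Hom_{\sThnssetslice{D}}(A\xrightarrow{f}D,\joinsliceunder{D}{x}\to D)$; the comparison on fibers is then induced by the weak equivalences $\gamma_{F[m,X],A}$ between the two joins. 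Since $D$ is fibrant in $\SegsThnsset$, these pullbacks are homotopy pullbacks, and \cref{gammaiswe} together with the fact that $-\diamond A$ and $-\star A$ are left Quillen on $\SegsThnsset$ (\cref{diamondjoinLQ,starjoinLQ}) shows the induced map on fibers is a weak equivalence in $\MSThnsset$. One then concludes by the fiberwise characterization of weak equivalences of double $(\infty,n-1)$-right fibrations (\cref{webtwdoubleright}, applied after replacing $D$ if necessary, or directly via the remark that $\MSrightfib{D}$ is the localization detecting such fiberwise equivalences). The final clause — that it is also a weak equivalence in $\injsThnsset$ — follows because both sides are fibrant in $\SegsThnsset$, hence the comparison is between fibrant objects of a localization of $\injsThnsset$, where weak equivalences between fibrant objects coincide.

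The hard part will be bookkeeping the fiber descriptions: matching \cref{sliceaspullback} and \cref{joinconeonfibers} so that the fiberwise comparison map is genuinely induced by $\gamma$, and checking that the relevant pullbacks computing the fibers are homotopy pullbacks in $\MSThnsset$ (using fibrancy of $D$ and that the path-object projections are fibrations). Everything else is an assembly of \cref{gammaiswe}, \cref{webtwdoubleright}, and the Quillen-pair statements \cref{diamondjoinLQ,starjoinLQ}.
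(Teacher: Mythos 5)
You have collected exactly the right ingredients (\cref{gammaiswe}, \cref{diamondjoinLQ}, \cref{starjoinLQ}), but neither of your two routes actually closes, and both miss the one-line assembly the paper uses. The paper's proof is: $\gamma_{-,A}\colon -\diamond A\Rightarrow -\star A$ is a natural transformation between the left Quillen functors $\SegsThnsset\to{}^{A/}\SegsThnsset$ of \cref{diamondjoinLQ,starjoinLQ}, and by \cref{gammaiswe} it is a weak equivalence on every object (all objects are cofibrant). By \cite[Corollary~1.4.4(b)]{Hovey}, the mate transformation between the right adjoints, $\joinslice{(-)}{(-)}\Rightarrow\slice{(-)}{(-)}$, is then a weak equivalence on every fibrant object of ${}^{A/}\SegsThnsset$; applying this to $f\colon A\to D$ with $D$ fibrant gives the statement immediately, and since both cone constructions are fibrant in $\SegsThnsset$ the equivalence holds in $\injsThnsset$ and $\MSrightfib{D}$ as well.

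Concerning your first route: the premise that ``$\MSrightfib{D}$ detects weak equivalences between fibrant objects via the lifting property against the generating trivial cofibrations'' is false. Lifting against trivial cofibrations characterizes \emph{fibrations}, not weak equivalences; to conclude a weak equivalence this way you would need the comparison map to be a trivial fibration, which it is not in general -- it is induced by precomposition with $\gamma_{\repDThnS,A}$, and $\gamma$ is a quotient-type map rather than a monomorphism (already $\gamma_{1,0}\colon F[1]\diamond F[0]\to F[2]$ collapses cells), so there is no reason for the induced map on cones to be a fibration at all. Concerning your second route: \cref{webtwdoubleright} is stated only for a base $J\in\pcatThn$, whereas here $D$ is an arbitrary fibrant object of $\SegsThnsset$; your parenthetical ``applied after replacing $D$ if necessary, or directly via the remark that \ldots'' is precisely the point that would need an argument, and the identification of the fiberwise comparison map with one induced by $\gamma$ -- which you defer as ``bookkeeping'' -- is the actual mathematical content of that route. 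As written, neither route constitutes a proof; the mate/conjugation argument via Hovey both avoids these issues and is shorter.
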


\begin{proof}
    This follows directly from \cref{diamondjoinLQ,starjoinLQ,gammaiswe} using \cite[Corollary~1.4.4(b)]{Hovey}.
\end{proof}

As a special case, we get the following.

\begin{cor} \label{cor:comparisoniswe}
    Let $D$ be a fibrant object in $\SegsThnsset$ and $d\in D_{0,[0],0}$. Then the comparison map 
    \[ \joinslice{D}{d}\to \slice{D}{d} \]
    is a weak equivalence in $\MSrightfib{D}$, and so also in $\injsThnsset$.
\end{cor}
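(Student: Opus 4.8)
The plan is to obtain this as the instance $A=\repD[0]$, $f=d$ of \cref{thm:comparisoniswe}, viewing the element $d\in D_{0,[0],0}$ as a map $d\colon\repD[0]\to D$ in $\sThnsset$.

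First I would record the purely definitional fact that the fat and neat slice constructions over $d$ are exactly the fat and neat cone constructions over the map $d\colon\repD[0]\to D$: on comparing the defining formulas, an $(m,\theta,k)$-simplex of $\joinslice{D}{d}$ is a commuting triangle $\repD[0]\hookrightarrow\repDThnS\star\repD[0]\to D$ restricting to $d$, which is precisely an $(m,\theta,k)$-simplex of $\joinslice{D}{f}$ for $A=\repD[0]$, $f=d$; likewise $\slice{D}{d}$ agrees with $\slice{D}{f}$, and the two projections to $D$ match. Next I would check that the comparison map $\joinslice{D}{d}\to\slice{D}{d}$ of \cref{def:comparisonslices}, induced by the map $\gamma_{\repDThnS,\repD[0]}$ of \cref{gammaonreps}, is identified under this with the comparison map $\joinslice{D}{f}\to\slice{D}{f}$ of \cref{def:comparisoncones}, which for $A=\repD[0]$ is induced by the map $\gamma_{\repDThnS,\repD[0]}$ of \cref{constr:generalgamma}: since $\repD[0]$ is representable, evaluating the colimit formula of \cref{constr:generalgamma} at $\repD[0]$ recovers, by naturality, the map of \cref{gammaonreps}, so the two comparison maps coincide. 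Granting these identifications, the statement is literally \cref{thm:comparisoniswe} applied to $f=d\colon\repD[0]\to D$.

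The main obstacle: there is essentially none. All the homotopical content — the left Quillen property of the join functors from \cref{diamondjoinLQ,starjoinLQ}, the weak equivalence $\gamma_{B,A}$ from \cref{gammaiswe}, and the transfer to $\MSrightfib{D}$ via \cite[Corollary~1.4.4(b)]{Hovey} — has already been carried out for the general cone construction, so the only task here is the bookkeeping that identifies the slice with the cone over the point. If one preferred not to cite \cref{thm:comparisoniswe} as a black box, the same short argument specializes verbatim with $A=\repD[0]$.
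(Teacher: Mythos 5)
Your proposal is correct and matches the paper exactly: the paper states this corollary as the special case $A=\repD[0]$, $f=d$ of \cref{thm:comparisoniswe} with no further argument, and your careful identification of the slice with the cone over the point (and of the two comparison maps) is precisely the implicit bookkeeping being invoked.
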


\section{Invariance of cone constructions under Dwyer-Kan equivalences} 

\label{SectionDK}

In this section, we study the invariance of the slice and cone constructions under certain weak equivalences. In \cref{sec:invariancelevelwise}, we first show that these constructions are invariant under levelwise Dwyer-Kan equivalences of Segal spaces. Then, in \cref{sec:invarianceDKeq}, we deduce that they are invariant under Dwyer-Kan equivalences (equivalently weak equivalences in $\CSSsThnsset$) between double $(\infty,n-1)$-categories whose $(\infty,n-1)$-category of objects is equivalent to a space. 

This result can then be applied to the Dwyer-Kan equivalence given by a fibrant replacement in $\CSSsThnsset$ of a fibrant object in $\pcatinj$, and so this implies that the cone construction associated to a fibrant object in $\pcatinj$ is homotopically meaningful. 

\subsection{Invariance under levelwise Dwyer-Kan equivalences} \label{sec:invariancelevelwise}

We now prove that the cone constructions are invariant under levelwise weak equivalence in $\MSDsset$ between fibrant objects in $\SegsThnsset$. In particular, by \cite[Theorem 7.7]{RezkCSS}, these precisely coincide with the levelwise \emph{Dwyer-Kan equivalences} of Segal spaces. For this, we first recall the following definition.

\begin{defn}
    Let $C$ and $D$ be fibrant objects in $\SegsThnsset$. A map $w\colon C \to D$ in $\sThnsset$ is \textbf{homotopically fully faithful} if the induced map 
\[ C^{F[1]} \to D^{F[1]} \times_{D^{\partial F[1]}} C^{\partial F[1]} \]
is a weak equivalence in $\SegsThnsset$. 
\end{defn}

We first show that, if a map $w\colon C\to D$ between fibrant objects in $\SegsThnsset$ is homotopically fully faithful, then it induces a weak equivalence between the cone construction $\slice{C}{f}$ and the pullback $w^*\slice{D}{wf}$ of the cone construction $\slice{D}{wf}$ along $w$.

\begin{lemma} \label{lemma:exponent ff}
    Let $C$ and $D$ be fibrant objects in $\SegsThnsset$ and $A\in \sThnsset$ be an object. If a map $w\colon C \to D$ in $\sThnsset$ is homotopically fully faithful, then so is the induced map $w^A\colon C^A \to D^A$.
\end{lemma}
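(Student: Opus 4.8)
The plan is to reduce the statement to the fact that the functor $(-)^A\colon\SegsThnsset\to\SegsThnsset$, being right Quillen, preserves weak equivalences between fibrant objects. First I would use cartesian closedness of $\SegsThnsset$ to rewrite every exponential appearing in the homotopical full faithfulness condition for $w^A$: there are natural isomorphisms $(C^A)^{F[1]}\cong C^{F[1]\times A}\cong (C^{F[1]})^A$ and $(C^A)^{\partial F[1]}\cong(C^{\partial F[1]})^A$, and likewise for $D$. Moreover, since $(-)^A$ is a right adjoint it preserves the pullback defining the target, giving a natural isomorphism
\[ (D^A)^{F[1]}\times_{(D^A)^{\partial F[1]}}(C^A)^{\partial F[1]}\;\cong\;\bigl(D^{F[1]}\times_{D^{\partial F[1]}}C^{\partial F[1]}\bigr)^A. \]
Under these identifications, the map one must show to be a weak equivalence is precisely $g^A$, where $g\colon C^{F[1]}\to D^{F[1]}\times_{D^{\partial F[1]}}C^{\partial F[1]}$ is the comparison map witnessing that $w$ is homotopically fully faithful, hence a weak equivalence in $\SegsThnsset$ by hypothesis.

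Next I would check that the source and target of $g$ are fibrant in $\SegsThnsset$. The source $C^{F[1]}$ is fibrant because $C$ is fibrant and $\SegsThnsset$ is cartesian closed. For the target, the restriction map $D^{F[1]}\to D^{\partial F[1]}$ is a fibration, being the exponential of the fibrant object $D$ by the monomorphism $\partial F[1]\hookrightarrow F[1]$; hence the pullback of the fibrant objects $D^{F[1]}$ and $C^{\partial F[1]}$ along this fibration is fibrant.

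Finally, the functor $-\times A\colon\SegsThnsset\to\SegsThnsset$ is left Quillen: for a (trivial) cofibration $i$ one has $i\times A=i\pushprod(\emptyset\hookrightarrow A)$, which is a (trivial) cofibration by the pushout-product axiom for the cartesian closed model structure $\SegsThnsset$. Therefore its right adjoint $(-)^A$ is right Quillen, and by Ken Brown's lemma it preserves weak equivalences between fibrant objects. Applying this to the weak equivalence $g$ between fibrant objects shows that $g^A$, and therefore the comparison map for $w^A$, is a weak equivalence in $\SegsThnsset$, i.e., $w^A$ is homotopically fully faithful. I do not expect a genuine obstacle here; the only point requiring a little care is the fibrancy of the target of $g$, which reduces to the standard facts that exponentiating a fibrant object by a monomorphism yields a fibration and that fibrant objects are closed under pullback along fibrations.
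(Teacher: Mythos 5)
Your proof is correct and takes essentially the same route as the paper, whose entire proof is the single sentence that the claim ``follows from the fact that $\SegsThnsset$ is cartesian closed.'' Your write-up simply makes explicit the details implicit in that citation: the exponential identifications, the preservation of the defining pullback by the right adjoint $(-)^A$, the fibrancy of the source and target of the comparison map, and Ken Brown's lemma applied to the right Quillen functor $(-)^A$.
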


\begin{proof}
    This follows from the fact that $\SegsThnsset$ is cartesian closed. 
\end{proof}

\begin{lemma} \label{adjunctiswe}
    Let $C$ and $D$ be fibrant objects in $\SegsThnsset$, $w\colon C\to D$ be a homotopically fully faithful map in $\sThnsset$, and $f\colon A\to C$ be a map in $\sThnsset$. Then the map
    \[ \slice{C}{f} \to w^*\slice{D}{wf} \]
    is a weak equivalence in $\MSrightfib{C}$.
\end{lemma}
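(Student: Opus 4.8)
The plan is to use the pullback characterization of the fat cone construction from \cref{sliceaspullback} together with the fact, recorded in \cref{webtwdoubleright}, that a map between double $(\infty,n-1)$-right fibrations over a Segal-fibrant base is a weak equivalence in $\MSrightfib{C}$ if and only if it induces weak equivalences on all fibers (here we use that $C$ is fibrant in $\SegsThnsset$, and that both $\slice{C}{f}\to C$ and $w^*\slice{D}{wf}\to C$ are double $(\infty,n-1)$-right fibrations, the latter since right fibrations are stable under pullback). So it suffices to fix $x\in C_{0,0,0}$, with image $wx\in D_{0,0,0}$, and show that the induced map on fibers
\[
(\slice{C}{f})_x \longrightarrow (\slice{D}{wf})_{wx}
\]
is a weak equivalence in $\MSThnsset$.

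First I would compute each fiber explicitly using \cref{sliceaspullback}. Since taking the fiber at $x$ is a further (homotopy) pullback along $x\colon \Delta[0]\to C$, and all the maps $(d^1,d^0)\colon (C^A)^{F[1]}\to C^A\times C^A$ are fibrations (by cartesian closedness of $\SegsThnsset$ and fibrancy of $C$), one identifies $(\slice{C}{f})_x$ with the (homotopy) pullback of the cospan
\[
\Delta[0]\xrightarrow{(\Delta x,\, f)} C^A\times C^A \xleftarrow{(d^1,d^0)} (C^A)^{F[1]},
\]
i.e.\ with the ``mapping $\Thn$-space'' in $C^A$ from the constant diagram $\Delta x$ to $f$; and similarly $(\slice{D}{wf})_{wx}$ is the analogous mapping object in $D^A$ from $\Delta(wx)$ to $wf$. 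Under this identification the map on fibers is the one induced by $w^A\colon C^A\to D^A$ on these mapping objects (using that $w^A\circ \Delta x = \Delta(wx)$ and $w^A\circ f = wf$, which follows from naturality of $\Delta$ in the base).

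The key input is then \cref{lemma:exponent ff}: since $w$ is homotopically fully faithful, so is $w^A\colon C^A\to D^A$. By definition of homotopically fully faithful, the comparison map
\[
(C^A)^{F[1]}\longrightarrow (D^A)^{F[1]}\times_{(D^A)^{\partial F[1]}} (C^A)^{\partial F[1]}
\]
is a weak equivalence in $\SegsThnsset$. Pulling this weak equivalence back along $(\Delta x, f)\colon \Delta[0]\to C^A\times C^A = (C^A)^{\partial F[1]}$ — which is a homotopy pullback since the relevant maps are fibrations — yields exactly the statement that the map on fibers $(\slice{C}{f})_x\to (\slice{D}{wf})_{wx}$ is a weak equivalence in $\MSThnsset$. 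Assembling over all $x$ and invoking \cref{webtwdoubleright} gives the claim.

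The main obstacle is the bookkeeping in the second step: one must carefully match the fiber of the fat cone construction, as a homotopy pullback built from the square of \cref{sliceaspullback}, with the fiber of the comparison map appearing in the definition of homotopically fully faithful, keeping track of which maps are fibrations so that the strict pullbacks compute homotopy pullbacks. Once the shapes are lined up correctly, the homotopical content is entirely supplied by \cref{lemma:exponent ff} and the stability of weak equivalences in $\SegsThnsset$ under pullback along fibrations.
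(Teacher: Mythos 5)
Your key homotopical input is the right one---\cref{lemma:exponent ff} applied to $w^A\colon C^A\to D^A$, pulled back along the classifying map of the cone---and this matches the engine of the paper's proof. However, your reduction to fibers is not justified: \cref{webtwdoubleright} is stated only for a base $J$ in $\pcatThn$, i.e.\ with $J_0$ discrete, whereas in this lemma $C$ is merely a fibrant object of $\SegsThnsset$, so $C_0$ is an arbitrary fibrant $\Thn$-space. Over such a base the set $C_{0,0,0}$ of vertices need not detect anything (it could even be empty while $C_0$ is homotopically nontrivial), so checking weak equivalences on fibers over elements of $C_{0,0,0}$ is not a valid criterion, and no result in the paper supplies one in this generality. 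As written, the argument therefore has a genuine gap at its first step.

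The gap is repairable without changing your main idea, and the repair is exactly what the paper does: instead of pulling back the equivalence
\[ (C^A)^{F[1]}\longrightarrow (D^A)^{F[1]}\times_{(D^A)^{\partial F[1]}} (C^A)^{\partial F[1]} \]
along each point $(\Delta x, f)\colon \Delta[0]\to C^A\times C^A$ separately, pull it back once along the single map $\Delta\times\{f\}\colon C\to C^A\times C^A$ from \cref{sliceaspullback}. Since the relevant comparison maps are fibrations (by cartesian closedness of $\SegsThnsset$) and the pullback functor $(\Delta\times\{f\})^*$ preserves weak equivalences between fibrant objects, this directly exhibits $\slice{C}{f}\to w^*\slice{D}{wf}$ as a weak equivalence in $\SegsThnsset$; both sides being double $(\infty,n-1)$-right fibrations over $C$ (hence fibrant in $\SegsThnsset$ by \cref{sourceofdblrightfib}), this is equivalent to being a weak equivalence in $\MSrightfib{C}$, with no fiberwise detection needed.
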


\begin{proof}
    First note that $\slice{C}{f}\to C$ and $w^*\slice{D}{wf}\to C$ are double $(\infty,n-1)$-right fibrations by \cref{postcomp-pullback,prop:fibrantcone}, and so $\slice{C}{f}$ and $w^*\slice{D}{wf}$ are fibrant in $\SegsThnsset$ by \cref{sourceofdblrightfib}. Hence, proving that $\slice{C}{f} \to w^*\slice{D}{wf}$ is a weak equivalence in $\MSrightfib{C}$ is equivalent to proving that it is a weak equivalence in $\injsThnsset$ or equivalently in $\SegsThnsset$. We show the latter.

  Consider the following commutative diagram in $\SegsThnsset$.
    \begin{tz}
        \node[](1) {$\slice{C}{f}$}; 
        \node[right of=1,xshift=3.1cm](1') {$w^*\slice{D}{wf}$}; 
        \node[right of=1',xshift=3.2cm](1'') {$C$}; 
        \node[below of=1](2) {$(C^A)^{F[1]}$}; 
        
        \node[below of=1''](2'') {$C^A\times C^A$}; 
        
        \draw[->>,bend right=12](2) to (2'');
        \node[below of=1'](2') {$(D^A)^{F[1]}\times_{(D^A)^{\partial F[1]}} (C^A)^{\partial F[1]}$}; 
        \pullback{1'};

        \draw[->](1) to (2); 
        \draw[->](1') to (2'); 
        \draw[->](1'') to node[right,la]{$\Delta\times \{f\}$} (2''); 
        \draw[->](1) to (1'); 
        \draw[->>](1') to (1''); 
        \draw[->](2) to node[above,la]{$\simeq$} (2'); 
        \draw[->>](2') to (2'');
    \end{tz}
    where the map marked with $\simeq$ is a weak equivalence by \cref{lemma:exponent ff}, the right-hand and total squares are pullbacks using \cref{sliceaspullback} and the definition of the pullback functor $w^*$. Then it follows from the cartesian closedness of $\SegsThnsset$ that all maps marked with $\twoheadrightarrow$ are fibrations. Hence, since the pullback functor 
    \[ (\Delta\times \{f\})^*\colon {\SegsThnsset}_{/C^A\times C^A}\to {\SegsThnsset}_{/C} \]
    preserves weak equivalences between fibrant objects, it follows that the map $\slice{C}{f}\to w^*\slice{D}{wf}$ is also a weak equivalence in $\SegsThnsset$, as desired.  
\end{proof}

Finally, we show that, if a map between fibrant objects in $\SegsThnsset$ is levelwise a weak equivalence in $\MSDsset$, then it induces a weak equivalence between the associated cone constructions.

\begin{lemma} \label{wisff}
    Let $C$ and $D$ be fibrant objects in $\SegsThnsset$ and $w\colon C\to D$ be a map in $\sThnsset$ such that, for every object $\theta\in \Thn$, the induced map $w_{-,\theta}\colon C_{-,\theta}\to D_{-,\theta}$ is a weak equivalence in $\MSDsset$. Then the map $w$ is homotopically fully faithful.
\end{lemma}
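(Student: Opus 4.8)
The plan is to verify the condition defining ``homotopically fully faithful'' for $w$ after restricting to each $\Thn$-level, reducing to the classical statement for simplicial spaces.

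First I would check that all the objects appearing in the comparison map are fibrant in $\SegsThnsset$: since $F[1]$, $F[0]$ and $\partial F[1]=F[0]\amalg F[0]$ are cofibrant and $\SegsThnsset$ is cartesian closed, the objects $C^{F[1]}$, $D^{F[1]}$, $C^{\partial F[1]}\cong C\times C$ and $D^{\partial F[1]}\cong D\times D$ are fibrant; and since $\partial F[1]\hookrightarrow F[1]$ is a cofibration, $D^{F[1]}\to D^{\partial F[1]}$ is a fibration, so $D^{F[1]}\times_{D^{\partial F[1]}}C^{\partial F[1]}$ is a homotopy pullback and hence fibrant. Thus the comparison map is a morphism between fibrant objects of $\SegsThnsset$, and showing it is a weak equivalence there amounts to showing it is one in $\injsThnsset$.

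Next I would reduce to $\Thn$-levels. As $\injsThnsset$ is the injective model structure on simplicial objects in $\MSThnsset$, a weak equivalence there is a levelwise weak equivalence in $\MSThnsset$, and a weak equivalence in $\MSThnsset$ between fibrant objects is detected objectwise in $\MSspace$; hence the comparison map is a weak equivalence in $\SegsThnsset$ as soon as, for every $\theta\in\Thn$, its restriction along $(-)_{-,\theta}\colon\sThnsset\to\Dsset$ is a weak equivalence in $\injDspace$, i.e.\ a levelwise weak equivalence of simplicial spaces. The functor $(-)_{-,\theta}$ has left adjoint $\repThn[\theta]\times\iota(-)$, where $\iota\colon\Dset\hookrightarrow\sThnsset$ is the (constant-in-$\Thn$) natural inclusion and $\repThn[\theta]$ the representable; this left adjoint is readily checked to be left Quillen from $\injDspace$ to $\injsThnsset$, so $(-)_{-,\theta}$ is right Quillen, whence $C_{-,\theta}$ and $D_{-,\theta}$ are Reedy fibrant and the Segal condition on $C$, $D$ restricts to make them Reedy fibrant Segal spaces. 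Moreover, as $F[1]$, $F[0]$, $\partial F[1]$ are pulled back from $\Dset$ along the projection forgetting the $\Thn$-coordinate, the same formula for the left adjoint shows that $(-)_{-,\theta}$ commutes with the exponentials $(-)^{F[1]}$, $(-)^{F[0]}$, $(-)^{\partial F[1]}$ and with finite limits. Therefore the $\theta$-restriction of the comparison map is (isomorphic to) the classical comparison map
\[ (C_{-,\theta})^{F[1]}\longrightarrow (D_{-,\theta})^{F[1]}\times_{(D_{-,\theta})^{\partial F[1]}}(C_{-,\theta})^{\partial F[1]} \]
attached to the map of simplicial spaces $w_{-,\theta}\colon C_{-,\theta}\to D_{-,\theta}$, where now $F[1]$, $F[0]$, $\partial F[1]$ are read inside $\Dset\hookrightarrow\Dsset$.

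Finally I would invoke the $(\infty,1)$-case: by hypothesis $w_{-,\theta}$ is a weak equivalence in $\MSDsset$, i.e.\ a Dwyer--Kan equivalence between Reedy fibrant Segal spaces, and such a map induces weak equivalences on all mapping spaces; by the classical theory of Segal spaces following \cite{RezkCSS}, together with the standard fact that for Reedy fibrant Segal spaces this property is equivalent to the displayed comparison map being a levelwise weak equivalence, the $\theta$-restriction of our comparison map is a weak equivalence in $\injDspace$. Since this holds for every $\theta$, the comparison map is a weak equivalence in $\SegsThnsset$, so $w$ is homotopically fully faithful. The step I expect to be the main obstacle is the identification in the middle paragraph: verifying that restriction to a single $\Thn$-level commutes with the exponential constructions $(-)^{F[1]}$, $(-)^{\partial F[1]}$ requires carefully unwinding the two cartesian-closed structures on $\sThnsset$ and $\Dsset$ and the explicit form of the adjoints of $(-)_{-,\theta}$; the remaining steps are routine manipulations with the localized model structures and the one-variable input.
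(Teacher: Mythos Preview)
Your proposal is correct and follows essentially the same route as the paper: reduce to each $\Thn$-level and invoke the classical Segal-space statement. The paper is terser about the reduction step (it simply asserts that the $\theta$-component of the comparison map is the displayed map for $C_{-,\theta}\to D_{-,\theta}$, which is what you carefully justify via the adjoint of $(-)_{-,\theta}$), but is more explicit about the $(\infty,1)$-input: rather than appealing to a ``standard fact'', it argues that the comparison map is a map of \emph{bifibrations} over $(C_{-,\theta})^{\partial F[1]}$, uses \cite[Proposition~13.8]{RezkCSS} to get fiberwise weak equivalences from the Dwyer--Kan equivalence, and then \cite[Proposition~2.4.7.6]{LurieHTT} to conclude it is a levelwise weak equivalence.
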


\begin{proof}
    Given an object $\theta\in \Thn$, since $w_{-,\theta}\colon C_{-,\theta}\to D_{-,\theta}$ is a weak equivalence in $\MSDsset$ between Segal spaces, the induced map in $\Dsset_{\;/(C_{-,\theta})^{\partial F[1]}}$ between \emph{bifibrations} (see \cite[Definition 2.4.7.2]{LurieHTT})
    \[ (C_{-,\theta})^{F[1]}\to (D_{-,\theta})^{F[1]}\times_{(D_{-,\theta})^{\partial F[1]}} (C_{-,\theta})^{\partial F[1]} \]
    is such that the induced maps on fibers are weak equivalences in $\MSspace$ by \cite[Proposition 13.8]{RezkCSS}, and so it is a weak equivalence in $\MSDsset$ by \cite[Proposition 2.4.7.6]{LurieHTT}. But the above weak equivalence is simply the $\theta$-component of the map 
    \[ C^{F[1]}\to D^{F[1]}\times_{D^{\partial F[1]}} C^{\partial F[1]}, \]
    which is therefore a weak equivalence in $\SegsThnsset$, as desired. 
\end{proof}

\begin{prop}
\label{DKeqoncones}
    Let $C$ and $D$ be fibrant objects in $\SegsThnsset$, $w\colon C\to D$ be a map in $\sThnsset$ such that, for every object $\theta\in \Thn$, the induced map $w_{-,\theta}\colon C_{-,\theta}\to D_{-,\theta}$ is a weak equivalence in $\MSDsset$, and $f\colon A\to C$ be a map in $\sThnsset$. The following statements hold:
    \begin{enumerate}[leftmargin=1cm]
        \item the map $w_! \slice{C}{f}\to \slice{D}{wf}$ is a weak equivalence in $\MSrightfib{D}$, 
        \item the map $w_! \joinslice{C}{f}\to \joinslice{D}{wf}$ is a weak equivalence in $\MSrightfib{D}$. 
    \end{enumerate}
\end{prop}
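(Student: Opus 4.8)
The plan is to deduce both parts from the invariance statement \cref{adjunctiswe}, together with the fact---recorded in \cref{postcomp-pullback}---that under our hypothesis on $w$ the post-composition--pullback adjunction $w_!\dashv w^*\colon \MSrightfib{C}\to \MSrightfib{D}$ is a \emph{Quillen equivalence}. Throughout we use that cofibrations in these model structures are precisely the monomorphisms, so that every object is cofibrant; in particular the left Quillen functor $w_!$ preserves all weak equivalences.

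\emph{Part (1).} By \cref{wisff}, the hypothesis on $w$ forces it to be homotopically fully faithful, so \cref{adjunctiswe} applies and the canonical map $\slice{C}{f}\to w^*\slice{D}{wf}$ is a weak equivalence in $\MSrightfib{C}$. The canonical map $w_!\slice{C}{f}\to \slice{D}{wf}$ is its transpose under $w_!\dashv w^*$, and so factors as
\[ w_!\slice{C}{f}\longrightarrow w_!w^*\slice{D}{wf}\xrightarrow{\ \varepsilon\ }\slice{D}{wf}, \]
where the first map is $w_!$ applied to the weak equivalence above---hence itself a weak equivalence in $\MSrightfib{D}$---and $\varepsilon$ is the counit of $w_!\dashv w^*$. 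Now $\slice{D}{wf}$ is fibrant in $\MSrightfib{D}$ by \cref{prop:fibrantcone}; since $w_!\dashv w^*$ is a Quillen equivalence and the transpose of $\varepsilon$ is the identity of the cofibrant object $w^*\slice{D}{wf}$, the map $\varepsilon$ is a weak equivalence in $\MSrightfib{D}$. The composite $w_!\slice{C}{f}\to\slice{D}{wf}$ is therefore a weak equivalence in $\MSrightfib{D}$.

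\emph{Part (2).} We reduce to (1) by means of the comparison maps of \cref{def:comparisoncones}. These are natural: the square whose top edge is the map $w_!\joinslice{C}{f}\to \joinslice{D}{wf}$ induced by $w$, whose bottom edge is the map $w_!\slice{C}{f}\to \slice{D}{wf}$ induced by $w$, whose left edge is $w_!$ applied to the comparison map $\joinslice{C}{f}\to\slice{C}{f}$, and whose right edge is the comparison map $\joinslice{D}{wf}\to\slice{D}{wf}$, commutes---this is just associativity of composition, since on an element of $\joinslice{C}{f}$, given by a map $\repDThnS[m]\star A\to C$ restricting to $f$, postcomposition with $w$ and restriction along the comparison map may be performed in either order. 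By \cref{thm:comparisoniswe} the right edge $\joinslice{D}{wf}\to\slice{D}{wf}$ is a weak equivalence in $\MSrightfib{D}$, and the left edge is $w_!$ applied to the weak equivalence $\joinslice{C}{f}\to\slice{C}{f}$ of \cref{thm:comparisoniswe}, hence also a weak equivalence in $\MSrightfib{D}$; the bottom edge is a weak equivalence by part (1). Two-out-of-three then forces the top edge $w_!\joinslice{C}{f}\to\joinslice{D}{wf}$ to be a weak equivalence in $\MSrightfib{D}$, as claimed.

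\emph{Expected main obstacle.} The conceptual content is entirely contained in \cref{adjunctiswe}; what remains is largely bookkeeping. The point that requires care is the passage in part (1) from the ``pullback'' form $\slice{C}{f}\simeq w^*\slice{D}{wf}$ to the ``pushforward'' form $w_!\slice{C}{f}\simeq\slice{D}{wf}$: one must identify the canonical map with the displayed composite and check that $\varepsilon$ is a weak equivalence, which is exactly where the Quillen-equivalence half of \cref{postcomp-pullback}---hence the hypothesis on $w$ a second time---and the fibrancy of $\slice{D}{wf}$ are used.
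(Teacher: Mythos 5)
Your proposal is correct and follows essentially the same route as the paper: part (1) reduces via the Quillen equivalence $w_!\dashv w^*$ of \cref{postcomp-pullback} (using fibrancy of $\slice{D}{wf}$ from \cref{prop:fibrantcone}) to the adjunct statement supplied by \cref{adjunctiswe,wisff}, and part (2) follows from part (1), \cref{thm:comparisoniswe}, and the fact that $w_!$ preserves weak equivalences, via two-out-of-three. The only difference is that you spell out the counit factorization and the naturality square explicitly, which the paper leaves implicit.
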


\begin{proof}
    We first prove (1). By \cref{postcomp-pullback}, the adjunction
    \begin{tz}
\node[](1) {$\MSrightfib{C}$}; 
\node[right of=1,xshift=3.9cm](2) {$\MSrightfib{D}$}; 

\draw[->] ($(2.west)-(0,5pt)$) to node[below,la]{$w^*$} ($(1.east)-(0,5pt)$);
\draw[->] ($(1.east)+(0,5pt)$) to node[above,la]{$w_!$} ($(2.west)+(0,5pt)$);

\node[la] at ($(1.east)!0.5!(2.west)$) {$\bot$};
\end{tz}
is a Quillen equivalence. So, in order to show that the map $w_! \slice{C}{f}\to \slice{D}{wf}$ is a weak equivalence in $\MSrightfib{D}$, it suffices to show that the adjunct map 
\[ \slice{C}{f} \to w^*\slice{D}{wf} \]
    is a weak equivalence in $\MSrightfib{C}$, using that $\slice{D}{wf}\to D$ is a double $(\infty,n-1)$-right fibration by \cref{prop:fibrantcone}. However, this follows from \cref{adjunctiswe,wisff}.

    Then (2) follows from (1) and \cref{thm:comparisoniswe} using that $w_!$ preserves weak equivalences.
\end{proof}

\subsection{Invariance under Dwyer-Kan equivalences} \label{sec:invarianceDKeq}

Recall from \cite[Theorem 8.18]{BR2} that weak equivalences in $\CSSsThnsset$ between fibrant objects in $\SegsThnsset$ coincide with \emph{Dwyer-Kan equivalences}, which are given via explicit conditions on mapping $\Thn$-spaces and homotopy categories; see e.g.~\cite[\textsection1.3]{MRR1}.

We prove here that the cone constructions are invariant under Dwyer-Kan equivalences between fibrant objects in $\SegsThnsset$ whose level $0$ are homotopically constant. 

\begin{defn}
    A $\Thn$-space $X$ is \textbf{homotopically constant} if, for every object $\theta\in \Thn$, the induced map $X_{[0]}\to X_\theta$ is a weak equivalence in $\MSspace$.
\end{defn}

We first state the following result, whose proof can be obtained using the same arguments as in the third paragraph of the proof of \cite[Proposition 2.1.5]{MRR2}.

\begin{lemma} \label{lemma:fib rep}
    Let $C$ be a fibrant object in $\SegsThnsset$ with $C_0$ homotopically constant. There is a monomorphism $i_C\colon C \hookrightarrow \widehat{C}$ in $\sThnsset$ with the following properties:
    \begin{enumerate}[leftmargin=1cm]
        \item for every object $\theta\in \Thn$, the map $(i_C)_{-,\theta}\colon C_{-,\theta}\hookrightarrow \widehat{C}_{-,\theta}$ is a weak equivalence in $\MSDsset$,
        \item the map $i_C\colon C\hookrightarrow \widehat{C}$ is a weak equivalence in $\CSSsThnsset$ (equivalently a Dwyer-Kan equivalence), 
        \item the object $\widehat{C}$ is fibrant in $\CSSsThnsset$.
    \end{enumerate}
\end{lemma}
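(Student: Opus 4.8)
The plan is to take for $\widehat C$ a fibrant replacement of $C$ in $\CSSsThnsset$, chosen so that it only modifies $C$ in the $\Delta$-direction; then (2) and (3) are automatic and the real content is (1). As a preliminary reduction, note that since $C$ is fibrant in $\SegsThnsset$, each $\Thn$-space $C_m$ is fibrant in $\MSThnsset$ and, as recorded in the proof of \cref{wisff}, the simplicial space $C_{-,\theta}$ is a Segal space for every $\theta\in\Thn$. Moreover, the only $\CSSsThnsset$-fibrancy condition that $C$ may fail is the completeness condition $\Hom_{\sThnsset}(N\bI,C)\to C_0$: the condition on the maps $C_{0,[0]}\to C_{0,\theta}$ holds because $C_0$ is homotopically constant by hypothesis.

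To build $\widehat C$, I would apply the completion construction of \cite[\textsection 14]{RezkCSS} ``in the $\Delta$-direction, uniformly over $\Thn$'', that is, form the realization over $\Dop$ of the simplicial object $[k]\mapsto C^{E(k)}$ obtained by cotensoring $C$ in the space direction with Rezk's cosimplicial interval $E(\bullet)$, and then factor the canonical map out of $C$ in $\injsThnsset$ as a monomorphism $i_C\colon C\hookrightarrow\widehat C$ followed by a trivial fibration onto this realization. Cotensoring with the cofibrant simplicial sets $E(k)$ and realizing over $\Dop$ preserve the Segal condition in the $\Delta$-direction (being built from limits and a specific colimit) and, since $\MSThnsset$ is excellent hence simplicial, preserve levelwise $\MSThnsset$-fibrancy; the realization is complete in the $\Delta$-direction by construction; and its $\Thn$-space of objects stays homotopically constant because each $E(k)$ is weakly contractible, so that space is unchanged up to weak equivalence. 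Hence $\widehat C$ is fibrant in $\CSSsThnsset$ and $i_C$ is a monomorphism which is a weak equivalence in $\CSSsThnsset$ (Rezk's completion map being one), giving (2) and (3). For (1): cotensoring in the space direction and realization over $\Dop$ are computed levelwise in $\Thn$, so $\widehat C_{-,\theta}$ is, up to the trivial fibration, Rezk's completion of the Segal space $C_{-,\theta}$; the completion map $C_{-,\theta}\to\widehat{C_{-,\theta}}$ is a weak equivalence in $\MSDsset$, and the trivial fibration restricts at $\theta$ to a levelwise trivial fibration of Kan complexes, so the two-out-of-three property shows $(i_C)_{-,\theta}$ is a weak equivalence in $\MSDsset$. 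This reproduces the argument of the third paragraph of the proof of \cite[Proposition 2.1.5]{MRR2}.

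The step requiring the most care — and the reason one cannot simply invoke an arbitrary $\CSSsThnsset$-fibrant replacement — is verifying that this levelwise-in-$\Thn$ completion is genuinely fibrant in $\CSSsThnsset$, i.e.\ that the realization is injectively fibrant as a simplicial object in $\MSThnsset$, still satisfies the Segal condition, is complete, and has homotopically constant level $0$; all of this uses that $C$ is already $\SegsThnsset$-fibrant with homotopically constant level $0$. A generic fibrant replacement in $\CSSsThnsset$ would also attach cells along the Segal and completeness localizers of $\MSThnsset$ in the $\Thn$-direction, and these do not restrict to weak equivalences in $\MSDsset$ after evaluation at $\theta$; organizing the construction so as to touch only the $\Delta$-direction is exactly what makes (1) go through.
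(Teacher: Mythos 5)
Your proposal is correct and follows essentially the same route as the paper, which itself only points to the third paragraph of the proof of \cite[Proposition 2.1.5]{MRR2}: since $C$ is already fibrant in $\SegsThnsset$ with $C_0$ homotopically constant, only the completeness condition can fail, and Rezk's completion performed in the categorical $\Delta$-direction (hence computed levelwise over $\Thn$) produces the required $\widehat{C}$ with properties (1)--(3). The one point to tighten is that the realization $|C^{E(\bullet)}|$ need not itself be injectively fibrant, so you should interpose a levelwise (injective) fibrant replacement---which preserves property (1) because injective weak equivalences are levelwise weak equivalences---before running your monomorphism/trivial-fibration factorization, since that factorization only yields a $\CSSsThnsset$-fibrant $\widehat{C}$ when its target is already fibrant.
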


As a consequence, we obtain the following.

\begin{prop} \label{prop:DKeqbtwhtyconstant} 
    Let $C$ and $D$ be fibrant objects in $\SegsThnsset$ with $C_0$ and $D_0$ homotopically constant, and let $w\colon C\to D$ be a weak equivalence in $\CSSsThnsset$ (equivalently a Dwyer-Kan equivalence). Then, for every object $\theta\in \Thn$, the induced map 
    \[ w_{-,\theta}\colon C_{-,\theta}\to D_{-,\theta} \]
    is a weak equivalence in $\MSDsset$.
\end{prop}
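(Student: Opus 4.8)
The plan is to reduce the statement to the situation already handled in \cref{lemma:fib rep}, namely the case in which $C$ and $D$ are fibrant not merely in $\SegsThnsset$ but in $\CSSsThnsset$; the hypothesis that $C_0$ and $D_0$ are homotopically constant enters precisely to make \cref{lemma:fib rep} applicable to both $C$ and $D$. Applying that lemma to $C$ and to $D$ produces monomorphisms $i_C\colon C\hookrightarrow \widehat{C}$ and $i_D\colon D\hookrightarrow \widehat{D}$ in $\sThnsset$ such that $\widehat{C}$ and $\widehat{D}$ are fibrant in $\CSSsThnsset$, the maps $i_C$ and $i_D$ are weak equivalences in $\CSSsThnsset$ (hence trivial cofibrations there, being monomorphisms), and for every $\theta\in \Thn$ the maps $(i_C)_{-,\theta}$ and $(i_D)_{-,\theta}$ are weak equivalences in $\MSDsset$.

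First I would produce a comparison $\widehat{w}\colon \widehat{C}\to \widehat{D}$: since $i_C$ is a trivial cofibration in $\CSSsThnsset$ and $\widehat{D}$ is fibrant there, there is a lift $\widehat{w}$ of $i_D\circ w$ along $i_C$, so that $\widehat{w}\circ i_C=i_D\circ w$. The two-out-of-three property in $\CSSsThnsset$, applied to this identity together with the fact that $w$, $i_C$ and $i_D$ are all weak equivalences in $\CSSsThnsset$, shows that $\widehat{w}$ is a weak equivalence in $\CSSsThnsset$ between objects fibrant in $\CSSsThnsset$. Then, for each fixed $\theta\in \Thn$, the commutative square with top edge $w_{-,\theta}$, vertical edges $(i_C)_{-,\theta}$ and $(i_D)_{-,\theta}$, and bottom edge $\widehat{w}_{-,\theta}$, combined with the two-out-of-three property in $\MSDsset$, reduces the claim to showing that $\widehat{w}_{-,\theta}$ is a weak equivalence in $\MSDsset$ for every $\theta$.

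To prove this last point I would unwind the relevant Bousfield localizations. Since $\CSSsThnsset$ is a localization of $\injsThnsset$ and $\widehat{C}$, $\widehat{D}$ are fibrant in $\CSSsThnsset$, the map $\widehat{w}$ is a weak equivalence in $\injsThnsset$, i.e.\ $\widehat{w}_m\colon \widehat{C}_m\to \widehat{D}_m$ is a weak equivalence in $\MSThnsset$ for every $m\geq 0$. Moreover the levels $\widehat{C}_m$ and $\widehat{D}_m$ are fibrant in $\MSThnsset$ (evaluation at $[m]$ being right Quillen for the injective model structure), and $\MSThnsset$ is itself a localization of $\injThnspace$, so $\widehat{w}_m$ is a weak equivalence in $\injThnspace$, i.e.\ for every $\theta\in \Thn$ the induced map of simplicial sets is a weak equivalence in $\MSspace$. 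Recombining these two levelwise statements, for each $\theta$ the map $\widehat{w}_{-,\theta}\colon \widehat{C}_{-,\theta}\to \widehat{D}_{-,\theta}$ is a levelwise weak equivalence of simplicial spaces, hence a weak equivalence in $\MSDsset$, the latter being a localization of the model structure on $\sset^{\Dop}$ whose weak equivalences are the levelwise ones.

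I do not anticipate a genuine obstacle: the argument is formal, combining \cref{lemma:fib rep} with the two-out-of-three property in two model structures and with the standard behaviour of weak equivalences under left Bousfield localization. The one place requiring care is this bookkeeping — in particular that a weak equivalence in $\MSThnsset$ is detected levelwise over $\Thn$ only between fibrant objects, which is why one must record the genuine fibrancy of $\widehat{C}$ and $\widehat{D}$ (and hence of all their levels) rather than merely that $\widehat{w}$ is a localization weak equivalence.
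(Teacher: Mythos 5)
Your proposal is correct and follows essentially the same route as the paper: replace $C$ and $D$ by the fibrant objects $\widehat{C}$, $\widehat{D}$ of $\CSSsThnsset$ via \cref{lemma:fib rep}, lift to get $\widehat{w}$ with $\widehat{w}\circ i_C=i_D\circ w$, and conclude by two-out-of-three after observing that a $\CSSsThnsset$-weak equivalence between $\CSSsThnsset$-fibrant objects is an $\injsThnsset$-weak equivalence and hence induces weak equivalences $\widehat{w}_{-,\theta}$ in $\MSDsset$ for all $\theta$. Your unwinding of the localizations in the last step is just a more detailed version of the chain of equivalences the paper asserts in one sentence.
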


\begin{proof}
    First note that a map $\widehat{w}\colon \widehat{C}\to \widehat{D}$ between fibrant objects in $\CSSsThnsset$ is a weak equivalence in $\CSSsThnsset$ if and only if it is a weak equivalence in $\injsThnsset$ if and only if, for every object $\theta\in \Thn$, the induced map $\widehat{w}_{-,\theta}$ is a weak equivalence in $\MSDsset$. Hence the result follows by $2$-out-of-$3$ when considering the maps $i_C\colon C\to \widehat{C}$ and $i_D\colon D\to \widehat{D}$ from \cref{lemma:fib rep} and a lift $\widehat{w}\colon \widehat{C}\to \widehat{D}$ such that $i_D\circ w=\widehat{w}\circ i_C$. 
\end{proof}

\begin{rmk} \label{fibreplofprecat}
    If $C$ is a fibrant object in $\pcatinj$ and $w\colon C\to D$ is a fibrant replacement in $\CSSsThnsset$, it is straightforward to see that $C$, $D$, and $w\colon C\to D$ satisfy the hypotheses of \cref{prop:DKeqbtwhtyconstant}.
\end{rmk}

\begin{cor} \label{cor:coneofprecat}
    Let $C$ and $D$ be fibrant objects in $\SegsThnsset$ with $C_0$ and $D_0$ homotopically constant. Let $w\colon C\to D$ be a weak equivalence in $\CSSsThnsset$ (equivalently a Dwyer-Kan equivalence), and $f\colon A\to C$ be a map in $\sThnsset$. Then the following statements hold:
    \begin{enumerate}[leftmargin=1cm]
        \item the map $w_! \slice{C}{f}\to \slice{D}{wf}$ is a weak equivalence in $\MSrightfib{D}$, 
        \item the map $w_! \joinslice{C}{f}\to \joinslice{D}{wf}$ is a weak equivalence in $\MSrightfib{D}$. 
    \end{enumerate}
    This holds in particular if $C$ is a fibrant object in $\pcatinj$ and $w\colon C\to D$ is a fibrant replacement in $\CSSsThnsset$. 
\end{cor}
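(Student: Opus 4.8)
The plan is to reduce the statement to \cref{DKeqoncones} by upgrading the Dwyer-Kan equivalence $w$ to a levelwise weak equivalence. First I would invoke \cref{prop:DKeqbtwhtyconstant}: since $C$ and $D$ are fibrant in $\SegsThnsset$ with $C_0$ and $D_0$ homotopically constant, and $w\colon C\to D$ is a weak equivalence in $\CSSsThnsset$, we conclude that for every object $\theta\in \Thn$ the induced map $w_{-,\theta}\colon C_{-,\theta}\to D_{-,\theta}$ is a weak equivalence in $\MSDsset$. This is exactly the hypothesis needed to apply \cref{DKeqoncones} to the map $w$ together with $f\colon A\to C$, which yields that both $w_!\slice{C}{f}\to \slice{D}{wf}$ and $w_!\joinslice{C}{f}\to \joinslice{D}{wf}$ are weak equivalences in $\MSrightfib{D}$, establishing (1) and (2).

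For the final assertion, when $C$ is fibrant in $\pcatinj$ and $w\colon C\to D$ is a fibrant replacement in $\CSSsThnsset$, I would appeal to \cref{fibreplofprecat}, which records that $C$, $D$, and $w$ then satisfy all the hypotheses of \cref{prop:DKeqbtwhtyconstant} (in particular $C_0$ is discrete, hence homotopically constant, and $D_0$ is homotopically constant as $D$ is a $\CSSsThnsset$-fibrant replacement of such an object); so the general case applies verbatim.

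There is essentially no obstacle beyond bookkeeping at this stage: the real work was carried out upstream, in \cref{DKeqoncones} (whose proof rests on the comparison of the fat and neat cone constructions from \cref{thm:comparisoniswe} together with \cref{adjunctiswe} and \cref{wisff}) and in \cref{prop:DKeqbtwhtyconstant} (whose proof rests on the $\CSSsThnsset$-fibrant replacement of \cref{lemma:fib rep}). The only point worth flagging is that the homotopically constant hypothesis is genuinely essential here: it is what bridges the gap between Dwyer-Kan equivalences, which need not be levelwise weak equivalences in $\MSDsset$, and the levelwise condition demanded by \cref{DKeqoncones}.
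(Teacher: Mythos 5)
Your proposal is correct and matches the paper's proof exactly: the paper likewise deduces the result by combining \cref{prop:DKeqbtwhtyconstant} (to upgrade the Dwyer-Kan equivalence to a levelwise weak equivalence in $\MSDsset$), \cref{DKeqoncones}, and \cref{fibreplofprecat} for the final assertion. Nothing is missing.
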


\begin{proof}
    This follows from \cref{DKeqoncones,prop:DKeqbtwhtyconstant,fibreplofprecat}.
\end{proof}

\part{Comparison of \texorpdfstring{$(\infty,n)$}{(infinity,n)}-limits} \label{part3}

\section{Enriched vs internal approaches to \texorpdfstring{$(\infty,n)$}{(infinity,n)}-limits}

\label{SectionLimits}

In this section, we first recall in \cref{enrichedlimit} the notion of weighted $(\infty,n)$-limits in the context of $(\infty,n)$-categories modeled by $\MSThnsset$-enriched categories. Then, in \cref{internalterminal}, we introduce and characterize double $(\infty,n-1)$-terminal objects in preparation for the definition of weighted $(\infty,n)$-limits in the context of $(\infty,n)$-categories modeled by complete Segal object in $\MSThnsset$. These are defined in \cref{internallimit} as double $(\infty,n-1)$-terminal objects in the corresponding cone construction. 

To prove the comparison between the two notions of weighted $(\infty,n)$-limits, we first show in \cref{comparisonslices} that there is an isomorphism between the unstraightening of the $\Thnsset$-enriched functor $\Hom_\cC(-,c)\colon \cC^{\op}\to \Thnsset$ and the slice construction $\joinslice{\Nh\cC}{c}$. Finally, in \cref{comparisonlimit}, we prove our main theorem, \cref{EquivalenceOfLimitsWeighted}, showing that the notions of weighted $(\infty,n)$-limits in the enriched and internal approaches agree, assuming a technical result proven in \cref{sec:Yonedamap}.

\subsection{Enriched approach to \texorpdfstring{$(\infty,n)$}{(infinity,n)}-limits} \label{enrichedlimit}

We first recall the definition of a weighted $(\infty,n)$-limit in the enriched setting. 

\begin{defn}
    Let $\cJ$ and $\cC$ be $\Thnsset$-enriched categories with $\cC$ fibrant in $\MSThncat$. Let $W\colon \cJ\to \Thnsset$ and $F\colon \cJ\to \cC$ be $\Thnsset$-enriched functors such that $W$ is cofibrant in $[\cJ,\MSThnsset]_\proj$. An object $\ell\in \cC$ is a \textbf{$W$-weighted $(\infty,n)$-limit of $F$} if there is a $\Thnsset$-enriched natural transformation $\lambda\colon W\Rightarrow \cC(\ell,F-)$ in $[\cJ,\Thnsset]$ such that the induced map
    \[ \lambda^*\colon \Hom_\cC(-,\ell)\to \Hom_{[\cJ,\Thnsset]}(W,\Hom_\cC(-,F)) \]
    is a weak equivalence in $[\cC^{\op},\MSThnsset]_\proj$.
\end{defn}

As a special case, we obtain \emph{conical} $(\infty,n)$-limits. 

\begin{defn}
    Let $\cJ$ and $\cC$ be $\Thnsset$-enriched categories with $\cC$ fibrant in $\MSThncat$. Let $F\colon \cJ\to \cC$ be a $\Thnsset$-enriched functor. Then an object $\ell\in \cC$ is a \textbf{conical $(\infty,n)$-limit of $F$} if it is a $W$-weighted $(\infty,n)$-limit of $F$ for $W$ a cofibrant replacement in $[\cJ,\MSThnsset]_\proj$ of the terminal weight $\Delta[0]\colon \cJ\to \Thnsset$.
\end{defn}

\begin{rmk}
    Let $J$ be an object in $\pcatThn$ and $\cC$ be a fibrant $\MSThnsset$-enriched category. Let $F\colon \Ch J\to \cC$ be a $\Thnsset$-enriched functor. Then a cofibrant replacement in $[\Ch J,\MSThnsset]_\proj$ of the constant weight $\Delta[0]\colon \Ch J\to \Thnsset$ is given by the (derived) counit of $\St_J\dashv \Un_J$ at $\Delta[0]$ \[ \St_J(\id_J)\cong\St_J(\Un_J(\Delta[0]))\xrightarrow{\simeq} \Delta[0]. \] 
    Hence the conical $(\infty,n)$-limit of $F$ can be computed as the $\St_J(\id_J)$-weighted $(\infty,n)$-limit of~$F$.
\end{rmk}

\subsection{Internal approach to terminal objects} \label{internalterminal}

In order to define $(\infty,n)$-limits in the internal setting, we need a notion of terminal objects. We first study equivalent ways of formulating double $(\infty,n-1)$-terminal objects. 

The following result says that the identity at an object is terminal in the corresponding slice.

\begin{prop} \label{slicehasterminalobject}
    Let $D$ be a fibrant object in $\SegsThnsset$ and $d\in D_{0,0,0}$. Then the following statements hold:
    \begin{enumerate}[leftmargin=1cm]
        \item the map $\id_d\colon \repD[0]\to \slice{D}{d}$ is a weak equivalence in $\MSrightfib{D}$, 
    \item the map $\id_d\colon \repD[0]\to \joinslice{D}{d}$ is a weak equivalence in $\MSrightfib{D}$.
    \end{enumerate}
\end{prop}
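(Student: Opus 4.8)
The plan is to reduce the two parts to each other via the comparison map of \cref{def:comparisonslices}, and then to establish the common statement by recognizing the cone construction as a fibrant replacement of the point $\repD[0]\xrightarrow{d}D$. First, by \cref{cor:comparisoniswe} the comparison map $\joinslice{D}{d}\to\slice{D}{d}$ is a weak equivalence in $\MSrightfib{D}$, and one checks directly from the definition of the maps $\gamma$ that it carries $\id_d$ to $\id_d$ — both are represented by the degenerate map collapsing the relevant join of two copies of $\repD[0]$ onto $d$ — so by two-out-of-three it suffices to prove one of (1), (2); I would prove (2). Here the projection $\joinslice{D}{d}\to D$ is a double $(\infty,n-1)$-right fibration by \cref{joinslicefibrant}, so $\joinslice{D}{d}$ is fibrant in $\MSrightfib{D}$, and $\id_d\colon\repD[0]\to\joinslice{D}{d}$ is a monomorphism in $\sThnsset$ (as is any map out of the terminal object $\repD[0]$), hence a cofibration in $\MSrightfib{D}$. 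It thus remains to show that $\id_d$ is a weak equivalence, equivalently a trivial cofibration, in $\MSrightfib{D}$.

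To prove this I would exhibit $\id_d\colon\repD[0]\hookrightarrow\joinslice{D}{d}$ as a transfinite composite of pushouts of generating trivial cofibrations of $\MSrightfib{D}$, thereby showing that $\joinslice{D}{d}\to D$ together with the section $\id_d$ is a fibrant replacement of $\repD[0]\xrightarrow{d}D$. Using the identification $\repD[m]\star\repD[0]\cong\repD[m+1]$, the neat cone $\joinslice{D}{d}$ is a décalage-type object: a non-degenerate cell of $\joinslice{D}{d}$ corresponds to a non-degenerate cell $\sigma$ of $D$ whose restriction along the last vertex is $d$, and — apart from the face of $\sigma$ opposite the last vertex, which need not end at $d$ — the boundary of $\sigma$ already lies in the subobject generated by the cells of lower dimension together with $\{\id_d\}$. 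Attaching $\sigma$ is therefore a pushout along a map of the shape $(\repD[0]\xhookrightarrow{\langle m+1\rangle}\repD[m+1])\pushprod(X\hookrightarrow Y)$, which is precisely one of the maps against which fibrant objects of $\MSrightfib{D}$ lift, as used in the proof of \cref{joinconefibrant}. Iterating over all cells shows $\id_d$ is a trivial cofibration; this is the $\Thn$-space-enriched enhancement of the classical fact that the inclusion of the terminal object into a slice is right anodyne.

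The main obstacle is this last step. The map $\id_d$ is genuinely \emph{not} a weak equivalence in $\injsThnsset$ or $\SegsThnsset$ — the fibers of $\joinslice{D}{d}\to D$ are the generally non-contractible hom-$\Thn$-spaces $\Hom_D(-,d)$ — and $\repD[0]\xrightarrow{d}D$ is not a fibrant object of $\MSrightfib{D}$, so one cannot reason on fibers directly via \cref{webtwdoubleright}; the real content lies in the Bousfield localization, i.e., in the universal property of the slice as the ``representable'' double $(\infty,n-1)$-right fibration at $d$. Carrying out the cell filtration rigorously requires careful bookkeeping in the $\Thn$-space-enriched simplicial setting, tracking the $\Thn$-direction through the pushout-products. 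Alternatively, one may bypass the filtration and instead prove, for every double $(\infty,n-1)$-right fibration $E\to D$, that restriction along $\id_d$ is a weak equivalence $\underline{\Hom}_{\MSrightfib{D}}(\slice{D}{d},E)\xrightarrow{\simeq}\underline{\Hom}_{\MSrightfib{D}}(\repD[0],E)=E_d$ of $\Thn$-spaces — a Yoneda-type computation — from which the statement follows since $\MSrightfib{D}$ is enriched over $\MSThnsset$.
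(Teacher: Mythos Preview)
Your reduction of (1) and (2) to one another via the comparison map is exactly what the paper does; the only difference is the direction. The paper establishes (1) first, by citing \cite[Theorem~5.25]{rasekh2023Dyoneda}, and then deduces (2) from (1) via \cref{cor:comparisoniswe}, whereas you reduce to (2) and attempt a direct argument.

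Your direct argument for (2), however, has a genuine gap. The identification $\repD\star\repD[0]\cong\repD[m+1]$ holds in $\Dset$, but it does \emph{not} promote to $\repDThnS\star\repD[0]\cong\repDThnS[m+1]$ in $\sThnsset$: the join is applied levelwise in $\Dset$, and at level $(\theta',k')$ the simplicial set $(\repDThnS)_{\theta',k'}$ is a \emph{coproduct} of copies of $\repD$, so joining with $\repD[0]$ yields a wedge of copies of $\repD[m+1]$ at the cone point rather than their coproduct. (Compare \cref{rem:F0join}, which only gives the identification after applying $L$.) Hence an $(m,\theta,k)$-simplex of $\joinslice{D}{d}$ is \emph{not} simply an $(m+1,\theta,k)$-simplex of $D$ with last vertex $d$, and the claimed cell attachment along $(\repD[0]\xhookrightarrow{\langle m+1\rangle}\repD[m+1])\pushprod(X\hookrightarrow Y)$ is unjustified as stated. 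The d\'ecalage picture is correct in the purely simplicial direction but does not survive the $\Thn$-enrichment without substantial additional work.

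Your Yoneda alternative --- showing $\underline{\Hom}_{/D}(\joinslice{D}{d},E)\xrightarrow{\simeq} E_d$ for every double $(\infty,n-1)$-right fibration $E\to D$ --- is sound in principle, but it is precisely the content of the fibrational Yoneda lemma that the paper cites from \cite{rasekh2023Dyoneda}; carrying it out would amount to reproving that theorem rather than bypassing it.
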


\begin{proof}
    (1) is proven in \cite[Theorem 5.25]{rasekh2023Dyoneda}, and (2) follows from (1) and \cref{cor:comparisoniswe}.
\end{proof}

From this statement, we can deduce the following result, which is an implementation of the Yoneda lemma in the fibrational setting.

\begin{rmk}
\label{prop:Yonedamap}
    Let $D$ be a fibrant object in $\SegsThnsset$ and $d\in D_{0,[0],0}$. Let $p\colon A\to D$ be a double $(\infty,n-1)$-right fibration, and $a\in (A_d)_{[0],0}$ be an element of the fiber of $p$ at $d$. Then there is a (homotopically unique) lift in either of the following diagrams in $\MSrightfib{D}$,
    \begin{tz}
        \node[](1) {$F[0]$}; 
        \node[below of=1](2) {$\slice{D}{d}$}; 
        \node[right of=1,xshift=.5cm](3) {$A$}; 
        \node[below of=3](4) {$D$}; 

        \draw[right hook->](1) to node[left,la]{$\id_d$} node[right,la]{$\simeq$} (2); 
        \draw[->](1) to (3); 
        \draw[->>](3) to node[right,la]{$p$} (4); 
        \draw[->](2) to (4);

        \draw[->,dashed](2) to node[left,la,yshift=5pt]{$a_*$} (3);

        \node[right of=3,xshift=1.2cm](1) {$F[0]$}; 
        \node[below of=1](2) {$\joinslice{D}{d}$}; 
        \node[right of=1,xshift=.5cm](3) {$A$}; 
        \node[below of=3](4) {$D$}; 

        \draw[right hook->](1) to node[left,la]{$\id_d$} node[right,la]{$\simeq$} (2); 
        \draw[->](1) to (3); 
        \draw[->>](3) to node[right,la]{$p$} (4); 
        \draw[->](2) to (4);

        \draw[->,dashed](2) to node[left,la,yshift=5pt]{$a_*$} (3);
    \end{tz}
    where $\id_d\colon F[0]\to \slice{D}{d}$ and $\id_d\colon F[0]\to \joinslice{D}{d}$ are weak equivalences by \cref{slicehasterminalobject}. In what follows, we will denote by $a_*$ any choice of such a lift and refer to $a_*$ as the \emph{Yoneda map}.
\end{rmk}

We now introduce terminal objects.

\begin{defn}
    Let $D$ be a fibrant object in $\SegsThnsset$ and $p\colon A\to D$ be a double $(\infty,n-1)$-right fibration. A \textbf{double $(\infty,n-1)$-terminal object} of $A$ is an element $a\in A_{0,[0],0}$ such that the canonical projection 
    \[ \slice{A}{a}\to A \]
    is a weak equivalence in $\SegsThnsset$.
\end{defn}

We have the following characterization of double $(\infty,n-1)$-terminal objects.

\begin{prop} \label{lem:terminalequivrelative}
    Let $D$ be a fibrant object in $\SegsThnsset$ and $d\in D_{0,0,0}$. Let $p\colon A\to D$ be a double $(\infty,n-1)$-right fibration and $a\in (A_d)_{[0],0}$ be an element of the fiber of $p$ at $d$. Then the following are equivalent: 
    \begin{rome}[leftmargin=1cm]
        \item the element $a\in A_{0,[0],0}$ is a double $(\infty,n-1)$-terminal object in $A$, i.e., the canonical projection $\slice{A}{a}\to A$ is a weak equivalence in $\SegsThnsset$, 
        \item the canonical projection $\joinslice{A}{a}\to A$ is a weak equivalence in $\SegsThnsset$,
        \item the map $a\colon \repD[0]\to A$ is a weak equivalence in $\MSrightfib{D}$, 
        \item the Yoneda map $a_*\colon \slice{D}{d}\to A$ is a weak equivalence in $\MSrightfib{D}$, 
        \item the Yoneda map $a_*\colon \joinslice{D}{d}\to A$ is a weak equivalence in $\MSrightfib{D}$.
    \end{rome}
\end{prop}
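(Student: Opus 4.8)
The plan is to establish the cycle of implications $(i)\Leftrightarrow(ii)$, then $(ii)\Leftrightarrow(iii)$, and finally $(iii)\Leftrightarrow(iv)\Leftrightarrow(v)$, using the various invariance and comparison results already set up. First, the equivalence $(i)\Leftrightarrow(ii)$ is immediate from \cref{thm:comparisoniswe}: since $p\colon A\to D$ is a double $(\infty,n-1)$-right fibration, $A$ is fibrant in $\SegsThnsset$ by \cref{sourceofdblrightfib}, so the comparison map $\joinslice{A}{a}\to \slice{A}{a}$ is a weak equivalence in $\SegsThnsset$; then by $2$-out-of-$3$ the projection $\joinslice{A}{a}\to A$ is a weak equivalence in $\SegsThnsset$ if and only if $\slice{A}{a}\to A$ is.

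Next I would prove $(i)\Leftrightarrow(iii)$. One direction is formal: if $a\colon \repD[0]\to A$ is a weak equivalence in $\MSrightfib{D}$, then by \cref{diamondjoinLQ} (or rather its relative version — note that $\slice{(-)}{(-)}$ applied fiberwise over $D$, realized via the pullback formula of \cref{sliceaspullback}) the induced map $\slice{A}{a}\to \slice{A}{a'}$ behaves well; more directly, \cref{slicehasterminalobject}(1) applied to $D=A$ (which is fibrant in $\SegsThnsset$) says $\id_a\colon \repD[0]\to \slice{A}{a}$ is a weak equivalence in $\MSrightfib{A}$, hence in $\SegsThnsset$, and composing with the weak equivalence $a\colon \repD[0]\to A$ (using that the projection $\slice{A}{a}\to A$ fits into a triangle with these two maps) gives that $\slice{A}{a}\to A$ is a weak equivalence in $\SegsThnsset$ by $2$-out-of-$3$. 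For the converse, if $\slice{A}{a}\to A$ is a weak equivalence in $\SegsThnsset$, then since both $\repD[0]\to\slice{A}{a}$ (by \cref{slicehasterminalobject}(1)) and $\slice{A}{a}\to A$ are weak equivalences in $\SegsThnsset$, so is the composite $a\colon\repD[0]\to A$; to upgrade this to a weak equivalence in $\MSrightfib{D}$ one invokes the fact that weak equivalences between fibrant objects in $\MSrightfib{D}$ are detected in $\injsThnsset$ (equivalently $\SegsThnsset$), noting $\repD[0]\to D$ (via $d$) and $A\to D$ are both double $(\infty,n-1)$-right fibrations.

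Finally, $(iii)\Leftrightarrow(iv)$ and $(iii)\Leftrightarrow(v)$ follow from the Yoneda map construction of \cref{prop:Yonedamap}: the lift $a_*\colon \slice{D}{d}\to A$ sits in a commutative triangle with the weak equivalence $\id_d\colon \repD[0]\xrightarrow{\simeq}\slice{D}{d}$ of \cref{slicehasterminalobject}(1) and the map $a\colon\repD[0]\to A$, so by $2$-out-of-$3$ in $\MSrightfib{D}$, $a_*$ is a weak equivalence iff $a$ is; the same argument with \cref{slicehasterminalobject}(2) handles $\joinslice{D}{d}$ and statement $(v)$. The main obstacle I anticipate is the careful bookkeeping in the converse direction of $(i)\Rightarrow(iii)$ — namely promoting a weak equivalence detected in $\SegsThnsset$ to one in the localized model structure $\MSrightfib{D}$ — which requires knowing that the relevant source and target are fibrant there (handled by \cref{sourceofdblrightfib} and \cref{prop:fibrantslice}) so that the localization does not change the weak equivalences between them; everything else is a formal chase through $2$-out-of-$3$ once the comparison map \cref{thm:comparisoniswe} and the terminality statement \cref{slicehasterminalobject} are in hand.
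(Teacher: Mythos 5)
Your steps $(i)\Leftrightarrow(ii)$ and $(iii)\Leftrightarrow(iv)\Leftrightarrow(v)$ are correct and essentially the paper's argument. The gap is in your proof of $(i)\Leftrightarrow(iii)$. You assert that $\id_a\colon\repD[0]\to\slice{A}{a}$ ``is a weak equivalence in $\MSrightfib{A}$, hence in $\SegsThnsset$''. That implication goes the wrong way: $\MSrightfib{A}$ is a localization of ${\injsThnsset}_{/A}$, so its weak equivalences form a \emph{larger} class, and a weak equivalence there descends to $\injsThnsset$ (equivalently $\SegsThnsset$, between Segal-fibrant objects) only when both source and target are fibrant in $\MSrightfib{A}$, i.e.\ double $(\infty,n-1)$-right fibrations over $A$. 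The object $\repD[0]\xrightarrow{a}A$ is not a double right fibration in general, and indeed $\id_a$ is typically \emph{not} an equivalence in $\SegsThnsset$: when $A$ models the walking arrow and $a$ is its target, $\slice{A}{a}\simeq A$ and $\id_a$ is the inclusion of a point into a non-contractible object. The same error recurs at the end of your paragraph, where you claim $\repD[0]\xrightarrow{d}D$ is a double right fibration in order to detect (iii) in $\injsThnsset$; it is not, and (iii) is genuinely a statement about the localized structure $\MSrightfib{D}$ --- it holds for $a=\id_d$ in $A=\slice{D}{d}$ (\cref{slicehasterminalobject}) even though $\id_d$ is rarely an equivalence in $\SegsThnsset$. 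So your intermediate assertion ``(i) iff $a$ is a weak equivalence in $\SegsThnsset$'' is false as stated.

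The repair, which is what the paper does, is to run the $2$-out-of-$3$ argument entirely inside $\MSrightfib{D}$ rather than $\SegsThnsset$. By \cref{slicehasterminalobject}, $\id_a\colon\repD[0]\to\slice{A}{a}$ is a weak equivalence in $\MSrightfib{A}$; since $p_!\colon\MSrightfib{A}\to\MSrightfib{D}$ is left Quillen (\cref{postcomp-pullback}) and every object is cofibrant, it preserves all weak equivalences, so the same underlying map, now viewed over $D$, is a weak equivalence in $\MSrightfib{D}$. Composing $\repD[0]\xrightarrow{\id_a}\slice{A}{a}\to A$ over $D$ gives $a$, and $2$-out-of-$3$ in $\MSrightfib{D}$ shows that (iii) holds iff $\slice{A}{a}\to A$ is a weak equivalence in $\MSrightfib{D}$. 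This last condition is then equivalent to (i) because $\slice{A}{a}$ and $A$ are fibrant both in $\MSrightfib{D}$ (by \cref{prop:fibrantslice} and closure of fibrations under composition) and in $\SegsThnsset$ (by \cref{sourceofdblrightfib}), so weak equivalences between them agree in $\MSrightfib{D}$, $\injsThnsset$ and $\SegsThnsset$. With this correction your remaining steps go through unchanged.
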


\begin{proof}
    First, note that, using \cref{prop:fibrantslice} (resp.~\cref{joinslicefibrant}), the maps $\slice{A}{a} \to A\to D$ (resp.~$\joinslice{A}{a}\to A\to D$) and $A\to D$ are double $(\infty,n-1)$-right fibrations, and so $\slice{A}{a}$ (resp.~$\joinslice{A}{a}$) and $A$ are also fibrant in $\SegsThnsset$ by \cref{sourceofdblrightfib} as $D$ is so. Hence the map $\slice{A}{a} \to A$ (resp.~$\joinslice{A}{a}\to A$) is a weak equivalence in $\SegsThnsset$ if and only if it is a weak equivalence in $\injsThnsset$ if and only if it is a weak equivalence in $\MSrightfib{D}$.

    Now, consider the following commutative diagram in $\MSrightfib{D}$.
    \begin{tz}
        \node[](1) {$\repD[0]$}; 
        \node[above right of=1,xshift=1.5cm](2') {$\joinslice{A}{a}$}; 
        \node[right of=2',xshift=1cm](2'') {$\slice{A}{a}$};
        \node[below right of=1,xshift=1.5cm](2) {$\joinslice{D}{d}$}; 
        \node[right of=2,xshift=1cm](3') {$\slice{D}{d}$};
        \node[above right of=3',xshift=1.5cm](3) {$A$}; 
        \node[below of=2,xshift=1.25cm](4) {$D$}; 

        \draw[->](1) to node[above,la,xshift=5pt]{$\simeq$} node[below,la,xshift=-5pt]{$\id_d$} (2);
        \draw[->](1) to node[above,la,xshift=-5pt]{$\id_a$} node[below,la,xshift=5pt]{$\simeq$} (2');
        \draw[->](2') to node[above,la]{$\simeq$} (2'');
        \draw[->](2'') to (3);
        \draw[->](1) to node[above,la]{$a$} (3);
        \draw[->](2) to node[above,la]{$\simeq$} (3');
        \draw[->](3') to node[below,la,xshift=5pt]{$a_*$} (3);
        \draw[->,bend right=20](1) to node[left,la,yshift=-5pt]{$d$} (4); 
        \draw[->](2) to (4);
        \draw[->](3') to (4); 
        \draw[->,bend left=20] (3) to node[right,la,yshift=-5pt]{$p$} (4);
    \end{tz}
    where the maps marked with $\simeq$ are weak equivalences in $\MSrightfib{D}$ by \cref{slicehasterminalobject,cor:comparisoniswe,thm:comparisoniswe} and the fact that the functor \[ p_!\colon \MSrightfib{A}\to \MSrightfib{D} \]
    preserves weak equivalences by \cref{postcomp-pullback}. Hence the desired result follows from the $2$-out-of-$3$ property of weak equivalences.
\end{proof}

\subsection{Internal approach to \texorpdfstring{$(\infty,n)$}{(infinity,n)}-limits} \label{internallimit}

We now introduce weighted $(\infty,n)$-limits in the internal setting. Since weighted $(\infty,n)$-limits will be indexed by any objects of $\sThnsset$, we need to require that the indexing category of an $(\infty,n)$-diagram looks like an $(\infty,n)$-category. We therefore make the following definition, which coincides with the \emph{weakly constant objects} property defined in \cite[Definition 1.66]{rasekh2023Dyoneda}, as follows from \cite[Lemma 1.77]{rasekh2023Dyoneda}.

\begin{defn}
    An object $J\in \sThnsset$ is \textbf{homotopically globular} if a fibrant replacement $J\to \widehat{J}$---and hence every fibrant replacement---in $\SegsThnsset$ is such that $\widehat{J}$ is fibrant in~$\CSSsThnsset$. 
\end{defn}

\begin{ex}
    If $J$ is an object in $\pcatThn$, then $J$ is homotopically globular. This follows from the third paragraph of the proof of \cite[Proposition 2.1.5]{MRR2}.
\end{ex}

We are now ready to introduce weighted $(\infty,n)$-limits in the internal setting.

\begin{defn} \label{def:weightedlimit}
    Let $J$ be a homotopically globular object in $\sThnsset$ and $C$ be a fibrant object in $\CSSsThnsset$. Let $p\colon A\to J$ and $f\colon J\to C$ be maps in $\sThnsset$. Then an element $\ell\in C_{0,[0],0}$ is a \textbf{$p$-weighted $(\infty,n)$-limit of $f$} if there exists an element $\lambda\in ((\slice{C}{f\circ p})_\ell)_{[0],0}$ of the fiber of $\slice{C}{f\circ p}\to C$ at $\ell$ such that the pair $(\ell,\lambda)$ is a double $(\infty,n-1)$-terminal object in the cone construction $\slice{C}{f\circ p}$. 
\end{defn}

By taking $p=\id_J$, we obtain \emph{conical} $(\infty,n)$-limits, introduced in \cite[Definition 1.4.24]{MRR3}. 

\begin{defn}
   Let $J$ be a homotopically globular object in $\sThnsset$ and $C$ be a fibrant object in $\CSSsThnsset$. Let $f\colon J\to C$ be a map in $\sThnsset$. Then an element $\ell\in C_{0,[0],0}$ is a \textbf{conical $(\infty,n)$-limit of $f$} if it is an $\id_J$-weighted $(\infty,n)$-limit of $f$, i.e., there exists an element $\lambda\in ((\slice{C}{f})_\ell)_{[0],0}$ of the fiber of $\slice{C}{f}\to C$ at $\ell$ such that the pair $(\ell,\lambda)$ is a double $(\infty,n-1)$-terminal object in the cone construction $\slice{C}{f}$. 
\end{defn}

We have the following characterization of (weighted) $(\infty,n)$-limits, taking $p=\id_J$ for the conical case. 

\begin{prop} \label{prop:comparedefnlimits}
    Let $J$ be a homotopically globular object in $\sThnsset$ and $C$ be a fibrant object in $\CSSsThnsset$. Let $p\colon A\to J$ and $f\colon J\to C$ be maps in $\sThnsset$. Then an element $\ell\in C_{0,[0],0}$ is a $p$-weighted $(\infty,n)$-limit of $f$ if and only if there is an element $\lambda \in ((\joinslice{C}{f\circ p})_\ell)_{[0],0}$ of the fiber of $\joinslice{C}{f\circ p}\to C$ at $\ell$ satisfying one of the following equivalent conditions: 
    \begin{rome}[leftmargin=1cm]
        \item the canonical projection $\slice{(\slice{C}{f\circ p})}{(\ell,\lambda)}\to \slice{C}{f\circ p}$ is a weak equivalence in $\SegsThnsset$, 
        \item the canonical projection $\joinslice{(\joinslice{C}{f\circ p})}{(\ell,\lambda)}\to \joinslice{C}{f\circ p}$ is a weak equivalence in $\SegsThnsset$,
        \item the Yoneda map $\lambda_*\colon \slice{C}{\ell}\to \slice{C}{f\circ p}$ is a weak equivalence in $\MSrightfib{C}$, 
        \item the Yoneda map $\lambda_*\colon \joinslice{C}{\ell}\to \joinslice{C}{f\circ p}$ is a weak equivalence in $\MSrightfib{C}$.
    \end{rome}
\end{prop}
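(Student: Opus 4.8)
The plan is to deduce everything from the characterization of double $(\infty,n-1)$-terminal objects already established in \cref{lem:terminalequivrelative}, applied not to $C$ itself but to the cone construction $\slice{C}{f\circ p}$ (respectively $\joinslice{C}{f\circ p}$) viewed as a double $(\infty,n-1)$-right fibration over $C$. First I would record that, by \cref{prop:fibrantcone} (resp.~\cref{joinconefibrant}), the canonical projection $\slice{C}{f\circ p}\to C$ (resp.~$\joinslice{C}{f\circ p}\to C$) is a double $(\infty,n-1)$-right fibration, and since $C$ is fibrant in $\CSSsThnsset$ it is in particular fibrant in $\SegsThnsset$, so $\slice{C}{f\circ p}$ (resp.~$\joinslice{C}{f\circ p}$) is fibrant in $\SegsThnsset$ by \cref{sourceofdblrightfib}. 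Thus $D=C$ and the double $(\infty,n-1)$-right fibration $A=\slice{C}{f\circ p}\to C$ (or $A=\joinslice{C}{f\circ p}\to C$) satisfy the hypotheses of \cref{lem:terminalequivrelative}, with $d=\ell$.

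Next I would match up the data. An element $\lambda$ of the fiber of $\slice{C}{f\circ p}\to C$ at $\ell$ is exactly an element $a\in (A_\ell)_{[0],0}$ in the notation of \cref{lem:terminalequivrelative}, and the pair $(\ell,\lambda)$ being a double $(\infty,n-1)$-terminal object in $\slice{C}{f\circ p}$ is, by \cref{def:weightedlimit} and the definition of double $(\infty,n-1)$-terminal object, precisely condition (i) of \cref{lem:terminalequivrelative} applied to this setup, namely that $\slice{(\slice{C}{f\circ p})}{(\ell,\lambda)}\to \slice{C}{f\circ p}$ is a weak equivalence in $\SegsThnsset$. So the definition of $p$-weighted $(\infty,n)$-limit is literally ``there exists $\lambda\in ((\slice{C}{f\circ p})_\ell)_{[0],0}$ satisfying (i)''. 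Applying \cref{lem:terminalequivrelative} to $A=\slice{C}{f\circ p}\to C$ with $d=\ell$ gives that (i) is equivalent to (ii), (iii), (iv), (v) of that proposition, where here the ``$\slice{D}{d}$'' and ``$\joinslice{D}{d}$'' are $\slice{C}{\ell}$ and $\joinslice{C}{\ell}$; these are exactly conditions (i), (iii), (iv) of the present statement (with the new (ii) being condition (ii) of \cref{lem:terminalequivrelative}, i.e.~$\joinslice{(\slice{C}{f\circ p})}{(\ell,\lambda)}\to \slice{C}{f\circ p}$ a weak equivalence).

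The one point needing a small argument is that the present statement phrases conditions (i)--(iv) in terms of an element $\lambda$ of the fiber of $\joinslice{C}{f\circ p}\to C$ at $\ell$, whereas \cref{def:weightedlimit} uses an element of the fiber of $\slice{C}{f\circ p}\to C$ at $\ell$. To reconcile these I would invoke \cref{thm:comparisoniswe}: the comparison map $\joinslice{C}{f\circ p}\to \slice{C}{f\circ p}$ is a weak equivalence of double $(\infty,n-1)$-right fibrations over $C$, hence by \cref{webtwdoubleright} (or directly, since both are fibrant over $C$) induces a weak equivalence on fibers at $\ell$, so an element of one fiber corresponds, up to the relevant homotopy, to an element of the other; in particular the existence of a suitable $\lambda$ in one fiber is equivalent to the existence of one in the other, and the comparison map sends the pair $(\ell,\lambda)$ compatibly. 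Concretely, I would run \cref{lem:terminalequivrelative} a second time with $A=\joinslice{C}{f\circ p}\to C$ to get that conditions (i)--(v) there (now with $\joinslice{}$ in the outer slot) are all equivalent, and then use \cref{thm:comparisoniswe} together with the naturality of the Yoneda maps to identify these with the conditions stated here. The main obstacle, though entirely routine, is bookkeeping: making sure the two ``layers'' of slice/cone constructions (the outer one over $C$, the inner one over the cone construction) are consistently tracked, and that the Yoneda map $\lambda_*$ appearing in (iii)--(iv) here is the one produced by \cref{prop:Yonedamap} for the right fibration $\slice{C}{f\circ p}\to C$ (resp.~$\joinslice{C}{f\circ p}\to C$) at the element $\lambda$, which is exactly the Yoneda map featured in conditions (iv)--(v) of \cref{lem:terminalequivrelative}. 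No genuinely new homotopical input is required beyond \cref{lem:terminalequivrelative}, \cref{thm:comparisoniswe}, \cref{prop:fibrantcone}, \cref{joinconefibrant}, \cref{sourceofdblrightfib}, and \cref{postcomp-pullback}.
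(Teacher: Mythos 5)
Your proposal is correct and follows essentially the same route as the paper, whose proof is simply to combine \cref{lem:terminalequivrelative} (applied to the right fibrations $\slice{C}{f\circ p}\to C$ and $\joinslice{C}{f\circ p}\to C$) with the comparison equivalences \cref{thm:comparisoniswe,cor:comparisoniswe}. You spell out the hypothesis-checking and the fat-versus-neat fiber bookkeeping more explicitly than the paper does, but no new ingredient is involved.
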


\begin{proof}
    This follows from \cref{cor:comparisoniswe,thm:comparisoniswe,lem:terminalequivrelative}.
\end{proof}

If one starts with an $(\infty,n)$-category presented by a fibrant object $C$ of $\pcatinj$, one would a priori need to fibrantly replace it in $\CSSsThnsset$ before computing an $(\infty,n)$-limit in $C$. The following result says that this step is not necessary. 

\begin{prop} 
\label{prop:limit in precat}
    Let $J$ be a homotopically globular object in $\sThnsset$ and let $C$ and $D$ be fibrant objects in $\SegsThnsset$ with $C_0$ and $D_0$ homotopically constant. Let $w\colon C\to D$ be a weak equivalence in $\CSSsThnsset$ (equivalently a Dwyer-Kan equivalence) and let $p\colon A\to J$ and $f\colon J\to C$ be maps in $\sThnsset$. Then, for elements $\ell\in C_{0,[0],0}$ and $\lambda \in ((\joinslice{C}{f\circ p})_\ell)_{[0],0}$ in the fiber of $\joinslice{C}{f\circ p}\to C$ at $\ell$, the following conditions are equivalent: 
    \begin{rome}[leftmargin=1cm]
        \item the pair $(\ell,\lambda)$ is a double $(\infty,n-1)$-terminal object in $\slice{C}{f\circ p}$ or equivalently in $\joinslice{C}{f\circ p}$, 
        \item the pair $(w(\ell),w(\lambda))$ is a double $(\infty,n-1)$-terminal object in $\slice{D}{w f\circ p}$ or equivalently in~$\joinslice{D}{w f\circ p}$.
    \end{rome}
    This holds in particular if $C$ is a fibrant object in $\pcatinj$ and $w\colon C\to D$ is a fibrant replacement in $\CSSsThnsset$. In this case, we call $\ell$ a \emph{$p$-weighted limit of $f$}.
\end{prop}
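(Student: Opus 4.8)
The plan is to reduce the statement to two results already available: the homotopical invariance of cone constructions under Dwyer--Kan equivalences, \cref{cor:coneofprecat}, and the list of equivalent characterizations of double $(\infty,n-1)$-terminal objects in \cref{lem:terminalequivrelative}. The key observation is that terminality is detected by a single, transportable condition. Given $\lambda$ in the fiber of $\joinslice{C}{f\circ p}\to C$ at $\ell$, I would first invoke \cref{lem:terminalequivrelative} --- the equivalence of its conditions (i), (ii), (iii) applied to the double $(\infty,n-1)$-right fibration $\joinslice{C}{f\circ p}\to C$ (which is fibrant by \cref{joinconefibrant}, using that $C$ is fibrant in $\SegsThnsset$) --- to see that $(\ell,\lambda)$ is a double $(\infty,n-1)$-terminal object in $\joinslice{C}{f\circ p}$ exactly when the classifying map $\lambda\colon \repD[0]\to \joinslice{C}{f\circ p}$ is a weak equivalence in $\MSrightfib{C}$. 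Transporting $\lambda$ along the comparison map $\joinslice{C}{f\circ p}\to \slice{C}{f\circ p}$, which is a weak equivalence in $\MSrightfib{C}$ by \cref{thm:comparisoniswe}, and using $2$-out-of-$3$ together with \cref{lem:terminalequivrelative} applied now to $\slice{C}{f\circ p}\to C$, this is in turn equivalent to $(\ell,\lambda)$ being a double $(\infty,n-1)$-terminal object in $\slice{C}{f\circ p}$; this establishes the ``or equivalently'' in the proposition's condition (i). Running the identical argument with $C$, $f\circ p$, $\ell$, $\lambda$ replaced by $D$, $wf\circ p$, $w(\ell)$, $w(\lambda)$ shows that the proposition's condition (ii) is equivalent to $w(\lambda)\colon \repD[0]\to \joinslice{D}{wf\circ p}$ being a weak equivalence in $\MSrightfib{D}$. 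The proposition thus reduces to the statement that $\lambda$ is a weak equivalence in $\MSrightfib{C}$ if and only if $w(\lambda)$ is a weak equivalence in $\MSrightfib{D}$.

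Next I would set up the transport functor. Under the standing hypotheses --- which, in the ``in particular'' case, hold by \cref{fibreplofprecat} --- \cref{prop:DKeqbtwhtyconstant} guarantees that $w_{-,\theta}\colon C_{-,\theta}\to D_{-,\theta}$ is a weak equivalence in $\MSDsset$ for every $\theta\in \Thn$, so \cref{postcomp-pullback} makes the adjunction $w_!\dashv w^*$ between $\MSrightfib{C}$ and $\MSrightfib{D}$ a Quillen equivalence. Since the cofibrations in $\MSrightfib{C}$ are exactly the monomorphisms, every object of $\MSrightfib{C}$ is cofibrant; hence the left Quillen functor $w_!$ preserves all weak equivalences (Ken Brown's lemma) and also reflects them, as a left Quillen equivalence with cofibrant domain induces an equivalence of homotopy categories, which is conservative.

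Then I would finish by a two-out-of-three chase. By \cref{cor:coneofprecat}(2), the canonical map $w_!\joinslice{C}{f\circ p}\to \joinslice{D}{wf\circ p}$ is a weak equivalence in $\MSrightfib{D}$, and by functoriality of the neat cone construction in $w$ --- unwinding that this map simply postcomposes cones with $w$ --- it sends the $(0,[0],0)$-simplex $\lambda$ to $w(\lambda)$; so $w(\lambda)$ factors as $w_!(\lambda)\colon \repD[0]\to w_!\joinslice{C}{f\circ p}$ followed by this weak equivalence. Therefore $\lambda$ is a weak equivalence in $\MSrightfib{C}$ if and only if $w_!(\lambda)$ is a weak equivalence in $\MSrightfib{D}$ (as $w_!$ preserves and reflects weak equivalences), if and only if $w(\lambda)$ is a weak equivalence in $\MSrightfib{D}$ (two-out-of-three with the weak equivalence above). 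Together with the first paragraph this yields (i) $\Leftrightarrow$ (ii), and the ``in particular'' clause follows at once from \cref{fibreplofprecat}.

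I do not anticipate a genuine obstacle: the substantive work was done in \cref{cor:coneofprecat} and \cref{lem:terminalequivrelative}. The points requiring care are purely organizational --- verifying that the canonical map $w_!\joinslice{C}{f\circ p}\to\joinslice{D}{wf\circ p}$ indeed carries $\lambda$ to $w(\lambda)$ (immediate from the description of cones as commuting triangles), and matching up the fat and neat versions of terminality via \cref{thm:comparisoniswe} on both sides. It is also worth remarking that the hypothesis that $J$ is homotopically globular is not used in the argument; it is present only so that the internal notion of $p$-weighted $(\infty,n)$-limit is well-posed.
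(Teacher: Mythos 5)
Your proposal is correct and follows essentially the same route as the paper: the paper's proof is a short deduction from \cref{cor:coneofprecat} and \cref{prop:comparedefnlimits}, using that $w_!$ preserves and reflects weak equivalences via \cref{postcomp-pullback,prop:DKeqbtwhtyconstant}, and your argument merely unfolds \cref{prop:comparedefnlimits} into its ingredients (\cref{lem:terminalequivrelative} and \cref{thm:comparisoniswe}) before running the same two-out-of-three chase. Your closing remark that the homotopically globular hypothesis on $J$ is not used in the argument is also accurate.
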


\begin{proof}
    This follows directly from \cref{cor:coneofprecat,prop:comparedefnlimits} using that the functor $w_!\colon \MSrightfib{C}\to \MSrightfib{D}$ preserves and reflects weak equivalences by \cref{postcomp-pullback,prop:DKeqbtwhtyconstant}.
\end{proof}

\subsection{Comparison between enriched and internal approaches to slices} \label{comparisonslices}

In order to compare the enriched and internal definition of $(\infty,n)$-limits, we first show that, for a $\Thnsset$-enriched category $\cC$ and an object $\ell\in \cC$, we have an isomorphism in $\sThnssetslice{\Nh\cC}$
\[ \Un_\cC \Hom_\cC(-,\ell)\cong \joinslice{\Nh\cC}{\ell}. \]

Recall the left adjoint $L\colon \sThnsset\to \pcatThn$ of the inclusion.

\begin{lemma} \label{rem:F0join}
    Let $m\geq 0$ and $X$ be a connected $\Thn$-space. There is a canonical isomorphism in $\pcatThn$
    \[ L(\repD[m,X]\star \repD[0]) \cong L\repD[m+1,X]. \]
\end{lemma}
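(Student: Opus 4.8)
The plan is to present $\repD[m,X]\star\repD[0]$ as a pushout, then apply the reflection $L\colon\sThnsset\to\pcatThn$ and observe that it turns one leg of the defining span into an isomorphism.

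First I would record that $\repD[0]\cong\repD[0,\Delta[0]]$, where $\Delta[0]$ is the terminal $\Thn$-space, which is connected, so that \cref{lemma:join reps} applies with $m'=0$ and $X'=\Delta[0]$. Combined with the isomorphism $\repD[m]\star\repD[0]\cong\repD[m+1]$ from \cref{joinsset} (and $X\times\Delta[0]\cong X$), this yields a pushout square in $\sThnsset$ with corners $(\repD[m]\amalg\repD[0])\times X$, $\repD[m,X]\amalg\repD[0]$, $\repD[m+1]\times X$, and $\repD[m,X]\star\repD[0]$. Inspecting the top horizontal map $(\repD[m]\amalg\repD[0])\times X\to\repD[m,X]\amalg\repD[0]$ in degree $0$ (it is a functorial construction, so this suffices), one sees that, under the identification $(\repD[m]\amalg\repD[0])\times X\cong\repD[m,X]\amalg(\repD[0]\times X)$, it restricts to the identity on the summand $\repD[m,X]$ and to the projection $\repD[0]\times X\to\repD[0]$ induced by $X\to\Delta[0]$ on the other summand.

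Next I would apply $L$: being a left adjoint, it preserves the pushout; being a reflection onto a full subcategory, it fixes $\repD[0]\in\pcatThn$; and it preserves coproducts. So the only point to settle is that $L$ sends the projection $\repD[0]\times X\to\repD[0]$ to an isomorphism — this is where connectedness of $X$ enters, and it is the heart of the argument. I would deduce it from the universal property of $L$: since $\repD[0]\times X$ is the constant simplicial object on $X$, for every $Z\in\pcatThn$ there are bijections natural in $Z$
\[ \Hom_{\pcatThn}(L(\repD[0]\times X),Z)\cong\Hom_{\sThnsset}(\repD[0]\times X,Z)\cong\Hom_{\Thnsset}(X,Z_0)\cong\Hom_{\set}(\pi_0 X,Z_0), \]
using the adjunction between the constant and the degree-$0$ functors and the discreteness of $Z_0$; as $\pi_0 X$ is a point, the last set is naturally identified with $\Hom_{\pcatThn}(\repD[0],Z)$, and chasing through the identities shows that the induced isomorphism $L(\repD[0]\times X)\cong\repD[0]$ is the one coming from the projection.

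Finally, with the top map of the square now an isomorphism after applying $L$, I would conclude that $L(\repD[m,X]\star\repD[0])$ is the pushout of $L(\repD[m+1]\times X)=L\repD[m+1,X]$ along an isomorphism, hence is canonically isomorphic to $L\repD[m+1,X]$, as claimed. Apart from the connectedness input just isolated, the argument is pure bookkeeping with pushouts and coproducts; that small identification is the only real obstacle.
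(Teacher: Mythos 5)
Your argument is correct and is essentially the paper's proof: both present $\repD[m,X]\star\repD[0]$ via the pushout of \cref{lemma:join reps} (with $m'=0$), apply the colimit-preserving reflection $L$, and use that $L\repD[0,X]\cong \repD[0]$ for connected $X$ to turn the relevant leg into an isomorphism. The only difference is that you spell out the adjunction/$\pi_0$ computation behind $L\repD[0,X]\cong\repD[0]$, which the paper simply asserts.
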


\begin{proof}
    Since $LF[0,X]\cong F[0]$, this follows directly from applying the colimit-preserving functor $L\colon \sThnsset\to \pcatThn$ to the pushout from \cref{lemma:join reps} giving $F[m,X]\star F[0]$. 
\end{proof}

\begin{lemma} \label{lem:enrfunctorvsnat}
    Let $m\geq 0$ and $X$ be a connected $\Thn$-space. Let $\tau\colon F[m,X]\to \Nh\cC$ be a map in $\sThnsset$ and $\ell\in \cC$ be an object. There is a bijection between the set of $\Thnsset$-enriched natural transformations in $[\Ch LF[m,X]^{\op},\Thnsset]$ of the form
    \[ \varphi\colon \Hom_{\Ch L(F[m,X]\star \repD[0])}(-,\top)\Rightarrow \Hom_\cC(\tau^\sharp(-),\ell) \] 
    and the set of $\Thnsset$-enriched functors $F\colon \Ch L(F[m,X]\star\repD[0])\to \cC$ making the following diagram in $\Thncat$ commute,
    \begin{tz}
         \node[](1) {$\Ch L(F[m,X]\star \repD[0])$}; 
        \node[right of=1,xshift=1.7cm](2) {$\cC$}; 
        \node[above of=1](3) {$\Ch LF[m,X]\amalg [0]$}; 
        \draw[->](1) to node[below,la]{$F$} (2); 
        \draw[right hook->](3) to (1);
        \draw[->](3) to  node[right,la,yshift=5pt]{$\tau^\sharp +\ell$} (2);
    \end{tz}
    where $\tau^\sharp\colon \Ch LF[m,X]\to \cC$ denotes the adjunct of $\tau$. 
\end{lemma}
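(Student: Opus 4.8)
The plan is first to make the domain explicit. By \cref{rem:F0join} there is a canonical isomorphism $L(F[m,X]\star\repD[0])\cong LF[m+1,X]$ in $\pcatThn$, under which the canonical inclusion $F[m,X]\amalg\repD[0]\hookrightarrow F[m,X]\star\repD[0]$ is carried to the inclusion of $LF[m,X]$ together with the vertex $m+1$ of $LF[m+1,X]$, with $\top$ corresponding to $m+1$. Since $\Ch$ is a left adjoint (\cref{nerve-cat}), I would then work with the $\Thnsset$-enriched category $\cD\coloneqq\Ch LF[m+1,X]$, whose set of objects is $\{0,1,\dots,m+1\}$ with $\top=m+1$; the inclusion in the statement becomes a $\Thnsset$-enriched functor $\Ch LF[m,X]\amalg[0]\hookrightarrow\cD$, injective on objects, agreeing with $\Ch\iota$ on $\Ch LF[m,X]$ (for $\iota\colon LF[m,X]\hookrightarrow LF[m+1,X]$ the structure inclusion) and sending the object of $[0]$ to $\top$. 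The requirement that a functor $F\colon\cD\to\cC$ make the triangle commute thus says precisely that $F$ restricts to $\tau^\sharp$ on $\Ch LF[m,X]$ and sends $\top$ to $\ell$.

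The key structural input --- and what I expect to be the main obstacle --- is an analysis of the hom $\Thn$-spaces of $\cD$: that $\Ch\iota$ exhibits $\Ch LF[m,X]$ as the full $\Thnsset$-enriched subcategory of $\cD$ on the objects $\{0,\dots,m\}$, so that $\Hom_\cD(j,j')\cong\Hom_{\Ch LF[m,X]}(j,j')$ for $j,j'\le m$; that $\Hom_\cD(\top,\top)$ is the terminal $\Thn$-space (the unit of $\Thnsset$), hit isomorphically by the inclusion of $[0]$; and that $\Hom_\cD(\top,j)=\emptyset$ for every $j\le m$, so $\top$ receives morphisms from every object but emits none into $\{0,\dots,m\}$. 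I would obtain this by applying the colimit-preserving composite $\Ch L$ to the pushout presentation of $F[m+1,X]=F[m,X]\star\repD[0]$ from \cref{lemma:join reps} (taken with $m'=0$ and $X'$ a point), thereby presenting $\cD$ as an explicit pushout of $\Thnsset$-enriched categories from which the hom-objects can be read off; here connectedness of $X$ is used to ensure the $X$-factor passes through the relevant colimits. Alternatively, one may invoke the known description of $\Ch$ of these representable precategories from \cite{MRR1,MRR2}.

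Granting this, I would unwind what such an $F$ amounts to. The object assignment, the action on the hom-objects $\Hom_\cD(j,j')$ with $j,j'\le m$, the action on $\Hom_\cD(\top,\top)$, and the (empty) action on $\Hom_\cD(\top,j)$ are all forced by the restriction; the only remaining datum is the family of maps $F_{j,\top}\colon\Hom_\cD(j,\top)\to\Hom_\cC(\tau^\sharp(j),\ell)$ for $j\in\{0,\dots,m\}$. Compatibility of $F$ with units is then automatic, compatibility with the composition maps involving $\Hom_\cD(\top,\top)$ or $\Hom_\cD(\top,j)$ is vacuous, and compatibility with the composition maps $\Hom_\cD(j,\top)\times\Hom_\cD(j',j)\to\Hom_\cD(j',\top)$ says exactly that the family $(F_{j,\top})_j$ is $\Thnsset$-enriched natural, i.e., defines a transformation of right $\Ch LF[m,X]$-modules $\Hom_\cD(\Ch\iota(-),\top)\Rightarrow\Hom_\cC(\tau^\sharp(-),\ell)$ --- precisely a $\varphi$ as in the statement, the right-module structure on the source being tautologically the one obtained by restricting composition in $\cD$. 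Conversely, any such $\varphi$ determines an $F$ by setting $F_{j,\top}\coloneqq\varphi_j$ together with the forced values elsewhere, and the assignments $F\mapsto(F_{j,\top})_j$ and $\varphi\mapsto F$ are mutually inverse, yielding the asserted bijection.
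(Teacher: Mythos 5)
Your argument is correct and takes essentially the same route as the paper's proof: both identify the data of such an $F$ as the forced restriction $\tau^\sharp+\ell$ together with the remaining components $F_{j,\top}$, and both observe that compatibility with composition into $\top$ is precisely the enriched naturality of $\varphi$. The paper's version is terser, leaving implicit the structural facts about $\Ch L(F[m,X]\star \repD[0])$ (full faithfulness of the inclusion on $\{0,\dots,m\}$, no morphisms out of $\top$) that you spell out.
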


\begin{proof}
    A $\Thnsset$-enriched functor $F$ as above determines a $\Thnsset$-enriched natural transformations in $[\Ch LF[m,X]^{\op},\Thnsset]$
    \[ F_{\tau^\sharp(-),\top}\colon \Hom_{\Ch L(F[m,X]\star \repD[0])}(-,\top)\Rightarrow \Hom_\cC(\tau^\sharp(-),\ell). \]

    Conversely, a $\Thnsset$-enriched natural transformation $\varphi$ as above determines a $\Thnsset$-enriched functor 
    \[ F_\varphi\colon \Ch L(F[m,X]\star\repD[0])\to \cC \]
    given on objects by sending $0\leq i\leq m$ to $\tau^\sharp (i)$ and $\top$ to $\ell$, and given on hom $\Thn$-spaces with $0\leq i<j\leq m$ by 
    \[ (F_\varphi)_{i,j}\coloneq \tau^\sharp_{i,j}\colon \Hom_{\Ch L(F[m,X]\star \repD[0])}(i,j)\Rightarrow \Hom_\cC(\tau^\sharp(i),\tau^\sharp(j))\]
    and on hom $\Thn$-spaces with $0\leq i\leq m$ by
    \[ (F_\varphi)_{i,\top}\coloneqq \varphi_i\colon \Hom_{\Ch L(F[m,X]\star \repD[0])}(i,\top)\Rightarrow \Hom_\cC(\tau^\sharp(i),\ell). \]
    The enriched naturality of $\varphi$ implies that $F_\varphi$ is compatible with composition. 

    These assignments are clearly inverse to each other. 
\end{proof}

\begin{lemma} \label{lem:nattransfoutofStvsfunctors}
    Let $m\geq 0$ and $X$ be a connected $\Thn$-space. Let $\tau\colon F[m,X]\to \Nh\cC$ be a map in $\sThnsset$ and $\ell\in \cC$ be an object. There is a bijection between the set of $\Thnsset$-enriched natural transformations in $[\cC^{\op},\Thnsset]$ of the form
    \[ \varphi\colon \St_\cC(\tau)\Rightarrow \Hom_\cC(-,\ell) \] 
    and the set of $\Thnsset$-enriched functors $F\colon \Ch L(F[m,X]\star\repD[0])\to \cC$ making the following diagram in $\Thncat$ commute,
    \begin{tz}
         \node[](1) {$\Ch L(F[m,X]\star \repD[0])$}; 
        \node[right of=1,xshift=1.7cm](2) {$\cC$}; 
        \node[above of=1](3) {$\Ch LF[m,X]\amalg [0]$}; 
        \draw[->](1) to node[below,la]{$F$} (2); 
        \draw[right hook->](3) to (1);
        \draw[->](3) to  node[right,la,yshift=5pt]{$\tau^\sharp +\ell$} (2);
    \end{tz}
    where $\tau^\sharp\colon \Ch LF[m,X]\to \cC$ denotes the adjunct of $\tau$. 
\end{lemma}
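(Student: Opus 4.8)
The plan is to combine \cref{lem:enrfunctorvsnat} with an identification of the natural transformations out of $\St_\cC(\tau)$, obtained from the naturality of straightening together with an explicit computation of the pushout appearing in the definition of straightening. By \cref{lem:enrfunctorvsnat}, the set of $\Thnsset$-enriched functors $F\colon \Ch L(F[m,X]\star\repD[0])\to\cC$ over $\Ch LF[m,X]\amalg [0]$ is in bijection with the set of $\Thnsset$-enriched natural transformations $\psi\colon \Hom_{\Ch L(F[m,X]\star\repD[0])}(-,\top)\Rightarrow\Hom_\cC(\tau^\sharp(-),\ell)$ in $[\Ch LF[m,X]^{\op},\Thnsset]$. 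Since $\Hom_\cC(\tau^\sharp(-),\ell)=(\tau^\sharp)^*\Hom_\cC(-,\ell)$, the adjunction $(\tau^\sharp)_!\dashv(\tau^\sharp)^*$ from \cref{LKE-precomp} identifies such $\psi$ with natural transformations $(\tau^\sharp)_!\Hom_{\Ch L(F[m,X]\star\repD[0])}(-,\top)\Rightarrow\Hom_\cC(-,\ell)$ in $[\cC^{\op},\Thnsset]$. Hence the statement reduces to producing an isomorphism in $[\cC^{\op},\Thnsset]$
\[ \St_\cC(\tau)\cong(\tau^\sharp)_!\Hom_{\Ch L(F[m,X]\star\repD[0])}(-,\top). \]

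To establish this, let $u\colon F[m,X]\to LF[m,X]$ denote the unit of the adjunction $L\dashv\mathrm{incl}$, and let $\bar\tau\colon LF[m,X]\to\Nh\cC$ be the unique map in $\pcatThn$ with $\bar\tau\circ u=\tau$; then $\tau^\sharp$ is the adjunct of $\bar\tau$ under $\Ch\dashv\Nh$, and $\tau=(\bar\tau)_!(u)$ in $\sThnssetslice{\Nh\cC}$. Applying \cref{lem:naturalityofSt} with $J:=LF[m,X]$, $F:=\tau^\sharp$ and $F^\flat:=\bar\tau$, and evaluating the resulting natural isomorphism of functors at the object $u$, we obtain $\St_\cC(\tau)\cong(\tau^\sharp)_!\St_{LF[m,X]}(u)$. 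It therefore remains to identify
\[ \St_{LF[m,X]}(u)\cong\Hom_{\Ch L(F[m,X]\star\repD[0])}(-,\top) \]
as presheaves on $\Ch LF[m,X]$.

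When $X$ is a representable $\Thn$-space, so that $F[m,X]=F[m,\theta,k]$ for suitable $\theta$ and $k$, the map $u$ is, in the notation of the definition of $\St_{LF[m,X]}$, a map $\sigma\colon F[m,\theta,k]\to LF[m,X]$ whose adjunct $L F[m,\theta,k]\to LF[m,X]$ is the identity. Consequently the pushout defining straightening collapses to $L F[m+1,\theta,k]$, with structure map $\iota_u=L[d^{m+1},\theta,k]$ and cone point $\top$ the $(m+1)$st vertex. By \cref{rem:F0join} there is a canonical isomorphism $L F[m+1,\theta,k]=LF[m+1,X]\cong L(F[m,X]\star\repD[0])$, compatible with the inclusion of $LF[m,X]$ and carrying $\top$ to the cone point, whence $\St_{LF[m,X]}(u)=\Hom_{\Ch L F[m+1,\theta,k]}(-,\top)\circ\Ch\iota_u\cong\Hom_{\Ch L(F[m,X]\star\repD[0])}(-,\top)$. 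For a general connected $\Thn$-space $X$ one writes $X$ as a colimit over its (connected) category of elements of representable $\Thn$-spaces and uses that $F[m,-]$, $-\star\repD[0]$, $L$, $\Ch$ and $\St_{LF[m,X]}$ are all suitably colimit-preserving to reduce to the representable case, tracking the cone points and the inclusions through the colimit.

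The main obstacle is precisely this last reduction: although every functor in sight preserves the relevant colimits, one must verify that the cone point $\top$ and the inclusion $\Ch LF[m,X]\hookrightarrow\Ch L(F[m,X]\star\repD[0])$ assemble coherently over the diagram, and that the left Kan extension defining straightening genuinely computes the cone hom-presheaf rather than merely a formal colimit thereof. Once this is in place, unwinding the bijections above shows that a functor $F$ corresponds to the natural transformation $\St_\cC(\tau)\Rightarrow\Hom_\cC(-,\ell)$ determined by $F_{\tau^\sharp(-),\top}$ under the identifications, which is the form in which the lemma will subsequently be used.
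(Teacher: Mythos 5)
Your proof is correct and follows essentially the same route as the paper's: the same chain of identifications via \cref{lem:enrfunctorvsnat}, the adjunction $(\tau^\sharp)_!\dashv(\tau^\sharp)^*$, the naturality of straightening (\cref{lem:naturalityofSt}) applied to the unit $F[m,X]\to LF[m,X]$, and \cref{rem:F0join}, merely traversed in the opposite order. The colimit reduction for non-representable connected $X$ that you flag as the remaining obstacle is also left implicit in the paper, which simply invokes ``the definition of straightening and \cref{rem:F0join}'' at that step.
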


\begin{proof} 
    By \cref{lem:naturalityofSt}, a $\Thnsset$-enriched natural transformation $\varphi$ as above corresponds to a $\Thnsset$-enriched natural transformation in $[\cC^{\op},\Thnsset]$
    \[ \varphi\colon (\tau^\sharp)_!\St_{LF[m,X]}(\id_{F[m,X]})\Rightarrow \Hom_\cC (-,\ell). \]
    By the adjunction $(\tau^\sharp)_!\dashv (\tau^\sharp)^*$, this corresponds to a $\Thnsset$-enriched natural transformation in $[\Ch LF[m,X]^{\op},\Thnsset]$
    
    \[ \varphi\colon \St_{LF[m,X]}(\id_{F[m,X]})\Rightarrow (\tau^\sharp)^*\Hom_\cC (-,\ell). \] 
    By definition of straightening and \cref{rem:F0join}, this corresponds to a $\Thnsset$-enriched natural transformation in $[\Ch LF[m,X]^{\op},\Thnsset]$
   \[ \varphi\colon \Hom_{\Ch L(F[m,X]\star\repD[0])}(-,\top)\Rightarrow \Hom_\cC(\tau^\sharp(-),\ell). \] 
   By \cref{lem:enrfunctorvsnat}, this corresponds to a $\Thnsset$-enriched functor $F$ as desired.
\end{proof}

\begin{prop} \label{Unvsjoinslice}
   Let $\cC$ be a $\Thnsset$-enriched category and $\ell\in \cC$ be an object. There is an isomorphism in $\sThnssetslice{\Nh\cC}$
    \[ \Un_\cC \Hom_\cC(-,\ell)\cong \joinslice{\Nh\cC}{\ell}. \]
\end{prop}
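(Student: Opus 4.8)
The plan is to exhibit a natural bijection, for each representable $F[m,X]$ with $X$ a connected $\Thn$-space, between the set of maps $F[m,X]\to \Un_\cC\Hom_\cC(-,\ell)$ over $\Nh\cC$ and the set of maps $F[m,X]\to \joinslice{\Nh\cC}{\ell}$ over $\Nh\cC$, compatible with the structure maps of $\Thnop\times\Dop$, and then invoke the density/Yoneda argument to upgrade this to an isomorphism of objects of $\sThnssetslice{\Nh\cC}$. The technical point that makes this work is that $\sThnsset$ is a presheaf category in which every object is a colimit of representables of the form $F[m,\theta,k]$, and the forgetful functor to $\sThnsset$ from either slice category preserves the relevant colimits; since $F[m,\theta,k]$ is a retract of (or maps compatibly from) $F[m,X]$ for $X=\Thn[\theta]\times\Delta[k]$ connected, it suffices to check the bijection on such objects.

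First I would unwind the left-hand side. A map $F[m,X]\to \Un_\cC\Hom_\cC(-,\ell)$ over $\Nh\cC$ is a map $\tau\colon F[m,X]\to\Nh\cC$ together with a lift of $\tau$ against the projection $\Un_\cC\Hom_\cC(-,\ell)\to\Nh\cC$. By the defining adjunction $\St_\cC\dashv\Un_\cC$ (more precisely, by the way $\Un_\cC$ is built by left Kan extension along the Yoneda embedding, as in \cref{straightening}) such a lift over $\tau$ is the same datum as a $\Thnsset$-enriched natural transformation $\varphi\colon \St_\cC(\tau)\Rightarrow \Hom_\cC(-,\ell)$ in $[\cC^{\op},\Thnsset]$. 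This is exactly the content I can feed into \cref{lem:nattransfoutofStvsfunctors}, which identifies such $\varphi$ with $\Thnsset$-enriched functors $F\colon \Ch L(F[m,X]\star\repD[0])\to\cC$ extending $\tau^\sharp+\ell$ along $\Ch LF[m,X]\amalg[0]\hookrightarrow \Ch L(F[m,X]\star\repD[0])$.

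Next I would unwind the right-hand side. By \cref{rem:F0join} there is a canonical isomorphism $L(F[m,X]\star\repD[0])\cong L F[m+1,X]$ in $\pcatThn$, and more to the point, a map $F[m,X]\to\joinslice{\Nh\cC}{\ell}$ over $\Nh\cC$ is, by the very definition of the neat slice construction (\cref{def:neatslice}) together with the $-\star\repD[0]\dashv \joinslice{(-)}{(-)}$ adjunction, the same as a map $F[m,X]\star\repD[0]\to\Nh\cC$ in $\sThnsset$ restricting to the given $\tau$ on $F[m,X]$ and to $\ell$ on $\repD[0]$. Since $\Nh\cC = \Nh_*$ applied to $\cC$ is a levelwise homotopy-coherent nerve and $L\dashv(\text{inclusion})$, such a map corresponds by adjunction $\Ch\dashv\Nh$ (using that $F[m,X]\star\repD[0]$ has discrete set of objects so that $L$ applies and then $\Ch L \dashv \Nh$) precisely to a $\Thnsset$-enriched functor $F\colon\Ch L(F[m,X]\star\repD[0])\to\cC$ extending $\tau^\sharp+\ell$ — i.e.\ exactly the same set that appeared on the left-hand side. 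Composing the two identifications gives the desired natural bijection.

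The main obstacle I anticipate is bookkeeping the naturality: one must check that the bijection just constructed is natural in $(m,\theta,k)\in\Thnop\times\Dop$, i.e.\ compatible with all the simplicial and $\Thn$-structure maps on both sides, and that it is also compatible with the two projections down to $\Nh\cC$ — and then that the comparison assembles to a genuine isomorphism of objects of $\sThnsset$, not merely a levelwise bijection. Both slice constructions are right adjoints, hence send colimits in the $F[m,X]$-variable to limits, but the argument actually needs a density statement: writing an arbitrary $Z\in\sThnsset$ as a colimit of representables $F[m,\theta,k]$, both $(\Un_\cC\Hom_\cC(-,\ell))$ and $\joinslice{\Nh\cC}{\ell}$ have their value at $Z$ computed as the limit of their values at the $F[m,\theta,k]$, so the levelwise/representable-wise bijection does propagate. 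I would phrase this cleanly by noting that both sides represent the same functor $(\sThnssetslice{\Nh\cC})^{\op}\to\set$, namely $(p\colon Y\to\Nh\cC)\mapsto\{\,g\colon Y\star\repD[0]\to\Nh\cC : g|_Y=p,\ g|_{\repD[0]}=\ell\,\}$ — on the right by adjunction, on the left by the preceding lemmas — and then Yoneda over $\sThnssetslice{\Nh\cC}$ finishes the proof; checking that the left-hand side really represents this functor, uniformly in the test object, is where the three lemmas \cref{lem:enrfunctorvsnat,lem:nattransfoutofStvsfunctors,rem:F0join} do their work and is the place to be careful.
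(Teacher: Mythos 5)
Your proposal is correct and follows essentially the same route as the paper's proof: both identify, for each representable $F[m,X]$ with $X$ connected, simplices of $\Un_\cC\Hom_\cC(-,\ell)$ over $\Nh\cC$ with enriched natural transformations $\St_\cC(\tau)\Rightarrow\Hom_\cC(-,\ell)$ via the adjunction $\St_\cC\dashv\Un_\cC$, then with enriched functors $\Ch L(F[m,X]\star\repD[0])\to\cC$ via \cref{lem:nattransfoutofStvsfunctors}, and finally with simplices of $\joinslice{\Nh\cC}{\ell}$ via $\Ch L\dashv\Nh$ and the definition of the neat slice. Your extra care about naturality in $(m,\theta,k)$ and the density argument is a point the paper leaves implicit, but it is the same proof.
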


\begin{proof}   
    Let $m\geq 0$ and $X$ be a connected $\Thn$-space. By the adjunction $\St_\cC\dashv \Un_\cC$, a simplex 
    \begin{tz}
    \node[](1) {$F[m,X]$}; 
    \node[below right of=1,xshift=.8cm](2) {$\Nh\cC$}; 
    \node[above right of=2,xshift=.8cm](3) {$\Un_\cC \Hom_\cC(-,\ell)$}; 

    \draw[->] (1) to node[above,la]{$\sigma$} (3); 
    \draw[->] (1) to node[left,la,yshift=-5pt]{$\tau$} (2);
    \draw[->] (3) to (2);
\end{tz}
    corresponds to a $\Thnsset$-enriched natural transformation in $[\cC^{\op},\Thnsset]$
    \[ \sigma^\sharp\colon \St_{\cC}(\tau)\Rightarrow \Hom_\cC (-,\ell). \] 
    By \cref{lem:nattransfoutofStvsfunctors}, this corresponds to a commutative diagram in $\Thncat$ as below left, which by the adjunction $\Ch L\dashv \Nh$ corresponds to a commutative diagram in $\sThnsset$ as below right.
   \begin{tz}
       \node[](1) {$\Ch L(F[m,X]\star \repD[0])$}; 
        \node[right of=1,xshift=1.7cm](2) {$\cC$}; 
        \node[above of=1](3) {$\Ch LF[m,X]\amalg [0]$}; 
        \draw[->](1) to node[below,la]{$\sigma^\sharp$} (2); 
        \draw[right hook->](3) to (1);
        \draw[->](3) to  node[right,la,yshift=5pt]{$\tau^\sharp +\ell$} (2); 
        
        \node[right of=1,xshift=5.2cm](1) {$F[m,X]\star \repD[0]$}; 
        \node[right of=1,xshift=1.5cm](2) {$\Nh \cC$}; 
        \node[above of=1](3) {$F[m,X]\amalg \repD[0]$}; 
        \draw[->](1) to node[below,la]{$\sigma$} (2); 
        \draw[right hook->](3) to (1);
        \draw[->](3) to  node[right,la,yshift=5pt]{$\tau +\ell$} (2);
    \end{tz}
    The latter is, by definition of the neat slice construction, a simplex
    \begin{tz}
    \node[](1) {$F[m,X]$}; 
    \node[below right of=1,xshift=.4cm](2) {$\Nh\cC$}; 
    \node[above right of=2,xshift=.4cm](3) {$\joinslice{\Nh\cC}{\ell}$}; 

    \draw[->] (1) to node[above,la]{$\sigma$} (3); 
    \draw[->] (1) to node[left,la,yshift=-5pt]{$\tau$} (2);
    \draw[->] (3) to (2);
\end{tz}
    This proves the desired isomorphism.
\end{proof}

\subsection{Comparison between enriched and internal approaches to \texorpdfstring{$(\infty,n)$}{(infinity,n)}-limits} \label{comparisonlimit}

In this section, we prove the main theorem giving the equivalence between the enriched and internal definitions of $(\infty,n)$-limits. 

\begin{data} \label{setting:limit}
    We fix the following data:
\begin{itemize}[leftmargin=0.6cm] 
    \item an object $J\in\pcatThn$, 
    \item a $\Thnsset$-enriched category $\cC$,
    \item a map $p\colon A\to J$ in $\sThnsset$ with straightening $\St_J(p)\colon \Ch J\to \Thnsset$,
    \item a $\Thnsset$-enriched functor $F\colon \Ch J\to \cC$ with adjunct $F^\flat\colon J\to \Nh\cC$ in $\sThnsset$.
\end{itemize}
\end{data}

We first show that the cone constructions representing the $(\infty,n)$-limits in each setting agree at an object of $\cC$. 

\begin{prop} \label{lem:webtwfibers}
   In \cref{setting:limit}, for every object $c\in \cC$, there is a canonical isomorphism in~$\Thnsset$
    \[ \Hom_{[\Ch J,\Thnsset]}(\St_J(p),\Hom_\cC(c,F-))\cong (\joinslice{\Nh\cC}{F^\flat\circ p})_c,  \]
    where $(\joinslice{\Nh\cC}{F^\flat\circ p})_c$ denotes the fiber of $\joinslice{\Nh\cC}{F^\flat\circ p}\to \Nh\cC$ at $c$. 
\end{prop}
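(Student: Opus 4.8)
The plan is to deduce the isomorphism from a short chain of isomorphisms of $\Thn$-spaces, transporting the enriched hom on the left step by step into the internal fiber on the right. Throughout, every $\Hom$ is an enriched hom $\Thn$-space, and at each step the adjunction invoked is the $\MSThnsset$-enriched one, so that all the isomorphisms produced are isomorphisms in $\Thnsset$ and not merely of underlying sets.

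Concretely, one first notes that, in the notation of \cref{setting:limit}, $\Hom_\cC(c,F-) = F^*\Hom_\cC(c,-)$ as $\Thnsset$-enriched functors $\Ch J\to\Thnsset$, where $F^*$ denotes precomposition with $F\colon\Ch J\to\cC$. The enriched left Kan extension/precomposition adjunction $F_!\dashv F^*$ (\cref{LKE-precomp}) then gives
\[ \Hom_{[\Ch J,\Thnsset]}(\St_J(p),\Hom_\cC(c,F-))\;\cong\;\Hom_{[\cC,\Thnsset]}(F_!\St_J(p),\Hom_\cC(c,-)). \]
Next, the naturality of straightening (\cref{lem:naturalityofSt}, in the covariant form recorded in the remark following it) yields $F_!\St_J(p)\cong\St_\cC((F^\flat)_!p)=\St_\cC(F^\flat\circ p)$, so the right-hand side becomes $\Hom_{[\cC,\Thnsset]}(\St_\cC(F^\flat\circ p),\Hom_\cC(c,-))$. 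The enriched adjunction $\St_\cC\dashv\Un_\cC$ (\cref{enrichedst-unst}, again covariantly) identifies this with $\Hom_{\sThnssetslice{\Nh\cC}}(F^\flat\circ p,\Un_\cC\Hom_\cC(c,-))$, where $\Un_\cC\Hom_\cC(c,-)\to\Nh\cC$ is the double $(\infty,n-1)$-left fibration obtained by unstraightening. The covariant counterpart of \cref{Unvsjoinslice} (proved by the same argument) gives an isomorphism $\Un_\cC\Hom_\cC(c,-)\cong\joinsliceunder{\Nh\cC}{c}$ over $\Nh\cC$. Finally, since $\Nh\cC$ lies in $\pcatThn$, \cref{joinconeonfibers} applies with $Z=\Nh\cC$, $f=F^\flat\circ p$ and $x=c$, identifying $\Hom_{\sThnssetslice{\Nh\cC}}(F^\flat\circ p,\joinsliceunder{\Nh\cC}{c}\to\Nh\cC)$ with the fiber $(\joinslice{\Nh\cC}{F^\flat\circ p})_c$. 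Composing these six isomorphisms proves the statement, and each is manifestly natural enough for the composite to be canonical.

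The substantive input here is \cref{joinconeonfibers} (which in turn rests on the pushout comparison \cref{lem:equalpushoutneat}); the remaining steps are formal manipulations with enriched adjunctions. Accordingly, the part demanding the most care — and the main potential pitfall — is the bookkeeping of variances: the weight is encoded by the \emph{covariant} straightening $\St_J(p)\colon\Ch J\to\Thnsset$ (the one associated with left fibrations), whereas the neat cone $\joinslice{\Nh\cC}{F^\flat\circ p}$ is a double $(\infty,n-1)$-\emph{right} fibration over $\Nh\cC$; one must therefore consistently use the covariant forms of \cref{lem:naturalityofSt}, \cref{enrichedst-unst} and \cref{Unvsjoinslice} and, correspondingly, the neat coslice in place of the neat slice (and ``left'' fibrations in place of ``right'') throughout, so that the concluding appeal to \cref{joinconeonfibers}, which pairs the cone construction with the coslice construction, matches up exactly.
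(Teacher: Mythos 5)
Your proposal matches the paper's proof essentially step for step: the same chain of isomorphisms via $F_!\dashv F^*$, the naturality of straightening (\cref{lem:naturalityofSt}), the enriched adjunction $\St_\cC\dashv\Un_\cC$, the identification $\Un_\cC\Hom_\cC(c,-)\cong\joinsliceunder{\Nh\cC}{c}$ from \cref{Unvsjoinslice}, and finally \cref{joinconeonfibers}. Your explicit attention to the covariant/left-fibration variants is exactly the convention the paper adopts via the remark following \cref{lem:naturalityofSt}, so the argument is correct and takes the same route.
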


\begin{proof}
    We have the following natural isomorphisms in $\Thnsset$
    \begin{align*}
    \Hom_{[\Ch J,\Thnsset]}&(\St_J(p),\Hom_\cC(c,F-)) \\
    & \cong \Hom_{[\cC,\Thnsset]}(F_!\St_J(p),\Hom_\cC(c,-)) & F_!\dashv F^* \\
    & \cong \Hom_{[\cC,\Thnsset]}(\St_\cC((F^\flat)_! p),\Hom_\cC(c,-)) & \text{\cref{lem:naturalityofSt}} \\
    & \cong \Hom_{\sThnssetslice{\Nh\cC}}(A\xrightarrow{F^\flat\circ p} \Nh\cC,\Un_\cC \Hom_\cC(c,-)\to \Nh\cC) & \St_\cC\dashv \Nh_\cC \\
    & \cong \Hom_{\sThnssetslice{\Nh\cC}}(A\xrightarrow{F^\flat\circ p} \Nh\cC,\joinsliceunder{\Nh\cC}{c}\to \Nh\cC) &  \text{\cref{Unvsjoinslice}}  \\
    & \cong (\joinslice{\Nh\cC}{F^\flat \circ p})_c & \text{\cref{joinconeonfibers}} 
    \end{align*}
    as desired.
\end{proof}

In particular, this gives a way of translating the $(\infty,n)$-limit cones from the enriched to the internal setting, and conversely. 

\begin{notation} \label{notn:enrvsfibcones}
    In \cref{setting:limit}, let $\ell\in \cC$ be an object and $\lambda\colon \St_J(p)\Rightarrow \Hom_\cC(\ell,F-)$ be a $\Thnsset$-enriched natural transformation  in $[\Ch J,\Thnsset]$. We denote by $\lambda^\flat \in ((\joinslice{\Nh\cC}{F^\flat \circ p})_\ell)_{[0],0}$ the image of $\lambda$ under the isomorphism from \cref{lem:webtwfibers}
    \[ \Hom_{[\Ch J,\Thnsset]}(\St_J(p),\Hom_\cC(\ell,F-))\cong (\joinslice{\Nh\cC}{F^\flat \circ p})_\ell. \]
\end{notation}

\begin{rmk}
    We believe that there is a map in $\sThnssetslice{\Nh\cC}$
   \[
    \Un_\cC \Hom_{[\Ch J,\Thnsset]}(\St_J(p), \Hom_\cC(-,F))\to \joinslice{\Nh\cC}{F^\flat\circ p}
    \]
that acts as \cref{lem:webtwfibers} on fibers, but it is difficult to build and so we bypass this issue by constructing explicit Yoneda maps that we can handle on fibers.
\end{rmk}

We state the following result whose proof is delayed to \cref{sec:Yonedamap}.

\begin{prop} \label{prop:existYoneda} 
In \cref{setting:limit}, let $\ell\in \cC$ be an object, $\lambda\colon \St_J(p)\Rightarrow \Hom_\cC(\ell,F-)$ be a $\Thnsset$-enriched natural transformation in $[\Ch J,\Thnsset]$, and $\lambda^\flat \in ((\joinslice{\Nh\cC}{F^\flat \circ p})_\ell)_{[0],0}$ be as in \cref{notn:enrvsfibcones}. There is a map in $\sThnssetslice{\Nh\cC}$
\[ \lambda^\flat_*\colon \joinslice{\Nh\cC}{\ell}\to \joinslice{\Nh\cC}{F^\flat\circ p} \]
sending $\id_\ell$ to $\lambda^\flat$ and
making the following diagrams commute, for every object $c\in \cC$,
\begin{tz}
        \node[](1) {$\Hom_\cC(c,\ell)$}; 
        \node[below of=1](2) {$\Hom_{\Nh\cC}(c,\ell)$}; 
        \node[right of=1,xshift=4.3cm,yshift=-3pt](3) {$\Hom_{[\Ch J,\Thnsset]}(\St_J(p),\Hom_\cC(c,F-))$}; 
        \node[below of=3,yshift=3pt](4) {$(\joinslice{\Nh\cC}{F^\flat\circ p})_c$}; 

        \draw[d] (1) to (2); 
        \draw[->] (1) to node[above,la]{$\lambda^*_c$} ($(3.west)+(0,3pt)$); 
        \draw[->] (2) to node[below,la]{$(\lambda^\flat_*)_c$} (4); 
        \draw[->] (3) to node[right,la]{$\cong$} (4); 
    \end{tz}
    where the right-hand map is the isomorphism from \cref{lem:webtwfibers}.
\end{prop}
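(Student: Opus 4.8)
The plan is to construct $\lambda^\flat_*$ by tracing through the chain of bijections established in \cref{lem:webtwfibers} at the level of \emph{representing simplices}, rather than just on fibers. Recall that by \cref{Unvsjoinslice}, an $m$-simplex of $\joinslice{\Nh\cC}{\ell}$ at a connected $\Thn$-space $X$ is the same datum as a $\Thnsset$-enriched functor $F\colon\Ch L(F[m,X]\star\repD[0])\to\cC$ extending $\tau^\sharp+\ell$ (via \cref{lem:nattransfoutofStvsfunctors,lem:enrfunctorvsnat}), equivalently a natural transformation $\varphi_\tau\colon \St_\cC(\tau)\Rightarrow \Hom_\cC(-,\ell)$ over the base. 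First I would define $\lambda^\flat_*$ on such a simplex by \emph{whiskering with $\lambda$}: given $\varphi_\tau$ as above, produce the composite natural transformation
\[
\St_\cC((F^\flat\circ p)^{\text{ad}})\;\cong\;\St_\cC(\tau)\circ(\text{straightening of }p)\;\xRightarrow{\ \lambda\ }\;\Hom_\cC(\ell,F-)\circ\cdots\;\xRightarrow{\ \varphi_\tau\ }\;\Hom_\cC(-,\ell)\circ\cdots
\]
More precisely, I would use that $\tau\colon F[m,X]\to\Nh\cC$ factors the diagram and that $\St$ is natural (\cref{lem:naturalityofSt}), so that pulling $\lambda\colon\St_J(p)\Rightarrow\Hom_\cC(\ell,F-)$ back along $\tau$ and post-composing with $\varphi_\tau$ yields exactly an element of $(\joinslice{\Nh\cC}{F^\flat\circ p})$ over the same $\tau$; this is the definition of $(\lambda^\flat_*)$ on simplices. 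Functoriality in $[m]\in\Delta$, $\theta\in\Thn$, and $[k]\in\Delta$ is automatic because every ingredient (pullback of $\lambda$, composition of natural transformations, the bijections of \cref{lem:enrfunctorvsnat,lem:nattransfoutofStvsfunctors}) is natural; this gives the map of simplicial presheaves $\lambda^\flat_*\colon\joinslice{\Nh\cC}{\ell}\to\joinslice{\Nh\cC}{F^\flat\circ p}$, and it lies in $\sThnssetslice{\Nh\cC}$ by construction since all the natural transformations involved are over the structure maps to $\Nh\cC$.

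Next I would check the two claimed compatibilities. That $\lambda^\flat_*$ sends $\id_\ell$ to $\lambda^\flat$ is essentially a tautology: the simplex $\id_\ell\colon\repD[0]\to\joinslice{\Nh\cC}{\ell}$ corresponds under \cref{Unvsjoinslice} and the bijections of \cref{lem:nattransfoutofStvsfunctors} to the identity natural transformation $\Hom_\cC(-,\ell)\Rightarrow\Hom_\cC(-,\ell)$, and whiskering $\lambda$ with the identity just returns $\lambda$, whose image under the chain of \cref{lem:webtwfibers} is by definition $\lambda^\flat$ (\cref{notn:enrvsfibcones}). For the commuting square with a fixed object $c\in\cC$: by definition, $\lambda^*_c\colon\Hom_\cC(c,\ell)\to\Hom_{[\Ch J,\Thnsset]}(\St_J(p),\Hom_\cC(c,F-))$ sends an element $h\colon c\to\ell$ (i.e.\ a map of hom $\Thn$-spaces, equivalently a natural transformation $\Hom_\cC(-,c)\Rightarrow\Hom_\cC(-,\ell)$) to the whiskered composite $\lambda\circ(\text{–}\ \text{precompose with }h)$, which is precisely the whiskering operation defining $\lambda^\flat_*$ applied to the simplex of $\joinslice{\Nh\cC}{\ell}$ at $c$ corresponding to $h$. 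Chasing this identification through the isomorphism $\Hom_{[\Ch J,\Thnsset]}(\St_J(p),\Hom_\cC(c,F-))\cong(\joinslice{\Nh\cC}{F^\flat\circ p})_c$ of \cref{lem:webtwfibers} — which is built from exactly the same adjunctions $F_!\dashv F^*$, naturality of $\St$, $\St_\cC\dashv\Un_\cC$, \cref{Unvsjoinslice}, and \cref{joinconeonfibers} — shows that the two ways around the square agree. The identification of $\Hom_\cC(c,\ell)$ with $\Hom_{\Nh\cC}(c,\ell)$ on the left is the standard one (the hom $\Thn$-space of $\Nh\cC$ at a pair of objects is by construction the enriched hom of $\cC$), so the vertical equality on the left poses no difficulty.

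I expect the main obstacle to be bookkeeping rather than conceptual: the chain of isomorphisms in \cref{lem:webtwfibers} is assembled from five separate adjunctions and comparison results, and to get a genuine map of simplicial presheaves (not just a fiberwise bijection) I need each of these to be realized \emph{functorially in the representing simplex data} $(m,X)$ and compatibly with the structure maps to $\Nh\cC$. The delicate points are (i) ensuring that the bijection of \cref{lem:enrfunctorvsnat} — which is stated for a fixed $\tau$ — is natural in $\tau$ as $F[m,X]$ varies, so that whiskering with $\lambda$ descends to a map of presheaves; and (ii) verifying that the comparison $\St_\cC((F^\flat)_!p)\cong F_!\St_J(p)$ of \cref{lem:naturalityofSt} interacts correctly with restriction along $\tau$, which is where \cref{prop:existYoneda} genuinely uses more than the fiberwise statement of \cref{lem:webtwfibers}. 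Since the excerpt explicitly flags (in the \cref{rmk} before this proposition) that one cannot easily build the unstraightened comparison map directly, I would keep the construction firmly at the level of simplices and natural transformations as above, where the functoriality is visibly automatic, and defer the heavier diagram chase to \cref{sec:Yonedamap} as the paper indicates.
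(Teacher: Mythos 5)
Your overall strategy matches the paper's: define $\lambda^\flat_*$ simplexwise, by taking a simplex $\sigma$ of $\joinslice{\Nh\cC}{\ell}$ (equivalently an enriched functor $\sigma^\sharp\colon\Ch L(F[m,X]\star\repD[0])\to\cC$ extending $\tau^\sharp+\ell$) and ``composing it with $\lambda$ along the common summit $\ell$,'' then checking naturality in the simplex and unwinding $\lambda^*_c$ to see that the square commutes. But there is a genuine gap at the central step: the displayed ``whiskered composite'' is not a composition of natural transformations. The transformation $\lambda\colon\St_J(p)\Rightarrow\Hom_\cC(\ell,F-)$ lives over $\Ch J$ and is covariant, while $\sigma^\sharp\colon\St_\cC(\tau)\Rightarrow\Hom_\cC(-,\ell)$ lives over $\cC^{\op}$ and is contravariant; neither sources nor targets match, so there is no vertical or horizontal composite to take. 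The operation actually needed is the enriched composition of $\cC$ itself, $\Hom_\cC(c,\ell)\otimes\Hom_\cC(\ell,Fj)\to\Hom_\cC(c,Fj)$, applied pointwise---a profunctor-style pairing, not a whiskering---and the nontrivial content of the proposition is precisely to realize this pairing as a map in $\sThnssetslice{\Nh\cC}$ naturally in the representing simplex.

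The paper does this by constructing, for all $A,B\in\sThnsset$, an enriched functor
\[ \Phi_{A,B}\colon \Ch L(A\star B)\to\Ch L(A\star\repD[0])\amalg_{[0]}\Ch L(\repD[0]\star B), \]
factoring through the pullback $\Ch L(A\star F[0])\times_{[1]}\Ch L(F[0]\star B)$, whose essential ingredient is exactly the composition map of the target enriched category; $\lambda^\flat_*(\sigma)$ is then the adjunct of $(\sigma^\sharp+\lambda)\circ\Phi_{F[m,X],A}$, and the naturality of $\Phi$ in both variables is what makes $\lambda^\flat_*$ a well-defined map of presheaves over $\Nh\cC$. The compatibility square then requires unpacking $\lambda^*_c$---itself defined via the composition transformation $\circ_{c,\ell,-}\colon\Hom_\cC(c,\ell)\otimes\Hom_\cC(\ell,-)\Rightarrow\Hom_\cC(c,-)$ and the cotensor adjunction---and matching it against $\Phi_{F[0,Y],A}$; this is the content of \cref{constr:composition}, \cref{lem:Unofcomposition} and \cref{unpacklambdastar}. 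So the ``bookkeeping'' you defer is in fact the construction that carries the mathematical weight of the statement; without exhibiting the composition gadget $\Phi$ (or an equivalent) together with its naturality, the proposal does not yet produce the map $\lambda^\flat_*$.
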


As a consequence, we obtain the following.

\begin{cor} \label{fiberwiseeq}
    In \cref{setting:limit}, let $\ell\in \cC$ be an object, $\lambda\colon \St_J(p)\Rightarrow \Hom_\cC(\ell,F-)$ be a $\Thnsset$-enriched natural transformation in $[\Ch J,\Thnsset]$, and $\lambda^\flat \in ((\joinslice{\Nh\cC}{F^\flat \circ p})_\ell)_{[0],0}$ be as in \cref{notn:enrvsfibcones}. Then, for every object $c\in \cC$,
    the map induced by precomposing with $\lambda$
    \[ \lambda^*_c\colon \Hom_\cC(c,\ell)\to \Hom_{[\Ch J,\Thnsset]}(\St_J(p),\Hom_\cC(c,F-)) \]
    is a weak equivalence in $\MSThnsset$ if and only if the map induced by $\lambda^\flat_*\colon \joinslice{\Nh\cC}{\ell}\to \joinslice{\Nh\cC}{F^\flat\circ p}$ from \cref{prop:existYoneda} on fibers
    \[ (\lambda^\flat_*)_c\colon \Hom_{\Nh\cC}(c,\ell)\to (\joinslice{\Nh\cC}{F^\flat \circ p})_c \]
    is a weak equivalence in $\MSThnsset$. 
\end{cor}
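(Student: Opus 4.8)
The plan is to deduce this corollary directly from \cref{prop:existYoneda}. That proposition provides, for every object $c\in\cC$, a commutative square in $\Thnsset$
\begin{tz}
    \node[](1) {$\Hom_\cC(c,\ell)$};
    \node[below of=1](2) {$\Hom_{\Nh\cC}(c,\ell)$};
    \node[right of=1,xshift=4.3cm,yshift=-3pt](3) {$\Hom_{[\Ch J,\Thnsset]}(\St_J(p),\Hom_\cC(c,F-))$};
    \node[below of=3,yshift=3pt](4) {$(\joinslice{\Nh\cC}{F^\flat\circ p})_c$};
    \draw[d] (1) to (2);
    \draw[->] (1) to node[above,la]{$\lambda^*_c$} ($(3.west)+(0,3pt)$);
    \draw[->] (2) to node[below,la]{$(\lambda^\flat_*)_c$} (4);
    \draw[->] (3) to node[right,la]{$\cong$} (4);
\end{tz}
in which the left-hand vertical map is the identity (the hom $\Thn$-space $\Hom_\cC(c,\ell)$ of the enriched category $\cC$ coincides with the mapping object $\Hom_{\Nh\cC}(c,\ell)$ of its nerve, by the construction of $\Nh$) and the right-hand vertical map is the isomorphism of \cref{lem:webtwfibers}. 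Since two of the four edges of this square are isomorphisms, the map $\lambda^*_c$ is a weak equivalence in $\MSThnsset$ if and only if $(\lambda^\flat_*)_c$ is; this is simply the $2$-out-of-$3$ (in fact $2$-out-of-$6$) property of weak equivalences applied to the commuting square, or equivalently the observation that $\lambda^*_c$ and $(\lambda^\flat_*)_c$ differ by pre- and post-composition with isomorphisms.

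The only subtlety worth spelling out is the identification of the left edge: one must check that the isomorphism of \cref{lem:webtwfibers} and the map $\lambda^\flat_*$ of \cref{prop:existYoneda} are genuinely compatible, i.e.\ that the square commutes on the nose, which is exactly the content of the commuting diagram asserted in \cref{prop:existYoneda}. Thus no new argument is needed here beyond invoking that compatibility. I would therefore write the proof as: the square from \cref{prop:existYoneda} commutes, its vertical maps are isomorphisms (an identity on the left, the isomorphism from \cref{lem:webtwfibers} on the right), and hence its horizontal maps $\lambda^*_c$ and $(\lambda^\flat_*)_c$ are simultaneously weak equivalences in $\MSThnsset$ or simultaneously not, by $2$-out-of-$3$.

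There is essentially no obstacle: the entire content has been front-loaded into \cref{prop:existYoneda} (whose proof is deferred to \cref{sec:Yonedamap}) and \cref{lem:webtwfibers}. The proof of the corollary is a one-line diagram chase. Concretely:

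\begin{proof}
    By \cref{prop:existYoneda}, for every object $c\in \cC$ the following square in $\Thnsset$ commutes,
    \begin{tz}
        \node[](1) {$\Hom_\cC(c,\ell)$};
        \node[below of=1](2) {$\Hom_{\Nh\cC}(c,\ell)$};
        \node[right of=1,xshift=4.3cm,yshift=-3pt](3) {$\Hom_{[\Ch J,\Thnsset]}(\St_J(p),\Hom_\cC(c,F-))$};
        \node[below of=3,yshift=3pt](4) {$(\joinslice{\Nh\cC}{F^\flat\circ p})_c$};
        \draw[d] (1) to (2);
        \draw[->] (1) to node[above,la]{$\lambda^*_c$} ($(3.west)+(0,3pt)$);
        \draw[->] (2) to node[below,la]{$(\lambda^\flat_*)_c$} (4);
        \draw[->] (3) to node[right,la]{$\cong$} (4);
    \end{tz}
    where the left-hand vertical map is the identity and the right-hand vertical map is the isomorphism from \cref{lem:webtwfibers}. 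Since the vertical maps are isomorphisms, the horizontal map $\lambda^*_c$ is a weak equivalence in $\MSThnsset$ if and only if $(\lambda^\flat_*)_c$ is, by the $2$-out-of-$3$ property of weak equivalences.
\end{proof}
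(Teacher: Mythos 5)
Your proof is correct and is essentially identical to the paper's: the paper also deduces the corollary by applying the $2$-out-of-$3$ property of weak equivalences in $\MSThnsset$ to the commutative square from \cref{prop:existYoneda}, whose vertical maps are an identity and the isomorphism from \cref{lem:webtwfibers}. No further comment is needed.
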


\begin{proof}
    This follows by the $2$-out-of-$3$ property of the weak equivalences in $\MSThnsset$ applied to the commutative diagram from \cref{prop:existYoneda}.
\end{proof}

We are now ready to state and prove the main theorem.

\begin{thm}
\label{EquivalenceOfLimitsWeighted}
    Let $J$ be an object in $\pcatThn$ and $\cC$ be a fibrant $\MSThnsset$-enriched category. Let $p\colon A\to J$ be a map in $\sThnsset$ with straightening $\St_J(p)\colon \Ch J\to \Thnsset$ and $F\colon \Ch J\to \cC$ be a $\Thnsset$-enriched functor with adjunct $F^\flat\colon J\to \Nh\cC$ in $\sThnsset$. Let $\ell\in \cC$ be an object, $\lambda\colon \St_J(p)\Rightarrow \Hom_\cC(\ell,F-)$ be a $\Thnsset$-enriched natural transformation in $[\Ch J,\Thnsset]$, and $\lambda^\flat \in ((\joinslice{\Nh\cC}{F^\flat \circ p})_\ell)_{[0],0}$ be as in \cref{notn:enrvsfibcones}. Then the following are equivalent: 
    \begin{rome}[leftmargin=1cm]
    \item the pair $(\ell,\lambda)$ is the $\St_J(p)$-weighted $(\infty,n)$-limit of $F$, 
    \item the pair $(\ell,\lambda^\flat)$ is the $p$-weighted $(\infty,n)$-limit of $F^\flat$.
    \end{rome}
\end{thm}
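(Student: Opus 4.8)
The plan is to reduce both definitions of $(\infty,n)$-limit to the same fiberwise criterion, via the Yoneda map machinery established in \cref{prop:existYoneda}. The key observation is that statement (i) is, by definition, the condition that $\lambda^*\colon \Hom_\cC(-,\ell)\to \Hom_{[\Ch J,\Thnsset]}(\St_J(p),\Hom_\cC(-,F))$ is a weak equivalence in $[\cC^{\op},\MSThnsset]_\proj$, which---since this is the projective model structure---amounts to the objectwise condition that $\lambda^*_c$ is a weak equivalence in $\MSThnsset$ for every $c\in\cC$. On the other side, statement (ii) says by \cref{def:weightedlimit} (translated through \cref{prop:comparedefnlimits}) that $(\ell,\lambda^\flat)$ is a double $(\infty,n-1)$-terminal object in $\joinslice{\Nh\cC}{F^\flat\circ p}$; equivalently, by condition (iv) of \cref{prop:comparedefnlimits}, that the Yoneda map $\lambda^\flat_*\colon \joinslice{\Nh\cC}{\ell}\to \joinslice{\Nh\cC}{F^\flat\circ p}$ is a weak equivalence in $\MSrightfib{\Nh\cC}$.

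Next I would bridge these two criteria using \cref{cor:fiberwiseeq} (the corollary deduced from \cref{prop:existYoneda}): for every $c\in\cC$, the map $\lambda^*_c$ is a weak equivalence in $\MSThnsset$ if and only if the induced map on fibers $(\lambda^\flat_*)_c\colon \Hom_{\Nh\cC}(c,\ell)\to (\joinslice{\Nh\cC}{F^\flat\circ p})_c$ is a weak equivalence in $\MSThnsset$. Here I would also need to identify $\Hom_{\Nh\cC}(c,\ell)$ with the fiber $(\joinslice{\Nh\cC}{\ell})_c$ of the double right fibration $\joinslice{\Nh\cC}{\ell}\to\Nh\cC$ at $c$---this is standard, coming from the explicit description of the neat slice together with the fact that $\Nh\cC$ is the nerve of a strictly enriched category, so its mapping $\Thn$-spaces are literally recovered as these fibers. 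So (i) holds iff for all $c\in\cC$ the fiber map $(\lambda^\flat_*)_c$ is a weak equivalence in $\MSThnsset$.

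Finally I would invoke \cref{webtwdoubleright}: since $J=\Nh\cC$ is an object of $\pcatThn$ (as $\cC$ is a $\Thnsset$-enriched category, $\Nh\cC$ has discrete object $\Thn$-space) and both $\joinslice{\Nh\cC}{\ell}\to\Nh\cC$ and $\joinslice{\Nh\cC}{F^\flat\circ p}\to\Nh\cC$ are double $(\infty,n-1)$-right fibrations (by \cref{joinconefibrant} and \cref{joinslicefibrant}), the map $\lambda^\flat_*$ is a weak equivalence in $\MSrightfib{\Nh\cC}$ if and only if for all $c\in(\Nh\cC)_{0,[0],0}$ the induced map on fibers is a weak equivalence in $\MSThnsset$. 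Since the objects $c$ of $\cC$ are exactly the elements of $(\Nh\cC)_{0,[0],0}$, this is precisely the condition obtained in the previous step. Chaining the equivalences: (i) $\iff$ $(\lambda^\flat_*)_c$ is a weak equivalence for all $c$ $\iff$ $\lambda^\flat_*$ is a weak equivalence in $\MSrightfib{\Nh\cC}$ $\iff$ (ii). One should also note that \cref{prop:limit in precat} ensures the internal $p$-weighted limit in $\Nh\cC$ (a fibrant object of $\pcatinj$) agrees with the one computed after fibrantly replacing in $\CSSsThnsset$, so there is no mismatch between the two model-categorical frameworks; and the fibrancy of $\cC$ in $\MSThncat$ guarantees $\Hom_\cC(a,b)$ is fibrant in $\MSThnsset$, which is needed for the objectwise criterion to make sense and for \cref{Unvsjoinslice,lem:webtwfibers} to apply. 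The main obstacle is really the bookkeeping in \cref{prop:existYoneda}---constructing the Yoneda map $\lambda^\flat_*$ compatibly on all fibers simultaneously---but that is deferred to \cref{sec:Yonedamap} and assumed here; granting it, the present theorem is a clean three-step chain of ``objectwise/fiberwise weak equivalence'' reformulations.
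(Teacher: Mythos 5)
Your proposal is correct and follows essentially the same route as the paper's proof: both reduce (i) to the objectwise condition on $\lambda^*_c$ in the projective model structure, transfer it fiberwise via \cref{fiberwiseeq} (the consequence of \cref{prop:existYoneda}), recognize the result as the fiberwise criterion of \cref{webtwdoubleright} for $\lambda^\flat_*$ being a weak equivalence in $\MSrightfib{\Nh\cC}$, and conclude via the characterization of double $(\infty,n-1)$-terminal objects together with \cref{prop:limit in precat}. The only cosmetic difference is that you cite \cref{prop:comparedefnlimits}(iv) where the paper cites \cref{lem:terminalequivrelative} directly; these are interchangeable.
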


\begin{proof}
    We have that the pair $(\ell,\lambda)$ is a $\St_J(p)$-weighted $(\infty,n)$-limit of $F$ if and only if, by definition, the induced map 
    \[ \lambda^*\colon \Hom_\cC(-,\ell)\to \Hom_{[\Ch J,\Thnsset]}(\St_J(p),\Hom_\cC(-,F)) \]
    is a weak equivalence in $[\cC^{\op},\MSThnsset]_\proj$ if and only if, by definition of the weak equivalences in $[\cC^{\op},\MSThnsset]_\proj$, for every object $c\in \cC$, the induced map 
    \[ \lambda^*_c\colon \Hom_\cC(c,\ell)\to \Hom_{[\Ch J,\Thnsset]}(\St_J(p),\Hom_\cC(c,F-)) \]
    is a weak equivalence in $\MSThnsset$ if and only if, by \cref{fiberwiseeq}, for every object $c\in \cC$, the~map 
    \[ (\lambda^\flat_*)_c\colon \Hom_{\Nh\cC}(c,\ell)\to (\joinslice{\Nh\cC}{(\ell,\lambda^\flat)})_c \]
    is a weak equivalence in $\MSThnsset$ if and only if, by \cref{webtwdoubleright}, the induced map 
    \[ \lambda^\flat_*\colon \joinslice{\Nh\cC}{\ell} \to \joinslice{\Nh\cC}{F^\flat\circ p} \]
    is a weak equivalence in $\MSrightfib{\Nh\cC}$ if and only if, by \cref{lem:terminalequivrelative,prop:limit in precat}, the pair $(\ell,\lambda^\flat)$ is a $p$-weighted $(\infty,n)$-limit of $F^\flat$.
\end{proof}

By applying the above result to $p=\id_J$, we get the following: 

\begin{cor}
\label{EquivalenceOfLimitsConical}
    Let $J$ be an object in $\pcatThn$, $\cC$ be a fibrant $\MSThnsset$-enriched category, and $F\colon \Ch J\to \cC$ be a $\Thnsset$-enriched functor with adjunct $F^\flat\colon J\to \Nh\cC$ in $\sThnsset$. Let $\ell\in \cC$ be an object, $\lambda\colon \St_J(\id_J)\Rightarrow \Hom_\cC(\ell,F-)$ be a $\Thnsset$-enriched natural transformation in $[\Ch J,\Thnsset]$, and $\lambda^\flat \in ((\joinslice{\Nh\cC}{F^\flat})_\ell)_{[0],0}$ be as in \cref{notn:enrvsfibcones}. Then the following are equivalent: 
    \begin{rome}[leftmargin=1cm]
    \item the pair $(\ell,\lambda)$ is the $\St_J(\id_J)$-weighted $(\infty,n)$-limit of $F$ (=conical $(\infty,n)$-limit), 
    \item the pair $(\ell,\lambda^\flat)$ is the conical $(\infty,n)$-limit of $F^\flat$. 
    \end{rome}
\end{cor}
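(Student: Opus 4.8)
The plan is to deduce \cref{EquivalenceOfLimitsConical} directly from \cref{EquivalenceOfLimitsWeighted} by specializing to the case $p = \id_J$. Since \cref{EquivalenceOfLimitsWeighted} is stated for an arbitrary map $p\colon A\to J$ in $\sThnsset$, and $\id_J\colon J\to J$ is such a map, the specialization is immediate at the level of hypotheses. The only thing to check is that the instances of ``$\St_J(p)$-weighted $(\infty,n)$-limit'' and ``$p$-weighted $(\infty,n)$-limit'' occurring in the conclusion really become the conical notions when $p = \id_J$.

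First I would recall that, on the internal side, the conical $(\infty,n)$-limit of $F^\flat$ was \emph{defined} as the $\id_J$-weighted $(\infty,n)$-limit of $F^\flat$; so the equivalence of ``$(\ell,\lambda^\flat)$ is the $\id_J$-weighted limit of $F^\flat$'' with ``$(\ell,\lambda^\flat)$ is the conical limit of $F^\flat$'' is a tautology by definition. On the enriched side, the remark following the definition of conical $(\infty,n)$-limits observes that, for $J$ an object of $\pcatThn$, a cofibrant replacement of the terminal weight $\Delta[0]\colon \Ch J\to \Thnsset$ is given by the derived counit $\St_J(\Un_J(\Delta[0]))\xrightarrow{\simeq}\Delta[0]$, and that $\St_J(\id_J)\cong\St_J(\Un_J(\Delta[0]))$; hence the conical $(\infty,n)$-limit of $F$ can be computed as the $\St_J(\id_J)$-weighted $(\infty,n)$-limit of $F$. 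This identifies condition (i) of \cref{EquivalenceOfLimitsConical} with condition (i) of \cref{EquivalenceOfLimitsWeighted} applied to $p = \id_J$.

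So the proof is simply: apply \cref{EquivalenceOfLimitsWeighted} with $p = \id_J$ to obtain the equivalence of ``$(\ell,\lambda)$ is the $\St_J(\id_J)$-weighted limit of $F$'' with ``$(\ell,\lambda^\flat)$ is the $\id_J$-weighted limit of $F^\flat$'', and then rewrite the left-hand condition using the enriched remark and the right-hand condition using the definition of conical internal limits. There is no real obstacle here; the statement is a corollary in the strict sense and the work was all done in \cref{EquivalenceOfLimitsWeighted}. The one point to be mildly careful about is to confirm that $\lambda^\flat$ as produced by \cref{notn:enrvsfibcones} with $p=\id_J$ is indeed the element appearing in the internal conical definition, but this is automatic since \cref{notn:enrvsfibcones} and \cref{lem:webtwfibers} are stated for general $p$ and we are merely substituting $p=\id_J$.

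\begin{proof}
    Apply \cref{EquivalenceOfLimitsWeighted} with $p = \id_J\colon J\to J$. Since $\id_J$ is a map in $\sThnsset$ and $F^\flat\circ \id_J = F^\flat$, this yields that $(\ell,\lambda)$ is the $\St_J(\id_J)$-weighted $(\infty,n)$-limit of $F$ if and only if $(\ell,\lambda^\flat)$ is the $\id_J$-weighted $(\infty,n)$-limit of $F^\flat$. By the remark following the definition of conical $(\infty,n)$-limits in \cref{enrichedlimit}, using that $\St_J(\id_J)\cong\St_J(\Un_J(\Delta[0]))\xrightarrow{\simeq}\Delta[0]$ is a cofibrant replacement of the terminal weight in $[\Ch J,\MSThnsset]_\proj$, the condition in~(i) is precisely that $(\ell,\lambda)$ is the conical $(\infty,n)$-limit of $F$. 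By the definition of conical $(\infty,n)$-limits in \cref{internallimit}, the condition in~(ii) is precisely that $(\ell,\lambda^\flat)$ is the conical $(\infty,n)$-limit of $F^\flat$. This proves the equivalence.
\end{proof}
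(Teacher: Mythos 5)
Your proof is correct and takes exactly the same route as the paper, which deduces the corollary by applying \cref{EquivalenceOfLimitsWeighted} with $p=\id_J$. The additional checks you record (that the conical notions coincide with the $\id_J$-weighted and $\St_J(\id_J)$-weighted ones by definition and by the remark in \cref{enrichedlimit}) are exactly the implicit content of the paper's one-line argument.
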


\section{Key examples of \texorpdfstring{$(\infty,n)$}{(infinity,n)}-(co)complete \texorpdfstring{$(\infty,n)$}{(infinity,n)}-categories}

\label{SectionApplications}

In this section, we show that the following $(\infty,n)$-categories have all weighted $(\infty,n)$-(co)limits:
\begin{itemize}[leftmargin=0.6cm]
    \item The $(\infty,n)$-category of $(\infty,n-1)$-categories.
    \item The $(\infty,n)$-category of $(\infty,n)$-presheaves, i.e., $(\infty,n-1)$-category valued presheaves.
    \item Localizations of the above examples.
    \item The $(\infty,n)$-category of $(\infty,n)$-sheaves, i.e., $(\infty,n-1)$-category valued sheaves, on an $(\infty,1)$-site.
\end{itemize}
For this, we first study in \cref{subsec:cocompletenessmodel} the $(\infty,n)$-(co)completeness of $(\infty,n)$-categories arising as homotopy coherent nerves of certain $\MSThnsset$-enriched model categories. Then, in \cref{subsec:cocompleteness(pre)sheaves}, we prove that the above examples are (co)complete in the $(\infty,n)$-categorical sense.

Throughout this section, we refer to fibrant objects in $\pcatinj$ as \emph{$(\infty,n)$-categories} and study their  $(\infty,n)$-(co)completeness. In light of \cref{prop:limit in precat}, the same $(\infty,n)$-(co)\-complete\-ness results also hold for the corresponding fibrant replacement in $\CSSsThnsset$.

\subsection{(Co)completeness of $(\infty,n)$-categories arising as homotopy coherent nerves} \label{subsec:cocompletenessmodel}

We start by studying the (co)completeness of $(\infty,n)$-categories arising as homotopy coherent nerves of $\MSThnsset$-enriched model categories.

\begin{notation}
    Let $\cM$ be a $\MSThnsset$-enriched model category. We denote by $\cM^{\mathrm{cf}}$ its full $\Thnsset$-enriched subcategory of bifibrant objects. Note that this is a fibrant $\MSThnsset$-enriched category.
\end{notation}

In the next result, we call \emph{strict} the weighted $\Thnsset$-enriched limits where the universal property is given by an isomorphism in $\Thnsset$, rather than a weak equivalence in $\MSThnsset$. 

\begin{prop} \label{lem:strictweightedlimit}
    Let $\cM$ be a $\MSThnsset$-enriched model category, and $F\colon \cJ\to \cM^\mathrm{cf}$ and $W\colon \cJ\to \Thnsset$ be $\Thnsset$-enriched functors such that $W$ is cofibrant in $[\cJ,\MSThnsset]_\mathrm{proj}$. Then the strict $W$-weighted limit of $F$ exists and is fibrant in $\cM$.
\end{prop}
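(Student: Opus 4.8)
The plan is to realize $\{W,F\}$ as an enriched weighted limit and to deduce its fibrancy from the enriched model structure on $\cM$ via a cell decomposition of $W$.

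First I would recall that $\cM$, being a $\MSThnsset$-enriched model category, is a complete $\Thnsset$-enriched category that is cotensored over $\Thnsset$; writing $Y^K$ for the cotensor of $Y\in\cM$ by $K\in\Thnsset$, the strict $W$-weighted limit then exists as the enriched end $\{W,F\}=\int_{j\in\cJ}F(j)^{W(j)}$, assembled from conical products and equalizers together with cotensors. By construction it corepresents $\Hom_{[\cJ,\Thnsset]}(W,\Hom_\cM(-,F-))$, so it is the strict $W$-weighted limit of $F$; only its fibrancy requires an argument.

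To prove fibrancy I would use that the cotensor $(-)^{(-)}\colon\Thnsset^{\op}\times\cM\to\cM$ is a right Quillen bifunctor (the enriched model category axiom for $\cM$). Since $F$ takes values in bifibrant objects, each $F(j)$ is fibrant in $\cM$, so $F(j)^i\colon F(j)^L\to F(j)^K$ is a (trivial) fibration in $\cM$ whenever $i\colon K\hookrightarrow L$ is a (trivial) cofibration in $\MSThnsset$. Next I would use that $[\cJ,\MSThnsset]_\proj$ is cofibrantly generated with generating (trivial) cofibrations the maps $(\ev_j)_!(i)$, where $(\ev_j)_!\dashv\ev_j$ is the left adjoint to evaluation at $j\in\cJ$ and $i$ ranges over generating (trivial) cofibrations of $\MSThnsset$. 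The contravariant functor $\{-,F\}\colon[\cJ,\Thnsset]^{\op}\to\cM$ sends colimits to limits (being the opposite of a left adjoint, as the displayed corepresentability exhibits $W\mapsto\{W,F\}$ as left adjoint to $X\mapsto\Hom_\cM(X,F-)$ regarded as a functor $[\cJ,\Thnsset]\to\cM^{\op}$); it carries the initial object of $[\cJ,\Thnsset]$ (the constant functor at $\emptyset$) to the terminal object $*$ of $\cM$, and it carries $(\ev_j)_!(i)$ to $F(j)^i$ via the adjunction isomorphism $\{(\ev_j)_!(K),F\}\cong F(j)^K$. Since $W$ is projectively cofibrant, $\emptyset\to W$ is a retract of a transfinite composite of pushouts of generating cofibrations $(\ev_j)_!(i)$; applying $\{-,F\}$ turns this into a presentation of $\{W,F\}\to *$ as a retract of a sequential (transfinite) limit of pullbacks of the fibrations $F(j)^i$, which is therefore a fibration (fibrations being stable under pullback, transfinite limits of towers, and retracts). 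Hence $\{W,F\}$ is fibrant in $\cM$.

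The main obstacle is essentially the bookkeeping: pinning down the generating cofibrations of the projective model structure, checking that $\{-,F\}$ sends their pushouts and transfinite composites to pullbacks and transfinite towers in $\cM$, and keeping the op-directions straight. Conceptually this is the standard fact that projectively cofibrant weights compute homotopy weighted limits, so an alternative to reproving it by hand would be to cite the corresponding statement in the framework of \cite{ShulmanHomotopyLimits}.
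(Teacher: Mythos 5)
Your argument is correct and is precisely the standard proof that the paper delegates to its citation (the paper's entire proof is ``A proof can be easily adapted from [RVLimits, Proposition~4.1.5(i)]''): existence via the enriched end built from cotensors, and fibrancy by running a cellular presentation of the projectively cofibrant weight $W$ through the limit-preserving contravariant functor $\{-,F\}$, using the right-Quillen-bifunctor property of the cotensor together with closure of fibrations under pullback, inverse transfinite composition, and retracts. No gaps; this matches the intended argument.
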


\begin{proof}
    A proof can be easily adapted from \cite[Proposition 4.1.5(i)]{RVLimits}. 
\end{proof}

\begin{prop} \label{Mcfiscomplete}
    Let $\cM$ be a $\MSThnsset$-enriched model category, and let $F\colon \cJ\to \cM^\mathrm{cf}$ and $W\colon \cJ\to \Thnsset$ be $\Thnsset$-enriched functors such that $W$ is cofibrant in $[\cJ,\MSThnsset]_\mathrm{proj}$. Then the $W$-weighted $(\infty,n)$-limit of $F$ exists in $\cM^{\mathrm{cf}}$. 
\end{prop}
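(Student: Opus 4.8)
The plan is to obtain the $W$-weighted $(\infty,n)$-limit of $F$ by \emph{replacing} the underlying object of the strict $W$-weighted limit furnished by \cref{lem:strictweightedlimit} with a bifibrant one, and then transporting the universal cone along the replacement equivalence. Thus the whole argument reduces to a formal manipulation once \cref{lem:strictweightedlimit} is in hand.

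Concretely, I would first invoke \cref{lem:strictweightedlimit} to obtain the strict $W$-weighted limit $L$ of $F$, which is fibrant in $\cM$, together with its universal enriched cone $\mu\colon W\Rightarrow \Hom_\cM(L,F-)$; by the defining universal property of the strict weighted limit, the induced map $\mu^*_c\colon \Hom_\cM(c,L)\to \Hom_{[\cJ,\Thnsset]}(W,\Hom_\cM(c,F-))$ is an isomorphism of $\Thn$-spaces, natural in $c\in\cM$. Next I would choose a bifibrant object $\ell$ of $\cM$ (hence an object of $\cM^{\mathrm{cf}}$) together with a weak equivalence $h\colon\ell\to L$: take a cofibrant replacement $L_c\to L$ (a trivial fibration, $L_c$ cofibrant), then a fibrant replacement $L_c\hookrightarrow\ell$ (a trivial cofibration, $\ell$ fibrant), so that $\ell$ is also cofibrant; since $L$ is fibrant, the trivial cofibration $L_c\hookrightarrow\ell$ lifts against $L_c\to L$ to give $h\colon\ell\to L$, which is a weak equivalence by two-out-of-three. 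I would then define the cone on $\ell$ to be the composite $\lambda\colon W\Rightarrow \Hom_\cM(L,F-)\to \Hom_\cM(\ell,F-)=\Hom_{\cM^{\mathrm{cf}}}(\ell,F-)$ of $\mu$ with the map given by precomposition with $h$.

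It then remains to verify that $\lambda$ exhibits $\ell$ as the $W$-weighted $(\infty,n)$-limit of $F$, i.e.\ that $\lambda^*\colon \Hom_\cM(-,\ell)\to \Hom_{[\cJ,\Thnsset]}(W,\Hom_\cM(-,F))$ is a weak equivalence in $[(\cM^{\mathrm{cf}})^{\op},\MSThnsset]_\proj$; since weak equivalences there are objectwise, this amounts to showing that $\lambda^*_c$ is a weak equivalence in $\MSThnsset$ for every bifibrant $c$. Because $\lambda$ is $\mu$ precomposed with $h$, the same composition-of-cones operation shows that $\lambda^*_c$ factors as $\Hom_\cM(c,\ell)\xrightarrow{\,h\circ-\,}\Hom_\cM(c,L)\xrightarrow{\,\mu^*_c\,}\Hom_{[\cJ,\Thnsset]}(W,\Hom_\cM(c,F-))$, where $h\circ-$ is postcomposition with $h$ and $\mu^*_c$ is an isomorphism. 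For bifibrant, hence cofibrant, $c$, the enriched functor $\Hom_\cM(c,-)\colon\cM\to\Thnsset$ is right Quillen — its left adjoint, tensoring with $c$, is left Quillen by the pushout--product axiom of the $\MSThnsset$-enrichment of $\cM$ — so it preserves weak equivalences between fibrant objects; as $h\colon\ell\to L$ is a weak equivalence between the fibrant objects $\ell$ and $L$, the map $h\circ-$ is a weak equivalence in $\MSThnsset$. Composing with the isomorphism $\mu^*_c$ gives that $\lambda^*_c$ is a weak equivalence, as desired.

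The only step requiring genuine care is the bifibrant replacement: a cofibrant replacement of the fibrant object $L$ need not be fibrant, so one really does need the two-stage replacement together with the lifting argument in order to produce a \emph{single} comparison weak equivalence $h\colon\ell\to L$ \emph{between fibrant objects} — it is precisely this that allows Ken Brown's lemma to be applied to the right Quillen functors $\Hom_\cM(c,-)$. Everything else is formal, given \cref{lem:strictweightedlimit} and the standard properties of $\MSThnsset$-enriched model categories used (explicitly or implicitly) in the excerpt.
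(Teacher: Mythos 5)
Your proof is correct and takes essentially the same route as the paper's, which simply cites the adaptation of the first part of \cite[Proposition 6.2.3]{RVLimits}: form the strict $W$-weighted limit from \cref{lem:strictweightedlimit}, pass to a (bi)fibrant--cofibrant replacement, transport the universal cone along the comparison weak equivalence, and use that $\Hom_\cM(c,-)$ is right Quillen for cofibrant $c$ together with Ken Brown's lemma. One small correction: your two-stage replacement and lifting argument are unnecessary, since a cofibrant replacement $\widehat{\ell}\twoheadrightarrow L$ chosen as a trivial fibration onto the fibrant object $L$ is automatically fibrant (fibrations compose), so the usual one-step cofibrant replacement already yields a single weak equivalence between fibrant objects.
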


\begin{proof}
Adapting the argument from the first part of the proof of 
\cite[Proposition 6.2.3]{RVLimits}, we see that, if $\ell$ is a strict $W$-weighted limit of $F$, then the object $\widehat{\ell}$ given by a cofibrant replacement $\widehat{\ell}\to \ell$ in $\cM$ gives the desired $W$-weighted $(\infty,n)$-limit of $F$ in $\cM^\mathrm{cf}$.
\end{proof}

\begin{defn}
    Let $C$ be an $(\infty,n)$-category. We say that $C$ is \textbf{$(\infty,n)$-complete} (resp.~\textbf{$(\infty,n)$-cocomplete}) if it has all weighted $(\infty,n)$-limits (resp.~weighted $(\infty,n)$-colimits). 
\end{defn}

\begin{thm} \label{thm:htpynervearecomplete}
    Let $\cM$ be a $\MSThnsset$-enriched model category. Then the homotopy coherent nerve $\Nh(\cM^\mathrm{cf})$ is $(\infty,n)$-complete and $(\infty,n)$-cocomplete. 
\end{thm}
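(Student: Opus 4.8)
The plan is to reduce the statement for general weighted $(\infty,n)$-limits to the strict enriched limits that already exist in the $\MSThnsset$-enriched model category $\cM$, by combining the enriched existence results of \cref{lem:strictweightedlimit,Mcfiscomplete} with the comparison theorem \cref{EquivalenceOfLimitsWeighted}. First I would fix an object $J$ in $\pcatThn$, a map $p\colon A\to J$ in $\sThnsset$, and a $\Thnsset$-enriched functor $F\colon \Ch J\to \cM^{\mathrm{cf}}$; I would take $W\coloneqq\St_J(p)\colon \Ch J\to \Thnsset$, which by \cref{LKE-precomp,st-unst} is cofibrant in $[\Ch J,\MSThnsset]_\proj$ (being in the image of the left Quillen functor $\St_J$ applied to a cofibrant object, i.e.\ any monomorphism $p$). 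By \cref{Mcfiscomplete} there exists an object $\ell\in\cM^{\mathrm{cf}}$ which is the $W$-weighted $(\infty,n)$-limit of $F$ in the enriched sense, i.e.\ there is an enriched natural transformation $\lambda\colon W\Rightarrow \Hom_{\cM^{\mathrm{cf}}}(\ell,F-)$ inducing a weak equivalence $\lambda^*$ in $[{(\cM^{\mathrm{cf}})}^{\op},\MSThnsset]_\proj$. Then \cref{EquivalenceOfLimitsWeighted} applied to $\cC=\cM^{\mathrm{cf}}$ (which is fibrant in $\MSThncat$) immediately yields that $(\ell,\lambda^\flat)$ is the $p$-weighted $(\infty,n)$-limit of $F^\flat\colon J\to \Nh(\cM^{\mathrm{cf}})$ in the internal sense, which is exactly what is needed to conclude $(\infty,n)$-completeness of $\Nh(\cM^{\mathrm{cf}})$.

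For $(\infty,n)$-cocompleteness I would dualize: apply the completeness argument to the $\MSThnsset$-enriched model category $\cM^{\op}$, noting that $\cM^{\op}$ is again $\MSThnsset$-enriched since $\MSThnsset$ is cartesian (hence symmetric) monoidal, and that $(\cM^{\op})^{\mathrm{cf}}=(\cM^{\mathrm{cf}})^{\op}$. A weighted $(\infty,n)$-colimit in $\Nh(\cM^{\mathrm{cf}})$ is a weighted $(\infty,n)$-limit in $\Nh(\cM^{\mathrm{cf}})^{\op}$, and I would use the version of the comparison and existence statements with ``op'' removed and ``right'' replaced by ``left'' (recorded in the remarks following \cref{st-unst} and \cref{postcomp-pullback}, and the remark after \cref{def:neatslice}); alternatively one can phrase everything through $\Nh(\cM^{\op})^{\mathrm{cf}}\cong \Nh(\cM^{\mathrm{cf}})^{\op}$ and quote the completeness half verbatim. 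Either way the colimit case follows formally from the limit case.

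One bookkeeping point to address carefully is that the internal notion of $p$-weighted $(\infty,n)$-limit in \cref{def:weightedlimit} is stated for $C$ fibrant in $\CSSsThnsset$, whereas $\Nh(\cM^{\mathrm{cf}})$ is a priori only fibrant in $\pcatinj$; this is exactly the situation handled by \cref{prop:limit in precat} together with \cref{fibreplofprecat}, which shows that being a $p$-weighted limit in the sense of the fibrant replacement in $\CSSsThnsset$ is detected already in $\Nh(\cM^{\mathrm{cf}})$, so no loss occurs. I would also remark that $J$ ranges over all homotopically globular objects of $\sThnsset$; since every such $J$ admits a fibrant replacement $\widehat{J}$ in $\CSSsThnsset$, and by \cref{st-unst}, \cref{webtwdoubleright} it suffices (up to the Quillen equivalences $\Ch\dashv\Nh$ and $\St\dashv\Un$) to treat indexing shapes of the form $\Ch J$ with $J\in\pcatThn$, which is the generality in which \cref{EquivalenceOfLimitsWeighted} is stated; this reduction is already implicit in the discussion preceding \cref{def:weightedlimit} and I would simply cite it.

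The main obstacle is not conceptual but rather verifying the hypotheses cleanly: namely that $\St_J(p)$ really is cofibrant in the projective model structure (which follows from $\St_J$ being left Quillen and $p$ being a monomorphism, hence cofibrant in $\MSrightfib{J}$), that every diagram $F\colon\Ch J\to\cM^{\mathrm{cf}}$ and weight $W\colon\Ch J\to\Thnsset$ cofibrant in $[\Ch J,\MSThnsset]_\proj$ is of the form covered by \cref{EquivalenceOfLimitsWeighted} — i.e.\ that $W\cong\St_J(p)$ for some monomorphism $p$, which holds up to weak equivalence because $\St_J\dashv\Un_J$ is a Quillen equivalence and one may replace $W$ by $\St_J$ of a cofibrant replacement of $\Un_J W$ without changing the limit — and that the passage to $\cM^{\op}$ in the cocomplete case genuinely produces an $\MSThnsset$-enriched model category. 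None of these requires real work, so the proof is short: assemble \cref{lem:strictweightedlimit}, \cref{Mcfiscomplete}, \cref{EquivalenceOfLimitsWeighted}, and \cref{prop:limit in precat}, and dualize.
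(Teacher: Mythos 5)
Your proposal is correct and takes essentially the same route as the paper, whose proof is literally the one-line combination of \cref{Mcfiscomplete} (existence of enriched weighted limits in $\cM^{\mathrm{cf}}$) with the comparison theorem \cref{EquivalenceOfLimitsWeighted}, together with the implicit dualization for cocompleteness. The additional bookkeeping you supply (cofibrancy of $\St_J(p)$, the passage between $\pcatinj$ and $\CSSsThnsset$ via \cref{prop:limit in precat}, and the reduction to indexing shapes in $\pcatThn$) is all consistent with what the paper leaves tacit.
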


\begin{proof}
    This is obtained by combining \cref{EquivalenceOfLimitsWeighted,Mcfiscomplete}. 
\end{proof}

We can deduce (co)completeness of the $(\infty,n)$-categories of functors valued in $\Nh(\cM^{\mathrm{cf}})$.

\begin{thm} \label{catoffunctorsarecomplete}
    Let $\cM$ be a combinatorial $\MSThnsset$-enriched model category and $J$ be an object in $\pcatThn$. Let $\mathrm{Fun}(J,\Nh(\cM^\mathrm{cf}))$ denote the $(\infty,n)$-category of functors $J\to \Nh(\cM^\mathrm{cf})$. Then the $(\infty,n)$-category $\mathrm{Fun}(J,\Nh(\cM^\mathrm{cf}))$ is $(\infty,n)$-complete and $(\infty,n)$-cocomplete.
\end{thm}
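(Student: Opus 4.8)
The plan is to realize $\mathrm{Fun}(J,\Nh(\cM^{\mathrm{cf}}))$ as the homotopy coherent nerve of the bifibrant objects of a suitable $\MSThnsset$-enriched model category, and then simply invoke \cref{thm:htpynervearecomplete}. The natural candidate is the projective model structure on the enriched diagram category $[\Ch J, \cM]_\mathrm{proj}$: since $\cM$ is a combinatorial $\MSThnsset$-enriched model category and $\Ch J$ is a (small) $\Thnsset$-enriched category, this model structure exists by \cite[Theorem 4.4]{Moserinj} (or the analogue of \cite[Proposition A.3.3.2]{LurieHTT}) and is again a combinatorial $\MSThnsset$-enriched model category. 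First I would record that $[\Ch J,\cM]_\mathrm{proj}$ is an $\MSThnsset$-enriched model category, so that \cref{thm:htpynervearecomplete} applies to give that $\Nh\bigl(([\Ch J,\cM]_\mathrm{proj})^{\mathrm{cf}}\bigr)$ is $(\infty,n)$-complete and $(\infty,n)$-cocomplete.

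The second step is to identify this $(\infty,n)$-category with $\mathrm{Fun}(J,\Nh(\cM^\mathrm{cf}))$. Here I would use the Quillen equivalences recalled in the paper: $\Ch\dashv\Nh$ from \cref{nerve-cat}, together with the behaviour of these functors on diagram categories. Concretely, one wants a zig-zag of Quillen equivalences between $[\Ch J,\cM]_\mathrm{proj}$ and an appropriate model structure presenting $\mathrm{Fun}(J,\Nh(\cM^\mathrm{cf}))$ --- the latter being, by definition, the internal-hom $(\infty,n)$-category $\Nh(\cM^\mathrm{cf})^J$ in $\pcatinj$ (or a cofibrant-fibrant model thereof). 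The comparison of strictly enriched functor categories with internal functor objects under the categorification--nerve equivalence is exactly the kind of statement proved in \cite{MRR1}; alternatively, on underlying $(\infty,n)$-categories one has $\Nh\bigl(([\Ch J,\cM]_\mathrm{proj})^{\mathrm{cf}}\bigr)\simeq \mathrm{Fun}(\Ch J[W^{-1}], \cM^\mathrm{cf}[W^{-1}])\simeq \mathrm{Fun}(J,\Nh(\cM^\mathrm{cf}))$, using that $\Ch J$ presents the same $(\infty,n)$-category as $J$ and that $\mathrm{Fun}(-,-)$ is invariant under Dwyer--Kan equivalences in each variable. Since $(\infty,n)$-completeness and $(\infty,n)$-cocompleteness are properties of the underlying $(\infty,n)$-category (and are preserved by the equivalences of \cref{prop:limit in precat}), transporting along this equivalence yields the claim.

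The main obstacle is the bookkeeping in the second step: making precise that $\Nh$ applied to the bifibrant enriched functors of $[\Ch J,\cM]_\mathrm{proj}$ models the functor $(\infty,n)$-category $\mathrm{Fun}(J,\Nh(\cM^\mathrm{cf}))$ as defined in $\pcatinj$. This requires knowing how the categorification--nerve adjunction interacts with cotensors/internal homs by $J$, i.e.\ a ``$\Ch(J\otimes(-)) \simeq \Ch J \otimes \Ch(-)$''-type compatibility, or equivalently that $\Nh$ is (homotopically) compatible with the enriched diagram construction along $\Ch J$. If such a statement is available from \cite{MRR1} or \cite{Moserinj} it can be cited; otherwise one argues at the level of homotopy ($\infty$-)categories, where invariance of $\mathrm{Fun}$ under the Dwyer--Kan equivalence $\Ch J \xrightarrow{\simeq} (\text{fibrant model of } J)$ and under $\cM^\mathrm{cf}\xrightarrow{\simeq}\cM^\mathrm{cf}$ does the job. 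A final routine point is that one may need $\cM$ combinatorial (not merely $\MSThnsset$-enriched) precisely to guarantee that $[\Ch J,\cM]_\mathrm{proj}$ exists and is again $\MSThnsset$-enriched and combinatorial, so that \cref{thm:htpynervearecomplete} is applicable to it; this explains the extra hypothesis compared to \cref{thm:htpynervearecomplete}.
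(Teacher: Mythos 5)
Your proposal follows essentially the same route as the paper: pass to the projective model structure $[\Ch J,\cM]_\mathrm{proj}$, apply \cref{thm:htpynervearecomplete} to it, and identify $\Nh\bigl(([\Ch J,\cM]_\mathrm{proj})^{\mathrm{cf}}\bigr)$ with $\mathrm{Fun}(J,\Nh(\cM^\mathrm{cf}))$. The one step you flag as the main obstacle is resolved in the paper exactly along the lines of your second suggestion, by a corepresentability (Yoneda) argument in $\Ho(\pcatinj)$: a chain of natural bijections on hom-sets of homotopy categories, whose key link --- that $\Ho(\MSThncat)(\Ch K\times \Ch J,\cM^{\mathrm{cf}})\cong \Ho(\MSThncat)(\Ch K,([\Ch J,\cM]_\mathrm{proj})^\mathrm{cf})$ --- is supplied by \cite[Proposition~A.3.4.13]{LurieHTT}, so no new compatibility of $\Ch\dashv\Nh$ with internal homs needs to be proved.
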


\begin{proof}
    For every object $K\in \pcatThn$, we have the following natural isomorphisms: 
\begin{align*}
\Ho(\Pcat&(\MSThnsset)_\mathrm{inj})(K, \mathrm{Fun}(J,\Nh(\cM^\mathrm{cf}))) & \\
    & \cong \Ho(\Pcat(\MSThnsset)_\mathrm{inj})(K\times J, \Nh(\cM^\mathrm{cf}))  & 
 \Ho(-\times J)\dashv \Ho\mathrm{Fun}(J,-)\\
    & \cong \Ho(\MSThncat)(\Ch(K\times J),\cM^{\mathrm{cf}}) & \Ch\dashv \Nh \text{ Quillen pair} \\
    & \cong \Ho(\MSThncat)(\Ch K\times \Ch J,\cM^{\mathrm{cf}}) & \Ho(\Ch) \text{ equivalence so pres.~product} \\
    & \cong \Ho(\MSThncat)(\Ch K,([\Ch J,\cM]_\mathrm{proj})^\mathrm{cf}) & \text{\cite[Proposition~A.3.4.13]{LurieHTT}} \\
    & \cong \Ho(\pcatinj)(K,\Nh(([\Ch J,\cM]_\mathrm{proj})^\mathrm{cf})) & \Ch\dashv \Nh \text{ Quillen pair}
\end{align*}
Then, by the Yoneda lemma, the $(\infty,n)$-categories $\Nh(([\Ch J,\cM]_\mathrm{proj})^\mathrm{cf})$ and $\mathrm{Fun}(J,\Nh(\cM^\mathrm{cf}))$ are equivalent. Hence we conclude by \cref{prop:limit in precat,thm:htpynervearecomplete}. 
\end{proof}

We now study $(\infty,n)$-localizations of $\Nh(\cM^{\mathrm{cf}})$. For this, we first recall the definition of an enriched Bousfield localization from \cite[Definition 4.42]{barwick2010bousfield}.

\begin{defn}
    Let $\cM$ be a $\MSThnsset$-enriched model category and $S$ be a set of $1$-morphisms in $\cM^\mathrm{cf}$. Then an object $c\in \cM$ is \textbf{$S$-local} if it is fibrant in $\cM$ and, for every $1$-morphism $f\colon x\to y$ in $S$, the induced map 
    \[ f^*\colon \Hom_\cM(y,c)\to \Hom_\cM(x,c) \]
    is a weak equivalence in $\MSThnsset$.
\end{defn}

The following appears as \cite[Theorem 4.46]{barwick2010bousfield}.

\begin{thm} \label{Barwick}
    Let $\cM$ be a left proper and combinatorial $\MSThnsset$-enriched model category and $S$ be a set of $1$-morphisms in $\cM^{\mathrm{cf}}$. Then there is a model structure $L_S\cM$ on $\cM$, called the \emph{enriched left Bousfield localization of $\cM$ at $S$}, whose cofibrations are those of $\cM$ and fibrant objects are the $S$-local objects. 
\end{thm}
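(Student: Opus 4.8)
The statement is the $\MSThnsset$-enriched refinement of Jeff Smith's recognition theorem for combinatorial model categories, so the plan is to follow the standard template for left Bousfield localizations (Hirschhorn, Barwick, Lurie \cite[\textsection A.3.7]{LurieHTT}) while carrying the enrichment along throughout. Fix a functorial cofibrant–fibrant replacement in $\cM$ and, for objects $x,y\in\cM$, write $\mathbb{R}\Hom_\cM(x,y)$ for the resulting derived enriched mapping object; an object $c$ is then $S$-local exactly when it is fibrant in $\cM$ and $\mathbb{R}\Hom_\cM(y,c)\to\mathbb{R}\Hom_\cM(x,c)$ is a weak equivalence in $\MSThnsset$ for every $f\colon x\to y$ in $S$, which agrees with the definition given via $\Hom_\cM$ on bifibrant source and target. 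Declare a map $g$ of $\cM$ to be an \emph{$S$-local equivalence} if $\mathbb{R}\Hom_\cM(g,c)$ is a weak equivalence in $\MSThnsset$ for every $S$-local $c$, let $W$ be the class of such maps, and take $I$ to be the set of generating cofibrations of $\cM$. The class $W$ visibly satisfies two-out-of-three and is closed under retracts, and since $\cM$ is an $\MSThnsset$-enriched model category, the enriched pushout-product axiom forces $\mathbb{R}\Hom_\cM(-,c)$ to send ordinary weak equivalences to weak equivalences in $\MSThnsset$; hence every trivial fibration of $\cM$, i.e.\ every $I$-injective map, lies in $W$.

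The two genuinely substantial points are then: (i) $W$ is an accessible and accessibly embedded subcategory of the arrow category $\cM^{[1]}$; and (ii) the cofibrations of $\cM$ lying in $W$ are closed under pushout and transfinite composition and are generated by a set. For (i) one uses that both $\cM$ and $\MSThnsset$ are combinatorial: the full subcategory of $S$-local objects of $\cM$ is cut out by a set of conditions, each of the form ``$\mathbb{R}\Hom_\cM(y,-)\to\mathbb{R}\Hom_\cM(x,-)$ is a weak equivalence in $\MSThnsset$'', so it is accessible, and consequently the class of maps inverted by the associated accessible localization is again accessible. For (ii) one builds, out of the generating (trivial) cofibrations of $\cM$, the generating cofibrations of $\MSThnsset$, and a chosen bifibrant cofibrant model of $S$, a set $\Lambda_S$ of ``local horns'' via the enriched pushout-product, checks that a fibrant object of $\cM$ has the right lifting property against $\Lambda_S$ if and only if it is $S$-local, and deduces in the usual way that a cofibration is an $S$-local equivalence precisely when it lies in the weak saturation of $\Lambda_S$. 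Left properness of $\cM$ is used at this stage to control the behaviour of $S$-local equivalences under these pushouts (and ensures $L_S\cM$ is again left proper). Granting (i) and (ii), Jeff Smith's theorem produces a combinatorial model structure $L_S\cM$ whose cofibrations are exactly those of $\cM$ (by construction) and whose weak equivalences are the $S$-local equivalences.

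It remains to identify the fibrant objects of $L_S\cM$ with the $S$-local objects, which by the general principle that a model structure is determined by its cofibrations and fibrant objects also pins down $L_S\cM$ uniquely. One inclusion is built into $\Lambda_S$: the local horns detecting $S$-locality are trivial cofibrations of $L_S\cM$, so every $L_S\cM$-fibrant object is $S$-local. For the converse one proves the key lifting lemma that if $i\colon A\hookrightarrow B$ is a cofibration of $\cM$ and an $S$-local equivalence and $c$ is $S$-local, then $\Hom_\cM(B,c)\to\Hom_\cM(A,c)$ is a trivial fibration in $\MSThnsset$ — combining the enriched pushout-product axiom of $\cM$ with the definition of $W$ — so in particular $c$ admits lifts against $i$. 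Hence an $S$-local $c$ has the right lifting property against all trivial cofibrations of $L_S\cM$, i.e.\ is $L_S\cM$-fibrant. The same enriched pushout-product check with $\Lambda_S$ shows that $L_S\cM$ is again an $\MSThnsset$-enriched model category, and by construction the identity functor $\cM\to L_S\cM$ exhibits it as a left Bousfield localization.

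The main obstacle in this program is point (i): establishing that the $S$-local objects, and consequently the $S$-local equivalences, form accessible subcategories is the technical heart of the argument, and it is precisely what makes the ``combinatorial'' (rather than merely cofibrantly generated) hypothesis unavoidable; everything else is bookkeeping around Jeff Smith's theorem and the enriched pushout-product axiom.
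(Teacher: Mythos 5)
The paper does not actually prove this statement: it is imported wholesale as \cite[Theorem 4.46]{barwick2010bousfield}, with no internal argument to compare against. So the question is only whether your reconstruction matches the standard proof in the literature, and in outline it does.

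One remark on the route. Barwick's own proof does not run Smith's recognition theorem directly on the class $W$ of \emph{enriched} $S$-local equivalences. Instead he forms the set $\Lambda(S)$ of pushout-products of (cofibrant models of) the maps in $S$ with the generating cofibrations of the enriching category, takes the \emph{ordinary} left Bousfield localization of $\cM$ at the set $\Lambda(S)$ --- which exists by the unenriched existence theorem for left proper combinatorial model categories --- and then uses the pushout-product/hom adjunction to identify the $\Lambda(S)$-local objects with the enriched $S$-local objects. This sidesteps your point (i): the accessibility of the class of local equivalences is the one already packaged in the unenriched theorem for localization at a \emph{set} of maps, rather than something to be established afresh for the class $W$ defined by the enriched locality condition. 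In your version the sentence ``the $S$-local objects are cut out by a set of conditions, so the subcategory is accessible, and consequently $W$ is accessible'' is precisely the step that carries all the technical weight (it needs an argument à la Lurie's Proposition A.3.7 / Dugger, via an accessible fibrant-replacement functor and the fact that weak equivalences of $\MSThnsset$ form an accessible subcategory of its arrow category); it is not automatic from the phrasing. If you instead define $L_S\cM$ as the ordinary localization at $\Lambda(S)$, the remaining work is exactly the adjunction argument you already sketch in the last paragraph: a fibrant object has the right lifting property against $\Lambda(S)$-horns if and only if the induced maps on derived mapping objects are trivial fibrations in $\MSThnsset$, i.e.\ if and only if it is $S$-local in the enriched sense. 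With that substitution your outline is the standard proof.
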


We now introduce $(\infty,n)$-localizations of $(\infty,n)$-categories. 

\begin{notation} \label{notn:Mapfc}
Let $C$ be an $(\infty,n)$-category and $f\colon x\to y$ be a $1$-morphism in $C$, i.e., an element $f\in C_{1,[0],0}$. Consider the following (homotopy) pullbacks in $\MSThnsset$
\begin{tz}
        \node[](1) {$\Map_C(f,c)$}; 
        \node[right of=1,xshift=1.2cm](2) {$C_2$}; 
        \node[below of=1](3) {$\Delta[0]$}; 
        \node[below of=2](4) {$C_1\times C_0$}; 

        \draw[->] (1) to  (2); 
        \draw[->>] (1) to (3); 
        \draw[->>] (2) to node[right,la]{$\langle 0,1\rangle^*\times \langle 2\rangle^*$} (4); 
        \draw[->] (3) to node[below,la]{$(f,c)$} (4);
        \pullback{1};

        \node[right of=2,xshift=2cm](1) {$\Map_C(x,c)$}; 
        \node[right of=1,xshift=1.2cm](2) {$C_1$}; 
        \node[below of=1](3) {$\Delta[0]$}; 
        \node[below of=2](4) {$C_0\times C_0$}; 

        \draw[->] (1) to  (2); 
        \draw[->>] (1) to (3); 
        \draw[->>] (2) to node[right,la]{$\langle 0\rangle^*\times \langle 1\rangle^*$} (4); 
        \draw[->] (3) to node[below,la]{$(x,c)$} (4);
        \pullback{1};
    \end{tz}
    Then the maps $\langle 0,2\rangle^*\colon C_2\to C_1$ and $\langle 0\rangle^*\times \id_{C_0}\colon C_1\times C_0\to C_0\times C_0$ (resp.~$\langle 1,2\rangle^*\colon C_2\to C_1$ and $\langle 1\rangle^*\times \id_{C_0}\colon C_1\times C_0\to C_0\times C_0$) induce a map in $\Thnsset$
    \[ \Map_C(f,c)\to \Map_C(x,c) \quad \text{(resp. }\Map_C(f,c)\to \Map_C(y,c)\text{)}. \]
    \end{notation}

    \begin{lemma} \label{rmk:Mapfcvsyc}
        Let $C$ be an $(\infty,n)$-category and $f\colon x\to y$ be a $1$-morphism in $C$, i.e., an element $f\in C_{1,[0],0}$. Then the induced map from \cref{notn:Mapfc}
        \[ \Map_C(f,c)\to \Map_C(y,c) \]
        is a weak equivalence in $\MSThnsset$.
    \end{lemma}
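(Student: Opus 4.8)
The statement asserts that in the pullback defining $\Map_C(f,c)$, the leg $\Map_C(f,c)\to \Map_C(y,c)$ is a weak equivalence in $\MSThnsset$. The plan is to recognize this as a retract-of-trivial-fibration (equivalently, invertibility in the mapping $\Thn$-space) statement coming from the fact that, in a complete Segal object $C$, the inclusion of ``degenerate $2$-cells at $y$'' is a weak equivalence. Concretely, I would proceed as follows.

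First I would unwind the definitions: $\Map_C(f,c)$ is the pullback of $C_2 \xrightarrow{\langle 0,1\rangle^*\times \langle 2\rangle^*} C_1\times C_0 \xleftarrow{(f,c)} \Delta[0]$, while $\Map_C(y,c)$ is the pullback of $C_1 \xrightarrow{\langle 0\rangle^*\times\langle 1\rangle^*} C_0\times C_0 \xleftarrow{(y,c)} \Delta[0]$, and the comparison map is induced by $\langle 1,2\rangle^*\colon C_2\to C_1$ together with $\langle 1\rangle^*\times\id_{C_0}$. Since $C$ is fibrant in $\CSSsThnsset$ (hence in $\SegsThnsset$), the Segal map $C_2\xrightarrow{\simeq} C_1\times_{C_0} C_1$ is a weak equivalence in $\MSThnsset$, and all the face maps $C_k\to C_{k-1}$ used here are fibrations (this is part of fibrancy in $\injsThnsset$). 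So the two pullbacks above are homotopy pullbacks. Then I would identify, via the Segal condition, $\Map_C(f,c)$ up to weak equivalence with the homotopy pullback of $C_1\times_{C_0} C_1$ over $C_1\times C_0$ at $(f,c)$, i.e.\ with the space of composable pairs whose first leg is $f$ and whose last vertex is $c$; the map to $\Map_C(y,c)$ picks out the second leg, which is a morphism $y\to c$. The point is that fixing the first leg to be $f\colon x\to y$ forces the middle vertex to be $y$, so this is precisely the space of morphisms $y\to c$, with the comparison map the identity up to the equivalences already invoked.

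Making this last step precise is the heart of the argument: I would build it as a diagram of (homotopy) pullbacks. Using the Segal equivalence $C_2\simeq C_1\times_{C_0}C_1$ (where the two copies of $C_1$ map to $C_0$ via $\langle 1\rangle^*$ and $\langle 0\rangle^*$ respectively, glued over the middle vertex), one sees that $\langle 0,1\rangle^*$ corresponds to the first projection and $\langle 1,2\rangle^*$ to the second. Pulling back along $f\colon \Delta[0]\to C_1$ in the first coordinate gives a homotopy pullback $P$ with $P\xrightarrow{\simeq}\{f\}\times_{C_0}C_1$, and since $\langle 1\rangle^*(f)=y$, we get $P\simeq \{y\}\times_{C_0}C_1 = \Map_C(y)$, the space of morphisms out of $y$; then further pulling back the target along $c\colon\Delta[0]\to C_0$ yields $\Map_C(f,c)\simeq \Map_C(y,c)$, compatibly with the structure map of the statement. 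So I would assemble a commuting grid of pullback squares, note that each square is a homotopy pullback because the relevant maps are fibrations (cartesian closedness of $\SegsThnsset$ and fibrancy of $C$), and conclude the comparison map is a weak equivalence in $\MSThnsset$ by the pasting law and $2$-out-of-$3$.

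The main obstacle is the bookkeeping with the simplicial operators $\langle i\rangle$, $\langle i,j\rangle$ and checking that the identifications of $\langle 0,1\rangle^*$, $\langle 1,2\rangle^*$, $\langle 0,2\rangle^*$ with the legs/composite of the Segal decomposition are the correct ones, and that the square in \cref{notn:Mapfc} defining $\Map_C(y,c)$ really matches the outer square of the grid — i.e.\ that the map labeled $\langle 1\rangle^*\times\id_{C_0}$ in the statement is the one that appears after pasting. This is routine but must be done carefully, exactly as in the classical $(\infty,1)$-case where $\Map(f,c)\to\Map(y,c)$ is a trivial fibration because composing with a fixed morphism on one side is an equivalence. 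No genuinely new input beyond the Segal condition and the elementary homotopy-pullback calculus is needed.
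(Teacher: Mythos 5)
Your proposal is correct and takes essentially the same route as the paper: both replace $C_2$ by $C_1\times_{C_0}C_1$ via the Segal equivalence, compatibly with the projections $\langle 0,1\rangle^*\times\langle 2\rangle^*$ and $\id_{C_1}\times\langle 1\rangle^*$ to $C_1\times C_0$, and then identify the fiber over $(f,c)$ with $\Map_C(y,c)$ by pasting pullback squares, using that the relevant maps are fibrations so that passing to fibers preserves the weak equivalence. The only cosmetic difference is that you phrase everything in terms of homotopy pullbacks, while the paper uses strict pullbacks plus the observation that a weak equivalence of fibrations over $C_1\times C_0$ induces weak equivalences on fibers.
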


    \begin{proof}
    First consider the following commutative diagram in $\MSThnsset$
    \begin{tz}
        \node[](1) {$\Map_C(f,c)$}; 
        \node[right of=1,xshift=2cm](2) {$C_1\times_{C_0} C_1$};
        \node[right of=2,xshift=1.4cm](2') {$C_1$};
        \node[below of=1](3) {$\Delta[0]$}; 
        \node[below of=2](4) {$C_1\times C_0$}; 
        \node[below of=2'](4') {$C_0\times C_0$}; 

        \draw[->] (1) to  (2); 
        \draw[->>] (1) to (3); 
        \draw[->>] (2) to node[left,la]{$\id_{C_1}\times \langle 1\rangle^*$} (4);
        \draw[->>] (2') to node[right,la]{$\langle 0\rangle^*\times \langle 1\rangle^*$} (4');
        \draw[->] (3) to node[below,la]{$(f,c)$} (4);
        \draw[->](2) to node[above,la]{$\mathrm{pr}_2$} (2'); 
        \draw[->](4) to node[below,la]{$\langle 1\rangle^*\times \id_{C_0}$} (4');
        \pullback{2};
    \end{tz}
    Since the right-hand and outer squares are pullbacks, then by the cancellation property of pullbacks, so is the left-hand square. Moreover, we have a commutative diagram in $\MSThnsset$,
    \begin{tz}
        \node[](1) {$C_2$}; 
        \node[below of=1,yshift=.3cm,xshift=1.5cm](2) {$C_1\times C_0$}; 
        \node[above of=2,yshift=-.3cm,xshift=1.5cm](3) {$C_1\times_{C_0} C_1$}; 
        \draw[->>](1) to node[left,la,yshift=-5pt]{$\langle 0,1\rangle^*\times \langle 2\rangle^*$} (2);
        \draw[->>](3) to node[right,la,yshift=-5pt]{$\id_{C_1}\times \langle 1\rangle^*$} (2);
        \draw[->](1) to node[above,la]{$\simeq$} (3);
    \end{tz}
    where the top map is the Segal map. Hence the induced map between fibers at $(f,c)$
    \[ \Map_C(f,c)\to \Map_C(y,c) \]
    is also a weak equivalence in $\MSThnsset$, as desired.
    \end{proof}

\begin{defn}
    Let $C$ be an $(\infty,n)$-category and $S$ a set of $1$-morphisms in $C$, i.e., $S\subseteq C_{1,[0],0}$. Then an object $c\in C$ is \textbf{$S$-local} if, for every $1$-morphism $f\colon x \to y$ in $S$, the induced map from \cref{notn:Mapfc} \[ \Map_C(f,c)\to \Map_C(x,c) \]
    is a weak equivalence in $\MSThnsset$.
\end{defn}

In particular, the two definitions of local objects coincide for $\cM$ and $\Nh(\cM^\mathrm{cf})$.

\begin{lemma} \label{lem:compareSlocal}
    Let $\cM$ be a $\MSThnsset$-enriched model category and $f\colon x\to y$ be a $1$-morphism in $\cM^\mathrm{cf}$ (and equivalently $f\in \Nh(\cM^\mathrm{cf})_{1,[0],0}$), and let $c\in \cM^{\mathrm{cf}}$ be an object. Then the map 
    \[ f^*\colon \Hom_\cM(y,c)\to \Hom_\cM(x,c) \]
    is a weak equivalence in $\MSThnsset$ if and only if the map 
    \[ \Map_{\Nh(\cM^\mathrm{cf})}(f,c)\to \Map_{\Nh(\cM^\mathrm{cf})}(x,c) \]
    is a weak equivalence in $\MSThnsset$.
\end{lemma}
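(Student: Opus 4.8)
The plan is to reduce the lemma to the fact that the homotopy coherent categorification--nerve of \cref{nerve-cat} identifies the enriched hom $\Thn$-spaces of a fibrant $\MSThnsset$-enriched category with the mapping $\Thn$-spaces of its nerve, compatibly with composition. Set $C\coloneqq \Nh(\cM^{\mathrm{cf}})$, and use the names $x,y,c$ both for the objects of $\cM^{\mathrm{cf}}$ and for the corresponding elements of $C_{0,[0],0}$, and $f$ for the corresponding element of $C_{1,[0],0}$. Since $\cM^{\mathrm{cf}}$ is fibrant in $\MSThncat$, the $\Thn$-spaces $\Hom_\cM(x,c)$ and $\Hom_\cM(y,c)$ are fibrant in $\MSThnsset$; since $C$ is fibrant in $\pcatinj$, the $\Thn$-spaces $\Map_C(f,c)$, $\Map_C(x,c)$, $\Map_C(y,c)$ are fibrant in $\MSThnsset$ and the squares of \cref{notn:Mapfc} are homotopy pullbacks. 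Consequently a map between any two of these $\Thn$-spaces is a weak equivalence in $\MSThnsset$ if and only if it is an isomorphism in $\Ho(\MSThnsset)$.

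The comparison input I would use is: for every fibrant $\MSThnsset$-enriched category $\cC$ and all objects $a,b$ there is a natural weak equivalence $\Map_{\Nh\cC}(a,b)\xrightarrow{\simeq}\Hom_\cC(a,b)$ in $\MSThnsset$, and these assemble, together with the identity on objects, into an equivalence between the $\Ho(\MSThnsset)$-enriched homotopy category of $\Nh\cC$ and that of $\cC$. This follows from the analysis of $\Ch\dashv\Nh$ in \cite{MRR1}; alternatively it can be extracted from \cref{nerve-cat}, using that the derived counit $\varepsilon_\cC\colon \Ch\Nh\cC\to\cC$ is a weak equivalence in $\MSThncat$ (cf.\ \cref{enrichedst-unst}), hence a Dwyer-Kan equivalence of enriched categories, together with the fact that $\Ch$ computes the mapping $\Thn$-spaces of fibrant objects of $\pcatinj$. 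The point is that, under these identifications, $f\in C_{1,[0],0}$ corresponds to the $1$-morphism $f$ of $\cM^{\mathrm{cf}}$ and precomposition by $f$ on mapping $\Thn$-spaces in $C$ corresponds, in $\Ho(\MSThnsset)$, to $f^*\colon \Hom_\cM(y,c)\to\Hom_\cM(x,c)$.

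Granting this, I would finish as follows. By \cref{rmk:Mapfcvsyc} the map $\Map_C(f,c)\to\Map_C(y,c)$ is a weak equivalence, and unwinding \cref{notn:Mapfc} the span $\Map_C(y,c)\xleftarrow{\simeq}\Map_C(f,c)\to\Map_C(x,c)$ exhibits precomposition by $f$ as a morphism $\Map_C(y,c)\to\Map_C(x,c)$ of $\Ho(\MSThnsset)$. Hence $\Map_C(f,c)\to\Map_C(x,c)$ is a weak equivalence in $\MSThnsset$ if and only if this morphism is an isomorphism in $\Ho(\MSThnsset)$. By the previous paragraph this morphism is identified, through the isomorphisms $\Map_C(y,c)\cong\Hom_\cM(y,c)$ and $\Map_C(x,c)\cong\Hom_\cM(x,c)$ of $\Ho(\MSThnsset)$, with $f^*\colon \Hom_\cM(y,c)\to\Hom_\cM(x,c)$, which is a weak equivalence in $\MSThnsset$ if and only if it is an isomorphism in $\Ho(\MSThnsset)$. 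The two conditions in the lemma therefore coincide.

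The main obstacle is the comparison input of the second paragraph: extracting from the construction of $\Ch\dashv\Nh$ in \cite{MRR1} that $\Nh$ carries enriched hom $\Thn$-spaces to mapping $\Thn$-spaces up to a natural weak equivalence compatible with composition, so that precomposition by $f$ does transport to $f^*$. Everything else is a formal two-out-of-three manipulation in $\Ho(\MSThnsset)$, relying only on \cref{rmk:Mapfcvsyc} and the fibrancy of the $\Thn$-spaces involved.
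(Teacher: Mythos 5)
Your proposal is correct and follows essentially the same route as the paper: identify the mapping $\Thn$-spaces of $\Nh(\cM^{\mathrm{cf}})$ with the enriched hom $\Thn$-spaces, use \cref{rmk:Mapfcvsyc} to replace $\Map_{\Nh(\cM^{\mathrm{cf}})}(f,c)\to\Map_{\Nh(\cM^{\mathrm{cf}})}(y,c)$ by a weak equivalence, and conclude by two-out-of-three. The only real difference is that the comparison input you flag as the main obstacle is available in much stronger form: \cite[Lemma 3.5.1]{MRR1} gives an isomorphism $\Hom_\cM(x,c)\cong\Map_{\Nh(\cM^{\mathrm{cf}})}(x,c)$ on the nose in $\Thnsset$, so $f^*$ transports strictly and all that remains is the homotopy-commutativity of the resulting triangle, which the paper likewise asserts rather than verifies in detail.
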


\begin{proof}
    Using \cite[Lemma 3.5.1]{MRR1}, for all objects $x,c\in \cM^\mathrm{cf}$, there are isomorphisms in $\Thnsset$
    \[ \Hom_\cM(x,c)\cong \Hom_{\cM^\mathrm{cf}}(x,c)\cong \Map_{\Nh(\cM^\mathrm{cf})}(x,c). \]
    Hence the map $f^*\colon \Hom_\cM(y,c)\to \Hom_\cM(x,c)$ induces an isomorphic map in $\Thnsset$
    \[ f^*\colon \Map_{\Nh(\cM^\mathrm{cf})}(y,c)\to \Map_{\Nh(\cM^\mathrm{cf})}(x,c) \]
    and one is a weak equivalence in $\MSThnsset$ if and only if the other is. Next, note that we have a diagram in $\MSThnsset$ that commutes up to homotopy
    \begin{tz}
        \node[](1) {$\Map_{\Nh(\cM^\mathrm{cf})}(f,c)$}; 
        \node[below of=1,yshift=.3cm,xshift=-2.3cm](2) {$\Map_{\Nh(\cM^\mathrm{cf})}(y,c)$}; 
        \node[below of=1,yshift=.3cm,xshift=2.3cm](3) {$\Map_{\Nh(\cM^\mathrm{cf})}(x,c)$}; 

        \draw[->](1) to node[above,la]{$\simeq$} (2);
        \draw[->](1) to (3); 
        \draw[->](2) to node[below,la]{$f^*$} (3);
    \end{tz}
    where the left-hand map is a weak equivalence by \cref{rmk:Mapfcvsyc}. Hence, we conclude by $2$-out-of-$3$ that the map $f^*$ is a weak equivalence in $\MSThnsset$ if and only if the map \[ \Map_{\Nh(\cM^\mathrm{cf})}(f,c)\to \Map_{\Nh(\cM^\mathrm{cf})}(x,c) \] is a weak equivalence in $\MSThnsset$.    
\end{proof}

This allows us to deduce (co)completeness of the $(\infty,n)$-localizations of $\Nh(\cM^\mathrm{cf})$. 

\begin{thm} \label{catoflocaliscomplete}
    Let $\cM$ be a left proper and combinatorial $\MSThnsset$-enriched model category and let $S$ be a set of $1$-morphisms in $\cM^\mathrm{cf}$ (and equivalently $S\subseteq \Nh(\cM^\mathrm{cf})_{1,[0],0}$).
    Let $\Nh(\cM^\mathrm{cf})^{S\text{-}\mathrm{loc}}$ denote the full $(\infty,n)$-subcategory of $\Nh(\cM^\mathrm{cf})$ consisting of the $S$-local objects. Then the $(\infty,n)$-category $\Nh(\cM^\mathrm{cf})^{S\text{-}\mathrm{loc}}$ is $(\infty,n)$-complete and $(\infty,n)$-cocomplete.
\end{thm}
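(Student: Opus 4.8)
The plan is to show that $\Nh(\cM^\mathrm{cf})^{S\text{-}\mathrm{loc}}$ arises as the homotopy coherent nerve of a suitable $\MSThnsset$-enriched model category, namely the enriched left Bousfield localization $L_S\cM$ from \cref{Barwick}, and then invoke \cref{thm:htpynervearecomplete}. The key observation is that, since $L_S\cM$ has the same cofibrations as $\cM$ and its fibrant objects are precisely the $S$-local objects of $\cM$, its bifibrant objects are exactly the cofibrant objects of $\cM$ that are $S$-local; moreover, the $\Thnsset$-enrichment is unchanged, so $\Hom_{L_S\cM}(x,c)=\Hom_\cM(x,c)$ for all such objects. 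First I would record that $L_S\cM$ is again a $\MSThnsset$-enriched model category---this is part of the content of \cite[Theorem 4.46]{barwick2010bousfield} (=\cref{Barwick}), since the enrichment axiom is preserved under enriched Bousfield localization---so that \cref{thm:htpynervearecomplete} applies to it and yields that $\Nh((L_S\cM)^\mathrm{cf})$ is $(\infty,n)$-complete and $(\infty,n)$-cocomplete.

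The remaining step is to identify $\Nh((L_S\cM)^\mathrm{cf})$ with the full $(\infty,n)$-subcategory $\Nh(\cM^\mathrm{cf})^{S\text{-}\mathrm{loc}}$ of $\Nh(\cM^\mathrm{cf})$. On objects this is immediate: an object is bifibrant in $L_S\cM$ iff it is cofibrant in $\cM$ and $S$-local, and by \cref{lem:compareSlocal} being $S$-local in $\cM$ is equivalent to being $S$-local as an object of $\Nh(\cM^\mathrm{cf})$ (in the sense of the $(\infty,n)$-categorical definition via $\Map$). On hom $\Thn$-spaces, using \cite[Lemma 3.5.1]{MRR1} as in the proof of \cref{lem:compareSlocal}, we have isomorphisms $\Hom_{L_S\cM}(x,c)\cong\Hom_\cM(x,c)\cong\Map_{\Nh(\cM^\mathrm{cf})}(x,c)$ for bifibrant $x,c$, so the inclusion $\Nh((L_S\cM)^\mathrm{cf})\hookrightarrow \Nh(\cM^\mathrm{cf})$ is fully faithful with image exactly the $S$-local objects; hence it exhibits $\Nh((L_S\cM)^\mathrm{cf})$ as (equivalent to) $\Nh(\cM^\mathrm{cf})^{S\text{-}\mathrm{loc}}$. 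Since $(\infty,n)$-(co)completeness is invariant under equivalence of $(\infty,n)$-categories (as it is phrased entirely in terms of weighted $(\infty,n)$-(co)limits, which are detected by the mapping $\Thn$-spaces and the homotopy theory), we conclude.

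The main obstacle---really the only non-formal point---is checking that the enriched left Bousfield localization $L_S\cM$ remains $\MSThnsset$-\emph{enriched} as a model category (not merely tensored/cotensored), so that \cref{thm:htpynervearecomplete} is applicable; this is where left properness and combinatoriality of $\cM$ enter, via \cite{barwick2010bousfield}. A secondary point to be careful about is that $\Nh(\cM^\mathrm{cf})^{S\text{-}\mathrm{loc}}$ is genuinely an $(\infty,n)$-category (i.e.\ fibrant in $\pcatinj$): this follows because it is a full subcategory of the $(\infty,n)$-category $\Nh(\cM^\mathrm{cf})$ determined by a property of objects, equivalently because it is isomorphic to $\Nh((L_S\cM)^\mathrm{cf})$, which is fibrant in $\pcatinj$ by \cref{nerve-cat}. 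Everything else is bookkeeping with the already-established comparison results.
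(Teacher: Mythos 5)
Your proposal is correct and follows essentially the same route as the paper: form the enriched left Bousfield localization $L_S\cM$ via \cref{Barwick}, identify $\Nh((L_S\cM)^\mathrm{cf})$ with $\Nh(\cM^\mathrm{cf})^{S\text{-}\mathrm{loc}}$ using \cref{lem:compareSlocal}, and conclude from \cref{thm:htpynervearecomplete}. The only cosmetic difference is that where you informally assert invariance of $(\infty,n)$-(co)completeness under equivalence, the paper cites \cref{prop:limit in precat} to transfer terminal cones along the resulting Dwyer--Kan equivalence.
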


\begin{proof}
    Let $L_S\cM$ denote the enriched left Bousfield localization of $\cM$ at $S$ from \cref{Barwick}. By \cref{lem:compareSlocal}, the objects of the homotopy coherent nerve $\Nh((L_S\cM)^\mathrm{cf})$ and the objects of $\Nh(\cM^\mathrm{cf})^{S\text{-}\mathrm{loc}}$ are given by objects in $\Nh((\cM)^\mathrm{cf})$ which satisfy the same universal property, meaning $\Nh((L_S\cM)^\mathrm{cf})$ and $\Nh(\cM^\mathrm{cf})^{S\text{-}\mathrm{loc}}$ are equivalent $(\infty,n)$-categories. Hence we conclude by \cref{prop:limit in precat,thm:htpynervearecomplete}.
\end{proof}

\subsection{(Co)completeness of \texorpdfstring{$(\infty,n)$}{(infinity,n)}-(pre)sheaves} \label{subsec:cocompleteness(pre)sheaves}

We now apply the results of the preceding section to the $\MSThnsset$-enriched model structure $\MSThnsset$ for $(\infty,n-1)$-categories.

\begin{notation}
   We denote by $\cat_{(\infty,n-1)}$ the $(\infty,n)$-category of $(\infty,n-1)$-categories. Recall that it can be modeled by the $(\infty,n)$-category $\Nh((\MSThnsset)^\mathrm{cf})$. 
\end{notation}

By taking $\cM=\MSThnsset$ in \cref{thm:htpynervearecomplete}, we then obtain the following result.

\begin{cor}
    The $(\infty,n)$-category $\cat_{(\infty,n-1)}$ of $(\infty,n-1)$-categories is $(\infty,n)$-complete and $(\infty,n)$-cocomplete.
\end{cor}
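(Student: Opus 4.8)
The plan is to deduce this corollary directly from \cref{thm:htpynervearecomplete} by taking $\cM = \MSThnsset$. First I would verify that $\MSThnsset$ qualifies as a $\MSThnsset$-enriched model category: this is exactly the statement recalled at the beginning of \cref{enrichedmodel}, namely that $\MSThnsset$ is cartesian closed by \cite[Theorem 8.1]{RezkTheta}, hence excellent in the sense of \cite[Definition A.3.2.16]{LurieHTT}, and in particular self-enriched as a model category. So the hypothesis of \cref{thm:htpynervearecomplete} is satisfied with $\cM = \MSThnsset$.

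Next I would identify $\Nh((\MSThnsset)^{\mathrm{cf}})$ with $\cat_{(\infty,n-1)}$. Since every object of $\MSThnsset$ is cofibrant (the cofibrations are the monomorphisms, and $\emptyset \hookrightarrow X$ is always one), the bifibrant objects $(\MSThnsset)^{\mathrm{cf}}$ are precisely the fibrant objects, i.e.\ the complete Segal $\Thn$-spaces, which model $(\infty,n-1)$-categories. Thus $\Nh((\MSThnsset)^{\mathrm{cf}})$ is by definition the $(\infty,n)$-category $\cat_{(\infty,n-1)}$, as recorded in the preceding \cref{notn:catinfty}-style notation.

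Finally, \cref{thm:htpynervearecomplete} asserts that $\Nh((\cM)^{\mathrm{cf}})$ is $(\infty,n)$-complete and $(\infty,n)$-cocomplete for any $\MSThnsset$-enriched model category $\cM$; applying this with $\cM = \MSThnsset$ gives that $\cat_{(\infty,n-1)}$ has all weighted $(\infty,n)$-limits and all weighted $(\infty,n)$-colimits, which is the claim.

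There is essentially no obstacle here: the corollary is a pure specialization. The only point requiring a moment's care is the bookkeeping that $\MSThnsset$ is literally a model category enriched over itself (as opposed to merely cartesian closed), but this is precisely the content of being excellent, which is recalled in \cref{enrichedmodel}; the real work has already been done in \cref{thm:htpynervearecomplete} and, upstream of it, in \cref{EquivalenceOfLimitsWeighted} and \cref{Mcfiscomplete}.
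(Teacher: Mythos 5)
Your proposal is correct and matches the paper's argument exactly: the paper proves this corollary by taking $\cM=\MSThnsset$ in \cref{thm:htpynervearecomplete}, using the notation that $\cat_{(\infty,n-1)}$ is modeled by $\Nh((\MSThnsset)^{\mathrm{cf}})$. Your additional remarks (cartesian closedness/excellence of $\MSThnsset$ and the identification of bifibrant objects with complete Segal $\Thn$-spaces) are just the bookkeeping the paper leaves implicit.
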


We now consider the $(\infty,n)$-category $\mathrm{Fun}(J^{\op},\cat_{(\infty,n-1)})$ of \emph{$(\infty,n)$-presheaves} over an $(\infty,n)$-category $J$. By taking $\cM=\MSThnsset$ in \cref{catoffunctorsarecomplete}, we obtain the following result. 

\begin{cor}
    The $(\infty,n)$-category $\mathrm{Fun}(J^{\op},\cat_{(\infty,n-1)})$ of $(\infty,n)$-presheaves over an $(\infty,n)$-category $J$ is $(\infty,n)$-complete and $(\infty,n)$-cocomplete.
\end{cor}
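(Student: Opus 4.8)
The plan is to deduce this corollary directly from \cref{catoffunctorsarecomplete}, specialized to $\cM=\MSThnsset$ and to the indexing object $J^{\op}$. First I would check that $\MSThnsset$ meets the hypotheses of \cref{catoffunctorsarecomplete}, namely that it is a combinatorial $\MSThnsset$-enriched model category. For combinatoriality: as recalled in \cref{enrichedmodel}, $\MSThnsset$ is a left Bousfield localization at a set of maps of the injective model structure $\injThnspace$ on $\Thn$-presheaves valued in $\MSspace$, which is combinatorial, and a left Bousfield localization of a combinatorial model structure at a set of maps is again combinatorial. For the self-enrichment: $\MSThnsset$ is cartesian closed by \cite[Theorem 8.1]{RezkTheta} (the input that makes it excellent, as noted in \cref{enrichedmodel}), hence a monoidal model category under its cartesian product and so canonically an $\MSThnsset$-enriched model category.

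Next I would observe that if $J\in\pcatThn$ then its opposite $J^{\op}$, obtained by reversing the simplicial direction, is again a simplicial object in $\Thnsset$ with discrete object of objects, so $J^{\op}\in\pcatThn$. Applying \cref{catoffunctorsarecomplete} with $\cM=\MSThnsset$ and with $J^{\op}$ in the role of the indexing object then shows that the $(\infty,n)$-category $\mathrm{Fun}(J^{\op},\Nh((\MSThnsset)^{\mathrm{cf}}))$ is $(\infty,n)$-complete and $(\infty,n)$-cocomplete. Finally, since $\cat_{(\infty,n-1)}$ is by definition modeled by $\Nh((\MSThnsset)^{\mathrm{cf}})$, the $(\infty,n)$-categories $\mathrm{Fun}(J^{\op},\cat_{(\infty,n-1)})$ and $\mathrm{Fun}(J^{\op},\Nh((\MSThnsset)^{\mathrm{cf}}))$ are equivalent, which gives the claim.

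I do not expect a genuine obstacle here: the corollary is a direct specialization of \cref{catoffunctorsarecomplete}. The only points requiring brief justification are the combinatoriality and self-enrichment of $\MSThnsset$ and the closure of $\pcatThn$ under opposites, all of which are standard.
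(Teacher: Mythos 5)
Your proposal is correct and matches the paper's argument exactly: the corollary is obtained by taking $\cM=\MSThnsset$ in \cref{catoffunctorsarecomplete} with $J^{\op}$ as the indexing object. The extra checks you supply (combinatoriality and self-enrichment of $\MSThnsset$, closure of $\pcatThn$ under opposites) are left implicit in the paper but are correctly justified.
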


We now consider full $(\infty,n)$-subcategories of local objects in the $(\infty,n)$-category of $(\infty,n)$-presheaves. By taking $\cM=[\Ch J,\MSThnsset]_\mathrm{proj}$ in \cref{catoflocaliscomplete}, for some $J\in \pcatThn$, we obtain the following result.

\begin{cor} \label{cor:localization}
    Every full $(\infty,n)$-subcategory of local objects of an $(\infty,n)$-category of $(\infty,n)$-presheaves over an $(\infty,n)$-category is $(\infty,n)$-complete and $(\infty,n)$-cocomplete.
\end{cor}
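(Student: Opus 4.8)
The plan is to apply \cref{catoflocaliscomplete} to a suitably chosen $\MSThnsset$-enriched model category, in the same spirit as the proof of \cref{catoffunctorsarecomplete}.

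First I would fix an object $J\in \pcatThn$ presenting the indexing $(\infty,n)$-category, so that the ambient $(\infty,n)$-category of $(\infty,n)$-presheaves over $J$ is $\mathrm{Fun}(J^{\op},\cat_{(\infty,n-1)})$ (with $J^{\op}$ again an object of $\pcatThn$), and set $\cM\coloneqq [\Ch J^{\op},\MSThnsset]_\proj$. I would then check that $\cM$ is a left proper and combinatorial $\MSThnsset$-enriched model category: it is $\MSThnsset$-enriched by \cite[Theorem 5.4]{Moserinj}; it is combinatorial because $\MSThnsset$ is combinatorial---being a left Bousfield localization of the combinatorial model structure $\injThnspace$---and $\Ch J^{\op}$ is a small $\Thnsset$-enriched category, so the projective model structure on enriched functors out of it is again combinatorial; and it is left proper since $\MSThnsset$ is left proper and the projective model structure detects weak equivalences and fibrations pointwise.

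Next I would recall from the chain of isomorphisms in the proof of \cref{catoffunctorsarecomplete}, applied with base model category $\MSThnsset$ and indexing category $J^{\op}$, that there is an equivalence of $(\infty,n)$-categories
\[ \Nh(\cM^\mathrm{cf})\simeq \mathrm{Fun}(J^{\op},\Nh((\MSThnsset)^\mathrm{cf}))=\mathrm{Fun}(J^{\op},\cat_{(\infty,n-1)}), \]
so that $\Nh(\cM^\mathrm{cf})$ models the $(\infty,n)$-category of $(\infty,n)$-presheaves over $J$. Given a full $(\infty,n)$-subcategory of local objects of the latter, say the $S$-local objects for a set $S$ of $1$-morphisms, I would transport $S$ along this equivalence to a set $S'$ of $1$-morphisms in $\Nh(\cM^\mathrm{cf})$; using the identification $\Map_{\Nh(\cM^\mathrm{cf})}(x,y)\cong \Hom_\cM(x,y)$ and \cref{rmk:Mapfcvsyc} to pick representatives, $S'$ is equally a set of $1$-morphisms in $\cM^\mathrm{cf}$. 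By \cref{lem:compareSlocal}, $S'$-locality in the model category $\cM$ and $S'$-locality in the $(\infty,n)$-category $\Nh(\cM^\mathrm{cf})$ coincide, so the given full subcategory of local objects is equivalent to $\Nh(\cM^\mathrm{cf})^{S'\text{-}\mathrm{loc}}$.

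Finally, \cref{catoflocaliscomplete} applied to $\cM$ and $S'$ gives that $\Nh(\cM^\mathrm{cf})^{S'\text{-}\mathrm{loc}}$ is $(\infty,n)$-complete and $(\infty,n)$-cocomplete; since $(\infty,n)$-(co)completeness is invariant under equivalences of $(\infty,n)$-categories---the cone constructions and double $(\infty,n-1)$-terminality transport along such equivalences, cf.~\cref{prop:limit in precat} and \cref{cor:coneofprecat}---the original full subcategory of local objects is $(\infty,n)$-complete and $(\infty,n)$-cocomplete as well. The only mildly delicate points are the verification that $[\Ch J^{\op},\MSThnsset]_\proj$ is left proper and combinatorial and the bookkeeping of transporting $S$ and the local subcategory across the equivalence; I do not expect either to present a genuine obstacle, the real content having already been isolated in \cref{catoflocaliscomplete}.
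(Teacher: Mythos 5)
Your proposal is correct and follows essentially the same route as the paper, which simply takes $\cM=[\Ch J,\MSThnsset]_\proj$ in \cref{catoflocaliscomplete}; the extra verifications you supply (left properness, combinatoriality, and the transport of $S$ across the equivalence with $\mathrm{Fun}(J^{\op},\cat_{(\infty,n-1)})$ via \cref{catoffunctorsarecomplete} and \cref{lem:compareSlocal}) are exactly the details the paper leaves implicit.
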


We can in particular apply this result to $(\infty,n)$-sheaves. Recall from \cite[Definition 6.2.2.1 and Remark 6.2.2.3]{LurieHTT} that an \emph{$(\infty,1)$-site $(C,T)$} is given by an $(\infty,1)$-category $C$ and a site $T$ on the homotopy category of~$C$. Moreover, by \cite[Definition 6.2.2.6]{LurieHTT}, the \emph{$(\infty,1)$-category $\cS hv_{(\infty,1)}(C,T)$ of $(\infty,1)$-sheaves over $(C,T)$} is defined as the full $(\infty,1)$-subcategory of $T$-local $(\infty,1)$-presheaves.

\begin{defn}
    Let $(C,T)$ be an $(\infty,1)$-site. The \textbf{$(\infty,n)$-category of sheaves} is the full $(\infty,n)$-subcategory $\cS hv_{(\infty,n)}(C,T)$ of $\mathrm{Fun}(C^{\op},\cat_{(\infty,n-1)})$ consisting of the $T$-local $(\infty,n)$-presheaves.
\end{defn}

\begin{cor}
    Let $(C,T)$ be an $(\infty,1)$-site. The $(\infty,n)$-category $\cS hv_{(\infty,n)}(C,T)$ of $(\infty,n)$-sheaves over $(C,T)$ is $(\infty,n)$-complete and $(\infty,n)$-cocomplete.
\end{cor}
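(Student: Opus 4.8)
The plan is to realize $\cShv_{(\infty,n)}(C,T)$ as an enriched left Bousfield localization of a combinatorial and left proper $\MSThnsset$-enriched model category and then invoke \cref{catoflocaliscomplete}. Recall that by \cite[Definition 6.2.2.6]{LurieHTT} the $(\infty,n)$-category $\cShv_{(\infty,n)}(C,T)$ is, by definition, the full $(\infty,n)$-subcategory of $\mathrm{Fun}(C^{\op},\cat_{(\infty,n-1)})$ on the $T$-local objects, where $T$-locality is tested against the (small) set $S_T$ of $1$-morphisms given by the Čech-covering maps associated to the covering sieves of $T$, seen as morphisms in $\mathrm{Fun}(C^{\op},\cat_{(\infty,n-1)})_{1,[0],0}$.

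First I would fix a realization of $C$ as an object $J\in\pcatThn$ via the standard inclusion of $(\infty,1)$-categories into $(\infty,n)$-categories, so that $\mathrm{Fun}(C^{\op},\cat_{(\infty,n-1)})$ is modeled by $\Nh(\cM^\mathrm{cf})$ for $\cM\coloneqq[\Ch J^{\op},\MSThnsset]_\mathrm{proj}$; this model category is combinatorial since $\MSThnsset$ is combinatorial and $\Ch J$ is a small $\Thnsset$-enriched category, and it is left proper since $\MSThnsset$ is left proper and cofibrations and weak equivalences in the projective model structure are detected objectwise. Next I would choose a set $S$ of $1$-morphisms in $\cM^\mathrm{cf}$ representing the covering maps: each covering sieve $U\hookrightarrow c$ in $T$ gives a map of $(\infty,n)$-presheaves whose domain is the colimit over the sieve, and cofibrant replacement inside $\cM$ turns this into a morphism between bifibrant objects; taking one such for every covering sieve yields a small set $S$ with $S\subseteq \Nh(\cM^\mathrm{cf})_{1,[0],0}$. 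By \cref{lem:compareSlocal}, an object of $\cM^\mathrm{cf}$ is $S$-local in the enriched sense of \cite[Definition 4.42]{barwick2010bousfield} if and only if the corresponding object of $\Nh(\cM^\mathrm{cf})$ is $S$-local in the sense of the $(\infty,n)$-categorical definition, and by construction these are exactly the $T$-local $(\infty,n)$-presheaves, i.e.\ the objects of $\cShv_{(\infty,n)}(C,T)$.

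With this in place, \cref{catoflocaliscomplete} applied to $\cM=[\Ch J^{\op},\MSThnsset]_\mathrm{proj}$ and the set $S$ gives that the full $(\infty,n)$-subcategory $\Nh(\cM^\mathrm{cf})^{S\text{-}\mathrm{loc}}$ is $(\infty,n)$-complete and $(\infty,n)$-cocomplete; since $\Nh(\cM^\mathrm{cf})^{S\text{-}\mathrm{loc}}$ and $\cShv_{(\infty,n)}(C,T)$ are equivalent $(\infty,n)$-categories by the above identification of local objects, we conclude by \cref{prop:limit in precat} that $\cShv_{(\infty,n)}(C,T)$ is itself $(\infty,n)$-complete and $(\infty,n)$-cocomplete. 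Alternatively, and more directly, one may simply observe that $\cShv_{(\infty,n)}(C,T)$ is by definition a full $(\infty,n)$-subcategory of local objects of the $(\infty,n)$-category of $(\infty,n)$-presheaves over an $(\infty,n)$-category, so it is an instance of \cref{cor:localization} verbatim once the set $S$ is exhibited.

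The main obstacle is the bookkeeping identifying the "$T$-local" condition coming from the $(\infty,1)$-site structure on the homotopy category of $C$ with $S$-locality for a set $S$ of $1$-morphisms of the presheaf $(\infty,n)$-category; concretely, one must check that $T$-locality of an $(\infty,n)$-presheaf is detected by finitely presenting the covering sieves as a genuine set of maps, that these maps live between bifibrant objects after cofibrant replacement, and that passing from $(\infty,n-1)$-valued presheaves to the defining descent condition does not alter which objects are local compared to the classical $(\infty,1)$-case. Once this translation is made precise, the rest is a direct appeal to \cref{catoflocaliscomplete,cor:localization,prop:limit in precat}.
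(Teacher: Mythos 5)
Your proposal is correct and follows the same route as the paper: the paper treats this statement as an immediate instance of \cref{cor:localization}, since the $(\infty,n)$-category of sheaves is by definition the full $(\infty,n)$-subcategory of $T$-local objects of the $(\infty,n)$-category of $(\infty,n)$-presheaves, with $T$-locality meaning locality with respect to the set of covering sieve inclusions. The extra bookkeeping you describe---exhibiting that set $S$ of $1$-morphisms between bifibrant objects and matching the two notions of locality via \cref{lem:compareSlocal}---is precisely the detail the paper leaves implicit in its definition, so your argument is the same one, spelled out.
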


\section{An explicit Yoneda map -- Proof of \texorpdfstring{\cref{prop:existYoneda}}{Proposition 7.5.5}} \label{sec:Yonedamap}
\label{SectionYonedamap}

Recall that, using the fibrational version of the Yoneda lemma from \cref{prop:Yonedamap}, we can build from any element in the fiber of a double $(\infty,n-1)$-right fibration a \emph{Yoneda map}. In \cref{sec:Yonedamapconstruction}, we construct a specific implementation of such a Yoneda map in the case where the double $(\infty,n-1)$-right fibration is the canonical projection $\joinslice{\Nh\cC}{f}\to \Nh\cC$ from a neat cone construction, where $\cC$ is a $\Thnsset$-enriched category. We then prove in \cref{sec:Yonedamapproperty} that, in \cref{setting:limit} and the case where $f=F^\flat\circ p\colon A\to \Nh\cC$, this Yoneda map is appropriately compatible with its enriched version and hence satisfies the conditions of \cref{prop:existYoneda}.

\subsection{Construction of the Yoneda map}
\label{sec:Yonedamapconstruction}

Given a $\Thnsset$-enriched category $\cC$, an object $\ell\in\cC$, a map $f\colon A\to \Nh\cC$ in $\sThnsset$, and an element $\lambda^\flat\in ((\slice{\Nh\cC}{f})_\ell)_{[0],0}$ of the fiber of $\slice{\Nh\cC}{f}\to \Nh\cC$ at $\ell$, in order to construct a map of the form 
\[ \lambda^\flat_*\colon \joinslice{\Nh\cC}{\ell}\to \joinslice{\Nh\cC}{f}, \] we need to associate to every simplex $\sigma\colon F[m,X]\to \joinslice{\Nh\cC}{\ell}$  a simplex $\lambda^\flat_*(\sigma)\colon F[m,X]\to \joinslice{\Nh\cC}{f}$ in $\sThnssetslice{\Nh\cC}$. By passing to adjoints, this amounts to associating to every $\Thnsset$-enriched functor $\sigma^\sharp\colon \Ch L (F[m,X]\star F[0]) \to \cC$ a $\Thnsset$-enriched functor $(\lambda^\flat_*(\sigma))^\sharp\colon \Ch L(F[m,X]\star A)\to \cC$, which are both appropriately compatible with the given data. As the element $\lambda^\flat\in ((\slice{\Nh\cC}{f})_\ell)_{[0],0}$ corresponds to a $\Thnsset$-enriched functor $\lambda\colon \Ch L(F[0]\star A)\to \cC$, using that $\cC$ admits strictly associative compositions, we aim to define $(\lambda^\flat_*(\sigma))^\sharp$ to be the composite of $\sigma^\sharp$ and $\lambda$ along their common summit $\ell$ . 

To model this, we construct for all objects $A,B\in \sThnsset$ a $\Thnsset$-enriched functor \[ \Phi_{A,B}\colon \Ch L (A\star B)\to \Ch L(A\star \repD[0])\amalg_{[0]} \Ch L (\repD[0]\star B) \]
which acts in the desired way. This functor factors through the following $\Thnsset$-enriched category.

\begin{notation}
    Let $A$ and $B$ be objects in $\sThnsset$. We consider the following pullback in $\Thncat$,
    \begin{tz}
        \node[](1) {$\Ch L(A\star F[0])\times_{[1]} \Ch L(F[0]\star B)$}; 
        \node[below of=1](2) {$\Ch L(A\star F[0])$}; 
        \node[right of=1,xshift=3.9cm](3) {$\Ch L(F[0]\star B)$}; 
        \node[below of=3](4) {$\Ch L(F[0]\star F[0])$}; 
        \pullback{1}; 

        \draw[->](1) to (2); 
        \draw[->](1) to (3); 
        \draw[->](2) to node[below,la]{$\Ch L(!\star F[0])$} (4);  \draw[->](3) to node[right,la]{$\Ch L(F[0]\star !)$} (4); 
    \end{tz}
    where we recall that $\Ch L(F[0]\star F[0])\cong \Ch L F[1]\cong [1]$.  
\end{notation}

It is straightforward to see that the above pullback admits the following description. 

\begin{lemma}
    Let $A$ and $B$ be objects in $\sThnsset$. Then the $\Thnsset$-enriched category $\Ch L(A\star F[0])\times_{[1]} \Ch L(F[0]\star B)$ is isomorphic to the $\Thnsset$-enriched category $\cP$ with 
    \begin{itemize}[leftmargin=0.6cm]
        \item object set $\Ob\cP= A_{0,[0],0}\amalg B_{0,[0],0}$, 
        \item given objects $x,y\in A_{0,[0],0}\amalg B_{0,[0],0}$, hom $\Thn$-space 
        \[ \Hom_\cP(x,y)= \begin{cases}
            \Hom_{\Ch L(A\star F[0])}(x,y) & \text{if } x,y\in A_{0,[0],0} \\
            \Hom_{\Ch L(F[0]\star B)}(x,y) & \text{if } x,y\in B_{0,[0],0} \\
            \Hom_{\Ch L(A\star F[0])}(x,\top)\times \Hom_{\Ch L(F[0]\star B)}(\bot,y) & \text{if } x\in A_{0,[0],0}, \; y\in B_{0,[0],0} \\
            \emptyset & \text{if } x\in B_{0,[0],0}, \; y\in A_{0,[0],0}
        \end{cases}\]
        where $\top$ (resp.~$\bot$) denotes the object of $\Ch L(A\star F[0])$ (resp.~$\Ch L(F[0]\star B)$) coming from $F[0]$,
        \item composition is induced by the compositions of $\Ch L(A\star F[0])$ and $\Ch L(F[0]\star B)$.
    \end{itemize}
\end{lemma}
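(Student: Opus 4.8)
The plan is to prove the claim by directly unwinding the defining pullback in $\Thncat$, using the standard description of a pullback $\cB\times_\cD\cC$ of $\Thnsset$-enriched categories: its object set is the pullback $\Ob\cB\times_{\Ob\cD}\Ob\cC$ in $\set$, and for objects $(b,c),(b',c')$ with $(b,c)$ over $d$ and $(b',c')$ over $d'$, its hom $\Thn$-space is the pullback in $\Thnsset$ of $\Hom_\cB(b,b')$ and $\Hom_\cC(c,c')$ over $\Hom_\cD(d,d')$, with the unit and composition being the unique maps compatible with the two projections. So everything reduces to understanding the two object sets, the two structure functors to $\Ch L(F[0]\star F[0])\cong[1]$, and the relevant hom $\Thn$-spaces.

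First I would pin down the object sets and the functors to $[1]$. By the pushout description of joins of representables in \cref{lemma:join reps} (and the way $\Ch$ and $L$ act on objects), the object set of $\Ch L(A\star F[0])$ is $A_{0,[0],0}\amalg\{\top\}$, where $\top$ is the vertex coming from the summand $F[0]$; moreover $\Ch L(!\star F[0])$ sends each object of $A_{0,[0],0}$ to $0\in[1]$ and sends $\top$ to $1\in[1]$. Symmetrically, $\Ch L(F[0]\star B)$ has object set $\{\bot\}\amalg B_{0,[0],0}$, with $\bot\mapsto 0$ and $B_{0,[0],0}\mapsto 1$ under $\Ch L(F[0]\star !)$. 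Taking the pullback of object sets over $\Ob[1]=\{0,1\}$ then identifies $\Ob\cP$ with $A_{0,[0],0}\amalg B_{0,[0],0}$, an $x\in A_{0,[0],0}$ corresponding to the pair $(x,\bot)$ and a $y\in B_{0,[0],0}$ to $(\top,y)$.

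Next I would compute the hom $\Thn$-spaces, using that $\Hom_{[1]}(i,j)$ is the terminal $\Thn$-space $\Delta[0]$ when $i\le j$ and is $\emptyset$ when $i>j$: pulling back over $\Delta[0]$ is the binary product in $\Thnsset$, while pulling back over $\emptyset$ yields $\emptyset$, since $\emptyset$ is strictly initial in the presheaf category $\Thnsset$. For $x,x'\in A_{0,[0],0}$ this gives $\Hom_{\Ch L(A\star F[0])}(x,x')\times\Hom_{\Ch L(F[0]\star B)}(\bot,\bot)$, which is $\Hom_{\Ch L(A\star F[0])}(x,x')$ once one knows the endomorphism $\Thn$-space $\Hom_{\Ch L(F[0]\star B)}(\bot,\bot)$ of the cone vertex is the unit $\Delta[0]$; symmetrically for $x,x'\in B_{0,[0],0}$ one obtains $\Hom_{\Ch L(F[0]\star B)}(x,x')$, using $\Hom_{\Ch L(A\star F[0])}(\top,\top)=\Delta[0]$; for $x\in A_{0,[0],0}$ and $y\in B_{0,[0],0}$ one obtains $\Hom_{\Ch L(A\star F[0])}(x,\top)\times\Hom_{\Ch L(F[0]\star B)}(\bot,y)$; and for $x\in B_{0,[0],0}$, $y\in A_{0,[0],0}$ one obtains $\emptyset$, because $\Hom_{\Ch L(A\star F[0])}(\top,y)$ maps to $\Hom_{[1]}(1,0)=\emptyset$ and is hence $\emptyset$ by strict initiality. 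These are precisely the hom $\Thn$-spaces of $\cP$, and the identification of composition and units with those described for $\cP$ is then automatic, being forced by the two projections out of the pullback.

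The main obstacle I anticipate is the non-formal input about $\Ch L$ of a join: that the cone vertex $\top$ of $A\star F[0]$ (resp.~$\bot$ of $F[0]\star B$) has endomorphism $\Thn$-space the unit $\Delta[0]$ and admits no morphisms into the objects of $A$ (resp.~receives no morphisms from the objects of $B$), together with the precise description of $\Ob(\Ch L(A\star F[0]))$ and of the structure functor to $[1]$. Vanishing of the backward hom $\Thn$-spaces is immediate from functoriality of $\Ch L$ applied to $A\star F[0]\to F[0]\star F[0]$ and strict initiality of $\emptyset$; the triviality of the cone-vertex endomorphisms and the description of the object set follow from the pushout formulas for joins of representables in \cref{lemma:join reps} and the identification in \cref{rem:F0join}, together with the way $\Ch$ and $L$ act on objects. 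Once these are in hand, the remaining argument is just bookkeeping of the five cases above.
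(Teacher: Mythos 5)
The paper gives no proof of this lemma (it is dismissed as ``straightforward to see''), so there is nothing to compare against; your proposal is a correct and essentially complete way of supplying the omitted argument. The reduction to the objectwise/homwise description of a pullback in $\Thncat$, the identification of the two structure functors to $[1]$, and the case analysis driven by $\Hom_{[1]}(i,j)$ being $\Delta[0]$ or $\emptyset$ are exactly what is needed, and your treatment of the backward homs via strict initiality of $\emptyset$ in $\Thnsset$ is fully rigorous. The one place where your justification is thinner than it should be is the claim that $\Hom_{\Ch L(A\star F[0])}(\top,\top)\cong\Delta[0]$ (and dually for $\bot$) for an \emph{arbitrary} object $A$: \cref{rem:F0join} and the explicit description of $\Ch L F[m+1,X]$ only settle this for representables, and since colimits of enriched categories can create new morphisms as composites, passing to general $A$ requires an extra word. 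The fix is short: the functor to $[1]$ shows that $\top$ admits no morphisms to any object of $A_{0,[0],0}$ in any of the representable pieces, so in the colimit every composite from $\top$ to $\top$ factors entirely through endomorphisms of $\top$ in the pieces, each of which is trivial; hence the endomorphism $\Thn$-space of $\top$ remains $\Delta[0]$. With that supplied, the rest of your argument is the bookkeeping you describe, and the identification of units and composition is indeed forced by the two projections.
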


We now construct a map from the above pullback into the desired pushout, which is described as follows. 

\begin{notation}
    Let $A$ and $B$ be objects in $\sThnsset$. We consider the following pushout in $\Thncat$, 
    \begin{tz}
        \node[](1) {$[0]$}; 
        \node[below of=1](2) {$\Ch L(A\star F[0])$}; 
        \node[right of=1,xshift=3.9cm](3) {$\Ch L(F[0]\star B)$}; 
        \node[below of=3](4) {$\Ch L(A\star F[0])\amalg_{[0]} \Ch L(F[0]\star B)$}; 
        \pushout{4}; 

        \draw[->](1) to node[left,la]{$\top$} (2); 
        \draw[->](1) to node[above,la]{$\bot$} (3); 
        \draw[->](2) to (4);  \draw[->](3) to (4); 
    \end{tz}
    where $\top$ (resp.~$\bot$) denotes the object of $\Ch L(A\star F[0])$ (resp.~$\Ch L(F[0]\star B)$) coming from $F[0]$.
\end{notation}

\begin{constr}
    Let $A$ and $B$ be objects in $\sThnsset$. We construct a $\Thnsset$-enriched functor 
    \[ \Psi_{A,B}\colon \Ch L(A\star F[0])\times_{[1]} \Ch L(F[0]\star B)\to \Ch L(A\star F[0])\amalg_{[0]} \Ch L(F[0]\star B)\]
    given by
    \begin{itemize}[leftmargin=0.6cm]
        \item sending an object in $A_{0,[0],0}$ to the corresponding object in $\Ch L(A\star F[0])$ and an object in $B_{0,[0],0}$ to the corresponding object in $\Ch L(F[0]\star B)$ seen as objects of $\cC\coloneqq \Ch L(A\star F[0])\amalg_{[0]} \Ch L(F[0]\star B)$ through the canonical inclusions, 
        \item for $x,y\in A_{0,[0],0}$ (resp.~$x,y\in B_{0,[0],0}$), the map on hom $\Thn$-spaces induced by the canonical inclusion of $\Ch L(A\star F[0])$ (resp.~$\Ch L(F[0]\star B)$) into $\cC$, 
        \item for $x\in A_{0,[0],0}$ and $y\in B_{0,[0],0}$, the map on hom $\Thn$-spaces given by the composite
        \[ \Hom_{\Ch L(A\star F[0])}(x,\top)\times \Hom_{\Ch L(F[0]\star B)}(\bot,y)\to \Hom_\cC(x,\top)\times \Hom_{\cC}(\bot,y)\xrightarrow{\circ} \Hom_\cC(x,y), \]
        where the first map is induced by the canonical inclusions and where we use that $\top=\bot$ in~$\cC$. 
    \end{itemize}
    It is straightforward to see that $\Psi_{A,B}$ is a well-defined $\Thnsset$-enriched functor and that this construction is natural in $A$ and $B$. 
\end{constr}

We then get the desired $\Thnsset$-enriched functor as follows. 

\begin{constr} \label{constr:PhimX}
    Let $A$ and $B$ be objects in $\sThnsset$. We define the $\Thnsset$-enriched functor 
    \[ \Phi_{A,B}\colon \Ch L (A\star B)\to \Ch L(A\star \repD[0])\amalg_{[0]} \Ch L (\repD[0]\star B) \]
    to be the following composite 
    \[ \Ch L (A\star B)\xrightarrow{\Ch L (A\star !)\times_{[1]} \Ch L (!\star B)} \Ch L(A\star F[0])\times_{[1]} \Ch L(F[0]\star B)\xrightarrow{\Psi_{A,B}} \Ch L(A\star F[0])\amalg_{[0]} \Ch L(F[0]\star B). \]
    It is straightforward to see that this is natural in $A$ and $B$, as each part of the composite is so.   
\end{constr}

Using this $\Thnsset$-enriched functor, we can define the desired map $\lambda^\flat_*$.

\begin{constr} \label{constr:Yonedamap}
Let $\cC$ be a $\Thnsset$-enriched category and $\ell\in \cC$ be an object. Consider a map $f\colon A\to \Nh\cC$ in $\sThnsset$, and an element $\lambda^\flat\in ((\joinslice{\Nh\cC}{f})_\ell)_{[0],0}$ of the fiber of $\joinslice{\Nh\cC}{f}\to \Nh\cC$ at $\ell$. We construct a map in $\sThnssetslice{\Nh\cC}$
\[ \lambda^\flat_*\colon \joinslice{\Nh\cC}{\ell}\to \joinslice{\Nh\cC}
{f} \]
by giving its action on simplices. Let $m\geq 0$ and $X$ be a connected $\Thn$-space. A simplex 
    \begin{tz}
    \node[](1) {$F[m,X]$}; 
    \node[below right of=1,xshift=.4cm](2) {$\Nh\cC$}; 
    \node[above right of=2,xshift=.4cm](3) {$\joinslice{\Nh\cC}{\ell}$}; 

    \draw[->] (1) to node[above,la]{$\sigma$} (3); 
    \draw[->] (1) to node[left,la,yshift=-5pt]{$\tau$} (2);
    \draw[->] (3) to (2);
\end{tz}
corresponds, by definition, to a commutative diagram in $\sThnsset$ as below left, which by the adjunction $\Ch L \dashv \Nh$ corresponds to a commutative diagram in $\Thncat$ as below right.
\begin{tz}
        \node[](1) {$F[m,X]\star \repD[0]$}; 
        \node[right of=1,xshift=1.5cm](2) {$\Nh \cC$}; 
        \node[above of=1](3) {$F[m,X]\amalg \repD[0]$}; 
        \draw[->](1) to node[below,la]{$\sigma$} (2); 
        \draw[right hook->](3) to (1);
        \draw[->](3) to  node[right,la,yshift=5pt]{$\tau +\ell$} (2); 

        \node[right of=1,xshift=5.5cm](1) {$\Ch L(F[m,X]\star \repD[0])$}; 
        \node[right of=1,xshift=1.7cm](2) {$\cC$}; 
        \node[above of=1](3) {$\Ch LF[m,X]\amalg [0]$}; 
        \draw[->](1) to node[below,la]{$\sigma^\sharp$} (2); 
        \draw[right hook->](3) to (1);
        \draw[->](3) to  node[right,la,yshift=5pt]{$\tau^\sharp +\ell$} (2); 
    \end{tz}
    Moreover, the element $\lambda^\flat$ corresponds to a commutative diagram in $\sThnsset$ as below left, which by adjunction $\Ch L\dashv \Nh$ corresponds to a commutative diagram in $\Thncat$ as below right.
    \begin{tz}
        \node[](1) {$\repD[0]\star A$}; 
        \node[right of=1,xshift=1.2cm](2) {$\Nh \cC$}; 
        \node[above of=1](3) {$\repD[0]\amalg A$}; 
        \draw[->](1) to node[below,la]{$\lambda^\flat$} (2); 
        \draw[right hook->](3) to (1);
        \draw[->](3) to  node[right,la,yshift=5pt]{$\ell+f$} (2); 

        \node[right of=1,xshift=4.7cm](1) {$\Ch L(\repD[0]\star A)$}; 
        \node[right of=1,xshift=1.4cm](2) {$\cC$}; 
        \node[above of=1](3) {$[0]\amalg \Ch L A$}; 
        \draw[->](1) to node[below,la]{$\lambda$} (2); 
        \draw[right hook->](3) to (1);
        \draw[->](3) to  node[right,la,yshift=5pt]{$\ell+f^\sharp$} (2); 
    \end{tz}
    Hence, together, they yield a map 
    \[ \sigma^\sharp+\lambda\colon \Ch L(F[m,X]\star \repD[0])\amalg_{[0]} \Ch L(\repD[0]\star A)\to \cC. \]
    By precomposing with the $\Thnsset$-enriched functor from \cref{constr:PhimX}, we get
    \begin{tz}
        \node[](1) {$\Ch L (F[m,X]\star A)$}; 
        \node[right of=1,xshift=4.8cm](2) {$\Ch L(F[m,X]\star \repD[0])\amalg_{[0]} \Ch L(\repD[0]\star A)$}; 
        \node[right of=2,xshift=3.4cm](2') {$\cC$};
        \node[above of=1](3) {$\Ch LF[m,X]\amalg \Ch L A$};
        \draw[->](1) to node[below,la]{$\Phi_{F[m,X],A}$} (2); 
        \draw[->](2) to node[below,la]{$\sigma^\sharp+\lambda$} (2'); 
        \draw[right hook->](3) to (1);
        \draw[->,bend left=5](3) to  node[right,la,yshift=5pt]{$\tau^\sharp+f^\sharp$} (2'.north west);
    \end{tz}
    and we set $\lambda^\flat_*(\sigma)\colon F[m,\theta,k]\to \Nh\cC$ to be the adjunct of the composite $ (\sigma^\sharp+\lambda)\circ \Phi_{F[m,X],A}$ under the adjunction $\Ch L\dashv \Nh$. In particular, we have a commutative diagram in $\sThnsset$ as below left, which gives a simplex as below right, as desired.
    \begin{tz}
        \node[](1) {$F[m,X]\star A$}; 
        \node[right of=1,xshift=1.6cm](2) {$\Nh \cC$}; 
        \node[above of=1](3) {$F[m,X]\amalg A$}; 
        \draw[->](1) to node[below,la]{$\lambda^\flat_*(\sigma)$} (2); 
        \draw[right hook->](3) to (1);
        \draw[->](3) to  node[right,la,yshift=5pt]{$\tau+f$} (2); 
        
    \node[right of=3,xshift=4.8cm,yshift=-.25cm](1) {$\repDThnS$}; 
    \node[below right of=1,xshift=.5cm](2) {$\Nh\cC$}; 
    \node[above right of=2,xshift=.5cm](3) {$\joinslice{\Nh\cC}{f}$}; 
    \draw[->] (1) to node[above,la]{$\lambda^\flat_*(\sigma)$} (3); 
    \draw[->] (1) to node[left,la,yshift=-5pt]{$\tau$} (2);
    \draw[->] (3) to (2);
\end{tz}

By naturality of the bijections induced by the adjunction $\Ch L\dashv \Nh$ and the naturality of $\Phi_{F[m,X],A}$ in $F[m,X]$, this assignment is natural in $\sigma$.
\end{constr}

\subsection{Comparison between enriched and internal approaches to Yoneda maps}
\label{sec:Yonedamapproperty}

The map from \cref{constr:Yonedamap} in \cref{setting:limit} and the case where $f=F^\flat\circ p$ induces, for every object $c\in \cC$, a map on fibers $(\lambda_*^\flat)_c\colon (\joinslice{\Nh\cC}{\ell})_c\to (\joinslice{\Nh\cC}{F^\flat\circ p})_c$. We now show that this map satisfies the properties of \cref{prop:existYoneda}. For this, we unpack the component  \[ \lambda^*_c\colon \Hom_\cC(c,\ell)\to \Hom_{[\Ch J,\Thnsset]}(\St_J(p),\Hom_\cC(c,F-)) \]
of the $\Thnsset$-enriched natural transformation induced by precomposing with the cone $\lambda$ in the enriched setting. We first study the following map which is used to define the map $\lambda^*_c$.

\begin{constr} \label{constr:composition}
    Let $\cC$ be a $\Thnsset$-enriched category and $c,\ell\in \cC$ be objects. The composition maps of $\cC$ induce a $\Thnsset$-enriched natural transformation in $[\cC^{\op},\Thnsset]$
\[ \circ_{c,\ell,-}\colon \Hom_\cC(c,\ell)\otimes \Hom_\cC(\ell,-)\Rightarrow \Hom_\cC(c,-). \]
Using that $[\cC^{\op},\Thnsset]$ is tensored and cotensored over $\Thnsset$ and so we have an adjunction $\Hom_\cC(c,\ell)\otimes (-)\dashv (-)^{\Hom_\cC(c,\ell)}$, this corresponds to a $\Thnsset$-enriched natural transformation in $[\cC^{\op},\Thnsset]$
\[ \circ^\sharp_{c,\ell,-}\colon \Hom_\cC(\ell,-)\Rightarrow \Hom_\cC(c,-)^{\Hom_\cC(c,\ell)}. \]
By applying the $\Thnsset$-enriched functor $\Un_\cC\colon [\cC^{\op},\Thnsset]\to \sThnssetslice{\Nh\cC}$, this yields a map in $\sThnssetslice{\Nh\cC}$
\[ \Un_\cC \, \circ^\sharp_{c,\ell,-}\colon \Un_\cC \Hom_\cC(\ell,-)\to \Un_\cC(\Hom_\cC(c,-)^{\Hom_\cC(c,\ell)})\cong \{\Hom_\cC(c,\ell),\Un_\cC\Hom_\cC(c,-)\}
\]
where $\{-,-\}$ denotes the cotensor of $\sThnssetslice{\Nh\cC}$ over $\Thnsset$ and the last isomorphism holds since the adjunction $\St_\cC\dashv \Un_\cC$ is enriched over $\Thnsset$ by \cref{st-unst}.
\end{constr}

Recall the isomorphic objects $P(A,Y)$ and $P'(A,Y)$ of $\sThnsset$ from \cref{lem:equalpushoutneat}.

\begin{lemma} \label{lem:joinvsP}
    Let $A$ be an object in $\sThnsset$ and $Y$ be a connected $\Thn$-space. There is a canonical isomorphism in $\pcatThn$
    \[ L(\repD[0,Y]\star A) \cong LP(A,Y)\cong LP'(A,Y). \]
\end{lemma}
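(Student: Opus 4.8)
The plan is to deduce the statement from \cref{lem:equalpushoutneat} together with the identity $L\repD[0,Y]\cong\repD[0]$ for connected $Y$, which was already used in the proof of \cref{rem:F0join}. The isomorphism $LP(A,Y)\cong LP'(A,Y)$ requires no work: by \cref{lem:equalpushoutneat} there is a canonical isomorphism $P(A,Y)\cong P'(A,Y)$ in $\sThnsset$, and applying the colimit-preserving functor $L$ transports it to an isomorphism in $\pcatThn$.

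For the isomorphism $L(\repD[0,Y]\star A)\cong LP(A,Y)$, I would first unwind the defining pushout of $P(A,Y)$: it presents $P(A,Y)$ as $\repD[0,Y]\star A$ with the canonical copy of $\repD[0,Y]$ collapsed onto $\repD[0]$ while leaving $A$ fixed, the collapse being induced by the map $(\repD[0,Y]\to\repD[0])\amalg\id_A$ out of $\repD[0,Y]\amalg A$. Since $L$ is a left adjoint, it preserves this pushout and commutes with coproducts, so $LP(A,Y)$ is the pushout of the span
\[ L(\repD[0,Y]\star A)\;\longleftarrow\; L\repD[0,Y]\amalg LA\;\longrightarrow\;\repD[0]\amalg LA, \]
in which the right-hand leg is $(L\repD[0,Y]\to\repD[0])\amalg\id_{LA}$. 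Now $\repD[0]$ is the terminal object of $\sThnsset$, hence of $\pcatThn$, so there is a unique map $L\repD[0,Y]\to\repD[0]$, and since $Y$ is connected it is an isomorphism; this is precisely the fact $L\repD[0,Y]\cong\repD[0]$ invoked in the proof of \cref{rem:F0join}. Consequently the right-hand leg of the span is an isomorphism, and since a pushout along an isomorphism is again an isomorphism, the canonical comparison $L(\repD[0,Y]\star A)\to LP(A,Y)$ is an isomorphism. Combined with the first paragraph, this yields $L(\repD[0,Y]\star A)\cong LP(A,Y)\cong LP'(A,Y)$.

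I do not expect a genuine obstacle here, as the argument is essentially formal; the only delicate point is making sure that $L$ really does render the collapse $\repD[0,Y]\star A\to P(A,Y)$ invisible — that is, that the leg $L\repD[0,Y]\to\repD[0]$ is an honest isomorphism, and not merely that $L\repD[0,Y]$ happens to be abstractly isomorphic to $\repD[0]$. This is taken care of by the terminality of $\repD[0]$, exactly as in the corresponding step of \cref{rem:F0join}.
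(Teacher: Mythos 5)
Your proof is correct and follows the same route as the paper: the paper likewise obtains $LP(A,Y)\cong LP'(A,Y)$ by applying $L$ to the isomorphism of \cref{lem:equalpushoutneat}, and obtains $L(\repD[0,Y]\star A)\cong LP(A,Y)$ by applying the colimit-preserving functor $L$ to the defining pushout of $P(A,Y)$ and using $L\repD[0,Y]\cong\repD[0]$ for connected $Y$. Your additional remark that the collapsed leg becomes an honest isomorphism (not merely an abstract one) is a correct and worthwhile clarification of the same argument.
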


\begin{proof}
    Since $LF[0,Y]\cong F[0]$, this follows directly from applying the colimit-preserving functor $L\colon \sThnsset\to \pcatThn$ to the pushout from \cref{lem:equalpushoutneat} defining $P(A,Y)$. 
\end{proof}

\begin{lemma} \label{lem:bijection}
    Let $\cC$ be a $\Thnsset$-enriched category and $\ell\in \cC$ be an object. Let $A$ be an object in $\sThnsset$ and $Y$ be a connected $\Thn$-space. There is a bijection between the set of maps in $\sThnssetslice{\Nh\cC}$ as below left, and the set of $\Thnsset$-enriched functors $\sigma^\sharp\colon \Ch L(F[0,Y]\star A)\to \cC$ making the below right diagram in $\Thncat$ commute,
    \begin{tz}
    \node[](1) {$A$}; 
    \node[below right of=1,xshift=.8cm](2) {$\Nh\cC$}; 
    \node[above right of=2,xshift=.8cm](3) {$\{Y,\Un_\cC \Hom_\cC(\ell,-)\}$}; 

    \draw[->] (1) to node[above,la]{$\sigma$} (3); 
    \draw[->] (1) to node[left,la,yshift=-5pt]{$\tau$} (2);
    \draw[->] (3) to (2);
    
         \node[right of=2,xshift=5.6cm,yshift=-.25cm](1) {$\Ch L(F[0,Y]\star A)$}; 
        \node[right of=1,xshift=1.7cm](2) {$\cC$}; 
        \node[above of=1](3) {$[0]\amalg \Ch L(A)$}; 
        \draw[->](1) to node[below,la]{$\sigma^\sharp$} (2); 
        \draw[right hook->](3) to (1);
        \draw[->](3) to  node[right,la,yshift=5pt]{$c+\tau^\sharp$} (2);
    \end{tz}
    where $\tau^\sharp\colon \Ch L(A)\to \cC$ denotes the adjunct of $\tau$. 
\end{lemma}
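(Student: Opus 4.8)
The plan is to prove \cref{lem:bijection} by a chain of adjunction isomorphisms, reducing a map into the cotensor $\{Y,\Un_\cC\Hom_\cC(\ell,-)\}$ to an enriched functor out of $\Ch L(F[0,Y]\star A)$, in much the same spirit as the proof of \cref{lem:nattransfoutofStvsfunctors}. First I would unwind the left-hand datum: a map $\sigma\colon A\to \{Y,\Un_\cC\Hom_\cC(\ell,-)\}$ in $\sThnssetslice{\Nh\cC}$ over $\tau\colon A\to \Nh\cC$. Since $\{-,-\}$ denotes the cotensor of $\sThnssetslice{\Nh\cC}$ over $\Thnsset$, mapping into it from $A$ is the same as mapping $A\times Y$ (more precisely, the relevant tensor) into $\Un_\cC\Hom_\cC(\ell,-)$ over $\Nh\cC$; equivalently, by \cref{rem:F0join}-style reasoning and the fact that $F[0,Y]$ is connected with $LF[0,Y]\cong F[0]$, this rearranges to a compatible family of maps $F[0,Y]\amalg A\to \Un_\cC\Hom_\cC(\ell,-)$-type data. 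Concretely I would use that the object $\Un_\cC\Hom_\cC(\ell,-)$ represents, via $\St_\cC\dashv\Un_\cC$, enriched natural transformations out of $\St_\cC$, and then invoke \cref{lem:joinvsP} to identify the relevant corepresenting object.

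The key steps, in order, are: (1) Using the cotensoring of $\sThnssetslice{\Nh\cC}$ over $\Thnsset$ and the definition of the cotensor from the proof of \cref{Unvsjoinslice}/\cref{constr:composition}, rewrite a map $A\to \{Y,\Un_\cC\Hom_\cC(\ell,-)\}$ over $\Nh\cC$ as a span datum: essentially a map from the pushout $P'(A,Y)$ (or equivalently $P(A,Y)$, by \cref{lem:equalpushoutneat}) into $\Un_\cC\Hom_\cC(\ell,-)$ that restricts correctly on $F[0]\amalg A$. (2) Apply \cref{lem:joinvsP}, giving $L(F[0,Y]\star A)\cong LP(A,Y)\cong LP'(A,Y)$, so this is the same as a map $F[0,Y]\star A\to \Un_\cC\Hom_\cC(\ell,-)$ in $\sThnssetslice{\Nh\cC}$ restricting to $c+\tau$ on $F[0]\amalg A$ — here $c$ is the object of $\cC$ corresponding to the distinguished $0$-cell and picks out the fiber; actually $\ell$ sits as the summit, and the restriction to $A$ recovers $\tau$. (3) Apply the adjunction $\St_\cC\dashv\Un_\cC$ (enriched over $\Thnsset$ by \cref{st-unst}) to turn this into an enriched natural transformation $\St_{\Nh\cC}$-type datum, then \cref{lem:naturalityofSt} and the definition of straightening to identify $\St_\cC$ of the relevant simplex with $\Hom_{\Ch L(F[0,Y]\star A)}(-,\text{summit})$, exactly as in \cref{lem:nattransfoutofStvsfunctors}. (4) Finally invoke (a mild variant of) \cref{lem:enrfunctorvsnat} to convert such a $\Thnsset$-enriched natural transformation into a $\Thnsset$-enriched functor $\sigma^\sharp\colon \Ch L(F[0,Y]\star A)\to\cC$ with the prescribed restriction $c+\tau^\sharp$ along $[0]\amalg\Ch L(A)\hookrightarrow \Ch L(F[0,Y]\star A)$. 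Each conversion is a bijection, and all are natural, so the composite is the desired bijection; checking the two assignments are mutually inverse is then formal.

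I expect the main obstacle to be step (1): carefully matching the cotensor $\{Y,\Un_\cC\Hom_\cC(\ell,-)\}$ in the slice category with the pushout object $P'(A,Y)$, i.e.\ showing that ``mapping $A$ into the cotensor over $\Nh\cC$'' genuinely unwinds to ``mapping $P'(A,Y)$ into $\Un_\cC\Hom_\cC(\ell,-)$ compatibly.'' This requires being precise about how the tensor/cotensor of $\sThnssetslice{\Nh\cC}$ over $\Thnsset$ interacts with the slice structure (the tensor $A\otimes Y$ in the slice is computed via the pushout $F[0]\amalg(A\times X)\to F[0]\star(A\times X)$ appearing in $P'(A,Y)$, not simply $A\times Y$), and then chasing that $\St_\cC$ sends this to the correct corepresentable. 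Once the identification of the corepresenting object with $\Ch L(F[0,Y]\star A)$ via \cref{lem:joinvsP} is in hand, the remaining steps are routine repackagings of \cref{lem:enrfunctorvsnat} and \cref{lem:nattransfoutofStvsfunctors}. The naturality claims (in $A$, and for the next lemma presumably in $Y$ as well) follow automatically from the naturality of each adjunction bijection and of the isomorphisms in \cref{lem:joinvsP,lem:equalpushoutneat}.
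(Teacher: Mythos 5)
Your proposal assembles the right ingredients and, in outline, follows the paper's route: unwind the cotensor via the adjunction $Y\otimes(-)\dashv\{Y,-\}$ in $\sThnssetslice{\Nh\cC}$, identify the resulting datum with a map out of $P'(A,Y)$ into $\Nh\cC$, pass across $\Ch L\dashv\Nh$, and conclude with the isomorphism $LP'(A,Y)\cong L(F[0,Y]\star A)$ from \cref{lem:joinvsP}. The paper's proof is exactly this, with one decisive simplification you only gesture at: it invokes \cref{Unvsjoinslice} \emph{first}, as an object-level isomorphism $\Un_\cC\Hom_\cC(\ell,-)\cong\joinsliceunder{\Nh\cC}{\ell}$ in the slice category, so that from then on no straightening--unstraightening manipulations are needed at all --- everything reduces to the join adjunction defining the neat slice. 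Also note that the tensor of $\tau\colon A\to\Nh\cC$ by $Y$ in the slice is just $A\times Y\to\Nh\cC$; the object $P'(A,Y)$ enters only afterwards, when the join adjunction converts a map $A\times Y\to\joinsliceunder{\Nh\cC}{\ell}$ over $\Nh\cC$ into a map $F[0]\star(A\times Y)\to\Nh\cC$ under $F[0]\amalg A$, i.e.\ a map out of $P'(A,Y)$.

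The genuine weak point is your steps (3)--(4). As written, you propose to apply $\St_\cC\dashv\Un_\cC$, identify $\St_\cC(\tau)$ with a corepresentable $\Hom_{\Ch L(-\star F[0])}(-,\top)$ ``by definition of straightening,'' and then invoke (a variant of) \cref{lem:enrfunctorvsnat}. But that identification is only available by definition when the source is a representable $F[m,X]$ (that is the setting of \cref{rem:F0join}, \cref{lem:enrfunctorvsnat} and \cref{lem:nattransfoutofStvsfunctors}); for a general $A$, the straightening $\St_{LA}(\id_A)$ is a colimit of such corepresentables, and equating it with $\Hom_{\Ch L(A\star F[0])}(-,\top)$ restricted to $\Ch LA$ would require an additional colimit-commutation argument that is nowhere established. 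Relatedly, your intermediate target ``a map $F[0,Y]\star A\to\Un_\cC\Hom_\cC(\ell,-)$ in $\sThnssetslice{\Nh\cC}$'' is not where the adjunctions lead: after unwinding, the codomain must be $\Nh\cC$ itself, with the constraint on $F[0]\amalg A$ recording $\ell$ and $\tau$. Both issues disappear if you commit to the paper's order of operations: cite \cref{Unvsjoinslice} up front to trade $\Un_\cC\Hom_\cC(\ell,-)$ for the neat slice, and then never touch $\St_\cC$ again. (Finally, the $c$ appearing in the statement's right-hand diagram should be read as $\ell$; your hesitation there is warranted, as it is a typo in the statement.)
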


\begin{proof}
    By \cref{Unvsjoinslice}, the map $\sigma$ in $\sThnssetslice{\Nh\cC}$ corresponds to a map in $\sThnssetslice{\Nh\cC}$
    \[ \sigma\colon A\to \{Y,\joinslice{\Nh\cC}{\ell}\}. \]
    Using that $\sThnssetslice{\Nh\cC}$ is tensored and cotensored over $\Thnsset$ and so we have an adjunction $Y\otimes (-)\dashv \{Y,-\}$, this corresponds to a map in $\sThnssetslice{\Nh\cC}$
    \[ \sigma\colon A\otimes Y\to \joinslice{\Nh\cC}{\ell}. \]
    By definition of the neat slice, this corresponds to a commutative diagram in $\sThnsset$ as below left, which by the adjunction $\Ch L\dashv \Nh$ corresponds to a commutative diagram in $\Thncat$ as below right.   
    \begin{tz}
        \node[](1) {$P'(A,Y)$}; 
        \node[right of=1,xshift=1.3cm](2) {$\Nh\cC$}; 
        \node[above of=1](3) {$F[0]\amalg A$}; 
        \draw[->](1) to node[below,la]{$\sigma$} (2); 
        \draw[right hook->](3) to (1);
        \draw[->](3) to  node[right,la,yshift=5pt]{$\ell+\tau$} (2); 
        
        \node[right of=2,xshift=2.2cm](1) {$\Ch L(P'(A,Y))$}; 
        \node[right of=1,xshift=1.5cm](2) {$\cC$}; 
        \node[above of=1](3) {$F[0]\amalg \Ch L(A)$}; 
        \draw[->](1) to node[below,la]{$\sigma^\sharp$} (2); 
        \draw[right hook->](3) to (1);
        \draw[->](3) to  node[right,la,yshift=5pt]{$\ell+\tau^\sharp$} (2); 
    \end{tz}
    We then conclude using the isomorphism $LP'(A,Y)\cong L(F[0,Y]\star A)$ from \cref{lem:joinvsP}. 
\end{proof}

We unpack the action of the map from \cref{constr:composition} on simplices. 

\begin{lemma} \label{lem:Unofcomposition}
    Let $\cC$ be a $\Thnsset$-enriched category and $c,\ell\in \cC$ be objects. Let $m\geq 0$, and $X$ and $Y$ be connected $\Thn$-spaces. Consider a map in $\sThnssetslice{\Nh\cC}$
    \begin{tz}
    \node[](1) {$F[m,X]$}; 
    \node[below right of=1,xshift=.8cm](2) {$\Nh\cC$}; 
    \node[above right of=2,xshift=.8cm](3) {$\Un_\cC \Hom_\cC(\ell,-)$}; 

    \draw[->] (1) to node[above,la]{$\sigma$} (3); 
    \draw[->] (1) to node[left,la,yshift=-5pt]{$\tau$} (2);
    \draw[->] (3) to (2);
\end{tz}
and a map $f\colon Y\to \Hom_\cC(c,\ell)$ in $\Thnsset$. Then the composite in $\sThnssetslice{\Nh\cC}$
\[ F[m,X]\xrightarrow{\sigma}\Un_\cC\Hom_\cC(\ell,-)\xrightarrow{\Un_\cC \, \circ^\sharp_{c,\ell,-}} \{\Hom_\cC(c,\ell),\Un_\cC\Hom_\cC(c,-)\}\xrightarrow{f^*} \{Y,\Un_\cC\Hom_\cC(c,-)\} \]
corresponds under the bijection from \cref{lem:bijection} to the composite in $\Thncat$
\[ \Ch L(F[0,Y]\star F[m,X]) \xrightarrow{\Phi_{F[0,Y],F[m,X]}} \Ch L(F[0,Y]\star F[0]) \amalg_{[0]} \Ch L(F[0]\star F[m,X]) \xrightarrow{f+F_{\sigma^\sharp}} \cC,  \]
where $\Phi_{F[0,Y],F[m,Y]}$ is the $\Thnsset$-enriched functor from \cref{constr:PhimX} and $F_{\sigma^\sharp}$ is the one corresponding to the adjunct $\sigma^\sharp\colon \St_\cC(\tau)\Rightarrow \Hom_\cC(\ell,-)$ of $\sigma$ under the bijection from \cref{lem:nattransfoutofStvsfunctors}.
\end{lemma}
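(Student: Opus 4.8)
The plan is to unwind all of the adjunctions and tensor--cotensor isomorphisms appearing in the displayed left-hand composite until it is presented, via \cref{lem:bijection} applied with its distinguished object taken to be $c$ and with $A = F[m,X]$, as a single $\Thnsset$-enriched functor $\Ch L(F[0,Y]\star F[m,X])\to\cC$, and then to identify that functor with $(f+F_{\sigma^\sharp})\circ\Phi_{F[0,Y],F[m,X]}$. Concretely, since $\St_\cC\dashv\Un_\cC$ is $\Thnsset$-enriched by \cref{st-unst}, the functor $\Un_\cC$ commutes with cotensors, so the last two arrows of the left-hand composite are $\Un_\cC$ applied to the adjunct under $\Hom_\cC(c,\ell)\otimes(-)\dashv(-)^{\Hom_\cC(c,\ell)}$ of the composition transformation $\circ_{c,\ell,-}$, followed by restriction of the cotensor along $f\colon Y\to\Hom_\cC(c,\ell)$. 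Passing then through the adjunction $Y\otimes(-)\dashv\{Y,-\}$, the identification $LP'(F[m,X],Y)\cong L(F[0,Y]\star F[m,X])$ of \cref{lem:equalpushoutneat,lem:joinvsP}, and the adjunction $\Ch L\dashv\Nh$, exactly as in the proof of \cref{lem:bijection}, puts the left-hand composite in the form handled there; what remains is to read off the resulting enriched functor.

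First I would compute the action of the left-hand composite on an arbitrary test simplex $\tau'\colon F[m',X']\to F[m,X]$. Tracing $\tau'$ through $\sigma$ and then through $\Un_\cC\,\circ^\sharp_{c,\ell,-}$ and $f^*$, and using the definition of $\St_\cC$ together with \cref{rem:F0join} for the relevant representables, reduces the statement to the following pointwise description of the classified functor: it sends the vertex coming from $F[0,Y]$ to $c$, the vertices coming from $F[m,X]$ via the adjunct $\tau^\sharp$ of $\tau$ (so in particular the summit $\ell$ of $\sigma$ has been ``composed away''), and, on a mixed hom $\Thn$-space, i.e.\ on a pair $(y,\phi)$ with $y$ an element of the $Y$-slot and $\phi$ a morphism of $\Ch L(F[0]\star F[m,X])$, it returns the composite in $\cC$ of $f(y)\colon c\to\ell$ with $F_{\sigma^\sharp}(\phi)$, where $F_{\sigma^\sharp}$ is the enriched functor of \cref{lem:nattransfoutofStvsfunctors} attached to the adjunct $\sigma^\sharp\colon\St_\cC(\tau)\Rightarrow\Hom_\cC(\ell,-)$ of $\sigma$. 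The subtlety here lies in keeping track of the several tensor--cotensor adjunctions and verifying that $\Un_\cC$, being $\Thnsset$-enriched, transports the adjunct of $\circ_{c,\ell,-}$ into exactly this ``post-compose with $f(y)$'' operation on the unstraightened object.

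The right-hand side is then matched to this pointwise description directly from the construction of $\Phi_{F[0,Y],F[m,X]}$ in \cref{constr:PhimX}. Its first factor $\Ch L(F[0,Y]\star!)\times_{[1]}\Ch L(!\star F[m,X])$ records precisely the pair $(y,\phi)$ as an element of the pullback $\Ch L(F[0,Y]\star F[0])\times_{[1]}\Ch L(F[0]\star F[m,X])$, and the second factor $\Psi_{F[0,Y],F[m,X]}$, by its very definition on the mixed hom $\Thn$-spaces, composes the two legs across the identification $\top=\bot$; postcomposing with $f+F_{\sigma^\sharp}$ therefore returns the composite in $\cC$ of $f(y)$ with $F_{\sigma^\sharp}(\phi)$, matching the second paragraph. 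On objects both sides send the $F[0,Y]$-vertex to $c$ and the $F[m,X]$-part via $\tau^\sharp$, and compatibility with composition on both sides comes down to associativity of composition in $\cC$. I expect the enriched-adjunction bookkeeping of the second paragraph --- in particular pinning down that the adjunct of $\circ^\sharp_{c,\ell,-}$ under $\Un_\cC$ agrees with the composition operation encoded by $\Psi$ --- to be the only genuinely delicate point; everything else is a sequence of routine identifications of representables and pushouts already supplied by \cref{lemma:join reps,rem:F0join,lem:equalpushoutneat,lem:joinvsP}.
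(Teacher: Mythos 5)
Your proposal is correct and follows essentially the same route as the paper: both transport the left-hand composite across the enriched adjunction $\St_\cC\dashv\Un_\cC$ (using that it commutes with cotensors) to the natural transformation $\circ_{c,\ell,-}\circ(f\otimes\sigma^\sharp)$, convert $\sigma^\sharp$ and $f$ into enriched functors via \cref{lem:nattransfoutofStvsfunctors} and $\Sigma\dashv\Hom$, and then recognize postcomposition with the composition maps as precomposition with $\Phi_{F[0,Y],F[m,X]}$ followed by $f+F_{\sigma^\sharp}$. Your more pointwise rendering on test simplices and mixed hom $\Thn$-spaces is just a more explicit unwinding of the same identifications, and you correctly isolate the matching of $\Un_\cC\,\circ^\sharp_{c,\ell,-}$ with the composition encoded by $\Psi$ as the one step requiring care.
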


\begin{proof}
    The composite 
\[ F[m,X]\xrightarrow{\sigma}\Un_\cC\Hom_\cC(\ell,-)\xrightarrow{\Un_\cC \, \circ^\sharp_{c,\ell,-}} \{\Hom_\cC(c,\ell),\Un_\cC\Hom_\cC(c,-)\}\xrightarrow{f^*} \{Y,\Un_\cC\Hom_\cC(c,-)\} \]
corresponds by the $\Thnsset$-enriched adjunction $\St_\cC\dashv \Un_\cC$ to the composite in $[\cC^{\op},\Thnsset]$
\[ Y\otimes \St_\cC(\tau)\xRightarrow{f\otimes \sigma^\sharp} \Hom_\cC(c,\ell)\otimes \Hom_\cC(\ell,-)\xRightarrow{\circ_{c,\ell,-}} \Hom_\cC(c,-). \]
Moreover, by \cref{lem:nattransfoutofStvsfunctors}, the $\Thnsset$-enriched natural transformation $\sigma^\sharp$ corresponds to a commutative diagram in $\Thncat$ as below left, and by the adjunction $\Sigma\dashv \Hom$, the map~$f$ corresponds to a commutative diagram in $\Thncat$ as below right.
\begin{tz}
       \node[](1) {$\Ch L(F[0]\star F[m,X])$}; 
        \node[right of=1,xshift=1.7cm](2) {$\cC$}; 
        \node[above of=1](3) {$[0]\amalg \Ch L F[m,X]$}; 
        \draw[->](1) to node[below,la]{$F_{\sigma^\sharp}$} (2); 
        \draw[right hook->](3) to (1);
        \draw[->](3) to  node[right,la,yshift=5pt]{$\ell+\tau^\sharp$} (2);

       \node[right of=1,xshift=6.3cm](1) {$\Ch L(Y\star F[0])$}; 
       \node[left of=1,xshift=-.3cm,yshift=1pt]{$\Sigma Y\cong$};
        \node[right of=1,xshift=1.3cm](2) {$\cC$}; 
        \node[above of=1](3) {$[0]\amalg [0]$}; 
        \draw[->](1) to node[below,la]{$f$} (2); 
        \draw[right hook->](3) to (1);
        \draw[->](3) to  node[right,la,yshift=5pt]{$c+\ell$} (2); 
   \end{tz}
   Finally, postcomposition with the composition maps corresponds to the following composite
   \[ \Ch L(F[0,Y]\star F[m,X]) \xrightarrow{\Phi_{F[0,Y],F[m,X]}} \Ch L(F[0,Y]\star F[0]) \amalg_{[0]} \Ch L(F[0]\star F[m,X]) \xrightarrow{f+F_{\sigma^\sharp}} \cC,  \]
   as desired.
\end{proof}

Finally, we can unpack the desired map. 

\begin{prop} \label{unpacklambdastar}
    Let $J$ be an object in $\pcatThn$ and $\cC$ be a $\Thnsset$-enriched category. Let $F\colon \Ch J\to \cC$ be a $\Thnsset$-enriched functor and $p\colon A\to J$ be a map in $\sThnsset$ with straightening $\St_J(p)\colon \Ch J\to \Thnsset$. Let $f\colon Y\to \Hom_\cC(c,\ell)$ be a map in $\Thnsset$ from a connected $\Thn$-space $Y$. Then the composite in $\Thnsset$
    \[ Y\xrightarrow{f}\Hom_\cC(c,\ell)\xrightarrow{\lambda^*_c} \Hom_{[\Ch J,\Thnsset]}(\St_J(p),\Hom_\cC(c,F-)) \]
corresponds to the composite in $\Thncat$
\[ \Ch L(F[0,Y]\star A) \xrightarrow{\Phi_{F[0,Y],A}} \Ch L(F[0,Y]\star F[0]) \amalg_{[0]} \Ch L(F[0]\star A) \xrightarrow{f+\lambda} \cC,  \]
where $\Phi_{F[0,Y],A}$ is the $\Thnsset$-enriched functor from \cref{constr:PhimX}.
\end{prop}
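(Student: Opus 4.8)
The plan is to trace the various adjunctions and identifications established earlier through the definition of $\lambda^*_c$ and match them term by term with the composite $(f+\lambda)\circ\Phi_{F[0,Y],A}$. The map $\lambda^*_c\colon \Hom_\cC(c,\ell)\to \Hom_{[\Ch J,\Thnsset]}(\St_J(p),\Hom_\cC(c,F-))$ is, by definition, induced by postcomposition with the cone $\lambda\colon \St_J(p)\Rightarrow \Hom_\cC(\ell,F-)$ followed by the composition maps of $\cC$. Precisely, a morphism $g\colon c\to \ell$ is sent to the natural transformation $\St_J(p)\xRightarrow{\lambda}\Hom_\cC(\ell,F-)\xrightarrow{g^*}\Hom_\cC(c,F-)$, where the second map uses the composition maps $\circ_{c,\ell,F-}$ of $\cC$. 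So the composite $Y\xrightarrow{f}\Hom_\cC(c,\ell)\xrightarrow{\lambda^*_c}\Hom_{[\Ch J,\Thnsset]}(\St_J(p),\Hom_\cC(c,F-))$ is exactly the map which postcomposes $\lambda$ with the composition maps, precomposed with $f$. The first step is therefore to rewrite this composite, via the enriched adjunctions $\Sigma\dashv\Hom$ and $\St_J\dashv\Un_J$, as something that can be fed into \cref{lem:Unofcomposition}.

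\textbf{Key steps.} First I would recall that, by the enriched adjunction $\St_\cC\dashv\Un_\cC$ (which is $\Thnsset$-enriched by \cref{st-unst}, hence \cref{LKE-precomp}), a $\Thnsset$-enriched natural transformation $\St_J(p)\Rightarrow\Hom_\cC(c,F-)$ corresponds to a map over $\Nh\cC$ of the form $A\to\Un_\cC\Hom_\cC(c,F-)$; combined with \cref{lem:naturalityofSt} this is the same as $(F^\flat)_!A\to\Un_\cC\Hom_\cC(c,-)$, i.e.\ a map $A\to (F^\flat)^*\Un_\cC\Hom_\cC(c,-)$. Similarly, $\lambda^\flat$ corresponds to the enriched functor $\lambda\colon \Ch L(\repD[0]\star A)\to\cC$ restricting to $\ell+f^\sharp$ on $[0]\amalg\Ch L A$. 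Second, I would apply \cref{lem:Unofcomposition} in the case $m=0$ and $X=Y$, with the simplex $\sigma$ there taken to be the one classifying $\lambda$: that lemma says that the composite $F[0,Y]\xrightarrow{\sigma}\Un_\cC\Hom_\cC(\ell,-)\xrightarrow{\Un_\cC\circ^\sharp_{c,\ell,-}}\{\Hom_\cC(c,\ell),\Un_\cC\Hom_\cC(c,-)\}\xrightarrow{f^*}\{Y,\Un_\cC\Hom_\cC(c,-)\}$ corresponds, under the bijection of \cref{lem:bijection}, to $(f+F_{\sigma^\sharp})\circ\Phi_{F[0,Y],F[m,X]}$. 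Third, I would promote this local (over a representable) statement to a statement over all of $A$ by the usual density argument: both sides of the desired identity define, as $A$ varies, colimit-preserving constructions (the functor $\Phi_{F[0,Y],-}$ is natural in its second argument by \cref{constr:PhimX}, and $\St_J$, $\Un_\cC$, the pushout along $[0]$, and the join all preserve colimits), so it suffices to check the identity on representables $F[m',\theta',k']\to A$, which is \cref{lem:Unofcomposition} together with the identification $F_{\sigma^\sharp}=$ the relevant restriction of $\lambda$ coming from \cref{lem:nattransfoutofStvsfunctors} applied to the cone $\lambda$.

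\textbf{Assembling the argument.} Putting these together: $f\colon Y\to\Hom_\cC(c,\ell)$ postcomposed with $\lambda^*_c$ is, by unwinding the definition of $\lambda^*_c$ and the $\St_\cC\dashv\Un_\cC$ adjunction, precisely the image of $\lambda$ under $\Un_\cC\circ^\sharp_{c,\ell,-}$ followed by $f^*$, reinterpreted as a map $Y\otimes\St_J(p)\Rightarrow\Hom_\cC(c,F-)$; evaluating on a representable $F[m',\theta',k']\to A$ and using \cref{lem:naturalityofSt} to recognize the straightening in terms of $\Ch L(F[m',\theta',k']\star\repD[0])$, \cref{lem:Unofcomposition} identifies this evaluation with $(f+\lambda)\circ\Phi_{F[0,Y],F[m',\theta',k']}$. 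Taking the colimit over the category of such representables and using naturality of $\Phi$ in the second variable and colimit-preservation of $\Ch L$, $\star$, and the pushout yields $(f+\lambda)\circ\Phi_{F[0,Y],A}$, as claimed.

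\textbf{Main obstacle.} The hard part will not be any single identification but the bookkeeping of making all the adjunction isomorphisms compatible---in particular, checking that the enriched functor $F_{\sigma^\sharp}$ produced by \cref{lem:nattransfoutofStvsfunctors} from the cone $\lambda$ genuinely restricts to $\lambda$ on the relevant subcategory $\Ch L(\repD[0]\star A)\hookrightarrow\Ch L(F[0,Y]\star A)$ after the density/colimit argument, i.e.\ that the ``glued'' functor $f+\lambda$ on $\Ch L(F[0,Y]\star\repD[0])\amalg_{[0]}\Ch L(\repD[0]\star A)$ is the colimit of the local gluings $f+F_{\sigma^\sharp}$. This amounts to verifying that the various uses of $\Phi_{-,-}$ and of the straightening-unstraightening naturality (\cref{lem:naturalityofSt}) are coherent, which is straightforward but genuinely fiddly; everything else is a direct application of the cited lemmas.
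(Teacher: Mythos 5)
Your proposal is correct and follows essentially the same route as the paper: unwind $\lambda^*_c$ through the enriched adjunction $\St_\cC\dashv\Un_\cC$ as postcomposition with $\Un_\cC\,\circ^\sharp_{c,\ell,-}$, reduce to \cref{lem:Unofcomposition} on representable simplices, and extend to all of $A$ via the naturality of $\Phi_{F[0,Y],-}$ in its second argument. The only slip is the phrase ``$m=0$ and $X=Y$'' in your second key step---the inputs to \cref{lem:Unofcomposition} should be the restrictions of the map $A\to\Un_\cC\Hom_\cC(\ell,-)$ classified by $\lambda$ to representables $F[m',\theta',k']\to A$, with $Y$ kept as the domain of $f$---but your assembling paragraph states this correctly.
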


\begin{proof}
    Recall that the map $\lambda^*_c$ is given by the following composite
    \begin{tz}
        \node[](1) {$\Hom_\cC(c,\ell)$}; 
        \node[right of=1,xshift=6.3cm](2) {$\Hom_\cC(c,\ell)\otimes \Hom_{\widehat{\Ch J}}(\St_J(p),\Hom_\cC(\ell,F-))$}; 
        \node[below of=2](3) {$\Hom_{\widehat{\Ch J}}(\St_J(p),\Hom_\cC(c,\ell)\otimes\Hom_\cC(\ell,F-))$}; 
        \node[below of=1](4) {$\Hom_{\widehat{\Ch J}}(\St_J(p),\Hom_\cC(c,F-))$}; 
        \draw[->](1) to node[above,la]{$\id\times\{\lambda\}$} (2); 
        \draw[->](2) to node[right,la]{$\cong$} (3); 
        \draw[->](3) to node[below,la]{$(\circ_{c,\ell,F-})_*$} (4); 
        \draw[->] (1) to node[left,la]{$\lambda^*_c$} (4);
    \end{tz}
    where $\widehat{\Ch J}=[\Ch J,\Thnsset]$. 
   Moreover, by the proof of \cref{lem:webtwfibers}, we have a natural isomorphism
   \[ \Hom_{[\Ch J,\Thnsset]}(\St_J(p),\Hom_\cC(c,F-))\cong \Hom_{\sThnssetslice{\Nh\cC}}(A\xrightarrow{F^\flat\circ p} \Nh\cC,\Un_\cC \Hom_\cC(c,-)\to \Nh\cC)\]
and the corresponding map is given by the following composite
\begin{tz}
        \node[](1) {$F[0]$}; 
        \node[right of=1,xshift=6.9cm](2) {$\Hom_{/\Nh\cC}(A,\Un_\cC \Hom_\cC(\ell,-))$}; 
        \node[below of=2](3) {$\Hom_{/\Nh\cC}(A,\{\Hom_\cC(c,\ell),\Un_\cC \Hom_\cC(c,-)\})$}; 
        \node[below of=1](4) {$\Hom_{/\Nh\cC}(A,\Un_\cC \Hom_\cC(c,-))^{\Hom_\cC(c,\ell)}$}; 
        \draw[->](1) to node[above,la]{$\lambda$} (2); 
        \draw[->](2) to node[right,la]{$(\Un_\cC \, \circ^\sharp_{c,\ell,-})_*$} (3); 
        \draw[->](3) to node[above,la]{$\cong$} (4); 
        \draw[->] (1) to node[left,la]{$\lambda^*_c$} (4);
    \end{tz}
    where $\Hom_{/\Nh\cC}(-,-)=\Hom_{\sThnssetslice{\Nh\cC}}(-,-)$ and we denote the maps over $\Nh\cC$ by just their source. Hence the result follows from \cref{lem:Unofcomposition} and the naturality of $\Phi_{F[0,Y],A}$ in $A$.  
\end{proof}

Finally, we prove that the map from \cref{constr:Yonedamap} associated to the element $\lambda^\flat$ as in \cref{prop:existYoneda} satisfies the property of \cref{prop:existYoneda}.

\begin{lemma} \label{lem:diagramcommutes} 
In \cref{setting:limit}, let $\ell\in \cC$ be an object, $\lambda\colon \St_J(p)\Rightarrow \Hom_\cC(\ell,F-)$ be a $\Thnsset$-enriched natural transformation in $[\Ch J,\Thnsset]$, and $\lambda^\flat \in ((\joinslice{\Nh\cC}{F^\flat \circ p})_\ell)_{[0],0}$ be as in \cref{notn:enrvsfibcones}. Then, for every object $c\in \cC$, the following diagram in $\Thnsset$ commutes,
\begin{tz}
        \node[](1) {$\Hom_\cC(c,\ell)$}; 
        \node[below of=1](2) {$\Hom_{\Nh\cC}(c,\ell)$}; 
        \node[right of=1,xshift=4.3cm,yshift=-3pt](3) {$\Hom_{[\Ch J,\Thnsset]}(\St_J(p),\Hom_\cC(c,F-))$}; 
        \node[below of=3,yshift=3pt](4) {$(\joinslice{\Nh\cC}{F^\flat\circ p})_c$}; 

        \draw[d] (1) to (2); 
        \draw[->] (1) to node[above,la]{$\lambda^*_c$} ($(3.west)+(0,3pt)$); 
        \draw[->] (2) to node[below,la]{$(\lambda^\flat_*)_c$} (4); 
        \draw[->] (3) to node[right,la]{$\cong$} (4); 
    \end{tz}
    where the right-hand map is the isomorphism from \cref{lem:webtwfibers} and the bottom map is the map induced on fibers by the map $\lambda^\flat_*$ from \cref{constr:Yonedamap} associated to the element $\lambda^\flat$.
\end{lemma}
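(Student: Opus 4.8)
The plan is to reduce the claim to a pointwise check and then match two explicit unwindings against each other. Since every $\Thn$-space is a coproduct of connected ones, a map of $\Thn$-spaces is determined by its composites with maps out of connected $\Thn$-spaces; so fix an object $c\in\cC$ and an arbitrary map $f\colon Y\to\Hom_\cC(c,\ell)$ from a connected $\Thn$-space $Y$, and chase $f$ around both legs of the square into the $\Thn$-space $(\joinslice{\Nh\cC}{F^\flat\circ p})_c$. The goal is to show that both legs send $f$ to the element of $(\joinslice{\Nh\cC}{F^\flat\circ p})_c$ which, through the $\Ch L\dashv\Nh$ adjunction and the description of the fibers of the neat cone construction (\cref{joinconeonfibers}), is the adjunct of the $\Thnsset$-enriched functor $(f+\lambda)\circ\Phi_{F[0,Y],A}\colon \Ch L(F[0,Y]\star\repD[0])\amalg_{[0]}\Ch L(\repD[0]\star A)$-composite, i.e., the composite
\[ \Ch L(F[0,Y]\star A)\xrightarrow{\Phi_{F[0,Y],A}}\Ch L(F[0,Y]\star\repD[0])\amalg_{[0]}\Ch L(\repD[0]\star A)\xrightarrow{f+\lambda}\cC, \]
where $\lambda$ also denotes the $\Thnsset$-enriched functor adjunct to $\lambda^\flat$ (as in \cref{constr:Yonedamap}).

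For the top-right leg, apply \cref{unpacklambdastar}: the composite $Y\xrightarrow{f}\Hom_\cC(c,\ell)\xrightarrow{\lambda^*_c}\Hom_{[\Ch J,\Thnsset]}(\St_J(p),\Hom_\cC(c,F-))$ corresponds to exactly the above composite $(f+\lambda)\circ\Phi_{F[0,Y],A}$. One then needs to observe, by inspection of the proof of \cref{unpacklambdastar}, that the correspondence it uses factors \emph{through} the isomorphism of \cref{lem:webtwfibers} (both are assembled from the same constituents: $F_!\dashv F^*$, \cref{lem:naturalityofSt}, $\St_\cC\dashv\Un_\cC$, \cref{Unvsjoinslice}, and \cref{joinconeonfibers}); hence post-composing with the isomorphism of \cref{lem:webtwfibers} sends $\lambda^*_c\circ f$ precisely to the element of $(\joinslice{\Nh\cC}{F^\flat\circ p})_c$ adjunct to $(f+\lambda)\circ\Phi_{F[0,Y],A}$.

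For the left-bottom leg, first identify, over the constant map $c\colon F[0,Y]\to\Nh\cC$, the set of maps $Y\to(\joinslice{\Nh\cC}{\ell})_c=\Hom_{\Nh\cC}(c,\ell)=\Hom_\cC(c,\ell)$ with the set of $\Thnsset$-enriched functors $\Ch L(F[0,Y]\star\repD[0])\to\cC$ extending $c+\ell$; under the isomorphism $\Ch L(F[0,Y]\star\repD[0])\cong\Sigma Y$ (\cref{rem:F0join}, resp.~\cref{lem:joinvsP}) this is the bijection $f\leftrightarrow\sigma^\sharp$ coming from the $\Sigma\dashv\Hom$ adjunction, and tracing the proof of \cref{Unvsjoinslice} shows this is exactly the identification used there. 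Now read \cref{constr:Yonedamap} on the fiber over $c$: the simplex $\sigma$ associated with $f$ has $\sigma^\sharp=f$ (under the above identification), so $\lambda^\flat_*(\sigma)$ is the $\Ch L\dashv\Nh$-adjunct of $(\sigma^\sharp+\lambda)\circ\Phi_{F[0,Y],A}=(f+\lambda)\circ\Phi_{F[0,Y],A}$, i.e., the same element of $(\joinslice{\Nh\cC}{F^\flat\circ p})_c$ as produced by the top-right leg. Since $\Phi_{F[0,Y],A}$ and all the adjunctions and isomorphisms involved are natural in $Y$, the two maps of $\Thn$-spaces agree, not merely pointwise.

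The main obstacle is the bookkeeping in the second and third paragraphs: making precise that the "corresponds to" of \cref{unpacklambdastar} is genuinely the isomorphism of \cref{lem:webtwfibers} (composed with the neat-cone fiber identification), and likewise that the identification $(\joinslice{\Nh\cC}{\ell})_c\cong\Hom_\cC(c,\ell)$ used in \cref{constr:Yonedamap} is the one coming from \cref{Unvsjoinslice}. Conceptually everything is transparent — both legs just form "the composite of the cone/$1$-simplex with $\lambda$ glued along $\ell$", which is exactly what the gluing functor $\Phi_{F[0,Y],A}$ from \cref{constr:PhimX} encodes — so the work is entirely in verifying that the enriched and internal sides run through the \emph{same} chain of adjunction isomorphisms.
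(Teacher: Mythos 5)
Your proposal is correct and follows essentially the same route as the paper's proof, which simply combines the unpacking of $\lambda^*_c$ in \cref{unpacklambdastar} with the construction of $\lambda^\flat_*$ in \cref{constr:Yonedamap}; your additional care in checking that both legs run through the same chain of adjunction isomorphisms (via \cref{lem:webtwfibers}, \cref{Unvsjoinslice}, and \cref{joinconeonfibers}) is exactly the bookkeeping the paper leaves implicit.
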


\begin{proof}
    This follows directly from the unpacking of $\lambda^*_c$ in \cref{unpacklambdastar} and the construction of the map $\lambda^\flat_*$ as described in \cref{constr:Yonedamap}. 
\end{proof}

In particular, we have constructed a map $\lambda^\flat_*$ satisfying the properties of \cref{prop:existYoneda} and so this concludes the proof. 

\bibliographystyle{amsalpha}
\bibliography{ref}

\end{document}